\newtheorem{theorem}{Theorem}[section]
\newtheorem{lemma}[theorem]{Lemma}
\newtheorem{prop}[theorem]{Proposition}
\newtheorem{cor}[theorem]{Corollary}  
\newtheorem{definition}[theorem]{Definition}
\theoremstyle{definition}
\newtheorem{rem}[theorem]{Remark}
\newtheorem{example}[theorem]{Example}
\newcommand{\Prob}{{\mathbb P}}
\newcommand{\E}{\mathbb E}
\newcommand{\pa}{\mathcal P_{\alpha}}
\newcommand{\fa}{\mathcal F_{\alpha}}
\newcommand{\EE}{\mathcal E}
\newcommand{\FF}{\mathcal F}
\newcommand{\HH}{\mathcal H}
\newcommand{\pb}{\mathcal P_{\beta}}
\newcommand{\pw}{\mathcal P_{w}}
\newcommand{\PP}{\mathcal P}
\newcommand{\qb}{\mathcal Q_{\beta}}
\newcommand{\Q}{\mathbb Q}
\newcommand{\R}{\mathbb R}
\newcommand{\N}{\mathbb N}
\newcommand{\fij}{\phi_{\bf i}^{\bf j}}
\theoremstyle{remark}
\numberwithin{equation}{section}
\begin{document}

%\title{Continuous crystal and Duistermaat-Heckmann measure for  finite Coxeter
%groups.}
\title[Continuous crystal and Coxeter groups]{Continuous crystal and Duistermaat-Heckmann measure for  Coxeter
groups.}

%    Information for first author
\author{Philippe Biane}
%    Address of record for the research reported here
\address{CNRS, IGM, Universit\'e Paris-Est,
77454 Marne-la-Vall\'ee Cedex 2
}
\email{Philippe.Biane@univ-mlv.fr}
\author{ Philippe Bougerol}
\address{Laboratoire de Probabilit\'es et mod\`eles al\'eatoires,  
Universit\'e Pierre et Marie Curie, 4, Place Jussieu, 75005 Paris,  
FRANCE}
\email{philippe.bougerol@upmc.fr}
\author{ Neil O'Connell}
\address{Mathematics Institute,
University of Warwick,
Coventry CV4 7AL, UK}
\email{n.m.o-connell@warwick.ac.uk}
%    General info
\subjclass{Primary 20F55, 14M25; Secondary 60J65}
\date{\today}
\thanks{ Research of the third author supported in part by Science Foundation  
 Ireland
 Grant No. SFI04/RP1/I512.}

\begin{abstract}We introduce a notion of continuous crystal analogous, for general
Coxeter groups, to the combinatorial   crystals introduced by Kashiwara in
representation theory of Lie algebras.
 We explore their main properties in the case of finite Coxeter
groups, where we use a generalization of the Littelmann path model to show the
existence of  the
crystals. We introduce a remarkable measure, analogous to the
Duistermaat-Heckman measure, which we interpret in terms of Brownian motion.
We also  show that  the Littelmann path operators can be derived from simple
considerations on Sturm-Liouville equations.
\end{abstract}
\maketitle
\section{Introduction}
\subsection{}
The aim of this paper is to introduce a notion of continuous crystals for Coxeter
groups, which are not necessarily Weyl groups. Crystals are
combinatorial objects,
which have been  associated by Kashiwara to Kac-Moody algebras, in order to 
provide a combinatorial model for the representation theory of these algebras, see e.g.
\cite{H-K}, \cite{Joseph-book}, \cite{Joseph-notes},  \cite{kashbook} for an
introduction to this theory. The crystal graphs defined by Kashiwara turn out to be
equivalent to certain other graphs, constructed independently by Littelmann, using his
path model. The approach of Kashiwara to the crystals is through representations of
quantum groups and their ``crystallization'', which is the process of letting the
parameter $q$ in the quantum group go to zero. This requires representation theory and
therefore does not make sense for realizations of arbitrary Coxeter groups.
On the other hand, as it was realized in a previous paper \cite{bbo},
 Littelmann's model can be adapted to fit with non-crystallographic
Coxeter groups, but the price to pay is that, since there is no lattice invariant under
the action of the group, one can only define a continuous version of the path model,
namely of the Littelmann path operators (see however the recent preprint
\cite{Joseph-penta}, which has appeared when this paper was under revision). In this continuous model, instead of  
 the Littelmann path
operators $e_i,f_i$ we have continuous semigroups $e^t_i,f^t_i$ indexed by
nonnegative real numbers $t\geq 0$. In the crystallographic case it is possible to
think of these continuous crystals as ``semi-classical limits" of the combinatorial
crystals, in much the same way as the coadjoint orbits arise  as
 semi-classical limits of the
representations of a compact semi-simple Lie group. 
These continuous path operators, and the
closely related Pitman transforms, were used in \cite{bbo} to investigate symmetry 
properties of Brownian
motion in a space where a finite  Coxeter group acts, with applications in particular
to the motion of eigenvalues of  matrix-valued Brownian motions.
In this paper, which is a sequel to \cite{bbo}, but can for the most part be read
independently,  we define continuous  crystals and start investigating their main
properties.  As for now the theory works well for finite Coxeter groups, but there are
still several difficulties to extend it to infinite groups. This
theory allows us to define objects which are analogues to simplified versions of 
the Schubert varieties (or Demazure-Littelmann modules)
 associated with semi-simple Lie groups. We hope these objects
might help in certain questions concerning Coxeter groups, such as, for example,
the Kazhdan-Lusztig polynomials.
\subsection{}
 This paper is organized as follows.
 The next section contains the main definition, that 
  of a continuous crystal associated with
 a realization of a Coxeter group. We establish the main properties of these objects,
 following closely the exposition of Joseph in \cite{Joseph-notes}. It would have been
 possible to just refer to \cite{Joseph-notes} for the most part of this section,
 however, for the convenience of the reader, and also for convincing ourselves that
 everything from the crystallographic situation goes smoothly to the continuous
 context, we have preferred to write everything down. The main body of the proof
 is relegated to an appendix in order to  ease the reading of the paper.
The main  result of this section is theorem \ref{thmuniq}, a uniqueness result for 
continuous crystals, analogous to the one in \cite{Joseph-notes}.
 In section 3 we introduce the
 path operators and establish their most important properties.
 Our approach to the path model is different from that in Littelmann
 \cite{littel},
 in that we base our exposition on the Pitman transforms,
  which are defined from scratch. These transforms satisfy 
  braid relations, which where proved in  \cite{bbo}, and which play a prominent role.
Using these operators, the set of continuous paths is endowed with a crystal structure and the 
continuous analogues of the Littelmann  modules are introduced as "connected components" of this crystal (see  the discussion following  proposition \ref{prop:exis}, definition \ref{dem-littel} and
 theorem \ref{thm:exis}).
Our definition makes sense for arbitrary Coxeter groups, but we are able to prove significant properties of these only in the case of finite Coxeter groups. It
 remains an interesting and challenging problem to extend these properties to the
 general case.
Continuous  Littelmann modules can be parameterized in several ways by  polytopes, corresponding to different reduced decompositions of an element in the Coxeter group.
In the case of Weyl groups, these are  the  
 Berenstein-Zelevinsky polytopes (see \cite{beze2})
 which contain the Kashiwara coordinates on the crystals. In section 4 
we state some properties of these parametrizations. In theorem \ref{piecewise} we prove that two such parametrizations are related by a piecewise linear transformation,  and in theorem
\ref{theo_poly} we show that the polytopes can be obtained by the intersection of a cone 
depending only on the element of the Coxeter group, and a set of inequalities which depend on the dominant path. Furthermore, we provide explicit equations for the cone in the dihedral case (in proposition \ref{dihedralcone}).
 In theorem \ref{uniq_iso} we prove that the crystal associated with a Littelmann module depends only on the end point of the dominant path, then in theorem \ref{uniq_closed}
we obtain the existence and uniqueness of a family of highest weight normal continuous crystals. We show that the Coxeter group acts on each Littelmann module (theorem \ref{theo:action}). 
 We introduce the Sch\"utzenberger involution in section \ref{schutz} and use it to give a direct combinatorial proof of the commutativity of the tensor product of continuous crystals (theorem \ref{crysiso}).
  We think that
 even in the crystallographic case our treatment sheds some light on these topics.
 In section 5, we introduce an analogue of the Duistermaat-Heckman measure, motivated by a
 result of Alexeev and Brion \cite{ab}. We prove several interesting properties of
 this measure, in particular, in theorem  \ref{p-2}, an analogue of the Harish-Chandra formula. 
The Laplace transform appearing in this formula is a generalized Bessel function. It is    shown in theorem \ref{prod_J} to satisfy a product formula, giving a positive answer to a question of R\"osler.
The Duistermaat-Heckman measure is intimately linked with Brownian motion,
 and in corollary \ref{cor_cone} we give a Brownian proof of the fact that the
 crystal defined by the path model depends only on the final position of the
 path. 
 The final section is of a quite different nature, and somewhat independent of the rest
 of the paper. The Littelmann path operators have been introduced as a generalization,
 for arbitrary root systems, of combinatorial operations on Young tableaux. Here we
 show how, using some simple considerations on Sturm-Liouville equations, the Littelmann
 path operators appear naturally. In particular this gives a concrete geometric basis
 to the theory of geometric lifting which has been introduced by Berenstein and
 Zelevinsky in \cite{beze2} in a purely formal way.

\section{Continuous crystal}
This section is devoted to introducing the main definition and first 
properties of continuous crystals.
  \subsection{Basic definition}
We use the standard references \cite{bo2}, \cite{humphreys} on Coxeter groups and their realizations.
 A Coxeter system $(W, S)$ is a group $W$ generated by a finite set of involutions $S$ such that, if $m(s,s')$ is the order of $ss'$ then  the relations 
  $$(ss')^{m(s,s')}=1$$
  for $m(s,s')$ finite, give a   presentation of $W$. 
  
  A realization of $(W,S)$ is given by a real vector space $V$ with dual
   $V^\vee$, an  action of $W$ on $V$,
    and a subset $\{(\alpha_s,\alpha^\vee_s), s \in S\}$ of
    $V \times V^\vee$ such that  each $s\in S$ acts on $V$ by
     the reflection given by
  $$s(x)=x-\alpha_s^\vee(x)\alpha_s,\;\; x \in V,$$ so $\alpha_s^\vee(\alpha_s)=2$.
One calls $\alpha_s$ the simple root associated with $s\in S$ and  
$\alpha_s^\vee$ its coroot.

 We consider a realization of a Coxeter system $(W,S)$ in a real vector space $V$,
and the associated simple roots $\Sigma=\{\alpha_s, s \in S\}$ in $V$ and coroots $\{\alpha^\vee_s, s \in S\}$ in $V^\vee$.
The closed Weyl chamber is the convex cone $$\overline   
C=\{v \in V; \alpha_s^\vee(v) \geq 0, \mbox{ for all } \alpha\in S\}$$
thus the simple roots are positive on $\overline C$.
There is an order relation on $V$ induced by this cone, namely
$\lambda\leq \mu$ if and only if $\mu-\lambda\in \overline C$.

 We adapt the definition of crystals due to Kashiwara (see, e.g.,  Kashiwara \cite{kash93}, \cite{kashbook}, Joseph \cite{Joseph-book}) to a continuous setting. 
 \begin{definition}
 A continuous crystal is a set $B$ equipped with maps
\begin{eqnarray*}wt&:&B\to V,\\
\varepsilon_\alpha, \varphi_\alpha&:&B \to \R\cup\{-\infty\},\, \alpha \in \Sigma,\\
 e_\alpha^r&:& B\cup\{{\bf 0}\}\to B\cup\{{\bf 0}\}, \, \alpha \in \Sigma, r \in \R,
\end{eqnarray*}
 where $\bf 0$ is a ghost element, such that the following properties 
 hold, for all $\alpha \in \Sigma$, and $b\in B$
\medskip

(C1) $\varphi_\alpha(b)=\varepsilon_\alpha(b)+\alpha^\vee(wt(b)),$
\medskip

(C2) If  $ e_\alpha^r(b)\not={\bf 0}$ then
\begin{eqnarray*}
\varepsilon_\alpha( e_\alpha^rb)&=&\varepsilon_\alpha(b)-r,\\
\varphi_\alpha( e_\alpha^rb)&=&\varphi_\alpha(b)+r,\\
wt( e_\alpha^rb)&=&wt(b)+r\alpha,
\end{eqnarray*}

 (C3)  For all $r \in \R, b\in B$ one has $e^r_\alpha({\bf 0})={\bf 0},e^0_\alpha(b)=b.$
If  $ e_\alpha^r(b)\not={\bf 0}$ then, for all $s\in \R$,
  $$ e_\alpha^{s+r}(b)= e_\alpha^s  (e_\alpha^r(b)),$$

(C4) If $\varphi_\alpha(b)=-\infty$ then $e_\alpha^r(b)={\bf 0}$, for all $r \in \R, r�\neq  0.$
\medskip
\end{definition}

The point is that, in this definition, $r$ takes any real value, and not only discrete ones. Sometimes we write, for $r \geq 0$,
$$f_\alpha^r=e_\alpha^{-r}.$$ 
\begin{example}[The crystal $B_\alpha$] \label{exba}For each $\alpha \in \Sigma$, we define the crystal  $B_\alpha$ as the set $\{b_\alpha(t), t \mbox{  is a nonpositive real number}\}$,  with the maps given by
$$wt(b_\alpha(t))=t\alpha,\quad \varepsilon_\alpha(b_\alpha(t))=-t, \quad \varphi_\alpha(b_\alpha(t))=t,$$ 
$$e_\alpha^r(b_\alpha(t))=b_\alpha(t+r) \mbox{ if } r \leq -t \mbox{ and }e_\alpha^r(b_\alpha(t)) = {\bf 0} \mbox{ otherwise,}$$
and, if $\alpha ' \neq \alpha$,
 $\varepsilon_{\alpha'}(b_\alpha(t))=-\infty, \,\varphi_{\alpha'}(b_\alpha(t))=-\infty,\,e^r_{\alpha'}(b_\alpha(t))={\bf 0},$  when $r \neq 0 $.
\end{example}
\subsection{Morphisms}

\begin{definition}
Let $B_1$ and $B_2$ be continuous crystals. 

1. A  morphism of crystals $\psi: B_1 \to B_2$ is a map $\psi: B_1\cup \{{\bf 0}\} \to B_2 \cup \{\bf 0\}$ such that $\psi({\bf 0})={\bf 0}$ and for all $\alpha \in \Sigma$ and $b \in B_1$,
$$
wt(\psi(b))=wt(b),\,  \varepsilon_\alpha(\psi(b))=\varepsilon_\alpha(b),\,
\varphi_\alpha(\psi(b))=\varphi_\alpha(b)
$$
and 
$e_\alpha^r(\psi(b))=\psi(e_\alpha^r(b))$ when $e_\alpha^r(b)\in B_1$.

2. A strict morphism is a morphism $\psi: B_1\to B_2$ such that $e_\alpha^r(\psi(b))=\psi(e_\alpha^r(b))$ for all $b\in B_1$.

3. A crystal embedding is an injective strict  morphism.
\end{definition}
The morphism
 $\psi$ is called a {\it crystal isomorphism}
if there exists a crystal morphism $\phi: B_2\to B_1$ such that $\phi\circ \psi=id_{B_1\cup\{\bf 0\}}$, and $\psi\circ
\phi=id_{B_2\cup\{\bf 0\}}$. It is then an embedding.

\subsection{Tensor product}\label{subtens}

Consider two continuous crystals $B_1$ and $B_2$ associated with $(W,S,\Sigma)$. We  define the tensor product $B_1\otimes B_2$ as the continuous crystal with set $B= B_1\times B_2$, whose elements are denoted
$b_1\otimes b_2,\text{for}\, b_1\in B_1,b_2\in B_2$. 
Let $\sigma=\varphi_\alpha(b_1)-\varepsilon_\alpha(b_2)$ where $(-\infty)-(-\infty)=0$, let $\sigma^+=\max(0,\sigma)$ and $\sigma^-=\max(0,-\sigma)$, then the maps defining the tensor product are given by the following formulas:
\begin{eqnarray*}wt(b_1\otimes b_2)&=&wt(b_1)+wt(b_2)\\\varepsilon_\alpha(b_1\otimes b_2)&=&\varepsilon_\alpha(b_1)+\sigma^-\\
\phi_\alpha(b_1\otimes b_2)&=&\phi_\alpha(b_2)+\sigma^+\\
e_\alpha^{r}(b_1\otimes b_2)&=&
 e_\alpha^{\max(r,-\sigma)-\sigma^-}b_1\otimes  e_\alpha^{\min(r,-\sigma)+\sigma^+} b_2, 
  \end{eqnarray*}
Here $b_1\otimes {\bf 0}$ and ${\bf 0}\otimes b_2$ are understood to be ${\bf 0}$. Notice that when $\sigma \geq 0$, one has $\varepsilon_\alpha(b_1\otimes b_2)=\varepsilon_\alpha(b_1)$ and
\begin{equation}\label{sigmapos}
e_\alpha^r(b_1\otimes b_2)=
 e_\alpha^{r}b_1\otimes b_2, \mbox{ for all } r \in [-\sigma,+\infty[.
\end{equation}
As in the discrete case, one can check that the tensor product is associative (but not commutative) so we can define without ambiguity the tensor product of several crystals.
\subsection{Highest weight crystal}
A crystal $B$ is called upper normal when, for all $b\in B$,
$$\varepsilon_\alpha(b) = \max\{r \geq  0 ; e_\alpha^r(b)\not={\bf 0}\}$$ and is called lower normal if
$$\varphi_\alpha(b)=\max\{r \geq  0 ; e_\alpha^{-r}(b)\not={\bf 0}\}.$$
We  call it normal (this is sometimes called seminormal by Kashiwara) when it is lower and upper normal. Notice that this implies that $\varepsilon_\alpha(b) \geq 0$ and $ \varphi_\alpha(b) \geq 0$.
\medskip

We introduce  the semigroup $\FF$ generated by the $\{f_\alpha^r, \alpha \mbox { simple root}, r \geq 0\}$:
$$\FF=\{f_{\alpha_1}^{r_1}\cdots f_{\alpha_k}^{r_k} , k \in \N^*, r_1,\cdots,r_k \geq 0, \alpha_1,\cdots , \alpha_k\in \Sigma\},$$
and, if $b$ is an element of a continuous crystal $B$,  the subset 
$\FF(b)=\{f (b), f\in \FF\}$ of $B$.

\begin{definition}
Let $\lambda \in V$, a continuous crystal $B(\lambda)$ is said to be of highest weight $\lambda$ if there exists $b_\lambda\in B(\lambda)$ such that $wt(b_\lambda)=\lambda$,
 $e_\alpha^r(b_\lambda)={\bf 0}$, for all $r > 0$ and $\alpha \in \Sigma$ and such that $B(\lambda) =\FF(b_\lambda).$
\end{definition}

For a continuous crystal with highest weight $\lambda$,
such an element $b_\lambda$ is unique, 
and called the primitive element of $B(\lambda)$. 
If the crystal is normal then $\lambda$ must be 
in the Weyl chamber $\bar C$.  
The vector $\lambda$ is a highest weight in the sense that, 
for all $b \in B(\lambda)$, $wt(b) \leq \lambda$.

\subsection{Uniqueness.}
Following Joseph \cite{Joseph-book}, \cite{Joseph-notes} we introduce the following definition.

\begin{definition} Let $(B(\lambda), \lambda \in \bar C),$ be  a family
 of highest weight continuous crystals.
The family  is closed if, for each $\lambda, \mu \in \bar C$, the subset $\FF(b_\lambda\otimes b_\mu)$ of $B(\lambda)\otimes B(\mu)$ is a crystal isomorphic to 
$B(\lambda+\mu)$.
\end{definition}
Joseph (\cite{Joseph-book}, 6.4.21) has shown in the Weyl group case,
 for discrete crystals, that a closed family of highest weight
  normal crystals is unique.  
 The analogue holds in our situation. 
\begin{theorem}\label{thmuniq} For a realization of a Coxeter system 
$(W,S)$, if a closed family $B(\lambda), \lambda \in \bar C,$
 of highest weight continuous normal
crystals exists, then it is unique.\end{theorem}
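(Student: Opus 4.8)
The plan is to transpose Joseph's argument (\cite{Joseph-book}, 6.4.21, and \cite{Joseph-notes}) to the continuous setting. Let $(B(\lambda))_{\lambda\in\bar C}$ and $(B'(\lambda))_{\lambda\in\bar C}$ be two closed families of highest weight continuous normal crystals, with primitive elements $b_\lambda$ and $b'_\lambda$. A formal observation first: since $B(\lambda)=\FF(b_\lambda)$ and a crystal morphism intertwines the operators $f^r_\alpha=e^{-r}_\alpha$ wherever they do not produce ${\bf 0}$, any morphism $\psi\colon B(\lambda)\to B'(\lambda)$ with $\psi(b_\lambda)=b'_\lambda$ is forced to satisfy $\psi(f(b_\lambda))=f(b'_\lambda)$ for $f\in\FF$, so it is unique; the same construction applied with the two families interchanged yields a morphism $\psi'\colon B'(\lambda)\to B(\lambda)$ fixing $b'_\lambda$, and each of the composites $\psi'\circ\psi$, $\psi\circ\psi'$ is a morphism fixing a primitive element, hence the identity by the uniqueness just noted, so $\psi$ and $\psi'$ are mutually inverse crystal isomorphisms. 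Hence the theorem reduces to showing, for each $\lambda\in\bar C$, that the assignment $f(b_\lambda)\mapsto f(b'_\lambda)$, $f\in\FF$, is well defined; one also has to check that it respects ${\bf 0}$, $\varepsilon_\alpha$ and $\varphi_\alpha$, but for normal crystals this reduces, via the characterisations $\varepsilon_\alpha(b)=\max\{r\geq 0: e^r_\alpha(b)\neq{\bf 0}\}$ and $\varphi_\alpha(b)=\max\{r\geq 0: e^{-r}_\alpha(b)\neq{\bf 0}\}$, to statements about the combinatorics of the $e^r_\alpha$ alone.

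To get at that combinatorics I would realize $B(\lambda)$ as a \emph{fixed} subcrystal of an explicit crystal built only from the elementary crystals $B_\alpha$ of Example~\ref{exba}, so that the description makes no reference to the particular family. The basic step is that, because $\varphi_\alpha\geq 0$ on any normal crystal, formula~(\ref{sigmapos}) makes $b\mapsto b\otimes b_\beta(0)$ a crystal morphism $B(\lambda)\to B(\lambda)\otimes B_\beta$; iterating this along a sequence $\mathbf i=(\beta_1,\beta_2,\dots)$ of simple roots and recording the successive maximal values of $\varepsilon$ attaches to each $b\in B(\lambda)$ a ``string'' of real coordinates. When $W$ is finite one takes $\mathbf i$ to be a reduced word for the longest element $w_0$ (an infinite sequence meeting every simple root, in general); upper normality forces the coordinates of $b_\lambda$ to vanish, and one obtains an injective crystal morphism of $B(\lambda)$ into $B_{\beta_1}\otimes B_{\beta_2}\otimes\cdots$, suitably twisted by a one-point weight-shift crystal $T_\lambda$, carrying $b_\lambda$ to an element determined by $\lambda$ alone. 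Thus $B(\lambda)$ is isomorphic to the subcrystal $\FF(m)$ generated by that element $m$ in a crystal that does not depend on the chosen family, and likewise for $B'(\lambda)$; hence $B(\lambda)\cong B'(\lambda)$ compatibly with primitive elements.

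The role of closedness is to make this last step legitimate in the absence of any representation theory. The isomorphisms $\FF(b_\lambda\otimes b_\mu)\cong B(\lambda+\mu)$ inside $B(\lambda)\otimes B(\mu)$, read through the explicit tensor-product formulas of Subsection~\ref{subtens}, identify the parametrizations of $B(\lambda)$, $B(\mu)$ and $B(\lambda+\mu)$; this is exactly what is needed to build and characterise the structural object playing the part of Kashiwara's crystal $B(\infty)$ — the object that, for Kac--Moody algebras, comes ready-made from the representation theory of quantum groups — and hence to run the induction establishing well-definedness of $f(b_\lambda)\mapsto f(b'_\lambda)$: whenever $f(b_\lambda)=g(b_\lambda)$, one transports the identity into a suitable tensor product, where the tensor-product formulas leave no latitude in how the two descriptions are matched, so that $f(b'_\lambda)=g(b'_\lambda)$ as well. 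I expect the main obstacle to be combinatorial rather than conceptual: because $r$ ranges over all of $\R$, the strings are intervals rather than finite chains, the maps $b\mapsto b\otimes b_\alpha(0)$ are morphisms but not strict (they can fail to intertwine $e^r_\alpha$ exactly where $e^r_\alpha(b)={\bf 0}$), and the $-\infty$ conventions in the tensor product must be tracked with care. Showing that the relations coming from (C3) and from closedness are \emph{precisely enough} to pin the crystal down — and, for infinite $W$, that the construction can still be carried through — is the heart of the matter, and is what the authors defer to the appendix; everything else is immediate from the definitions.
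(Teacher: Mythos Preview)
Your overall strategy is the paper's: pass from the closed family to a single limit crystal $B(\infty)$, embed that into a canonical crystal $B(A)\subset\cdots\otimes B_{\alpha_2}\otimes B_{\alpha_1}$ built solely from the elementary crystals $B_\alpha$, and conclude that $B(\infty)$ --- hence each $B(\lambda)$, via the embedding $B(\lambda)\hookrightarrow B(\infty)\otimes S(\lambda)$ --- is determined independently of the family.

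There is, however, one point in your second paragraph that is imprecise in a way that matters. You describe the key step as iterating the map $b\mapsto b\otimes b_\beta(0)$ from $B(\lambda)$ to $B(\lambda)\otimes B_\beta$. That map is indeed a (non-strict) morphism, but iterating it only produces $b\mapsto b\otimes b_{\beta_1}(0)\otimes b_{\beta_2}(0)\otimes\cdots$, whose image sits inside $B(\lambda)\otimes B_{\beta_1}\otimes\cdots$ and still carries the factor $B(\lambda)$ from the family you are trying to characterise; nothing in this construction lets you drop that factor, because the morphism fails to be strict exactly where the $B_\beta$-components would have to record nontrivial information. If instead you mean the string parametrisation (apply $e_{\beta_1}^{\varepsilon_{\beta_1}(b)}$, then $e_{\beta_2}^{\varepsilon_{\beta_2}(\cdot)}$, etc.), that map is intrinsic but is not a priori a crystal morphism --- proving it is one is the whole content of the embedding theorem.

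The paper resolves this by reversing the order you suggest. First it builds $B(\infty)$ as the direct limit along the maps $\Psi_{\lambda,\mu}$ of Lemma~\ref{clnfcry} (this is the first use of closedness). Then, for each simple root $\alpha$, it defines $\Gamma_\alpha:B(\infty)\to B(\infty)\otimes B_\alpha$ not by $b\mapsto b\otimes b_\alpha(0)$ but by
\[
\Gamma_\alpha(fb_\infty)\;:=\;f\bigl(b_\infty\otimes b_\alpha(0)\bigr),\qquad f\in\FF,
\]
which is genuinely different: already for $f=f_\alpha^r$ one gets $b_\infty\otimes b_\alpha(-r)$ rather than $(f_\alpha^r b_\infty)\otimes b_\alpha(0)$. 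The substance of the appendix (Lemma~\ref{lem_indm} and Proposition~\ref{gammaalpha}) is that this formula is well defined and gives a \emph{strict} embedding; closedness is used a second time here, to compare $f(b_\infty\otimes b_\alpha(0))$ with $f(b_\lambda\otimes b_\mu)$ inside $B(\lambda)\otimes B(\mu)$ for $\lambda$ with $\alpha^\vee(\lambda)=0$. Iterating $\Gamma_\alpha$ along the sequence $A$ then really does push the leftmost factor down to $b_\infty$, after which one projects to $B(A)$. Your third paragraph gestures at $B(\infty)$ and at the role of closedness, but the construction in your second paragraph, as written, does not achieve what you claim for it.
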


The proof of the theorem, which follows closely
Joseph \cite{Joseph-notes}, is in the appendix \ref{prfuniq}.

\section{Pitman transforms and Littelmann path operators
 for Coxeter groups}
 In this section we recall definition and properties of Pitman transforms, introduced in our previous paper
\cite{bbo}. We deduce from these properties the existence of Littelmann operators,  then we define continuous Littelmann modules, prove that they are  continuous crystals, and make a first study of their parametrization.

\subsection{The Pitman transform}

Let $V$ be a real vector space, with dual space $V^{\vee}$.
Let $\alpha\in V$
and $\alpha^{\vee}\in V^{\vee}$ be such that $\alpha^{\vee}(\alpha)=2$.
 The reflection $s_\alpha:V \to V$ associated to $(\alpha,\alpha^\vee)$ is the 
 linear map
defined, for $x \in V$,  by
$$s_\alpha(x)=x- \alpha^{\vee}(x)\alpha.$$
For $T > 0$, let $C_T^0(V)$ be
 the set of continuous path $\eta:[0,T]\to V$ such that 
 $\eta(0)=0$, with the topology of uniform convergence. We have introduced
  and studied in \cite{bbo} the following path transformation, similar to the 
  one defined by Pitman in \cite{pitman}.
\begin{definition}\label{pitman-transform}The Pitman
    transform  $\pa$ associated with $(\alpha,\alpha^\vee)$
    is defined on $C_T^0(V)$ by the formula:
$$\pa \eta(t)=\eta(t)-\inf_{t\geq s\geq 0}\alpha^{\vee}(
\eta(s))\alpha,\qquad
T\geq t\geq 0.$$
\end{definition}
 A path  $\eta \in C_T^0(V)$ is called $\alpha$-dominant
  when $\alpha^\vee(\eta (t))\geq 0$ for all $t \in [0,T]$. 
  The following properties of the Pitman transform are easily established.
    
\begin{prop}\label{pit}

({\it i}) The transformation $\pa:C_T^0(V) \to C_T^0(V)$ is continuous.

({\it ii}) For all $\eta \in C_T^0(V)$, the path
 $\pa \eta $  is $\alpha$-dominant and   
  $\pa \eta =\eta $ if and only if  $\eta $ is $\alpha$-dominant.

({\it iii}) 
  The transformation $\pa$ is an idempotent, i.e. $\pa\pa \eta =\pa \eta $
for all
$\eta \in C_T^0(V)$.
  
({\it iv)})
 Let $\pi\in C_T^0(V)$ be $\alpha$-dominant, and let $ x\in[0,  \alpha^\vee(\pi(T))]$, then 
 there exists a unique path $\eta$ in $C^0_T(V)$ such that
$\pa \eta =\pi$
and $\eta(T)=\pi(T)-x \alpha.$
Moreover for $0\leq t\leq
T,$
$$\eta(t)=\pi(t)-\min[x,\inf_{T\geq s\geq t}\alpha^\vee(\pi(s))]\alpha.$$
\end{prop}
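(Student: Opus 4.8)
The plan is to dispose of parts (i)--(iii), which are formal, and then to concentrate on (iv). For (i): $\pa\eta(t)=\eta(t)-\bigl(\inf_{0\le s\le t}\alpha^\vee(\eta(s))\bigr)\alpha$ is obtained from $\eta$ by composing with the fixed continuous linear form $\alpha^\vee$, taking a running infimum, and subtracting a multiple of the fixed vector $\alpha$; since $|\inf_{s\le t}u(s)-\inf_{s\le t}v(s)|\le\sup_{[0,T]}|u-v|$, each of these operations is continuous for the topology of uniform convergence, and one has $\pa\eta(0)=\eta(0)=0$, so $\pa$ maps $C_T^0(V)$ continuously into itself. For (ii), apply $\alpha^\vee$ to the defining formula and use $\alpha^\vee(\alpha)=2$: writing $I_t=\inf_{0\le s\le t}\alpha^\vee(\eta(s))$ we get $\alpha^\vee(\pa\eta(t))=\alpha^\vee(\eta(t))-2I_t=\bigl(\alpha^\vee(\eta(t))-I_t\bigr)+(-I_t)\ge0$, since $I_t\le\alpha^\vee(\eta(t))$ and $I_t\le\alpha^\vee(\eta(0))=0$; hence $\pa\eta$ is $\alpha$-dominant. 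If $\eta$ is already $\alpha$-dominant then $I_t\equiv0$, so $\pa\eta=\eta$; conversely a fixed point of $\pa$ is $\alpha$-dominant by the previous line. Part (iii) follows at once: $\pa\eta$ is $\alpha$-dominant, hence fixed by $\pa$.

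For the uniqueness half of (iv), I would start from the remark that whenever $\pa\eta=\pi$ the difference $\eta(t)-\pi(t)$ is at each time a multiple of $\alpha$, so one may write $\eta(t)=\pi(t)+c(t)\alpha$ with $c\colon[0,T]\to\R$ continuous and $c(0)=0$. Applying $\alpha^\vee$ and using $\alpha^\vee(\alpha)=2$, the equation $\pa\eta=\pi$ becomes exactly
$$c(t)=\inf_{0\le s\le t}\bigl(\alpha^\vee(\pi(s))+2c(s)\bigr),$$
that is, $c$ is the running minimum of the continuous function $h(s):=\alpha^\vee(\pi(s))+2c(s)$. In particular $c$ is non-increasing and (taking $s=t$) satisfies $\alpha^\vee(\pi(t))+c(t)\ge0$ for all $t$. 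Under the endpoint constraint $c(T)=-x$ I claim $c(t)=-\min\bigl(x,m(t)\bigr)$, where $m(t):=\inf_{t\le s\le T}\alpha^\vee(\pi(s))$, which is the stated formula. The inequality $c(t)\ge-\min(x,m(t))$ follows from $c(t)\ge c(T)=-x$ and from $\alpha^\vee(\pi(s))\ge-c(s)\ge-c(t)$ for $s\ge t$ (hence $m(t)\ge-c(t)$). For the reverse inequality one may assume $c(t)>-x$; then $t':=\inf\{s\ge t:\ c(s)<c(t)\}$ lies in $[t,T)$ and $c(t')=c(t)$, and because $c$ is a running minimum of $h$ and decreases strictly immediately to the right of $t'$ one is forced to have $h(t')=c(t')$, that is $c(t)=c(t')=-\alpha^\vee(\pi(t'))$ with $t'\in[t,T]$, so $m(t)\le\alpha^\vee(\pi(t'))=-c(t)$. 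This pins $c$ down, proving uniqueness.

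For existence it remains to check that the path $\eta$ given by the formula works. It lies in $C_T^0(V)$ (continuity is clear and $\eta(0)=0$ since $m(0)=0$), it satisfies $\eta(T)=\pi(T)-x\alpha$ because $x\le\alpha^\vee(\pi(T))=m(T)$ makes $\min(x,m(T))=x$, and $\pa\eta=\pi$ reduces, after applying $\alpha^\vee$, to the identity
$$\inf_{0\le s\le t}\bigl(\alpha^\vee(\pi(s))-2\min(x,m(s))\bigr)=-\min\bigl(x,m(t)\bigr).$$
The ``$\ge$'' here is termwise: $\alpha^\vee(\pi(s))\ge m(s)\ge\min(x,m(s))$ gives $\alpha^\vee(\pi(s))-2\min(x,m(s))\ge-\min(x,m(s))\ge-\min(x,m(t))$ since $m$ is non-decreasing. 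The ``$\le$'' is where the work lies: according to whether $m(t)\le x$ or $m(t)>x$ one must exhibit some $s\le t$ attaining the right-hand side, namely (using continuity and monotonicity of $m$) a time $t_1\le t$ with $\alpha^\vee(\pi(t_1))=m(t)$ in the first case, and a time $\le t$ at which $\alpha^\vee(\pi)$ equals $x$ in the second. I expect this last verification to be the only real obstacle: the infimum defining $\pa$ is taken over $[0,t]$ whereas the infimum in the formula for $\eta$ is taken over $[t,T]$, so one has to reconcile the two by locating the ``crossing time'' at which $m$ attains the level $x$ and arguing separately on either side of it; the rest is bookkeeping with $\alpha^\vee$ and the relation $\alpha^\vee(\alpha)=2$.
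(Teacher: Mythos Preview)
Your argument is correct, and in fact the paper does not give a proof at all: it simply declares that these properties ``are easily established'' and refers back to \cite{bbo}. So there is no competing approach to compare with; your write-up would serve perfectly well as the omitted proof.

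One small sharpening is worth making in your existence sketch. When $m(t)\le x$ you propose to find $t_1\le t$ with $\alpha^\vee(\pi(t_1))=m(t)$; but to get $\alpha^\vee(\pi(t_1))-2\min(x,m(t_1))=-m(t)$ you also need $m(t_1)=m(t)$, which is not automatic from $g(t_1)=m(t)$ alone (the path $g=\alpha^\vee\circ\pi$ could dip below $m(t)$ between $t_1$ and $t$). The fix is to take $t_1=\sup\{s\le t:\ g(s)\le m(t)\}$; then $g(t_1)=m(t)$ by continuity (using $g(0)=0\le m(t)$ and $g(t)\ge m(t)$), and $g>m(t)$ on $(t_1,t]$ forces $\inf_{[t_1,t]}g=g(t_1)=m(t)$, hence $m(t_1)=m(t)$. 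The same device, applied first at the crossing time $t_x\in(0,t)$ where $m(t_x)=x$ (which exists by the intermediate value theorem since $m(0)=0<x<m(t)$), handles the case $m(t)>x$ and produces $s_0\le t_x$ with $g(s_0)=m(s_0)=x$. With this adjustment your ``bookkeeping'' goes through exactly as you outlined.
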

\subsection{ Littelmann path operators }\label{littelpit}
Let $V,V^{\vee},\alpha,\alpha^{\vee}$ be as above.
Using proposition \ref{pit}, as in \cite{bbo}, we can define generalized
Littelmann path
operators (see \cite{littel}).

\begin{definition}\label{littelmanntransform}
Let $\eta \in C_T^0(V)$, and 
$x\in \mathbb R$, then  we define
$\EE_{\alpha}^x\eta$ as the unique path such that $$\pa \EE_{\alpha}^x\eta=\pa  
\eta\quad
\text{and}\quad \EE_{\alpha}^x\eta(T)=\eta(T)+x\alpha 
$$
if $
    -\alpha^{\vee}(\eta(T))+\inf_{0\leq t\leq T}\alpha^{\vee}(
\eta(t))\leq x\leq -\inf_{0\leq t\leq T}\alpha^{\vee}(
\eta(t))$ and $\EE_{\alpha}^x\eta=\bf 0$ otherwise. The following formula holds
$$\EE_{\alpha}^x\eta(t)=
\eta(t)-\min(-x,\inf_{t\leq s\leq T}\alpha^\vee(\eta(s))
-\inf_{0\leq s\leq T}\alpha^\vee(\eta(s)))\alpha$$
if $\  -\alpha^\vee(T)+\inf_{0\leq t\leq T}
\alpha^{\vee}(
\eta(t))\leq x\leq 0$, and 
$$\EE_{\alpha}^x\eta(t)=
\eta(t)-
\min(0,-x-\inf_{0\leq s\leq T}\alpha^\vee(\eta(s))+
\inf_{0\leq s\leq t}\alpha^\vee(\eta(s)))\alpha$$
if $ 0\leq x\leq -\inf_{0\leq t\leq T}
\alpha^{\vee}(
\eta(t))$.
\end{definition}
Here, as in the definition of crystals, $\bf 0$ is a ghost element.
The following result is immediate from the definition of the Littelmann
operators.
\begin{prop}
    $\EE_{\alpha}^0\eta=\eta$ and
    $\EE_{\alpha}^x \EE_{\alpha}^y\eta=\EE_{\alpha}^{x+y}\eta$ as long as
    $\EE_{\alpha}^y\eta\ne \bf 0$.
    \end{prop}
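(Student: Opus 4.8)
The plan is to exploit that, by Definition \ref{littelmanntransform}, $\EE_{\alpha}^x\eta$ is characterized — when it is not ${\bf 0}$ — as the unique path $\xi\in C_T^0(V)$ with $\pa\xi=\pa\eta$ and $\xi(T)=\eta(T)+x\alpha$, existence and uniqueness of such a $\xi$ in the relevant parameter range being exactly Proposition \ref{pit}(iv). Throughout set $m(\xi)=\inf_{0\le s\le T}\alpha^\vee(\xi(s))$ and $h(\xi)=\alpha^\vee(\xi(T))$; since $\xi(0)=0$ we have $m(\xi)\le 0$, and since $T\in[0,T]$ we have $m(\xi)\le h(\xi)$, so the admissibility interval $[\,-h(\xi)+m(\xi),\,-m(\xi)\,]$ for the parameter of $\EE_{\alpha}^r\xi$, $r\in\R$, always contains $0$.

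The identity $\EE_{\alpha}^0\eta=\eta$ is then immediate: the value $0$ is admissible, and $\eta$ itself satisfies $\pa\eta=\pa\eta$ and $\eta(T)=\eta(T)+0\cdot\alpha$, so by the uniqueness clause of Proposition \ref{pit}(iv) one gets $\EE_{\alpha}^0\eta=\eta$.

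For the semigroup relation, put $\zeta=\EE_{\alpha}^y\eta$, assumed $\neq{\bf 0}$, so that $\pa\zeta=\pa\eta$ and $\zeta(T)=\eta(T)+y\alpha$. First I would record how $\zeta$ compares with $\eta$: directly $h(\zeta)=h(\eta)+2y$, and evaluating the Pitman transform at $t=T$ — where $\pa\xi(T)=\xi(T)-m(\xi)\alpha$ — in the identity $\pa\zeta=\pa\eta$ gives $\zeta(T)-m(\zeta)\alpha=\eta(T)-m(\eta)\alpha$, whence (using $\alpha\neq 0$, which is forced by $\alpha^\vee(\alpha)=2$) $m(\zeta)=m(\eta)+y$. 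Substituting these two identities into the admissibility interval for $\EE_{\alpha}^x\zeta$ turns it into $[\,-h(\eta)+m(\eta)-y,\,-m(\eta)-y\,]$; thus $x$ is admissible for $\EE_{\alpha}^x\zeta$ precisely when $x+y$ is admissible for $\EE_{\alpha}^{x+y}\eta$, and in particular the left-hand side equals ${\bf 0}$ if and only if the right-hand side does. When both are defined, $\EE_{\alpha}^x\zeta$ is the unique path $\xi$ with $\pa\xi=\pa\zeta=\pa\eta$ and $\xi(T)=\zeta(T)+x\alpha=\eta(T)+(x+y)\alpha$, which is exactly the characterization of $\EE_{\alpha}^{x+y}\eta$; hence $\EE_{\alpha}^x\EE_{\alpha}^y\eta=\EE_{\alpha}^{x+y}\eta$.

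The only step requiring a little care is the bookkeeping identity $m(\EE_{\alpha}^y\eta)=m(\eta)+y$ — the statement that applying $\EE_{\alpha}^y$ shifts the running infimum of $\alpha^\vee$ by exactly $y$ — since it is what makes the two admissibility intervals correspond; everything else reduces to matching endpoints and invoking Proposition \ref{pit}(iv). A less conceptual route would be a direct case analysis on the signs of $x$, $y$ and $x+y$ using the three explicit formulas of Definition \ref{littelmanntransform}, but the argument above avoids that casework entirely.
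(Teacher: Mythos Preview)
Your proof is correct and is precisely the unpacking of what the paper means when it says the result ``is immediate from the definition of the Littelmann operators'': you use the defining characterization of $\EE_\alpha^x\eta$ as the unique $\xi$ with $\pa\xi=\pa\eta$ and $\xi(T)=\eta(T)+x\alpha$, together with Proposition~\ref{pit}(iv), and track the admissibility interval. The key bookkeeping identity $m(\EE_\alpha^y\eta)=m(\eta)+y$, which you extract by evaluating $\pa\zeta(T)=\pa\eta(T)$, is exactly the observation that makes the argument go through cleanly without the case analysis on signs.
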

We shall also use the notation $\fa^x=\EE_{\alpha}^{-x}$ for $x\geq 0$, and denote by
$\HH_{\alpha}^x$ the restriction of the operator $\fa^x$ to 
$\alpha$-dominant paths.
Let $\pi$ be 
 an $\alpha$-dominant path in $C_T^0(V)$
  and $0 \leq x \leq \alpha^\vee(T)$, then 
  $\HH^x_\alpha\pi$ is the unique path in $C^0_T(V)$ such that
$$\pa \HH^x_\alpha \pi =\pi$$
and $$\HH^x_\alpha \pi(T)=\pi(T)-x \alpha.$$
Observe that in this equality
$$x=-\inf_{0\leq t\leq T}\alpha^{\vee}(\HH^x_\alpha \pi(t)).$$

\subsection{Product of Pitman transforms}

Let $\alpha,\beta\in V$ and   $\alpha^{\vee},\beta^{\vee}\in V^{\vee}$
be such
that $\alpha^{\vee}(\beta)< 0$ and $\beta^{\vee}(\alpha)< 0$. Replacing if necessary
$(\alpha,\alpha^\vee,\beta,\beta^\vee)$ by 
$(t\alpha,\alpha^\vee/t,\beta/t,t\beta^\vee)$, which does not change $\pa$ and $\pb$,
 we  will assume that
$\alpha^{\vee}(\beta)=\beta^{\vee}(\alpha)$.
We use the notations
$
\rho=-\frac{1}{2}\alpha^{\vee}(\beta)=
-\frac{1}{2}\beta^{\vee}(\alpha)$.
The following result is proved in \cite{bbo}.
\begin{theorem}\label{formula} Let $n$ be a positive integer, then  if
$\rho\geq\cos\frac{\pi}{n}$, 
\begin{eqnarray}\label{forpapb}
(\underbrace{\pa\pb\pa\ldots}_{ \text{$n$ terms}} )
\pi(t)&=&\pi(t)-\inf_{t\geq s_0\geq s_1\geq \ldots\geq s_{n-1}\geq
0}\bigl(\sum_{i=0}^{n-1}T_i(\rho)Z^{(i)}(s_i)\bigr)\alpha\nonumber\\&&
-\inf_{t\geq s_0\geq s_1\geq \ldots\geq s_{n-2}\geq
0}\bigl(\sum_{i=0}^{n-2}T_i(\rho)Z^{(i+1)}(s_i)\bigr)\beta
\end{eqnarray}
where $Z^{(k)}(t)=\alpha^{\vee}( \pi(t))$ if $k$ is even and $Z^{(k)}(t)=\beta^{\vee}( \pi(t))$ if $k$ is odd. The
$T_k(x)$ are the Tchebycheff polynomials defined by
\begin{equation}\label{Tcheb}
T_0(x)=1,\, T_1(x)=2x, \,
2xT_k(x)=T_{k-1}(x)+T_{k+1}(x) \text{ for}\ k\geq 1.
\end{equation}
\end{theorem}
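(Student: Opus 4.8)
The plan is to prove \eqref{forpapb} by induction on $n$, using the $\alpha\leftrightarrow\beta$ symmetry of the claimed formula and Definition \ref{pitman-transform}. First one notes that the right‑hand side of \eqref{forpapb} genuinely defines a path in $C_T^0(V)$ (the nested infima are continuous in $t$ and vanish at $t=0$), so the statement is meaningful. For $n=1$ it is the definition of $\pa$. For $n=2$, writing out $\pa\pb\pi$ directly gives
\[
\pa\pb\pi(t)=\pi(t)-\inf_{0\le s\le t}\Bigl(\alpha^\vee(\pi(s))+2\rho\inf_{0\le u\le s}\beta^\vee(\pi(u))\Bigr)\alpha-\inf_{0\le s\le t}\beta^\vee(\pi(s))\,\beta ,
\]
and since $2\rho>0$ one pulls the inner infimum out to recognise $\inf_{t\ge s_0\ge s_1\ge 0}(T_0(\rho)Z^{(0)}(s_0)+T_1(\rho)Z^{(1)}(s_1))$. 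This already exhibits the two mechanisms on which everything rests: a \emph{nonnegative} scalar commutes with an infimum, and two adjacent infima merge into a single infimum over a longer decreasing chain.

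For the inductive step (say $n\ge 3$) I would write $(\pa\pb\pa\ldots)_n=\pa\circ(\pb\pa\ldots)_{n-1}$ and substitute the level‑$(n-1)$ formula for the product starting with $\pb$, which is \eqref{forpapb} with $n$ replaced by $n-1$ and $\alpha,\beta$ interchanged (so $Z^{(k)}\mapsto Z^{(k+1)}$). This is legitimate because $\cos\frac{\pi}{n-1}<\cos\frac{\pi}{n}\le\rho$. Writing $\pi'=(\pb\pa\ldots)_{n-1}\pi$, so that $\pi'(t)=\pi(t)-B_\beta(t)\,\beta-B_\alpha(t)\,\alpha$ with $B_\beta,B_\alpha$ the two nested infima occurring in that formula, one computes, using $\alpha^\vee(\alpha)=2$ and $\alpha^\vee(\beta)=-2\rho$,
\[
\alpha^\vee(\pi'(s))=Z^{(0)}(s)+2\rho B_\beta(s)-2B_\alpha(s).
\]
Applying $\pa$ leaves the coefficient of $\beta$ equal to $B_\beta(t)$ — which is exactly the $\beta$‑part demanded by \eqref{forpapb} at level $n$ — so everything reduces to the scalar identity
\[
B_\alpha(t)+\inf_{0\le s\le t}\Bigl(Z^{(0)}(s)+2\rho B_\beta(s)-2B_\alpha(s)\Bigr)=\inf_{t\ge s_0\ge\cdots\ge s_{n-1}\ge 0}\sum_{i=0}^{n-1}T_i(\rho)Z^{(i)}(s_i).
\]

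To attack this I would use the Tchebycheff recursion $2\rho T_i(\rho)=T_{i-1}(\rho)+T_{i+1}(\rho)$ (with $T_{-1}:=0$) to expand $2\rho B_\beta(s)$ as an infimum over a chain $s\ge u_0\ge\cdots\ge u_{n-2}\ge 0$ of $\sum_i(T_{i-1}(\rho)+T_{i+1}(\rho))Z^{(i+1)}(u_i)$; after reindexing, the $T_{i+1}$‑part is $\sum_{k=1}^{n-1}T_k(\rho)Z^{(k)}(u_{k-1})$ and the $T_{i-1}$‑part is $\sum_{j=0}^{n-3}T_j(\rho)Z^{(j+2)}(u_{j+1})$, which is a $B_\alpha$‑type sum evaluated along the sub‑chain $u_1\ge\cdots\ge u_{n-2}$. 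The decisive input is that $T_i(\rho)\ge 0$ for $0\le i\le n-1$; since $T_i(\cos\theta)=\sin((i+1)\theta)/\sin\theta$, this is \emph{precisely} equivalent to $\rho\ge\cos\frac{\pi}{n}$ (and automatic once $\rho\ge 1$, which covers the crystallographic case). Nonnegativity is what lets one move all the scalars $T_i(\rho)$ and $2\rho$ through infima and amalgamate the three nested infima — $B_\alpha(t)$, the outer $\inf_{0\le s\le t}$, and $B_\beta$ — into the single length‑$n$ nested infimum on the right.

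The main obstacle will be the term $-2B_\alpha(s)$: a \emph{difference} of infima, for which the manipulations above fail outright. I expect it to cancel against the $T_{i-1}$‑part of the expanded $2\rho B_\beta(s)$ identified above — intuitively, the ``$T_{i-1}$‑tail'' of the $\beta$‑chain reproduces the $\alpha$‑chain $B_\alpha$ — so that after cancellation only the $T_{i+1}$‑part survives, carrying the new index $n-1$, and splices onto the existing chain. Making this rigorous will require proving the two inequalities separately: ``$\le$'' by turning an optimal length‑$n$ chain on the right into compatible choices of $s$, of the $B_\beta$‑chain, and of the two $B_\alpha$‑chains on the left; ``$\ge$'' by a rearrangement/telescoping argument turning any near‑optimal choice on the left into a decreasing chain of length $n$ of no larger value. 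This is exactly the point where $\rho\ge\cos\frac{\pi}{n}$ is indispensable: if it fails, one of the coefficients turns negative, the corresponding infimum must be replaced by a supremum, and the formula collapses.
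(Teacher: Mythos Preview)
The paper does not prove this theorem: it is quoted from \cite{bbo} and used as a black box. So there is no argument here to compare yours against line by line; what I can do is assess whether your plan is sound.

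Your inductive set-up is correct and standard. The reduction to the single scalar identity
\[
B_\alpha(t)+\inf_{0\le s\le t}\bigl(Z^{(0)}(s)+2\rho B_\beta(s)-2B_\alpha(s)\bigr)
=\inf_{t\ge s_0\ge\cdots\ge s_{n-1}\ge 0}\sum_{i=0}^{n-1}T_i(\rho)Z^{(i)}(s_i)
\]
is exactly right, as is the observation that the $\beta$-coefficient carries over unchanged and that the hypothesis $\rho\ge\cos(\pi/n)$ is equivalent to $T_i(\rho)\ge 0$ for $0\le i\le n-1$. You have also correctly located the only real obstacle: the term $-2B_\alpha(s)$, which prevents a direct merging of infima.

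Where the proposal is still a sketch is the ``cancellation'' paragraph. Splitting $2\rho B_\beta(s)$ via the Chebyshev recursion does \emph{not} split the infimum: you only get $2\rho B_\beta(s)=\inf_u[P(u)+Q(u)]$ with $P$ the $T_{i+1}$-part and $Q$ the $T_{i-1}$-part, and while $Q(u)\ge B_\alpha(s)$ for every admissible chain $u$ (so one of your two inequalities is within reach), this gives $2\rho B_\beta(s)-2B_\alpha(s)\ge \inf_u P(u)-B_\alpha(s)$, which still leaves one copy of $B_\alpha$ uncancelled. The genuine proof has to pair the \emph{optimal} chain for the outer $B_\alpha(t)$ with the chain inside $B_\beta$, not treat them as independent infima; this is where the monotonicity of $s\mapsto B_\alpha(s)$ and a min-plus rearrangement (of the kind used later in the paper, e.g.\ in the proof of Lemma \ref{lemme_di}) enter. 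Your two-inequality strategy is the right shape, but as written the ``$\ge$'' direction needs this coupling made explicit; ``I expect it to cancel'' is precisely the step that carries all the content.

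In short: the architecture matches what one would find in \cite{bbo}, and everything up to the displayed scalar identity is correct. The remaining work is entirely in that identity, and your outline for it is plausible but not yet a proof.
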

    The Tchebycheff polynomials satisfy
$T_k(\cos\theta)=\frac{\sin(k+1)\theta}{\sin\theta}$ and, in particular,
under the assumptions on $\rho$ and $n$, 
$T_k(\rho)\geq 0$ for all $k\leq n-1$.
An important property of the Pitman transforms is the following corollary
(see \cite{bbo}).
\begin{theorem}\label{braid}({\it Generalized braid relations for the Pitman
transforms.})
Let $\alpha,\beta\in V$ and   $\alpha^{\vee},\beta^{\vee}\in V^{\vee}$
     be  such that
$\alpha^{\vee}(\alpha)=\beta^{\vee}(\beta)=2$, and
$\alpha^{\vee}(\beta)< 0,
\beta^{\vee}(\alpha)< 0$ and
$\alpha^{\vee}(\beta)
\beta^{\vee}(\alpha)=4\cos^2\frac{\pi}{n}$, where $n\geq 2$
    is some  integer.
Then 
    $$\pa\pb\pa \ldots =\pb\pa\pb\ldots$$ where there are $n$
factors in each product.
\end{theorem}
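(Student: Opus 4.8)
The plan is to obtain the identity by writing both sides out explicitly via Theorem \ref{formula}; the point is that the hypotheses force $\rho=\cos\frac{\pi}{n}$, which is exactly the value at which the leading Tchebycheff coefficient $T_{n-1}(\rho)$ vanishes. First I would normalize: since replacing $(\alpha,\alpha^\vee,\beta,\beta^\vee)$ by $(t\alpha,\alpha^\vee/t,\beta/t,t\beta^\vee)$ alters neither $\pa$ nor $\pb$, I may assume $\alpha^\vee(\beta)=\beta^\vee(\alpha)$. Then $\bigl(\alpha^\vee(\beta)\bigr)^2=\alpha^\vee(\beta)\beta^\vee(\alpha)=4\cos^2\frac{\pi}{n}$ together with $\alpha^\vee(\beta)<0$ give $\alpha^\vee(\beta)=-2\cos\frac{\pi}{n}$, hence $\rho=-\alpha^\vee(\beta)/2=\cos\frac{\pi}{n}\geq\cos\frac{\pi}{n}$, so Theorem \ref{formula} applies with this $n$ (necessarily $n\geq 3$ here, the commuting case being immediate from the defining formula for $\pa$).

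Next, fix $\pi\in C_T^0(V)$ and apply Theorem \ref{formula} to the $n$-fold product $(\pa\pb\pa\ldots)\pi$, and then again, with $(\alpha,\alpha^\vee)$ and $(\beta,\beta^\vee)$ exchanged, to $(\pb\pa\pb\ldots)\pi$. With $Z^{(k)}=\alpha^\vee(\pi)$ for $k$ even and $\beta^\vee(\pi)$ for $k$ odd, the exchange turns $Z^{(k)}$ into $\widetilde{Z}^{(k)}$, which equals $\beta^\vee(\pi)$ for $k$ even and $\alpha^\vee(\pi)$ for $k$ odd; thus $\widetilde{Z}^{(k)}=Z^{(k+1)}$, and also $Z^{(k+2)}=Z^{(k)}$ by parity. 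Reading off coefficients, the coefficient of $\alpha$ in $(\pa\pb\ldots)\pi(t)$ is $-\inf_{t\geq s_0\geq\cdots\geq s_{n-1}\geq 0}\sum_{i=0}^{n-1}T_i(\rho)Z^{(i)}(s_i)$, whereas the coefficient of $\alpha$ in $(\pb\pa\ldots)\pi(t)$ — the ``$\beta$-part'' of the exchanged formula — is
$$-\inf_{t\geq s_0\geq\cdots\geq s_{n-2}\geq 0}\sum_{i=0}^{n-2}T_i(\rho)\widetilde{Z}^{(i+1)}(s_i)=-\inf_{t\geq s_0\geq\cdots\geq s_{n-2}\geq 0}\sum_{i=0}^{n-2}T_i(\rho)Z^{(i)}(s_i);$$
symmetrically the coefficients of $\beta$ on the two sides are $-\inf_{t\geq s_0\geq\cdots\geq s_{n-2}\geq 0}\sum_{i=0}^{n-2}T_i(\rho)Z^{(i+1)}(s_i)$ and $-\inf_{t\geq s_0\geq\cdots\geq s_{n-1}\geq 0}\sum_{i=0}^{n-1}T_i(\rho)Z^{(i+1)}(s_i)$.

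Finally, $T_{n-1}(\rho)=T_{n-1}\bigl(\cos\tfrac{\pi}{n}\bigr)=\sin\pi/\sin(\pi/n)=0$, so in each of the two ``longer'' infima the $i=n-1$ summand vanishes identically; the variable $s_{n-1}$ then no longer appears, the infimum over $s_{n-1}\in[0,s_{n-2}]$ is vacuous, and the expression collapses to the corresponding ``shorter'' infimum over $t\geq s_0\geq\cdots\geq s_{n-2}\geq 0$. Hence the coefficient of $\alpha$ and the coefficient of $\beta$ agree for $(\pa\pb\ldots)\pi(t)$ and $(\pb\pa\ldots)\pi(t)$, for every $t\in[0,T]$ and every $\pi\in C_T^0(V)$; since $\pi(t)$ itself is common to both, this is precisely the braid relation $\pa\pb\pa\ldots=\pb\pa\pb\ldots$ with $n$ factors in each product.

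I expect the only real obstacle to be bookkeeping: correctly matching the $\alpha$-coefficient of one product with the $\beta$-coefficient of the other, tracking the parity shift $\widetilde{Z}^{(k)}=Z^{(k+1)}$ induced by exchanging the roots, and observing that after the normalization $\rho$ — and hence each $T_i(\rho)$ — is symmetric in $\alpha$ and $\beta$, so the same Tchebycheff coefficients occur on both sides. Everything then hinges on the single fact $T_{n-1}\bigl(\cos\tfrac{\pi}{n}\bigr)=0$.
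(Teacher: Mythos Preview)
Your proof is correct and follows exactly the route the paper intends: the theorem is stated as a corollary of the explicit formula in Theorem~\ref{formula}, and the derivation you give---normalize to $\rho=\cos\frac{\pi}{n}$, expand both products via Theorem~\ref{formula}, match coefficients of $\alpha$ and $\beta$ using $\widetilde{Z}^{(k)}=Z^{(k+1)}$ and the period-$2$ property $Z^{(k+2)}=Z^{(k)}$, and collapse the longer infimum to the shorter one via $T_{n-1}(\cos\frac{\pi}{n})=0$---is precisely that corollary spelled out.
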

\subsection{Pitman transforms for Coxeter groups}\label{sec_cox}
Let $(W,S)$ be a Coxeter system, with a realization in the space $V$.
For a simple reflection $s$, 
 denote by
$\PP_{\alpha_s}$ or $\PP_{s}$ the Pitman transform associated with  the pair  
$(\alpha_s,\alpha^\vee_s)$.
From theorem \ref{braid} and Matsumoto's lemma [\cite{bo2}, Ch. IV, No. 1.5.
Prop.5] we deduce (\cite{bbo}):
\begin{theorem}\label{braidP} Let $w=s_{1}\cdots s_{r}$ be a reduced decomposition of $w \in W$, with $s_1,\cdots,s_r \in S$.  Then
$$\PP_w:=\PP_{s_{1}}\cdots\PP_{s_{r}}$$
depends only on $w$ and not on the chosen decomposition.  
\end{theorem}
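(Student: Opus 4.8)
The plan is to deduce Theorem \ref{braidP} from the braid relations for Pitman transforms (Theorem \ref{braid}) together with Matsumoto's lemma. First I would recall what Matsumoto's lemma says in the form cited: any two reduced decompositions of an element $w \in W$ can be transformed into one another using only braid moves, i.e.\ replacing a subword $\underbrace{ss'ss'\cdots}_{m(s,s')}$ by $\underbrace{s'ss's\cdots}_{m(s,s')}$, without ever passing through the relations $s^2=1$ (this is precisely the content of the cited Bourbaki Prop.~5). Consequently it suffices to check that $\PP_{s_1}\cdots\PP_{s_r}$ is invariant under a single such braid move applied to a consecutive block of the word.

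Next I would reduce the invariance under one braid move to the dihedral statement of Theorem \ref{braid}. Suppose $s = s_\alpha$ and $s' = s_\beta$ are two distinct simple reflections with $m(s,s') = n < \infty$. In the realization, the simple roots satisfy $\alpha_s^\vee(\alpha_s) = \alpha_{s'}^\vee(\alpha_{s'}) = 2$, and (for a realization giving the correct order) $\alpha_s^\vee(\alpha_{s'}) \le 0$, $\alpha_{s'}^\vee(\alpha_s) \le 0$ with $\alpha_s^\vee(\alpha_{s'})\,\alpha_{s'}^\vee(\alpha_s) = 4\cos^2\frac{\pi}{n}$ — exactly the hypotheses of Theorem \ref{braid}. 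That theorem then gives
$$\underbrace{\PP_s\PP_{s'}\PP_s\cdots}_{n} = \underbrace{\PP_{s'}\PP_s\PP_{s'}\cdots}_{n}.$$
Since the Pitman transforms associated to reflections not appearing in the relation are untouched, and composition of transformations is associative, replacing the block $\underbrace{ss'\cdots}_n$ by $\underbrace{s's\cdots}_n$ inside $\PP_{s_1}\cdots\PP_{s_r}$ leaves the composite unchanged. Thus $\PP_{s_1}\cdots\PP_{s_r}$ is constant on the equivalence class of braid-equivalent reduced words, and by Matsumoto's lemma this class is all reduced decompositions of $w$, so $\PP_w$ is well defined.

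The one point that requires a little care, rather than being entirely routine, is verifying that the hypotheses of Theorem \ref{braid} genuinely hold for every pair of simple reflections in an \emph{arbitrary} realization of $(W,S)$ in the sense defined earlier in the paper — in particular that $\alpha_s^\vee(\alpha_{s'})$ and $\alpha_{s'}^\vee(\alpha_s)$ are nonpositive and that their product equals $4\cos^2\frac{\pi}{n}$ when $m(s,s') = n$. This is the standard compatibility condition imposed on realizations of Coxeter systems (one normalizes, as in the paragraph preceding Theorem \ref{formula}, so that $\alpha_s^\vee(\alpha_{s'}) = \alpha_{s'}^\vee(\alpha_s)$, whence the common value is $-2\cos\frac{\pi}{n}$); I would simply invoke the standard references \cite{bo2}, \cite{humphreys} for it. Everything else is a formal consequence of Theorem \ref{braid} and Matsumoto's lemma, and indeed this is exactly the argument given in \cite{bbo}, so the statement here is a recollection of that result.
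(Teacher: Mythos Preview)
Your proposal is correct and follows exactly the paper's own argument: the paper simply states that the result follows from Theorem \ref{braid} together with Matsumoto's lemma (citing \cite{bo2}, Ch.~IV, No.~1.5, Prop.~5) and refers to \cite{bbo}. One small point worth making explicit is the commuting case $m(s,s')=2$: there $\alpha_s^\vee(\alpha_{s'})=\alpha_{s'}^\vee(\alpha_s)=0$, so the strict-inequality hypotheses of Theorem \ref{braid} are not met, but $\PP_s$ and $\PP_{s'}$ then trivially commute since each acts only through its own coroot; the paper handles this case separately elsewhere (first step of the proof of Theorem \ref{piecewise}).
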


When $W$ is finite, it  has a unique longest element, denoted by $w_0$.
 The transformation $\PP_{w_0}$ plays a fundamental role in the sequel.
 The following result   is proved in \cite{bbo}.

\begin{prop} When $W$ is finite, for any path $\eta \in C_T^0(V)$, the path 
$\PP_{w_0}\eta$ takes values in the closed Weyl chamber $\overline   
C$. Furthermore
    $\PP_{w_0}$ is an idempotent and  
$\PP_w\PP_{w_0}=\PP_{w_0}\PP_w=\PP_{w_0}$ for all $w \in W$.
\end{prop}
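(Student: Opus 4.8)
The plan is to deduce everything from the braid-relation theorem (Theorem~\ref{braidP}) together with the one-step properties of a single Pitman transform (Proposition~\ref{pit}). First I would establish that $\PP_{w_0}\eta$ takes values in $\overline C$. For this, recall that for any simple reflection $s$, by Proposition~\ref{pit}(ii) the path $\PP_s\zeta$ is $\alpha_s$-dominant, i.e.\ $\alpha_s^\vee$ is nonnegative along it. Now fix a simple root $\alpha_s$. Since $w_0$ is the longest element, $w_0 s$ has length $\ell(w_0)-1$, so there is a reduced decomposition of $w_0$ ending in $s$: write $w_0 = w' s$ with $\ell(w_0)=\ell(w')+1$. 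By Theorem~\ref{braidP}, $\PP_{w_0}=\PP_{w'}\PP_s$, hence $\PP_{w_0}\eta = \PP_{w'}(\PP_s\eta)$. I then need the fact that applying $\PP_{w'}$, a product of Pitman transforms, preserves $\alpha_s$-dominance; this is the one genuinely nontrivial point and I address it below. Granting it, $\PP_{w_0}\eta$ is $\alpha_s$-dominant. Since $s$ was an arbitrary simple reflection, $\alpha_t^\vee(\PP_{w_0}\eta(u))\ge 0$ for every simple root $\alpha_t$ and every $u\in[0,T]$, which is exactly the statement that $\PP_{w_0}\eta$ lies in $\overline C$.

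For the identities $\PP_w\PP_{w_0}=\PP_{w_0}\PP_w=\PP_{w_0}$, the key combinatorial input is that for any $w\in W$ one has $\ell(w_0 w^{-1})=\ell(w_0)-\ell(w)$ (multiplication by the longest element reverses the length function), and similarly $\ell(w_0 w)=\ell(w_0)-\ell(w)$ after replacing $w$ by $w^{-1}$. Concretely: given $w$ with reduced decomposition $w=s_1\cdots s_r$, the length additivity $\ell(w_0)=\ell(w)+\ell(w^{-1}w_0)$ means that $s_1\cdots s_r$ followed by a reduced word for $w^{-1}w_0$ is a reduced word for $w_0$. Applying Theorem~\ref{braidP} to this reduced word gives $\PP_{w_0}=\PP_w\,\PP_{w^{-1}w_0}$, and then $\PP_w\PP_{w_0}=\PP_w\PP_w\PP_{w^{-1}w_0}$. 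Here I use $\PP_w\PP_w=\PP_w$, which itself follows by writing $\PP_w=\PP_{s_1}\cdots\PP_{s_r}\PP_{s_r}\cdots$\,—\,more carefully, idempotence of $\PP_w$ follows from Proposition~\ref{pit}(iii) and an induction once one knows $\PP_w\eta$ is $w$-dominant in the appropriate sense; alternatively one notes that $\PP_w$ is idempotent because $w\cdot w = $ is not reduced, so one reorganizes via braid moves to cancel adjacent squares. Thus $\PP_w\PP_{w_0}=\PP_w\PP_{w^{-1}w_0}=\PP_{w_0}$. Symmetrically, putting a reduced word for $w_0 w^{-1}$ in front of $s_1\cdots s_r$ yields $\PP_{w_0}=\PP_{w_0 w^{-1}}\PP_w$, so $\PP_{w_0}\PP_w=\PP_{w_0 w^{-1}}\PP_w\PP_w=\PP_{w_0 w^{-1}}\PP_w=\PP_{w_0}$. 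Finally, idempotence of $\PP_{w_0}$ is the special case $w=w_0$ of either identity.

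\emph{The main obstacle.} The one step that is not a formal consequence of Theorem~\ref{braidP} is the claim that a product of Pitman transforms $\PP_{w'}$ preserves $\alpha_s$-dominance when $w_0=w's$ is a reduced factorization. The natural approach is an induction on $\ell(w')$: it suffices to show that if $\zeta$ is both $\alpha_s$-dominant and $\alpha_t$-dominant-closed-under-what-we-need, then $\PP_t\zeta$ is still $\alpha_s$-dominant, whenever $\ell(tw')<\ell(w')$ forces $t\ne s$ and an analysis of the rank-two subsystem $\langle s,t\rangle$ applies. For $t\ne s$ with $\alpha_t^\vee(\alpha_s)\le 0$, the explicit formula $\PP_t\zeta(u)=\zeta(u)-\inf_{u\ge v\ge 0}\alpha_t^\vee(\zeta(v))\,\alpha_t$ shows $\alpha_s^\vee(\PP_t\zeta(u))=\alpha_s^\vee(\zeta(u))-(\inf_v\alpha_t^\vee(\zeta(v)))\,\alpha_s^\vee(\alpha_t)$; since $\inf_v\alpha_t^\vee(\zeta(v))\le 0$ (as $\zeta(0)=0$) and $\alpha_s^\vee(\alpha_t)\le 0$, the correction term is $\ge 0$, so $\alpha_s$-dominance is preserved. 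The genuinely delicate case is when $\alpha_s^\vee(\alpha_t)>0$ is impossible in a Coxeter realization for distinct simple $s,t$, so in fact the sign hypothesis always holds and the induction closes; assembling this with the ordering of the reduced word (so that at each stage the remaining simple reflections to be applied are exactly those we need) is where the care is required, and I would lean on the detailed rank-two computations of \cite{bbo} (underlying Theorem~\ref{formula}) to justify it cleanly.
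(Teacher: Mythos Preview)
Your proof has two genuine errors that need to be fixed.

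\textbf{First error (the ``main obstacle'' paragraph).} Your sign computation is wrong. Writing $m(u)=\inf_{0\le v\le u}\alpha_t^\vee(\zeta(v))\le 0$, you have
\[
\alpha_s^\vee(\PP_t\zeta(u))=\alpha_s^\vee(\zeta(u))-m(u)\,\alpha_s^\vee(\alpha_t).
\]
Since $-m(u)\ge 0$ and $\alpha_s^\vee(\alpha_t)\le 0$, the correction $-m(u)\,\alpha_s^\vee(\alpha_t)$ is $\le 0$, not $\ge 0$. So $\PP_t$ does \emph{not} preserve $\alpha_s$-dominance in general, and your inductive scheme collapses. Geometrically, $\PP_t$ pushes the path in the $+\alpha_t$ direction, which moves it \emph{towards} the wall $\alpha_s^\vee=0$ because adjacent simple roots make an obtuse angle.

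The fix for the first assertion is much simpler than what you attempted: put $s$ at the \emph{left} end of the reduced word, not the right. Since $w_0$ is longest, $\ell(sw_0)<\ell(w_0)$, so there is a reduced decomposition $w_0=s\,s_{2}\cdots s_{q}$. By Theorem~\ref{braidP}, $\PP_{w_0}=\PP_s\PP_{s_2}\cdots\PP_{s_q}$, hence $\PP_{w_0}\eta=\PP_s(\cdots)$ is $\alpha_s$-dominant directly by Proposition~\ref{pit}(ii). No preservation argument is needed.

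\textbf{Second error (the identities).} The claim $\PP_w\PP_w=\PP_w$ is false for general $w$. In type $A_2$ with $w=s_1s_2$ one computes, using the braid relation and $\PP_{s_i}^2=\PP_{s_i}$,
\[
\PP_{s_1}\PP_{s_2}\PP_{s_1}\PP_{s_2}=\PP_{s_1}(\PP_{s_2}\PP_{s_1}\PP_{s_2})=\PP_{s_1}(\PP_{s_1}\PP_{s_2}\PP_{s_1})=\PP_{s_1}\PP_{s_2}\PP_{s_1}=\PP_{w_0},
\]
and $\PP_{w_0}\ne\PP_{s_1}\PP_{s_2}$ since $\PP_{s_1}\PP_{s_2}\eta$ need not be $\alpha_2$-dominant (indeed, by the sign computation above, applying $\PP_{s_1}$ can destroy $\alpha_2$-dominance). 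Your hand-wave about ``reorganizing via braid moves to cancel adjacent squares'' gives the Demazure product, which here is $w_0$, not $w$.

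The correct argument for the identities is to reduce to simple reflections, using only $\PP_s^2=\PP_s$. For $\PP_s\PP_{w_0}=\PP_{w_0}$: choose a reduced word $w_0=s\,w''$, so $\PP_{w_0}=\PP_s\PP_{w''}$ and $\PP_s\PP_{w_0}=\PP_s^2\PP_{w''}=\PP_{w_0}$. For $\PP_{w_0}\PP_s=\PP_{w_0}$: choose a reduced word $w_0=w'\,s$, so $\PP_{w_0}=\PP_{w'}\PP_s$ and $\PP_{w_0}\PP_s=\PP_{w'}\PP_s^2=\PP_{w_0}$. Iterating over a reduced word for $w$ gives $\PP_w\PP_{w_0}=\PP_{w_0}\PP_w=\PP_{w_0}$; idempotence is the case $w=w_0$. (The paper itself does not reprove this proposition but cites \cite{bbo}.)
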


\subsection{The continuous cristal $C_T^0(V)$}

For any path $\eta$ in $C^0_T(V)$, 
 let
  $wt(\eta)=\eta(T)$. Let  $e_\alpha^r$ be the generalized Littelmann operator 
  $\EE_{\alpha}^r$ defined in Definition \ref{littelmanntransform}, and
  $$\varepsilon_\alpha(\eta) = \max\{r \geq  0 ; \EE_\alpha^r(\eta)\not=0\}
  =-\inf_{0 \leq t \leq T} \alpha^\vee(\eta(t))$$ 
$$\varphi_\alpha(\eta)=\max\{r \geq  0 ; \EE_\alpha^{-r}(\eta)\not=0\}
=\alpha^\vee(\eta(T))-\inf_{0 \leq t \leq T} \alpha^\vee(\eta(t)).$$
It is clear that
  \begin{prop} \label{prop:exis}
   With the above definitions,
 $C_T^0(V)$ is a  normal continuous crystal.  \end{prop}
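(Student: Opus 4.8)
The plan is to verify the crystal axioms (C1)--(C4) directly from the definitions of $wt$, $\varepsilon_\alpha$, $\varphi_\alpha$ and $e_\alpha^r=\EE_\alpha^r$ on $C_T^0(V)$, and then check the normality conditions, most of which are already built into the definitions. Throughout I would write, for a path $\eta$ and a simple root $\alpha$, $M_\alpha(\eta)=\inf_{0\le t\le T}\alpha^\vee(\eta(t))$, so that $\varepsilon_\alpha(\eta)=-M_\alpha(\eta)$, $\varphi_\alpha(\eta)=\alpha^\vee(\eta(T))-M_\alpha(\eta)$, and $\alpha^\vee(wt(\eta))=\alpha^\vee(\eta(T))$; this makes (C1) immediate since $\varphi_\alpha(\eta)-\varepsilon_\alpha(\eta)=\alpha^\vee(\eta(T))=\alpha^\vee(wt(\eta))$.

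For (C2): suppose $\EE_\alpha^r\eta\ne{\bf 0}$. By Definition \ref{littelmanntransform} one has $wt(\EE_\alpha^r\eta)=\EE_\alpha^r\eta(T)=\eta(T)+r\alpha=wt(\eta)+r\alpha$, giving the third equality. For the other two, the key point is that $\pa\EE_\alpha^r\eta=\pa\eta$, hence $\EE_\alpha^r\eta$ and $\eta$ have the same image under $\pa$; since $\pa\eta(T)=\eta(T)-M_\alpha(\eta)\alpha$, comparing endpoints gives $M_\alpha(\EE_\alpha^r\eta)=M_\alpha(\eta)+r$ wait --- more carefully, from $\pa\EE_\alpha^r\eta(T)=\pa\eta(T)$ and $\EE_\alpha^r\eta(T)=\eta(T)+r\alpha$ one gets $\alpha^\vee(\eta(T))+2r-M_\alpha(\EE_\alpha^r\eta)=\alpha^\vee(\eta(T))-M_\alpha(\eta)$, wait $\alpha^\vee(\alpha)=2$, so $M_\alpha(\EE_\alpha^r\eta)=M_\alpha(\eta)+2r$ is wrong; I should instead use that the Pitman image determines $M_\alpha$ up to the path itself. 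Let me restate: since $\pa\eta=\pa\eta'$ forces $\eta'(T)-\eta(T)=(M_\alpha(\eta')-M_\alpha(\eta))\alpha$, and here $\eta'(T)-\eta(T)=r\alpha$, we get $M_\alpha(\EE_\alpha^r\eta)=M_\alpha(\eta)+r$, hence $\varepsilon_\alpha(\EE_\alpha^r\eta)=\varepsilon_\alpha(\eta)-r$; then $\varphi_\alpha(\EE_\alpha^r\eta)=\alpha^\vee(\eta(T)+r\alpha)-M_\alpha(\eta)-r=\varphi_\alpha(\eta)+r$, using $\alpha^\vee(\alpha)=2$ so $\alpha^\vee(r\alpha)=2r$ --- so actually $\varphi_\alpha(\EE_\alpha^r\eta)=\varphi_\alpha(\eta)+2r-r=\varphi_\alpha(\eta)+r$. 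Good. For (C3): $\EE_\alpha^0\eta=\eta$ and the semigroup relation $\EE_\alpha^{s+r}\eta=\EE_\alpha^s\EE_\alpha^r\eta$ (when $\EE_\alpha^r\eta\ne{\bf 0}$) are exactly the content of the Proposition immediately following Definition \ref{littelmanntransform}; the ghost convention $e_\alpha^r({\bf 0})={\bf 0}$ is fixed by fiat. For (C4): the values of $\varepsilon_\alpha,\varphi_\alpha$ here are always finite real numbers (infima of continuous functions on a compact interval), so the hypothesis $\varphi_\alpha(\eta)=-\infty$ is vacuous and (C4) holds trivially.

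It then remains to check normality. Upper normality asks $\varepsilon_\alpha(\eta)=\max\{r\ge 0:\EE_\alpha^r\eta\ne{\bf 0}\}$; this is precisely the displayed definition of $\varepsilon_\alpha$ we adopted, which in turn follows from the range condition $-\alpha^\vee(\eta(T))+M_\alpha(\eta)\le x\le -M_\alpha(\eta)$ in Definition \ref{littelmanntransform}: the largest admissible $x$ is $-M_\alpha(\eta)=\varepsilon_\alpha(\eta)\ge 0$. Lower normality asks $\varphi_\alpha(\eta)=\max\{r\ge 0:\EE_\alpha^{-r}\eta\ne{\bf 0}\}$; again from the range condition the most negative admissible $x$ is $-\alpha^\vee(\eta(T))+M_\alpha(\eta)$, so the largest $r$ with $\EE_\alpha^{-r}\eta\ne{\bf 0}$ is $\alpha^\vee(\eta(T))-M_\alpha(\eta)=\varphi_\alpha(\eta)$, which is $\ge 0$. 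I do not expect any genuine obstacle: the proposition is asserted as ``clear'' in the text, and the only mildly delicate point is the bookkeeping in (C2) --- tracking how the common Pitman image pins down $M_\alpha(\EE_\alpha^r\eta)=M_\alpha(\eta)+r$ and then propagating the factor $\alpha^\vee(\alpha)=2$ correctly through $\varphi_\alpha$. Everything else is a direct unwinding of Definition \ref{littelmanntransform} and the semigroup proposition, together with compactness of $[0,T]$ to guarantee finiteness of the relevant infima.
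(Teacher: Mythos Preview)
Your verification is correct and is exactly the direct check the paper has in mind when it asserts the proposition as ``clear'': the paper gives no argument beyond that word, and the intended content is precisely the unwinding of Definition~\ref{littelmanntransform} together with the semigroup proposition that follows it. The one substantive computation is your (C2) step, and your final version is right: from $\pa(\EE_\alpha^r\eta)=\pa\eta$ one gets $M_\alpha(\EE_\alpha^r\eta)=M_\alpha(\eta)+r$, whence $\varepsilon_\alpha$ drops by $r$ and $\varphi_\alpha$ rises by $r$; you should, however, clean out the exploratory ``wait'' passages before presenting it.
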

   We say that a path is dominant 
  if it takes its values in the closed
   Weyl chamber $\overline   C$.   
\begin{definition} \label{dem-littel} Let $\pi \in C_T^0(V)$ be a dominant path, and $w\in W$. 
  We define $$L_\pi^w=\{\eta  \in C_T^0(V);  \PP_{w}\eta=\pi\}.$$
   \end{definition}
   These sets are  defined for arbitrary  
  Coxeter groups. We shall establish their main properties in the case of finite
  Coxeter groups, where they are analogues of Demazure-Littelmann modules.
   It remains an interesting problem to establish similar
  properties in the general case.
  
  From now on we assume that $W$ is finite, with longest element 
  $w_0$, and we denote
  $L_\pi=L^{w_0}_\pi$, which we call the Littelmann module associated with $\pi$.
The set $L_\pi\cup\{{\bf 0}\}$ is a subset of $C_T^0(V)\cup\{{\bf 0}\}$invariant under  the Littelmann operators, thus:  \begin{theorem} \label{thm:exis}
For any dominant path $\pi$,
  $L_\pi$ is a  normal continuous crystal with highest weight $\pi(T)$.
  \end{theorem}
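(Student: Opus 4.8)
The plan is to verify that $L_\pi$ is a sub-crystal of the normal continuous crystal $C_T^0(V)$ (Proposition \ref{prop:exis}) and that it has the required highest-weight structure. Since $L_\pi=\{\eta\in C_T^0(V):\PP_{w_0}\eta=\pi\}$ and $\pi$ is dominant, $\pi$ itself belongs to $L_\pi$ because $\PP_{w_0}$ is an idempotent fixing dominant paths (by the proposition preceding Definition \ref{dem-littel}, $\PP_{w_0}\eta\in\overline C$ always, and $\PP_{w_0}\pi=\PP_{w_0}\PP_{w_0}\pi=\PP_{w_0}\pi$, while a dominant path is fixed by every $\PP_s$ hence by $\PP_{w_0}$). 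So $L_\pi$ is nonempty.

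First I would show $L_\pi\cup\{\mathbf 0\}$ is stable under every Littelmann operator $\EE_\alpha^r$. Fix a simple root $\alpha=\alpha_s$ and write $w_0=s\,w'$ with $\ell(w_0)=1+\ell(w')$, which is possible since any simple reflection can start a reduced word for $w_0$; by Theorem \ref{braidP}, $\PP_{w_0}=\PP_s\PP_{w'}$. Now for $\eta\in L_\pi$, I claim $\PP_{w_0}(\EE_\alpha^r\eta)=\PP_{w_0}\eta$ whenever $\EE_\alpha^r\eta\neq\mathbf 0$. By definition of $\EE_\alpha^r$ (Definition \ref{littelmanntransform}) we have $\PP_s(\EE_\alpha^r\eta)=\PP_s\eta$, so $\PP_{w_0}(\EE_\alpha^r\eta)=\PP_{w'}\PP_s(\EE_\alpha^r\eta)$... wait, the order is $\PP_s\PP_{w'}$, so I should instead use a reduced word \emph{ending} in $s$: write $w_0=w''s$ with $\ell(w_0)=\ell(w'')+1$. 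Then $\PP_{w_0}=\PP_{w''}\PP_s$, and $\PP_{w_0}(\EE_\alpha^r\eta)=\PP_{w''}(\PP_s\EE_\alpha^r\eta)=\PP_{w''}(\PP_s\eta)=\PP_{w_0}\eta=\pi$. Hence $\EE_\alpha^r\eta\in L_\pi$. This is the key closure step, and the main thing to be careful about is the direction of composition and the existence of a reduced word for $w_0$ ending (or starting) with any prescribed simple reflection, which follows from standard Coxeter theory (the set of simple reflections appearing in some/any reduced word for $w_0$ is all of $S$).

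Next, normality: $L_\pi$ inherits the maps $wt,\varepsilon_\alpha,\varphi_\alpha,\EE_\alpha^r$ by restriction from $C_T^0(V)$, and since $C_T^0(V)$ is normal (Proposition \ref{prop:exis}) and $L_\pi$ is stable under the operators with the same $\varepsilon_\alpha,\varphi_\alpha$, the identities $\varepsilon_\alpha(\eta)=\max\{r\ge0:\EE_\alpha^r\eta\neq\mathbf 0\}$ and $\varphi_\alpha(\eta)=\max\{r\ge0:\EE_\alpha^{-r}\eta\neq\mathbf 0\}$ hold within $L_\pi$ as well; one just notes the witnessing paths $\EE_\alpha^r\eta$ still lie in $L_\pi$ by the closure just proved. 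Finally, the highest-weight structure: I would check that $\pi$ is a primitive element. For any simple $\alpha$, $\EE_\alpha^r\pi=\mathbf 0$ for all $r>0$ since $\pi$ is $\alpha$-dominant, giving $\inf_{0\le t\le T}\alpha^\vee(\pi(t))=0$, so $\varepsilon_\alpha(\pi)=0$ and the admissibility interval for positive $r$ is empty; hence $e_\alpha^r(\pi)=\mathbf 0$ for $r>0$, and $wt(\pi)=\pi(T)$. It remains to see $L_\pi=\FF(\pi)$, i.e. every $\eta$ with $\PP_{w_0}\eta=\pi$ is obtained from $\pi$ by a sequence of $\fa^r=\EE_\alpha^{-r}$. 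This I would prove by induction on a reduced decomposition $w_0=s_{i_1}\cdots s_{i_N}$: using Proposition \ref{pit}(iv), peel off one Pitman transform at a time, writing $\PP_{s_{i_1}}\cdots\PP_{s_{i_N}}\eta=\pi$ and expressing $\eta$ as $\HH^{x}_{\alpha}$ applied to a path with one fewer $\PP$ in its stabilizer, then iterate; each $\HH^x_\alpha$ is a restriction of some $\fa^x$, so composing gives $\eta\in\FF(\pi)$. The main obstacle I anticipate is precisely this last surjectivity argument — tracking that the intermediate paths remain in the right domains (dominance conditions needed to apply Proposition \ref{pit}(iv) and to identify $\HH^x_\alpha$ with $\fa^x$) through all $N$ steps of the induction; the closure step and normality are comparatively routine once the composition directions are pinned down.
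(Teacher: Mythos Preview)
Your proposal is correct and follows essentially the same approach as the paper: closure of $L_\pi$ under the $\EE_\alpha^r$ (which the paper simply asserts just before the theorem) together with the string decomposition $\eta=\HH_{s_q}^{x_q}\cdots\HH_{s_1}^{x_1}\pi$ from (\ref{xy}) to get the highest-weight property. One remark: the ``main obstacle'' you flag at the end is not actually an obstacle, since by Proposition \ref{pit}({\it ii}) the intermediate path $\eta_{k-1}=\PP_{s_k}\eta_k$ is automatically $\alpha_{s_k}$-dominant, and (\ref{inegx}) gives $x_k\in[0,\alpha_{s_k}^\vee(\eta_{k-1}(T))]$, so the domain conditions for $\HH_{s_k}^{x_k}$ are met at every step without further work.
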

  \proof 
  This follows from the result of \ref{sec_cox}, except the highest
  weight property, which follows from the fact that, see (\ref{xy}),
  any  $\eta\in L_\pi$ can be written as 
  $$\eta=
  \HH_{s_{q}}^{x_q}\HH_{s_{q-1}}^{x_{q-1}}\cdots\HH_{s_{1}}^{x_1}\pi.\qed$$ 

Two paths $\eta_1$ and $\eta_2$ are said to be connected if there exists simple roots $\alpha_1,\cdots, \alpha_k$ and real numbers $r_1,\cdots, r_k$ such that
$$ \eta_1=\EE_{\alpha_1}^{r_1}\cdots \EE_{\alpha_k}^{r_k}\eta_2.$$
This is equivalent with the relation $\PP_{w_0}\eta_1=\PP_{w_0}\eta_2$. A connected set in $C_T^0(V)$ is a subset in which each two elements are connected.
We see that the sets $\{L_\pi, \pi \mbox { dominant}\}$ are the connected components in 
 $C_T^0(V)$. Moreover 
we will show in theorem \ref{uniq_iso} that the continuous crystals  $L_{\pi_1}$ and $L_{\pi_2}$ are isomorphic if and only if  $\pi_1(T)=\pi_2(T)$.   
\subsection{Braid relations for the $\HH$ operators}

Let $w \in W$ and fix a reduced decomposition $w=s_{ 1}\ldots s_{p}$. 
For any  path $\eta$ in  $C_T^0(V)$,  denote $\eta_p=\eta$ and for 
$k=1,\ldots,p$,
    $$\eta_{k-1}=\PP_{s_{{k}}}\ldots \PP_{s_{p}}\eta.$$ 
    Then $\eta_{k-1}=\PP_{s_{{k}}}\eta_{k}$ is $\alpha_{s_{k}}$-dominant, by
    proposition \ref{pit} ({\it ii}) and 
   $$ \eta_{k}=\FF_{s_{k}}^{x_k}\eta_{k-1}=\HH_{s_{k}}^{x_k}\eta_{k-1}$$
   where
 \begin{equation}\label{kash}x_k=-
    \inf_{0\leq t\leq T}\alpha_{s_{k}}^{\vee}( \eta_k(t)).
    \end{equation}
 Observe that   \begin{equation}\label{inegx}x_k\in
    [0,\alpha_{s_{k}}^{\vee}( \eta_{k-1}(T))] \end{equation} 
    and
   $$ \eta_{k}(T)= \eta_{k-1}(T)-x_k\alpha_{s_{k}};$$
   thus,  
  $$ \eta_{k}(T)=\eta_0(T)-\sum_{i=1}^k x_i\alpha_{s_{i}}.$$
 Furthermore,
    \begin{equation}\label{xy}
    \eta_k=\HH_{s_{k}}^{x_k}\HH_{s_{{k-1}}}^{x_{k-1}}
    \cdots \HH_{s_{1}}^{x_1}\PP_{w}\eta,
    \end{equation}
   and the numbers $(x_1,\ldots, x_k)$ are uniquely determined by this equation.
   
   We consider two reduced decompositions
$$w=s_{1}\cdots s_{p}, w=s'_{1}\cdots s'_{p}$$
    of $w$. Let 
    ${\bf i}=(s_1,\cdots,s_p)$ and ${\bf j}=(s'_1,\cdots,s'_p)$.
    Let $\eta:[0,T]\to V$ be a continuous path such that $\eta(0)=0$, 
    and let $(x_1,\ldots, x_p)$, respectively $(y_1,\ldots, y_p)$, be
    the numbers determined by equation (\ref{xy}) for the two decompositions
    ${\bf i}$ and ${\bf j}$.
The following 
 theorem states that the correspondence between
  the $x_n$'s and the $y_n$'s actually does not depend on the path $\eta$.
In other words, we have the following braid relation for the operators $\HH$.
\begin{equation}
\HH_{s_{p}}^{x_p}\cdots \HH_{s_{2}}^{x_2}\HH_{s_{1}}^{x_1}=\HH_{s'_{p}}^{y_p}\cdots 
\HH_{s'_{2}}^{y_2}\HH_{s'_{1}}^{y_1}.
\end{equation}
    \begin{theorem}\label{piecewise}
      There exists a piecewise linear continuous map $\fij:\R^p\to \R^p$ such that for all 
  paths $\eta \in C_T^0(V)$, 
        $$(y_1,\cdots, y_p)=\fij(x_1,\cdots,x_p).$$
    \end{theorem}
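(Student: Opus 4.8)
The key point is that the numbers $(x_1,\ldots,x_p)$ attached to a path $\eta$ by equation (\ref{xy}) depend on $\eta$ only through the chain of paths $\eta=\eta_p,\eta_{p-1},\ldots,\eta_0$ obtained by successively applying the $\PP_{s_k}$, and in fact only through certain infima of the coordinates $\alpha_{s_k}^\vee(\eta_k(\cdot))$. The strategy is to reduce the statement to the case $w=s_\alpha s_\beta s_\alpha\cdots$ dihedral of length $m(\alpha,\beta)$, i.e.\ to a single braid move, and then to invoke the explicit product formula of Theorem \ref{formula} together with Proposition \ref{pit}(iv) to read off the correspondence between the two coordinate systems as an explicit piecewise linear map.

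First I would record the standard reduction. By Matsumoto's lemma, any two reduced decompositions $\mathbf{i}$ and $\mathbf{j}$ of $w$ are connected by a sequence of elementary braid moves, each of which replaces a factor $\cdots s s' s\cdots$ ($m=m(s,s')$ letters) inside the word by $\cdots s' s s'\cdots$. By composing the maps $\fij$ associated to each move (and padding with the identity on the unaffected coordinates, noting that the $\HH$-operators for $s_k$ with $k$ outside the modified block act identically and with the same exponents on both sides, since those $\eta_k$'s coincide), it suffices to treat a single dihedral braid move. Continuity and piecewise linearity are preserved under composition, so this reduction is clean; the only mild care needed is to check that the coordinates $x_k$ for the letters \emph{before} and \emph{after} the modified block are genuinely unchanged, which follows because $\PP_w$ itself is independent of the decomposition (Theorem \ref{braidP}), so the paths $\eta_0,\ldots,\eta_{k-\text{block}}$ at the two ends of the block agree.

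Next, for the dihedral case, I would work with the rank-two Coxeter group generated by $\alpha=\alpha_s$, $\beta=\alpha_{s'}$ with $\rho=\cos\frac{\pi}{m}$ as in Theorem \ref{formula}. Applying $\PP_\alpha\PP_\beta\PP_\alpha\cdots$ ($m$ terms) to $\eta$ and then unwinding via (\ref{xy}), each $x_k=-\inf_{0\le t\le T}Z^{(?)}(\eta_k(t))$ can, using the explicit formula (\ref{forpapb}) and the nesting structure of the infima appearing there, be expressed as a fixed $\min$/$\max$ combination of the quantities $\inf_{t\ge s_0\ge\cdots\ge s_i\ge 0}\big(\sum_j T_j(\rho)Z^{(j)}(s_j)\big)$ — equivalently, of the values $\eta_k(T)$ together with running infima of $Z^{(k)}(\eta)$. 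These building blocks are the \emph{same} for the decomposition $\mathbf{j}$ (by the braid relation Theorem \ref{braid}, $\PP_\alpha\PP_\beta\cdots=\PP_\beta\PP_\alpha\cdots$, so $\eta_0$ coincides), so both $(x_1,\ldots,x_p)$ and $(y_1,\ldots,y_p)$ are explicit $\min$/$\max$/linear functions of one common finite list of real parameters extracted from $\eta$. Eliminating those parameters expresses $(y_\bullet)$ as a piecewise linear continuous function of $(x_\bullet)$ alone; since $\eta$ was arbitrary and every intermediate path in $C_T^0(V)$ arises (Proposition \ref{pit}(iv) guarantees surjectivity of each $\HH$-step onto the relevant range), this $\fij$ is well defined on all of $\R^p$.

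**Main obstacle.** The delicate part is the last step: showing that after eliminating the auxiliary parameters one really obtains a map defined on \emph{all} of $\R^p$ and not merely on the image of the $(x_1,\ldots,x_p)$ arising from paths. One must verify that the natural domain of the $(x_k)$'s — governed by the inequalities (\ref{inegx}), $x_k\in[0,\alpha_{s_k}^\vee(\eta_{k-1}(T))]$ — is cut out by conditions that are themselves piecewise linear in $(x_1,\ldots,x_{k-1})$, so that the composite $\fij$ extends continuously (e.g.\ by the same formulas) to a globally defined piecewise linear map; this is exactly the polytope/cone description that Theorem \ref{theo_poly} and Proposition \ref{dihedralcone} make precise, and invoking those (or redoing the dihedral computation by hand) is what makes the argument go through. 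The combinatorics of which $\min$ is active in (\ref{forpapb}) — i.e.\ the chamber decomposition of $\R^p$ on which $\fij$ is linear — is bookkeeping-heavy but routine once the dihedral formula is in hand.
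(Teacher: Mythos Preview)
Your reduction via Matsumoto's lemma to a single braid move is correct and matches the paper's outermost step. The reduction for the letters outside the modified block is also fine: $\PP_w$ is independent of the decomposition (Theorem \ref{braidP}), so the outer coordinates agree.

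The gap is in your treatment of the dihedral block itself. You assert that both $(x_\bullet)$ and $(y_\bullet)$ are piecewise linear functions of ``one common finite list of real parameters extracted from $\eta$'' (the nested infima in (\ref{forpapb})), and then say that ``eliminating those parameters'' yields $(y_\bullet)=\fij(x_\bullet)$. But this elimination is exactly the content of the theorem, and you have not justified it. The list of nested infima has more entries than $p$, so the map from that list to $(x_\bullet)$ is not injective; to conclude that $(y_\bullet)$ factors through $(x_\bullet)$ you must show that two paths $\eta,\eta'$ with the same $(x_\bullet)$ but different $\PP_w\eta$ produce the same $(y_\bullet)$. Equivalently, you need the braid relation $\HH_{s_p}^{x_p}\cdots\HH_{s_1}^{x_1}=\HH_{s'_p}^{y_p}\cdots\HH_{s'_1}^{y_1}$ to hold as an identity of operators on \emph{all} dominant paths, with the \emph{same} $(y_\bullet)$. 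Nothing in your outline establishes this independence; calling it ``bookkeeping-heavy but routine'' is optimistic---the paper's own Remark after the proof says explicit formulas of this kind are not known for $m\ge 8$.

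The paper avoids this difficulty by a different route. It carries out the direct computation only for $A_1\times A_1$ (trivial) and $A_2$ (formula (\ref{transition}), obtained by inspection from the nested-infimum expressions; here the elimination really is checkable). For a general dihedral group $I(m)$ it does \emph{not} attempt a direct elimination. Instead it embeds $I(m)$ into the Weyl group of $A_{m-1}$ by realizing $s_\alpha,s_\beta$ as products $\tau_1,\tau_2$ of commuting simple reflections in $A_{m-1}$ (Lemma \ref{lemzx} shows $\PP_{\tau_1}=\PP_\alpha$ and $\PP_{\tau_2}=\PP_\beta$ on the relevant plane $\Pi$). The transition map for $I(m)$ is then obtained as $\tilde g\circ\phi_{\bf i}^{\bf j}\circ g^{-1}$, where $g,\tilde g$ are explicit linear maps and $\phi_{\bf i}^{\bf j}$ is the $A_{m-1}$ transition map, itself a composite of $A_2$ and $A_1\times A_1$ moves by Matsumoto. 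This embedding is the missing idea in your argument.

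Two smaller points. First, your ``main obstacle''---extending $\fij$ to all of $\R^p$---is a non-issue: as the Remark after the theorem notes, the values of $\fij$ outside the cone of realizable coordinates are irrelevant, so any piecewise linear extension will do. Second, invoking Theorem \ref{theo_poly} or Proposition \ref{dihedralcone} here would be out of order; those are proved later (and Theorem \ref{theo_poly} uses the same embedding trick).
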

    \proof
    First step: 
    If the roots $\alpha,\beta$ generate a system of type $A_1\times A_1$
    and $w=s_\alpha s_\beta=s_\beta s_\alpha$, then $\pa$ and $\pb$ commute,
    and 
    it is immediate that $x_1=y_2$, $x_2=y_1$.
   Let $\alpha,\alpha^\vee$ and $\beta,\beta^\vee$ be such that
  
$$\alpha^\vee(\alpha)=\beta^\vee(\beta)=2,\quad
\alpha^\vee(\beta)=\beta^\vee(\alpha)=-1,$$
then 
    $\alpha$ and $\beta$ generate a root system of type $A_2$
     and the braid relation is
    $$w_0=s_\alpha s_\beta s_\alpha=s_\beta s_\alpha s_\beta.$$ 
Define 
$$a\wedge b=\min(a,b),\quad a\vee b=\max(a,b).$$
    We prove that the following map
     \begin{equation}\label{transition}
  \begin{array}{ll}
  x_1=(y_2-y_1)\wedge y_3&\qquad y_1=(x_2-x_1)\wedge x_3\\
  x_2=y_1+y_3&\qquad y_2=x_1+x_3\\
  x_3=y_1\vee (y_2-y_3)&\qquad y_3=x_1\vee (x_2-x_3)\\
  \end{array}
  \end{equation}
  satisfies the required properties.
  Assume that, for $\pi=\mathcal P_{w_0}\eta$,  
  $$\eta=\HH^{x_3}_\alpha\HH^{x_2}_\beta\HH^{x_1}_\alpha \pi.$$
  Then define $\eta_2=\pa\eta, \eta_1=\pb\pa\eta,
  \eta_0=\pi=\pa\pb\pa\eta$.
   Using theorem \ref{formula} for computing the paths $\eta_i$ one gets
  the explicit formulas.
   $$
  \begin{array}{rcl}
  x_3&=&-\inf_{0\leq s\leq T}\alpha^{\vee}(\eta(s))\\
  x_2&=&-\inf_{0\leq s_2\leq s_1\leq T}
  \left(\beta^{\vee}(\eta(s_1))+\alpha^{\vee}(\eta(s_2))\right)\\
  x_1&=&-\inf_{0\leq s_2\leq s_1\leq T}
  \left(\alpha^{\vee}(\eta(s_1))+\beta^{\vee}(\eta(s_2))\right)-x_3.
  \end{array}
  $$
  Similar formulas are obtained for the $y_i$ coming from the other reduced
  decomposition, by exchanging the roles of $\alpha$ and $\beta$.  The formula 
  (\ref{transition}) follows by inspection. 
  
  In the context of crystals,    this result is well known and first 
    appeared   in  Lusztig \cite{luszt} and Kashiwara \cite{kash93}.
 We observe that it can also be obtained from
 the considerations of section 6,
see e.g. \ref{redbru}.

 Second step: When the roots generate 
 a root system of type $A_n$,  
 using Matsumoto's lemma, one can pass from one reduced
  decomposition to another by a sequence of 
 braid relations corresponding to the two cases of the first step.
 \medskip
 
 Third step: We consider now the case where the
  roots generate the dihedral group  $I(m)$, and 
  $w=s_{\alpha}s_\beta...=s_{\beta} s_{\alpha}...$ is the longest element in 
  $W$. 
We will use an embedding of the dihedral group $I(m)$
 in the Weyl group of the system $A_{m-1}$, 
 see e.g. Bourbaki \cite{bo2}, ch.\  V, 6, Lemme 2. 
Recall the Tchebicheff polynomials $T_k$
defined in (\ref{Tcheb}).
Let $\lambda=\cos(2\pi/m)$, 
$a_1=a_2=1$ and, for $k \geq 1$, $$a_{2k}=T_{k-1}(\lambda),\quad
 a_{2k+1}=T_{k}(\lambda)+T_{k-1}(\lambda)$$
then, 
\begin{equation}\label{rec_Ceby}a_{2k}+a_{2k+2}=a_{2k+1}, \,\,\,a_{2k+1}a_{2k-1}+a_{2k+1}=(1+a_3)a_{2k},\end{equation}
Moreover $a_k >0$ when $k <m$  and $a_{m}=0$.

In the Euclidean space $V=\R^{m-1}$ we choose
  simple roots  $\alpha_1,\cdots,\alpha_{m-1}$ which satisfy
 $\langle \alpha_i, \alpha_{j} \rangle = a_{ij}$
 where $a_{ij}=2$ if $i=j$, $a_{ij}=-1$ if $|i-j|=1$, $a_{ij}=0$ otherwise.
Let $\alpha_i^\vee=\alpha_i$ and $s_i=s_{\alpha_i}$. These generate a root
system of type $A_{m-1}$.

Let $\Pi$ be the two dimensional plane  
defined as the set of $x \in V$ such that for all $n <m$,
 $$\langle \alpha_n, x \rangle =a_n \langle \alpha_1, x \rangle$$  if  $n$ is odd,  and $$  \langle \alpha_n, x \rangle =a_n \langle \alpha_2, x \rangle$$
if $n$ is even. It follows  from the relation (\ref{rec_Ceby}) 
that the vectors $$\alpha=\sum_{n \mbox{ \small  odd}, n < m}a_n\alpha_n,
 \,\,\, \beta=\sum_{n \mbox{ \small even}, n <m}a_n\alpha_n$$ are in $\Pi$. 
 Let $\alpha^\vee=2\alpha/||\alpha||^2,\,\beta^\vee=2\beta/||\beta||^2$ and
$$\tau_1=s_1s_3s_5\cdots s_{2p-1} ,$$ 
$$ \tau_2=s_2s_4s_6\cdots s_{2r},$$
where $2p=m-1, r=p$ when $m$ is odd and  $2p=m,r=p-1$ when $m$ is even.
Let $w_0$ be the longest element in the Weyl group of $A_{m-1}$. Its length is $q=(m-1)m/2$.  We first consider the case where $m$ is odd, $m=2p+1, q= pm$. Then
$$w_0=(\tau_1\tau_2)^p\tau_1, \mbox{  and  } w_0=\tau_2(\tau_1\tau_2)^p$$
are two reduced decompositions of $w_0$. Since $(\tau_1\tau_2)^m=Id$ the
 angle between $\alpha$ and $-\beta$ is $\pi/m$ and these vectors are 
 the simple roots of the dihedral system $I(m)$.

 Let
 $\gamma$ be a continuous path in $\Pi$, let $\gamma_p=\gamma$ and for  
 $1 < k \leq p$, $\gamma_{k-1}=\PP_{\alpha_{2k-1}}\gamma_{k}$ and 
$$z_k(t)=-
    \inf_{0\leq s\leq t}\alpha_{2k-1}^{\vee}(\gamma_{k}(s)).$$
\begin{lemma}\label{lemzx}
Let $\gamma$ be a continuous path with values in $\Pi$ and let 
$$x(t)= -\inf_{0\leq s\leq t}\alpha^\vee(\gamma(s)).$$ 
    Then, for all $k$, $z_k(t)=a_{2k-1}x(t)$ and 
      $$ {\PP}_{\tau_1}\gamma(t)=
      {\PP}_{\alpha_1}{\PP}_{\alpha_3}{\PP}_{\alpha_5}
      \cdots{\PP}_{\alpha_{2p-1}}\gamma(t)
=\gamma(t)-\inf_{s \leq t}\alpha^{\vee}(\gamma(s))
\alpha={\PP}_\alpha\gamma(t).$$
\end{lemma}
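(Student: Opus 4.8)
The plan is to reduce everything to one structural fact about the chosen $A_{m-1}$ realization: the odd-indexed simple roots $\alpha_1,\alpha_3,\dots,\alpha_{2p-1}$ are pairwise orthogonal, since their nodes in the Dynkin diagram are non-adjacent, so $\alpha_{2i-1}^\vee(\alpha_{2j-1})=\langle\alpha_{2i-1},\alpha_{2j-1}\rangle=0$ for $i\neq j$; and on the plane $\Pi$ all the coroots $\alpha_n^\vee$ with $n$ odd are the same linear functional up to the scalar $a_n$. Granting these, the lemma becomes a short computation.

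First I would observe that, because $\alpha_{2i-1}^\vee(\alpha_{2j-1})=0$ for $i\neq j$, the Pitman transforms $\PP_{\alpha_1},\dots,\PP_{\alpha_{2p-1}}$ pairwise commute (this is the orthogonal $A_1\times A_1$ situation already used in the first step of the proof of Theorem \ref{piecewise}), and, crucially, each $\PP_{\alpha_{2j-1}}$ modifies a path only by subtracting multiples of $\alpha_{2j-1}$, hence leaves the functional $\alpha_{2i-1}^\vee$ of the running path unchanged for every $i\neq j$. A downward induction on $k$, using this last point, then gives for any $\eta\in C_T^0(V)$ that
$$\PP_{\alpha_1}\PP_{\alpha_3}\cdots\PP_{\alpha_{2p-1}}\eta(t)=\eta(t)-\sum_{k=1}^{p}\Bigl(\inf_{0\le s\le t}\alpha_{2k-1}^\vee(\eta(s))\Bigr)\alpha_{2k-1},$$
with all infima taken against the \emph{original} path $\eta$. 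The same induction shows that, in the notation of the paragraph preceding the lemma, $\gamma_k=\PP_{\alpha_{2k+1}}\cdots\PP_{\alpha_{2p-1}}\gamma$ differs from $\gamma$ only by a combination of the roots $\alpha_{2j-1}$ with $j>k$, all orthogonal to $\alpha_{2k-1}$, so that $\alpha_{2k-1}^\vee(\gamma_k(s))=\alpha_{2k-1}^\vee(\gamma(s))$ for all $s$. Finally $\PP_{\tau_1}=\PP_{\alpha_1}\cdots\PP_{\alpha_{2p-1}}$ since $\tau_1=s_1s_3\cdots s_{2p-1}$ is a reduced word.

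Next I would bring in the hypothesis that $\gamma$ takes its values in $\Pi$. The defining equations of $\Pi$ give $\alpha_{2k-1}^\vee(\gamma(s))=\langle\alpha_{2k-1},\gamma(s)\rangle=a_{2k-1}\langle\alpha_1,\gamma(s)\rangle$ for every $s$. A short computation identifies $\langle\alpha_1,\cdot\rangle$ with $\alpha^\vee$ on $\Pi$: the odd simple roots being pairwise orthogonal, $\|\alpha\|^2=2\sum_{n\ \mathrm{odd}}a_n^2$, whereas $\langle\alpha_1,\alpha\rangle=2$ because $\langle\alpha_1,\alpha_n\rangle=0$ for odd $n\ge 3$; hence for $x\in\Pi$, using $\langle\alpha,x\rangle=\sum_{n\ \mathrm{odd}}a_n\langle\alpha_n,x\rangle=\bigl(\sum_{n\ \mathrm{odd}}a_n^2\bigr)\langle\alpha_1,x\rangle$, one gets $\alpha^\vee(x)=\frac{2}{\|\alpha\|^2}\langle\alpha,x\rangle=\langle\alpha_1,x\rangle$. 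Therefore $\alpha_{2k-1}^\vee(\gamma(s))=a_{2k-1}\,\alpha^\vee(\gamma(s))$ for all $s$.

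Putting the two parts together, and using $a_{2k-1}>0$ for $2k-1<m$ to pull the constant out of the infimum, I obtain $z_k(t)=-\inf_{0\le s\le t}\alpha_{2k-1}^\vee(\gamma_k(s))=-a_{2k-1}\inf_{0\le s\le t}\alpha^\vee(\gamma(s))=a_{2k-1}x(t)$, which is the first assertion, and, plugging $\eta=\gamma$ into the product formula and factoring out the common infimum,
$$\PP_{\tau_1}\gamma(t)=\gamma(t)-\Bigl(\inf_{0\le s\le t}\alpha^\vee(\gamma(s))\Bigr)\sum_{k=1}^{p}a_{2k-1}\alpha_{2k-1}=\gamma(t)-\Bigl(\inf_{0\le s\le t}\alpha^\vee(\gamma(s))\Bigr)\alpha=\PP_\alpha\gamma(t),$$
the last equality being the definition of the Pitman transform and $\sum_{k=1}^p a_{2k-1}\alpha_{2k-1}=\alpha$. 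The only step requiring genuine care is the ``independent action'' claim of the second paragraph — that no $\PP_{\alpha_{2j-1}}$ ever disturbs the coroot value $\alpha_{2i-1}^\vee$ of the running path for $i\ne j$, which is precisely what allows every infimum in the product formula to be computed against $\gamma$ itself; together with the elementary identity $\alpha^\vee|_\Pi=\langle\alpha_1,\cdot\rangle|_\Pi$, this is the substance of the argument, and everything else is routine.
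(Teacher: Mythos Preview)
Your proof is correct and follows essentially the same approach as the paper: both exploit the pairwise orthogonality of the odd-indexed simple roots to see that each $\PP_{\alpha_{2j-1}}$ leaves $\alpha_{2i-1}^\vee(\gamma_k)$ undisturbed for $i\neq j$, reduce $\alpha_{2k-1}^\vee(\gamma(s))$ to $a_{2k-1}\alpha^\vee(\gamma(s))$ via the defining equations of $\Pi$, and pull $a_{2k-1}>0$ out of the infimum. The paper proceeds by a direct downward induction computing each $\gamma_k$ explicitly, whereas you first package the orthogonality into a product formula and then specialize; you also spell out the identity $\alpha^\vee|_\Pi=\langle\alpha_1,\cdot\rangle|_\Pi$ that the paper simply asserts, but the substance is the same.
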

\proof First, notice that $\alpha^\vee(\gamma(t))=\alpha_1^\vee(\gamma(t))$.
Since $\gamma$ is in $\Pi$, one has
$$z_p(t)=-\inf_{0\leq s\leq t}\alpha_{2p-1}^\vee(\gamma(s))=-\inf_{0\leq s\leq t}a_{2p-1}\alpha_{1}^\vee(\gamma(s))=a_{2p-1}x(t)$$
where we use the positivity of $a_{2p-1}$. Therefore
$$\gamma_{p-1}(t)=\PP_{\alpha_{2p-1}}\gamma(t)=\gamma(t)+z_p(t)\alpha_{2p-1}=\gamma(t)+a_{2p-1}x(t)\alpha_{2p-1}.$$
Now, since the $\alpha_{2i+1}$ are orthogonal,
$$z_{p-1}(t)=-\inf_{0\leq s\leq t}\alpha_{2p-3}^\vee(\gamma_{p-1}(s))=-\inf_{0\leq s\leq t}\alpha_{2p-3}^\vee(\gamma(s))=a_{2p-3}x(t),$$
and 
$$\gamma_{p-2}(t)=\PP_{\alpha_{2p-3}}\gamma_{p-1}(t)=\gamma_{p-1}(t)+z_{p-1}(t)\alpha_{2p-3}$$
$$=\gamma(t)+x(t)(a_{2p-3}\alpha_{2p-3}+a_{2p-1}\alpha_{2p-1}).$$
Continuing, we obtain that 
$$z_{k}(t)=a_{2k-1}x(t)$$
$$\gamma_k(t)=\gamma(t)+x(t)(a_{2k-1}\alpha_{2k-1}+\cdots+a_{2p-1}\alpha_{2p-1})$$
Since $\alpha=\alpha_1+a_3\alpha_3+a_5\alpha_5+\cdots+a_{2p-1}\alpha_{2p-1}$ we obtain the lemma. $\square$
\medskip

We  have similarly, if $\gamma$ is a path in  $\Pi$, 
 $$ {\PP}_{\tau_2}\gamma(t)={\PP}_{\alpha_2}
 {\PP}_{\alpha_4}{\PP}_{\alpha_6}\cdots{\PP}_{\alpha_{2r}}
 \gamma(t)=\gamma(t)-\inf_{s \leq t}\beta^{\vee}(\gamma(s))\beta=
 {\PP}_\beta\gamma(t).$$

Let ${\bf i}=(s_{i_1},\cdots,s_{i_q})=({\bf i_1},{\bf i_2},\cdots,{\bf i_{m}})$ and ${\bf j}=(s_{j_1},\cdots,s_{j_q})=({\bf j_1},{\bf j_2},\cdots,{\bf j_{m}})$ where
${\bf i_k}={\bf j_{k+1}}=(s_1,s_3,\cdots,s_{2p-1})$ when $k$ is odd and ${\bf i_k}={\bf j_{k+1}}=(s_2,s_4,\cdots,s_{2p})$ when $k$ is even.
We write explicitly $$w_0=(\tau_1\tau_2)^p\tau_1=s_{i_1}\cdots s_{i_q}, w_0=\tau_2(\tau_1\tau_2)^p=s_{j_1}\cdots s_{j_q}.$$ Let us denote by $\phi_{\bf i}^{\bf j}:\R^q\to \R^q$ the mapping given by the second step corrresponding to these two reduced decompositions of $w_0$ in the Weyl group of $A_{m-1}$.

 Let $\gamma$ be a path with values in $\Pi$. 
 If we consider it as a path in $V$ we can set 
$\eta_q=\tilde \eta_q=\gamma$ and,  for $n=1,2,\ldots ,q$,
$$\eta_{n-1}=\PP_{\alpha_{i_{n}}}\eta_{n},\quad z_n=-
    \inf_{0\leq t\leq T}\alpha_{i_n}^{\vee}( \eta_n(t))$$
$$\tilde\eta_{n-1}=\PP_{\alpha_{j_{n}}}\tilde\eta_{n}, \quad\tilde z_n=-
    \inf_{0\leq t\leq T}\alpha_{j_n}^{\vee}( \tilde\eta_n(t)).$$
Then, by definition,
$$(\tilde z_1,\cdots, \tilde z_q)= \phi_{\bf i}^{\bf j}(z_1,\cdots,z_q).$$
We now consider $\gamma$ as a path in $\Pi$. We let
$$(u_1,u_2,\cdots,u_m)=(\alpha,\beta,\alpha,\beta,\cdots,\alpha)$$
and 
$$(v_1,v_2,\cdots,v_m)=(\beta,\alpha,\beta,\alpha,\cdots,\beta).$$
In $I(m)$ the two reduced decompositions of the longest element are 
$$s_{u_1}\cdots s_{u_m}=s_{v_1}\cdots s_{v_m}.$$We introduce
$\gamma_m=\tilde \gamma_m=\gamma$, and,  for $n=1,2,\ldots ,m,$
$$\gamma_{n-1}=\PP_{u_{n}}\ldots \PP_{u_{m}}\gamma_m, \tilde\gamma_{n-1}=\PP_{v_{n}}\ldots \PP_{u_{m}}\tilde\gamma_m$$ 
 $$x_n=-
    \inf_{0\leq t\leq T}u_n^{\vee}( \gamma_n(t)), \tilde x_n=-
    \inf_{0\leq t\leq T}v_n^{\vee}(\tilde \gamma_n(t)).$$
It follows from  lemma \ref{lemzx} and from its analogue with $\alpha$ replaced by $\beta$
 that 
$$z_1=a_1x_1,z_2=a_3x_1,\cdots,z_p=a_{2p-1}x_1$$ $$z_{p+1}=a_2x_2,z_{p+2}=a_4x_2,\cdots,z_{2p}=a_{2p}x_2$$
and more generally, for $k=0,\cdots $
$$a_1^{-1}z_{2kp+1}=a_3^{-1}z_{2kp+2}=\cdots=a_{2p-1}^{-1}z_{2kp+p}=x_{k+1}$$ 
$$a_2^{-1}z_{(2k+1)p+1}=a_4^{-1}z_{(2k+1)p+2}=\cdots=a_{2p}^{-1}z_{(2k+2)p}=x_{k+2}.$$ This defines a linear map $$(x_1,\cdots,x_m)=g(z_1,z_2,\cdots,z_q).$$
Analogously exchanging the role of $\alpha$ and $\beta$
  we define a similar map $$ (\tilde x_1,\cdots,\tilde x_m)
  =\tilde g(\tilde z_1,\tilde z_2,\cdots,\tilde z_q)$$
(for instance $\tilde z_1=a_2\tilde x_1,\tilde z_2=
a_3 \tilde x_1,\cdots$). Then we see that
$$(x_1,\cdots,x_m)=\phi (\tilde x_1,\cdots,\tilde x_m)$$
where 
$\phi=\tilde g \circ \phi_{\bf i}^{\bf j} \circ g^{-1}.$
The proof when $m$ is even is similar (when $m=2p$, 
$w_0=(\tau_1\tau_2)^p$ and $w_0=(\tau_2\tau_1)^p$
are two reduced decompositions of $w_0$).
This proves the theorem in the dihedral case.
\medskip

Fourth step. We use Matsumoto's lemma to reduce the general case to
 the dihedral case. 
 
 This ends the proof of theorem    \ref{piecewise}.
 $\square$

\medskip

\begin{rem} Although the given proof is constructive, 
it gives a complicated expression for $\phi_{\bf i}^{\bf j}$
 which can sometimes be simplified. In the dihedral case $I(m)$, for the Weyl
  group case, i.e. $m=3,4,6$, 
these expressions are given in Littelmann \cite{littel2}.
 For $m=5$ it can be shown by a tedious verification that it is given when $\alpha,\beta$ have the same length, by a similar formula. Thus 
 for $m=2,3,4,5,6$  let $c_0=1,c_1=2\cos(\pi/m),  c_{n+1}+c_{n-1}=c_1c_n$ for $n \geq 0,$
 and
 $$u=\max(c_kx_{k+1}-c_{k-1}x_{k+2}, 0 \leq k \leq m-3),$$ $$ v=\min(c_kx_{k+2}-c_{k+1}x_{k+1}, 1 \leq k \leq m-2).$$
 Then the expressions are given by
 \begin{eqnarray*}
y_m&=&\max(x_{m-1}-c_1x_m,u)\\
y_{m-1}&=&x_m+\max(x_{m-2}-c_2x_m,c_1u)\\
y_2&=&x_1+\min(x_3-c_2x_1,c_1v)\\
y_1&=&\min(x_2-c_1x_1,v)
\end{eqnarray*}
and
 $$y_1+y_3+\cdots= x_2+x_4+\cdots$$
 $$y_2+y_4+\cdots=x_1+x_3+\cdots$$
 This determines completely $(y_1,\cdots,y_m)$ as a function of $(x_1,\cdots,x_m)$ when $m \leq 6$. For $m=7$ we think
(and made a computer check) that we have to add that
  \begin{eqnarray*}y_7+y_5&=&x_6+\max(c_2x_1,x_4-c_3x_7,w)\\
w&=&\min(c_2u,x_4-c_2v,\max(x_6-c_1x_5+x_4+c_2u,c_1x_3-x_2-c_2v).
\end{eqnarray*}
\end{rem}
We do not know of similar formulas for $m\geq 8$.
\begin{rem} The map given by theorem \ref{piecewise} is unique on the set of
all possible coordinates of paths. We will see in the next section that this set is a
convex cone. Since the value of the map $\fij$ is irrelevant outside this cone, we may say
that there exists a unique such  map for each pair of reduced decompositions
 ${\bf i},{\bf j}$.
 \end{rem} 
 \section{Parametrization of the continuous Littelmann module}
In this section we make a more in-depth study of the parametrization of the Littelmann modules,
and we prove the analogue of the independence theorem of Littelmann (the crystal structure depends only on the endpoint of the dominant path), then we study the concatenation of paths, using it to prove existence and uniqueness of families of crystals. Finally we define the action of the Coxeter group on the crystal, and the 
Sch\"utzenberger involution.
  \subsection{String parametrization of $C_T^0(V)$}
 Let $(W,S,V,V^\vee)$ be a realization of the Coxeter system
   $(W,S)$.  
  From now on we assume that $W$ is finite, with longest element 
  $w_0$.
  For notational convenience, we
 sometimes write $\alpha^\vee \eta$ instead of $\alpha^\vee (\eta)$.

Let $\eta \in L_\pi$, where $\pi$ is dominant and $w_0=s_1\ldots s_q$ be a reduced decomposition, then we have seen that
 $$\eta=\HH_{s_{q}}^{x_q}\HH_{s_{q-1}}^{x_{q-1}}\cdots
\HH_{s_{1}}^{x_1}\pi
$$
for a unique sequence $$\varrho_{\bf i}(\eta)=(x_1,\ldots, x_q).$$
Following Berenstein and Zelevinsky \cite{beze2}, we call
$\varrho_{\bf i}(\eta)$ the {\bf i}-string parametrization of $\eta$, 
or the string parametrization if no confusion is possible.

  We let $$C_{\bf i}^\pi=\varrho_{\bf i}(L_\pi),$$
  this is the set of all the $(x_1,\cdots,x_q)\in \R^q$ 
  which occur in the string parametrizations of the elements of $L_\pi$.
  \begin{prop}\label{homeo}
 The set $L_\pi$  is compact and the map $\varrho_{\bf i}$  is a bicontinuous bijection  from $L_\pi$ onto its image $C_{\bf i}^\pi$.
\end{prop}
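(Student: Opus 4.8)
The plan is to establish three things: that $L_\pi$ is compact, that $\varrho_{\bf i}$ is continuous, and that its inverse (defined on the image $C_{\bf i}^\pi$) is continuous. The injectivity of $\varrho_{\bf i}$ is already granted by the uniqueness statement recorded in equation \eqref{xy}, so the content is the topological claims.

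First I would prove continuity of $\varrho_{\bf i}$. Recall from the discussion preceding the theorem that, for a reduced decomposition $w_0 = s_1 \cdots s_q$ and $\eta \in L_\pi$, one sets $\eta_q = \eta$, $\eta_{k-1} = \PP_{s_k}\eta_k$, and $x_k = -\inf_{0\le t\le T}\alpha_{s_k}^\vee(\eta_k(t))$. Each $\PP_{s_k}$ is continuous on $C_T^0(V)$ by Proposition \ref{pit}(i), so the maps $\eta \mapsto \eta_k$ are continuous; and $\eta_k \mapsto -\inf_{0\le t\le T}\alpha_{s_k}^\vee(\eta_k(t))$ is continuous for the topology of uniform convergence since $\alpha_{s_k}^\vee$ is a continuous linear functional and $\eta \mapsto \inf_t \alpha^\vee(\eta(t))$ is $1$-Lipschitz in the sup norm. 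Composing, each $x_k$ depends continuously on $\eta$, hence $\varrho_{\bf i}$ is continuous from $L_\pi$ to $\R^q$.

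Next, compactness of $L_\pi$. Since $\PP_{w_0}$ is continuous (Proposition \ref{pit}(i) applied along a reduced word, via Theorem \ref{braidP}), $L_\pi = \PP_{w_0}^{-1}(\{\pi\})$ is a closed subset of $C_T^0(V)$; so it suffices to show it is relatively compact, i.e. (by Arzelà–Ascoli, since all paths start at $0$) uniformly bounded and equicontinuous. For this I would use the explicit description $\eta = \HH_{s_q}^{x_q}\cdots \HH_{s_1}^{x_1}\pi$ together with the constraints \eqref{inegx}, namely $x_k \in [0,\alpha_{s_k}^\vee(\eta_{k-1}(T))]$, which bound each $x_k$ in terms of $\pi(T)$ and the previously chosen $x_i$; iterating gives a uniform bound on $(x_1,\dots,x_q)$ depending only on $\pi$. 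Then, from the formula for $\HH_\alpha^x$ (equivalently for $\EE_\alpha^x$ in Definition \ref{littelmanntransform}), the modulus of continuity of $\HH_\alpha^x\xi$ is controlled by that of $\xi$ plus that of $t\mapsto \inf_{s\ge t}\alpha^\vee(\xi(s))$, which is again controlled by the modulus of continuity of $\xi$. Starting from $\pi$, which is a fixed continuous (hence uniformly continuous) path on $[0,T]$, and applying the $q$ operators $\HH_{s_k}^{x_k}$ with the $x_k$ now uniformly bounded, we get that all $\eta \in L_\pi$ share a common modulus of continuity and a common bound. Hence $L_\pi$ is compact. An alternative, perhaps cleaner, route: show directly that $\varrho_{\bf i}(L_\pi) = C_{\bf i}^\pi$ is a bounded closed (in fact, as the next section shows, polytopal) subset of $\R^q$ and that the reconstruction map $(x_1,\dots,x_q)\mapsto \eta$ is continuous, then deduce compactness of $L_\pi$ as the continuous image of a compact set; this folds the compactness claim into the inverse-continuity claim below.

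Finally, continuity of $\varrho_{\bf i}^{-1}$ on $C_{\bf i}^\pi$. Given $(x_1,\dots,x_q)\in C_{\bf i}^\pi$, one reconstructs $\eta$ by applying $\HH_{s_1}^{x_1}, \HH_{s_2}^{x_2}, \dots, \HH_{s_q}^{x_q}$ in turn to $\pi$. Each operator $\HH_\alpha^x\xi$, read off from the explicit formula in Section \ref{littelpit} (or Definition \ref{littelmanntransform}), is jointly continuous in $(x,\xi)$: it is $\xi(t)$ minus a $\min$ of affine-in-$x$ and $\inf$-of-$\xi$ expressions times $\alpha$, and all these operations are continuous in the sup norm. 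Composing $q$ such jointly continuous maps starting from the fixed path $\pi$ yields a continuous map $C_{\bf i}^\pi \to C_T^0(V)$, which by \eqref{xy} is exactly $\varrho_{\bf i}^{-1}$. Combined with the first paragraph, $\varrho_{\bf i}$ is a homeomorphism onto its image.

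I expect the main obstacle to be the equicontinuity/compactness step: one must check carefully that applying the Littelmann operators $\HH_{s_k}^{x_k}$ does not destroy a uniform modulus of continuity, and that the constraints \eqref{inegx} genuinely yield an $\eta$-independent bound on all the $x_k$ simultaneously (the bound on $x_k$ involves $\eta_{k-1}(T)$, which depends on $x_1,\dots,x_{k-1}$, so one argues by induction on $k$). Everything else is a routine continuity bookkeeping using Proposition \ref{pit}(i) and the explicit formulas for $\EE_\alpha^x$.
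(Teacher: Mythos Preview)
Your proposal is correct and follows essentially the same approach as the paper: the paper also writes down the explicit inverse $\varrho_{\bf i}^{-1}(x_1,\dots,x_q)=\HH_{s_q}^{x_q}\cdots\HH_{s_1}^{x_1}\pi$, declares both $\varrho_{\bf i}$ and $\varrho_{\bf i}^{-1}$ continuous, notes that $L_\pi=\PP_{w_0}^{-1}(\{\pi\})$ is closed, and then invokes Ascoli after observing (via $\varrho_{\bf i}^{-1}$) that $L_\pi$ is equicontinuous. The paper's proof is much terser, simply asserting the continuity and equicontinuity steps as ``clear''; your version spells out exactly the justifications (Proposition~\ref{pit}(i), Lipschitz continuity of the infimum, joint continuity of $\HH_\alpha^x$ in $(x,\xi)$, and the inductive bound on the $x_k$ from~\eqref{inegx}) that make those assertions go through.
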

\proof
The map $\varrho_{\bf i}$ has an  inverse
$$\varrho_{\bf i}^{-1}(x_1,\cdots,x_q)=\HH_{s_{q}}^{x_q}
\HH_{s_{q-1}}^{x_{q-1}}\cdots\HH_{s_{1}}^{x_1}\pi,
$$
hence it is bijective.
It is clear that $\varrho_{\bf i}$ and $\varrho_{\bf i}^{-1}$ are continuous. Since $\PP_{w_0}$ is continuous, $L_\pi=\{\eta; \PP_{w_0}(\eta)=\pi\}$ is closed. Using $\varrho_{\bf i}^{-1}$ we easily see  that $L_\pi$ is equicontinuous, it is thus compact by Ascoli's theorem. \hfill$\square$

We will study $C_{\bf i}^\pi$ in detail in the following sections.
\medskip

       \subsection{The crystallographic case}
In this subsection we  consider the case of a Weyl group $W$ with a
crystallographic root system. 
When $\alpha$ is a root and $\alpha^{\vee}$ 
its coroot,
    then  $\EE_{\alpha}^1$ and $\EE_{\alpha}^{-1}$ from definition \ref{littelmanntransform}
coincide with the Littelmann operators $e_{\alpha}$ and $f_{\alpha}$,
    defined in
\cite{littel}. 
Recall that a path $\eta$ is called integral in \cite{littel} if its endpoint $\eta(T)$
 is in the weight
lattice and if, for each simple root $\alpha$,
 the
minimum of the function $\alpha^{\vee}(\eta(t))$ over $[0,T]$ is an integer.
 The class of 
integral paths
is invariant under the Littelmann operators. 

 Let $\pi$ be a dominant integral path.  
 The discrete Littelmann module $D_\pi$ is defined
as the orbit of $\pi$ under the semigroup generated by all the transformations
 $e_\alpha, f_\alpha$, for all simple roots $ \alpha $, so it is the set of integral paths in $L_\pi$.

Let ${\bf i}=(s_1,\cdots,s_q)$ where $w_0=s_1\cdots s_q$ 
is a reduced decomposition, then it follows from Littelmann's theory that 
$$D_\pi=\{\eta \in L_\pi; x_1,\cdots, x_q \in \N\}=\varrho_{\bf i}^{-1}(\{(x_1,\cdots,x_q)
\in C_{\bf i}^\pi;  x_1\in \N,\cdots, x_q \in \N\}).$$
Furthermore, the set  $D_\pi$  
has a crystal structure isomorphic to the Kashiwara 
crystal associated with the highest weight $\pi(T)$.
On $ D_\pi$ the coordinates $(x_1,\cdots,x_q)$ are called the string or the Kashiwara parametrization of the dual canonical basis. They are 
described in  Littelmann \cite{littel2} and Berenstein and Zelevinsky \cite{beze2}.

When restricted to $D_\pi$, the Pitman operator $\pa$ coincides with $e_\alpha^{max}$, i.e.
 the operator sending $\eta$ to $e_\alpha^n\eta$, where $n=\max(k,e_\alpha^k\eta\ne{\bf 0})$.

For any path $\eta:[0,T]\to V$ 
and $\lambda>0$ let $\lambda\eta$ be the path defined by $(\lambda\eta)(t)=\lambda\eta(t)$ for $0 \leq t \leq T$. 
The following results are immediate.
\begin{prop}[Scaling property] \label{scal}

\begin{enumerate}[(i)]

\item For any $\lambda >0$, $\lambda L_\pi=L_{\lambda\pi}$.

\item Let $\eta\in C^0_T(V)$, $r\in\mathbb R,u>0$, then $\EE_\alpha^{ru}(u\eta)=u\EE_\alpha^{r}(\eta).$

\item Let $\pi$ be a dominant path and $a>0$ then  
$C_{\bf i}^{a\pi}=aC_{\bf i}^\pi$.
\end{enumerate}
\end{prop}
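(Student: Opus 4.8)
The plan is to exploit the fact that every transformation involved is positively homogeneous of degree one. First I would record that the Pitman transform is homogeneous: from its defining formula $\pa\eta(t)=\eta(t)-\inf_{0\leq s\leq t}\alpha^{\vee}(\eta(s))\,\alpha$, and the equality $\inf_{s}\alpha^{\vee}((\lambda\eta)(s))=\lambda\inf_{s}\alpha^{\vee}(\eta(s))$ valid for $\lambda>0$, one reads off at once that $\pa(\lambda\eta)=\lambda\,\pa\eta$. Since $\PP_{w_0}=\PP_{s_1}\cdots\PP_{s_q}$ is a composition of Pitman transforms (theorem \ref{braidP}), it is homogeneous too: $\PP_{w_0}(\lambda\eta)=\lambda\,\PP_{w_0}\eta$. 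Part (i) then follows immediately, since $\eta\in L_\pi$ iff $\PP_{w_0}\eta=\pi$ iff $\PP_{w_0}(\lambda\eta)=\lambda\pi$ iff $\lambda\eta\in L_{\lambda\pi}$; applying this with $\lambda$ replaced by $\lambda^{-1}$ gives $L_{\lambda\pi}\subseteq\lambda L_\pi$, whence $\lambda L_\pi=L_{\lambda\pi}$.

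For part (ii) I would go back to definition \ref{littelmanntransform} and check that the path $u\,\EE_\alpha^{r}\eta$ satisfies the two conditions that characterize $\EE_\alpha^{ru}(u\eta)$. On the one hand $\pa(u\,\EE_\alpha^{r}\eta)=u\,\pa(\EE_\alpha^{r}\eta)=u\,\pa\eta=\pa(u\eta)$ by homogeneity of $\pa$ together with the defining property $\pa\EE_\alpha^{r}\eta=\pa\eta$; on the other hand $(u\,\EE_\alpha^{r}\eta)(T)=u(\eta(T)+r\alpha)=(u\eta)(T)+ru\,\alpha$. It remains to see that the admissibility interval is respected: the condition $-\alpha^{\vee}((u\eta)(T))+\inf_{0\leq t\leq T}\alpha^{\vee}((u\eta)(t))\leq ru\leq-\inf_{0\leq t\leq T}\alpha^{\vee}((u\eta)(t))$ is obtained from the corresponding condition for $\EE_\alpha^{r}\eta$ merely by multiplying the chain of inequalities by $u>0$, so $\EE_\alpha^{ru}(u\eta)\neq{\bf 0}$ exactly when $\EE_\alpha^{r}\eta\neq{\bf 0}$. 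In that case the uniqueness statement built into definition \ref{littelmanntransform} (coming from proposition \ref{pit}({\it iv})) forces $\EE_\alpha^{ru}(u\eta)=u\,\EE_\alpha^{r}\eta$; when $\EE_\alpha^{r}\eta={\bf 0}$ both sides equal ${\bf 0}$. In particular, restricting to $\alpha$-dominant paths, $\HH_\alpha^{ax}(a\xi)=a\,\HH_\alpha^{x}\xi$ for $a>0$.

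Finally, part (iii) is a bookkeeping consequence of (i) and (ii). Given $\eta\in L_\pi$ in its string form $\eta=\HH_{s_q}^{x_q}\cdots\HH_{s_1}^{x_1}\pi$, homogeneity of $\PP_{s_k}\cdots\PP_{s_q}$ shows that the intermediate paths for $a\eta$ are $a\eta_{k-1}$, which are again $\alpha_{s_k}$-dominant, and (ii) gives $(a\eta)_k=a\eta_k=\HH_{s_k}^{ax_k}(a\eta_{k-1})$; thus $a\eta=\HH_{s_q}^{ax_q}\cdots\HH_{s_1}^{ax_1}(a\pi)$ is precisely the ${\bf i}$-string expansion of $a\eta$ relative to the dominant path $a\pi$, so $\varrho_{\bf i}(a\eta)=(ax_1,\dots,ax_q)=a\,\varrho_{\bf i}(\eta)$. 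Since $a\eta$ ranges over $aL_\pi=L_{a\pi}$ as $\eta$ ranges over $L_\pi$, by (i), this yields $C_{\bf i}^{a\pi}=aC_{\bf i}^{\pi}$.

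None of these steps involves a genuine difficulty; the only point requiring a little care is the verification in (ii) that the admissibility interval and the two boundary formulas of definition \ref{littelmanntransform} scale correctly, which reduces to dividing a finite system of inequalities by the positive constant $u$.
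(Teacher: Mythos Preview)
Your argument is correct and is exactly the natural verification the paper has in mind: the paper states that these results are ``immediate'' and gives no proof, and your check---homogeneity of each $\pa$ from its defining formula, hence of $\PP_{w_0}$, together with the scaling of the characterizing conditions in definition~\ref{littelmanntransform}---is precisely what makes them immediate. One minor remark: for (iii) you do not even need to invoke (ii), since once you know $(a\eta)_k=a\eta_k$ by homogeneity of the Pitman transforms, formula~(\ref{kash}) gives the string parameter directly as $-\inf_t\alpha_{s_k}^\vee(a\eta_k(t))=ax_k$; but your route via $\HH$ is equally valid.
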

\begin{prop} \label{dense}If $\pi$ is a dominant integral path, then 
 the set $$D_\pi(\Q)=\cup_{n \in \N}\frac{1}{n}D_{n\pi}$$
is dense in $L_\pi$.
\end{prop}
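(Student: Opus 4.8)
The plan is to show that any $\eta \in L_\pi$ can be approximated uniformly by elements of $D_\pi(\Q)=\cup_{n\in\N}\frac1n D_{n\pi}$. Fix a reduced decomposition $w_0 = s_1\cdots s_q$ and set ${\bf i}=(s_1,\dots,s_q)$. By Proposition \ref{homeo}, the string parametrization $\varrho_{\bf i}$ is a bicontinuous bijection from $L_\pi$ onto $C_{\bf i}^\pi$, so it suffices to approximate $\varrho_{\bf i}(\eta)=(x_1,\dots,x_q)\in\R^q$ by points coming from elements of $D_\pi(\Q)$. The scaling property (Proposition \ref{scal}, items (i) and (iii)) gives $L_{n\pi}=nL_\pi$ and $C_{\bf i}^{n\pi}=nC_{\bf i}^\pi$, and from the equivariance $\EE_\alpha^{ru}(u\eta)=u\EE_\alpha^r(\eta)$ one checks that $\varrho_{\bf i}(n\eta) = n\,\varrho_{\bf i}(\eta)$. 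Hence the string parametrization of $\frac1n D_{n\pi}$ is exactly $\frac1n$ times the set of integer points of $C_{\bf i}^{n\pi}=nC_{\bf i}^\pi$, i.e. the set of points of $C_{\bf i}^\pi$ with all coordinates in $\frac1n\N$.

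First I would verify that $C_{\bf i}^\pi$ is a convex polytope (or at least that its intersection with $\frac1n\N^q$ becomes dense in it as $n\to\infty$); this is foreshadowed in the second Remark after Theorem \ref{piecewise} and will be proved in detail in the parametrization section, but for the present purpose it is enough to know that $C_{\bf i}^\pi$ is the intersection of a cone $C_{\bf i}$ depending only on $w_0$ with finitely many inequalities depending linearly on $\pi$, all with rational coefficients when $\pi$ is suitably normalized. Given a point $x=(x_1,\dots,x_q)\in C_{\bf i}^\pi$ and $\varepsilon>0$, I would produce a rational point $x^{(n)}$ with coordinates in $\frac1n\N$, lying in $C_{\bf i}^{n\pi}/n = C_{\bf i}^\pi$, with $|x^{(n)}-x|<\varepsilon$: this is the standard fact that rational (indeed $\frac1n\N$-valued) points are dense in a rational polytope, using that the endpoint $n\pi(T)$ is in the weight lattice (so $n\pi$ is a dominant integral path whenever $\pi(T)$ is in the weight lattice — which holds since $\pi$ is assumed dominant integral, possibly after noting $n\pi$ is integral for all $n$). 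Then $\varrho_{\bf i}^{-1}(x^{(n)}) \in \frac1n D_{n\pi} \subset D_\pi(\Q)$, and by continuity of $\varrho_{\bf i}^{-1}$ the corresponding paths converge uniformly to $\eta$.

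The main obstacle is the density of $\frac1n\N$-valued points in $C_{\bf i}^\pi$ for the appropriate sequence of $n$: one must be slightly careful because $n$ ranges over all of $\N$, not a subsequence, and the defining inequalities of $C_{\bf i}^{n\pi}$ scale by $n$, so a point of $C_{\bf i}^\pi$ with coordinates in $\frac1n\N$ is automatically in $\frac1n D_{n\pi}$ only if the integral-path condition (the minima of $\alpha_s^\vee(\eta(t))$ being integers) is encoded correctly by the string coordinates being integers — which is precisely the content of Littelmann's description $D_\pi=\varrho_{\bf i}^{-1}(\{x\in C_{\bf i}^\pi: x_j\in\N\})$ recalled just above, applied to $n\pi$. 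Once the polytope $C_{\bf i}^\pi$ is known to be rational, the argument is routine: approximate $x$ by a point $y$ in the relative interior, then round each coordinate of $y$ down to the nearest multiple of $\frac1n$ and check the rounded point still satisfies all inequalities for $n$ large, using that the inequalities have bounded coefficients and $y$ is interior.
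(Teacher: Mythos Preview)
The paper gives no proof here; both Proposition~\ref{scal} and Proposition~\ref{dense} are simply declared ``immediate''. Your reduction via the homeomorphism $\varrho_{\bf i}$ to the statement that $C_{\bf i}^\pi\cap\Q_{\ge0}^q$ is dense in $C_{\bf i}^\pi$ is correct and is almost certainly what the authors have in mind.

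The problem is how you justify that density. You invoke the description of $C_{\bf i}^\pi$ as the rational polytope $C_{\bf i}\cap K_\pi$, and you say yourself that this ``will be proved in detail in the parametrization section''. But look at that section: the Proposition asserting $C_{\bf i}^\pi=C_{\bf i}\cap K_\pi$ (for Weyl groups) is deduced \emph{from} Propositions~\ref{scal} and~\ref{dense}. So this is not merely a forward reference, it is a genuine circularity; you cannot use the polytope description to establish Proposition~\ref{dense}.

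A way out that stays within the paper's resources: Littelmann's theorem $\tilde C_{\bf i}^{n\pi}=C_{\bf i}\cap\N^q\cap K_{n\pi}$ is an \emph{external} input from \cite{littel2}, not something proved here, so it is available. Together with scaling it gives
\[
C_{\bf i}^\pi\cap\tfrac1n\N^q \;=\;(C_{\bf i}\cap K_\pi)\cap\tfrac1n\N^q,
\]
so $\varrho_{\bf i}(D_\pi(\Q))=(C_{\bf i}\cap K_\pi)\cap\Q_{\ge0}^q$. This set is dense in the rational polytope $C_{\bf i}\cap K_\pi$, and it is contained in the closed set $C_{\bf i}^\pi$, whence $C_{\bf i}\cap K_\pi\subset C_{\bf i}^\pi$. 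For the reverse inclusion you need $C_{\bf i}^\pi\subset C_{\bf i}$ (the inclusion in $K_\pi$ is (\ref{inegx})); this should be argued directly from Littelmann's definition of $C_{\bf i}$ as the string cone of $B(\infty)$, independent of the highest weight, rather than from the paper's later proposition. Once both inclusions are in hand, density and the polytope description come out together, and nothing is circular. The paper glosses over exactly this point.
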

Actually a good interpretation of $L_\pi$ in the Weyl group case is as the "limit" of $ \frac{1}{n}B_{n\pi}$ when $n \to \infty$. In the general Coxeter case only the limiting object is defined.
\subsection{Polyhedral nature of the continuous crystal for a Weyl group}
 Let $W$ be a finite Weyl group,  associated to a crystallographic
 root system.  
 Let $D_\pi$ be the discrete Littelmann module associated 
 with an  integral dominant path $\pi$.  We fix a reduced decomposition
   $w_0=s_{1}\cdots s_{q}$ of the longest element and let 
 ${\bf i}=(s_1,\cdots,s_q)$. We have seen that if $\rho_{\bf i}:L_\pi\to C_{\bf i}^\pi$   is the string parametrization of the continuous module $L_\pi$,  then
 $$D_\pi=\{\eta \in L_\pi; x_1,\cdots, x_q \in \N\}=
\varrho_{\bf i}^{-1}(\{(x_1,\cdots,x_q)\in C_{\bf i}^\pi;  x_1\in \N,\cdots, x_q \in \N\}).$$
Therefore the set 
 $$\tilde C_{\bf i}^\pi=C_{\bf i}^\pi\cap \N^q$$
 is the image of the discrete Littelmann module $D_\pi$, or equivalently, 
 the image of the Kashiwara crystal with highest 
 weight $\pi(T)$, under the string parametrization 
 of Littelmann \cite{littel2} and Berenstein and Zelevinsky \cite{beze2}.
Let 
$$K_\pi=\{(x_1,\cdots,x_q)\in \R^q;0 \leq x_r \leq  \alpha_{i_r}^{\vee}(\pi(T)-\sum_{n=1}^{r-1}x_n\alpha_{i_n}), r=1,\cdots q\}.$$
 It is shown in Littelmann  \cite{littel2} that there exists a 
  convex rational polyhedral cone $C_{\bf i}$ in $\R^q$, depending only on 
  $\bf i$ such that, for all dominant integral paths $\pi$,  
 $$ \tilde C_{\bf i}^\pi= C_{\bf i}\cap \N^q\cap K_\pi.$$
 This cone is described explicitly in Berenstein and Zelevinsky \cite{beze2}. 
 Recall that $C_{\bf i}^\pi=\varrho_{\bf i}(L_\pi)$. 
 Using propositions \ref{scal}, \ref{dense} it is easy
  to see that the following holds.
 \begin{prop} For all dominant paths $\pi$,
  $ C_{\bf i}^\pi= C_{\bf i}\cap K_\pi.$ 
 \end{prop}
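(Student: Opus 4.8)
The plan is to prove the identity in three steps --- for integral dominant paths, then for ``rational'' ones, then for arbitrary dominant paths --- the first step carrying the real content and resting on the Littelmann--Berenstein--Zelevinsky description $\tilde C_{\bf i}^\pi=C_{\bf i}\cap\N^q\cap K_\pi$. Two preliminary remarks will be used throughout. First, a dominant path $\pi$ is integral exactly when $\pi(T)$ lies in the weight lattice $P$: since $\pi(0)=0$ and $\alpha^\vee(\pi(t))\ge 0$ we have $\inf_{0\le t\le T}\alpha^\vee(\pi(t))=0\in\mathbb Z$ for every simple root $\alpha$, so the only surviving constraint is $\pi(T)\in P$. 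Second, $K_\pi$ depends on $\pi$ only through $\pi(T)$, it lies in the non-negative orthant, it is homogeneous, $K_{a\pi}=aK_\pi$ for $a>0$, and it is monotone for the chamber order: if $\pi'(T)-\pi(T)\in\overline C$ then $\alpha_{i_r}^\vee\bigl(\pi'(T)-\sum_{n<r}x_n\alpha_{i_n}\bigr)\ge\alpha_{i_r}^\vee\bigl(\pi(T)-\sum_{n<r}x_n\alpha_{i_n}\bigr)$, hence $K_{\pi'}\supseteq K_\pi$. I also record that $C_{\bf i}^\pi\subseteq K_\pi$ is immediate from (\ref{inegx}), and that $C_{\bf i}$, being a rational polyhedral cone, satisfies $aC_{\bf i}=C_{\bf i}$ for $a>0$, while $K_\pi$ is a (rational, when $\pi$ is integral, since the Cartan entries $\alpha_{i_r}^\vee(\alpha_{i_n})$ and $\alpha_{i_r}^\vee(\pi(T))$ are integers) bounded polytope.

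\emph{Integral case.} Let $\pi$ be dominant and integral. Using Proposition \ref{scal}, for every $n\in\N^*$,
$$\varrho_{\bf i}\bigl(\tfrac1n D_{n\pi}\bigr)=\tfrac1n\,\varrho_{\bf i}(D_{n\pi})=\tfrac1n\bigl(C_{\bf i}\cap\N^q\cap K_{n\pi}\bigr)=C_{\bf i}\cap\tfrac1n\N^q\cap K_\pi,$$
using $K_{n\pi}=nK_\pi$ and that $C_{\bf i}$ is a cone; the union over $n$ gives $\varrho_{\bf i}(D_\pi(\Q))=C_{\bf i}\cap K_\pi\cap\Q^q$ (within $K_\pi$ all coordinates are non-negative, so $\Q^q_{\ge 0}$ may be replaced by $\Q^q$). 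Now $L_\pi$ is compact and $\varrho_{\bf i}$ is a homeomorphism of it onto $C_{\bf i}^\pi$ (Proposition \ref{homeo}), so $C_{\bf i}^\pi$ is closed; since $D_\pi(\Q)$ is dense in $L_\pi$ (Proposition \ref{dense}),
$$C_{\bf i}^\pi=\varrho_{\bf i}\bigl(\overline{D_\pi(\Q)}\bigr)=\overline{\varrho_{\bf i}(D_\pi(\Q))}=\overline{C_{\bf i}\cap K_\pi\cap\Q^q}=C_{\bf i}\cap K_\pi,$$
the last equality because $C_{\bf i}\cap K_\pi$ is a closed rational polytope, in which the rational points are dense.

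\emph{Passage to all dominant paths.} If $\pi$ is dominant with $\pi(T)\in\Q\otimes P$, I pick $n$ with $n\pi(T)\in P$; then $n\pi$ is integral and $nC_{\bf i}^\pi=C_{\bf i}^{n\pi}=C_{\bf i}\cap K_{n\pi}=n(C_{\bf i}\cap K_\pi)$, so $C_{\bf i}^\pi=C_{\bf i}\cap K_\pi$. For an arbitrary dominant $\pi$ I would prove the two inclusions by approximation. For $C_{\bf i}^\pi\subseteq C_{\bf i}$: any $\eta\in L_\pi$ is a uniform limit of paths $\eta^{(n)}$ with $\PP_{w_0}\eta^{(n)}(T)\in\Q\otimes P$ --- modify $\eta$ on a short final interval by a segment in a direction of $\overline C$ and use that appending such a segment commutes with $\PP_{w_0}$ up to the corresponding shift of the endpoint, together with the density of $\Q\otimes P$ near any point of $\overline C$; then $\eta^{(n)}\in L_{\pi^{(n)}}$ for the rational dominant path $\pi^{(n)}=\PP_{w_0}\eta^{(n)}$, so $\varrho_{\bf i}(\eta^{(n)})\in C_{\bf i}^{\pi^{(n)}}=C_{\bf i}\cap K_{\pi^{(n)}}\subseteq C_{\bf i}$, and $\varrho_{\bf i}(\eta)=\lim_n\varrho_{\bf i}(\eta^{(n)})\in\overline{C_{\bf i}}=C_{\bf i}$; combined with $C_{\bf i}^\pi\subseteq K_\pi$ this gives $C_{\bf i}^\pi\subseteq C_{\bf i}\cap K_\pi$. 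For the reverse inclusion, given $x=(x_1,\dots,x_q)\in C_{\bf i}\cap K_\pi$ I choose rational dominant paths $\pi^{(n)}\to\pi$ uniformly with $\pi^{(n)}(t)-\pi(t)\in\overline C$ for all $t$ (e.g. $\pi^{(n)}(t)=\pi(t)+\tfrac tT\delta_n$, $\delta_n\in\overline C$, $\delta_n\to0$, $\pi(T)+\delta_n\in\Q\otimes P$); then $K_{\pi^{(n)}}\supseteq K_\pi\ni x$, so $x\in C_{\bf i}\cap K_{\pi^{(n)}}=C_{\bf i}^{\pi^{(n)}}$, i.e. $\HH_{s_q}^{x_q}\cdots\HH_{s_1}^{x_1}\pi^{(n)}$ is well defined; letting $n\to\infty$, using the joint continuity of the operators $\HH_s^{\,\cdot}$ on their (closed) domains and the fact that the conditions $0\le x_k\le\alpha_{s_k}^\vee(\eta_{k-1}(T))$ and ``$\eta_{k-1}$ is $\alpha_{s_k}$-dominant'' are closed, I conclude that $\HH_{s_q}^{x_q}\cdots\HH_{s_1}^{x_1}\pi$ is well defined, i.e. $x\in C_{\bf i}^\pi$.

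The step carrying the real content is the integral case (matching the continuous string polytope with the closures of the rescaled discrete ones). The point I expect to demand the most care in a full write-up is the last passage to non-rational dominant paths: the precise justification of the density of the points of $\Q\otimes P$ reachable as $\PP_{w_0}$-endpoints of small perturbations of a given path, and the interchange of the resulting limits with $\varrho_{\bf i}$, $\HH_s$ and $\PP_{w_0}$ --- all of which, however, reduce to the continuity statements already recorded in Proposition \ref{pit} and Definition \ref{littelmanntransform}.
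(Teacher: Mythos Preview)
Your proof is correct and follows the approach the paper has in mind. The paper's own argument is just the one-line hint ``Using propositions \ref{scal}, \ref{dense} it is easy to see that the following holds,'' and your integral case is precisely the natural unpacking of that hint: scale $D_{n\pi}$ by $1/n$, use $\tilde C_{\bf i}^{n\pi}=C_{\bf i}\cap\N^q\cap K_{n\pi}$, take the union, and close up via Proposition \ref{homeo} and rationality of the polytope.

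The paper does not spell out the passage from integral to arbitrary dominant $\pi$, whereas you do, via scaling to rational endpoints and then an approximation argument. Your approximation is sound; let me only remark that the first inclusion $C_{\bf i}^\pi\subseteq C_{\bf i}$ can be done more cleanly than you sketch. For $\eta\in L_\pi$ and any $v\in\overline C$, Lemma \ref{lem:Px}(i) gives $\PP_{w_0}(\eta\star\sigma_v)=\pi\star\sigma_v$ (since a dominant segment is fixed by every $\PP_\alpha^x$), and the same computation shows that appending a dominant tail leaves the string parameters unchanged: $\varrho_{\bf i}(\eta\star\sigma_v)=\varrho_{\bf i}(\eta)$. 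Choosing $v$ with $\pi(T)+v\in\Q\otimes P$ places $\varrho_{\bf i}(\eta)$ in $C_{\bf i}^{\pi\star\sigma_v}=C_{\bf i}\cap K_{\pi\star\sigma_v}\subseteq C_{\bf i}$ directly, with no limiting process needed. Your reverse inclusion via $\pi^{(n)}(t)=\pi(t)+\tfrac{t}{T}\delta_n$ and the closedness of the dominance and range constraints is fine as written.
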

 
    \subsection{The cone in the general case}
We now consider a general  Coxeter system $(W,S)$, with $W$ finite, realized in
$V$. 
 \begin{theorem}\label{theo_poly}Let 
 $\bf i$ be a reduced decomposition of $w_0$, then  there exists a unique
 polyhedral
  cone $C_{\bf i}$ in $\R^q$ such that for any dominant path $\pi$ 
  $$ C_{\bf i}^\pi= C_{\bf i}\cap K_\pi.$$
 In particular $C_{\bf i}^\pi$ depends only on $\lambda=\pi(T)$.
 \end{theorem}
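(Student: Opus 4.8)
The plan is to split the proof into an easy part and a hard part: the easy part settles the crystallographic case and the two trivial inclusions/topological facts, the hard part treats general finite $W$ by reduction to rank two. First I would record that by the inequalities $(\ref{inegx})$ every string parametrization of an element of $L_\pi$ satisfies the defining inequalities of $K_\pi$, so $C_{\bf i}^\pi=\varrho_{\bf i}(L_\pi)\subseteq K_\pi$, and that $C_{\bf i}^\pi$ is a compact subset of $\R^q$ by Proposition \ref{homeo}; note also $C_{\bf i}^\pi\subseteq\{x_r\ge 0\}$ by $(\ref{inegx})$. When $W$ is a Weyl group the statement is exactly the proposition preceding the theorem (Littelmann's and Berenstein--Zelevinsky's description of the string cone, together with Propositions \ref{scal} and \ref{dense}), so only the non-crystallographic finite groups and a uniform polyhedral statement remain, plus uniqueness of the cone.

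Next I would describe $C_{\bf i}^\pi$ intrinsically. Setting $\eta_0=\pi$ and $\eta_k=\HH_{s_k}^{x_k}\eta_{k-1}$, a tuple $(x_1,\dots,x_q)$ lies in $C_{\bf i}^\pi$ precisely when, at every step $k$, the path $\eta_{k-1}$ is $\alpha_{s_k}$-dominant and $0\le x_k\le\alpha_{s_k}^\vee(\eta_{k-1}(T))$. Since $\eta_{k-1}(T)=\pi(T)-\sum_{n<k}x_n\alpha_{s_n}$, the second family telescopes into exactly the inequalities defining $K_\pi$. For the first family, using $\alpha_{s'}^\vee(\alpha_s)\le 0$ for $s\neq s'$ together with the explicit formula for $\HH$ from Definition \ref{littelmanntransform}, one checks that $\eta_k$ is automatically $\alpha_{s'}$-dominant whenever $s'\notin\{s_1,\dots,s_k\}$; hence the only possibly non-trivial extra conditions occur at the indices $k$ where the letter $s_k$ has already occurred earlier in the word. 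The assertion of the theorem is then that, although those conditions a priori involve $\pi$ through its whole trajectory, collectively they are equivalent to finitely many \emph{homogeneous} linear inequalities, with coefficients depending only on $\bf i$, added to the $K_\pi$-inequalities.

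The heart of the proof is to verify this by reduction to rank two. By Matsumoto's lemma any two reduced words of $w_0$ are joined by a chain of dihedral braid moves, and by Theorem \ref{piecewise} the induced change of string parametrization is a fixed piecewise-linear bijection $\fij$ with $\fij(C_{\bf i}^\pi)=C_{\bf j}^\pi$, independent of $\pi$; so it suffices to analyze a rank-two sub-situation and to check that such a transition carries a decomposition ``cone $\cap\,K_\pi$'' to another one. In the dihedral case $I(m)$ I would use the embedding of $I(m)$ into the Weyl group of $A_{m-1}$ from the proof of Theorem \ref{piecewise}: paths valued in the two-plane $\Pi$ correspond, the Pitman transforms match, and the string coordinates are related by the explicit linear maps $g,\tilde g$ there; consequently $L_\pi^{I(m)}$ is the slice of $L_\pi^{A_{m-1}}$ by the linear condition of being $\Pi$-valued, so $C_{\bf i}^\pi$ for $I(m)$ is the linear image under $g$ of $\bigl(C_{\bf i'}(A_{m-1})\cap K_\pi\bigr)$ intersected with a linear subspace. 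Since the $A_{m-1}$ string cone is known, this exhibits $C_{\bf i}^\pi$ for $I(m)$ as a polyhedral cone — which one reads off to depend only on $\bf i$ — intersected with a polytope one identifies with $K_\pi$ for the dihedral realization; this also yields the explicit equations of Proposition \ref{dihedralcone}. Propagating this back along a Matsumoto chain gives, for every reduced word $\bf i$ of $w_0$ in any finite $W$, the identity $C_{\bf i}^\pi=C_{\bf i}\cap K_\pi$ with $C_{\bf i}$ a polyhedral cone independent of $\pi$; in particular $C_{\bf i}^\pi$ depends only on $\lambda=\pi(T)$.

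Finally, uniqueness of $C_{\bf i}$ is automatic: taking $\pi$ with $\pi(T)$ in the interior of $\overline C$, the polytope $K_\pi$ contains a neighbourhood of the origin in $C_{\bf i}$, so $C_{\bf i}=\bigcup_{t>0}t\,C_{\bf i}^\pi$ is determined by $C_{\bf i}^\pi$. The step I expect to be the genuine obstacle is making the rank-two reduction uniform in $\pi$ — that is, showing the ``dominance'' conditions at repeated letters collapse to homogeneous inequalities and that the piecewise-linear braid maps respect the ``cone $\cap\,K_\pi$'' decomposition, equivalently that $C_{\bf i}^\pi$ depends only on the endpoint of $\pi$, the continuous counterpart of Littelmann's independence theorem. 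The bookkeeping for this rests on the nested-infimum formulas of Theorem \ref{formula}.
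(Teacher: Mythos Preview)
Your treatment of the dihedral case via the embedding $I(m)\hookrightarrow A_{m-1}$ is exactly the paper's argument, and the crystallographic case is indeed the preceding proposition. The genuine gap is in how you handle $H_3$ and $H_4$.

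You write that ``propagating this back along a Matsumoto chain gives, for every reduced word $\bf i$ of $w_0$ in any finite $W$, the identity $C_{\bf i}^\pi=C_{\bf i}\cap K_\pi$''. But a Matsumoto chain only connects different reduced expressions of the \emph{same} longest element in the \emph{same} group $W$; it does not reduce the problem for $H_3$ or $H_4$ to rank-two parabolic subgroups. Knowing that every dihedral $I(m)$ has a polyhedral string cone tells you nothing, by itself, about whether the $15$- (resp.\ $60$-) dimensional string image for $H_3$ (resp.\ $H_4$) has the form ``cone $\cap\,K_\pi$''. At best your braid-move argument would show: \emph{if} the theorem holds for one reduced word of $w_0$ in a given $W$, then it holds for all --- and even that implication is not free, since a homogeneous piecewise-linear bijection need not send a polyhedral cone to a polyhedral cone, nor respect the splitting $C_{\bf i}\cap K_\pi$ (you rightly flag this as an obstacle, but then still rely on it).

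The paper avoids this entirely. It never tries to pass through braid moves; instead it uses the classification of non-crystallographic finite Coxeter groups and embeds each one directly into a Weyl group by the same mechanism you used for $I(m)$: concretely, $H_4\hookrightarrow E_8$ and $H_3\hookrightarrow D_6$ via ``folding'' pairs of commuting simple reflections (e.g.\ $\tau_i=s_is_{9-i}$ in $E_8$). One then checks, exactly as in Lemma \ref{lemzx}, that on the invariant subspace $\Pi$ the product $\PP_{\tau_i}$ of the two commuting Pitman transforms equals the single Pitman transform $\PP_{\tilde\alpha_i}$ for the folded root, so string coordinates for $H_3,H_4$ are a fixed linear image of string coordinates for $D_6,E_8$. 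The desired cone is then the linear image of the known Weyl-group string cone, and $K_\pi$ matches up. Your proposal is missing precisely these two embeddings; without them there is no anchor for the Matsumoto chain to start from.
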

\proof It remains to consider the non crystallographic Coxeter systems. It is clearly
enough to consider reduced systems.
We use their classification:  $W$ is either a dihedral group $I(m)$ or 
$H_3$ or $H_4$ (see Humphreys \cite{humphreys}), and the same trick as the one used 
in the proof of theorem \ref{piecewise}.

We first consider the case $I(m)$ where $m=2p+1$ and we use the notation of the proof of 
theorem \ref{piecewise}. Let 
 ${\bf i}=(i_1,\cdots,i_q)$  be as in that proof, and 
 write  $$w_0=(\tau_1\tau_2)^p\tau_1=s_{i_1}\cdots s_{i_q}$$ for the longest word in $A_{m-1}$.
 Let $\gamma$ be a path with values in the plane $\Pi$.  
 If we consider $\gamma$ as a path in $V=\R^{m-1}$ we can set, for $q=(m-1)m/2$, 
$\eta_q=\gamma$   and,  for $n=1,2,\ldots ,q,$
$$\eta_{n-1}=\PP_{\alpha_{i_{n}}}\eta_{n},\quad z_n=-
    \inf_{0\leq t\leq T}\alpha_{i_n}^{\vee}( \eta_n(t)).$$
We can also consider $\gamma$ as a path in $\Pi$, with the realization of $I(m)$. Let
$${\bf u}=(u_1,u_2,\cdots,u_m)=(\alpha,\beta,\alpha,\beta,\cdots,\alpha).$$
Let
$\tilde\eta_m=\gamma$ and,  for $n=1,2,\ldots ,m,$
$$\tilde\eta_{n-1}=\PP_{u_{n}}\ldots \PP_{u_{m}}\eta_m,\quad x_n=-
    \inf_{0\leq t\leq T}u_n^{\vee}( \eta_n(t)).$$
We have seen that the  map $$(x_1,\cdots,x_m)=g(z_1,z_2,\cdots,z_q),$$ is linear. 
Let $C_{\bf i}$ be the cone associated with ${\bf i}$ in $A_{m-1}$,
then
$C_{\bf u}=g( C_{\bf i})$
 is the cone in $\R^m$ associated with the reduced decomposition   $ \alpha\beta\cdots\alpha$ of the longest word in $I(m)$. 
Furthermore,  for any dominant path $\pi$ in $\Pi$,
  $ C_{\bf u}^\pi= C_{\bf u}\cap K_\pi.$
  
  The proof when $m$ is even is similar. 

\medskip

In order to deal with the cases $H_3$ and $H_4$ it is enough, using an analogous proof to embed these systems in some Weyl groups.

Let us first consider the case of $H_4$. We use the embedding of $H_4$ in $E_8$ (see \cite{moopat}). 
Consider the following indexation of the simple roots of the system $E_8$:
  $${\tt    \setlength{\unitlength}{0.60pt}
\begin{picture}(320,150)
\thinlines    \put(120,10){ System $E_8$}           
 
            \put(310,45){{$8$}}
             \put(260,45){{$7$}}
              \put(210,45){{$6$}}
              \put(160,45){{$4$}}
                 \put(110,45){{$3$}}
                 
                 \put(220,100){{$5$}}
                    \put(211,66){\line(0,1){33}}
              \put(60,45){{$2$}}
               \put(10,45){{$1$}}
               \put(65,63){\line(1,0){40}}
                 \put(115,63){\line(1,0){40}}
                   \put(165,63){\line(1,0){40}}
                \put(265,63){\line(1,0){40}}
              \put(215,63){\line(1,0){40}}
                      \put(211,103){\circle{8}}
              \put(211,63){\circle{8}}
             \put(161,63){\circle{8}}
               \put(261,63){\circle{8}}
                 \put(311,63){\circle{8}}
               \put(11,63){\circle{8}}
              \put(111,63){\circle{8}}
             \put(61,63){\circle{8}}
              \put(16,63){\line(1,0){40}}
              \end{picture}}
$$
In the euclidean space $V=\R^8$ the roots $\alpha_1,...,\alpha_8$,
satisfy  $\langle \alpha_i, \alpha_j \rangle= -1\mbox{ or } 0$ depending whether they are linked or not. Let $ \phi=(1+\sqrt{5})/2$. We consider the 4-dimensional subspace $\Pi$ of $V$ defined as the set of  $x \in V$ orthogonal to $ \alpha_8-\phi\alpha_1, \alpha_7-\phi\alpha_2, \alpha_6-\phi\alpha_3$ and $ \phi\alpha_5- \alpha_4$.
Let $s_i$ be the reflection which corresponds to $\alpha_i$  and $$\tau_1=s_1s_8,\,\, \tau_2=s_2s_7, \,\, \tau_3= s_3s_6,\,\,  \tau_4= s_4s_5.$$
 One checks easily that   $\tau_1,\tau_2,\tau_3,\tau_4$ generate $H_4$ and that the vectors$$\tilde \alpha_1=\alpha_1+\phi\alpha_8,\tilde \alpha_2=\alpha_2+\phi \alpha_7,\tilde \alpha_3=\alpha_3+\phi \alpha_6,\tilde \alpha_4=\alpha_4+\phi^{-1}\alpha_5$$ are in $\Pi$. If $\pi$ is a continuous path in $\Pi$, then,   for $i=1,\cdots,4$, if $\tilde \alpha_i^\vee=\tilde \alpha_i/(2||\tilde \alpha_i||^2)$,
 $$ {\PP}_{\tau_i}\pi(t)=\pi(t)-\inf_{0 \leq s \leq t}\tilde\alpha_i^{\vee}(\pi(s))\tilde\alpha_i.$$

  The case of $H_3$ is similar by using $D_6$:
  $${\tt    \setlength{\unitlength}{0.60pt}
\begin{picture}(263,150)
\thinlines    \put(100,0){ System $D_6$}           
 \put(209,5){{$6$}}
              \put(110,45){{$3$}}
              \put(209,110){{$5$}}
              \put(155,45){{$4$}}
              \put(59,45){{$2$}}
  \put(65,63){\line(1,0){40}}
              \put(9,45){{$1$}}
                      \put(211,103){\circle{8}}
              \put(211,23){\circle{8}}
        \put(166,63){\line(1,-1){40}}
              \put(166,63){\line(1,1){40}}
              \put(161,63){\circle{8}}
              \put(116,63){\line(1,0){40}}
              \put(111,63){\circle{8}}
      \put(61,63){\circle{8}}
              \put(16,63){\line(1,0){40}}
              \put(11,63){\circle{8}}
\end{picture}}
$$
In $V=\R^6$ we choose the roots $\alpha_1,...,\alpha_6$ with $\langle \alpha_i, \alpha_{j} \rangle =-1$ if they are linked. We define a 3-dimensional subspace $\Pi$  defined as the set of  $x \in V$ orthogonal to $ \alpha_5-\phi\alpha_1, \alpha_4-\phi\alpha_2$ and $ \phi\alpha_6- \alpha_3$.
 Then the reflections
\begin{equation}\label{tau_sig}\tau_1=s_1s_5,\,\, \tau_2=s_2s_4, \,\, \tau_3= s_3s_6,
\end{equation}
  generate $H_3$ and
   $$\tilde \alpha_1=\alpha_1+a\alpha_5,\tilde \alpha_2=\alpha_2+a\alpha_4,\tilde \alpha_3=\alpha_3+b\alpha_6$$
 are in $\Pi$. $\square$

    We will  prove in corollary \ref{cor_cone}
that the cones $C_{\bf i}$ have the following description: 
for any simple root $\alpha$, let ${\bf j}(\alpha)$ be a reduced decomposition of $w_0$ which begins by $s_\alpha$. 
    Then $$C_{\bf i}=\{x \in \R^q; \phi_{\bf i}^{{\bf j}(\alpha)}
    (x)_1 \geq 0, \mbox{ for all simple roots } \alpha\}.$$    
    \subsection{The cone in the dihedral case}
  In this section we provide explicit equations for the cone,  in the dihedral case,
 following the approach of Littelmann \cite{littel2} in the Weyl group case.
\begin{lemma}\label{lemme_di} Let $\alpha, \beta\in V$, 
$\alpha^\vee, \beta^\vee \in V^\vee$ and
$c=-\beta^\vee(\alpha)$. Consider a continuous path 
$\eta \in C^0_T(V)$  and
$\pi=\PP_{\alpha}\eta$. Let 
\begin{eqnarray*}
U&=&\min_{T \geq t \geq�0}[a \beta^\vee (\eta(t)) +b \min_{t\geq s\geq 0}
\alpha^\vee(\eta(s))]�,\\
V&=&\min_{T\geq t\geq 0}[a \min_{t\geq s\geq 0}\beta^\vee(\pi(s)+
(ac-b) \alpha^\vee (\pi(t))],\\ W&=&a
\min_{T\geq t\geq 0}\beta^\vee (\pi(t))-(ac-b)\min_{T\geq t\geq 0} \alpha^\vee(\eta(t)),
\end{eqnarray*}
where $a,b$ are real numbers such that $a \geq 0, ac-b \geq�0$. Then $U=\min(V,W)$.
\end{lemma}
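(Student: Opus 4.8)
\textbf{Plan of proof for Lemma \ref{lemme_di}.}

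The statement asserts an identity $U = \min(V,W)$ among three explicit expressions built from the path $\eta$ and its Pitman transform $\pi = \PP_\alpha\eta$. The natural strategy is to \emph{unfold the definition} of $\pi$, namely $\pi(t) = \eta(t) - m(t)\alpha$ where $m(t) := \inf_{t\ge s\ge 0}\alpha^\vee(\eta(s))$, and then rewrite the quantities $V$ and $W$ purely in terms of $\eta$. Since $\alpha^\vee(\alpha)=2$ and $\beta^\vee(\alpha) = -c$, one has $\alpha^\vee(\pi(t)) = \alpha^\vee(\eta(t)) - 2m(t)$ and $\beta^\vee(\pi(t)) = \beta^\vee(\eta(t)) + c\,m(t)$. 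Substituting into $V$ and $W$ and using $ac - b \ge 0$ together with $a\ge 0$, the $m(\cdot)$-terms should combine so that the integrand in $V$ becomes $a\,\inf_{t\ge s\ge 0}[\beta^\vee(\eta(s)) + c\,m(s)] + (ac-b)[\alpha^\vee(\eta(t)) - 2m(t)]$, and similarly for $W$. The goal is to massage both $V$ and $W$ into expressions of the same shape as $U$, i.e.\ an infimum over a chain $T\ge t\ge s\ge 0$ of a linear combination of $\beta^\vee(\eta(\cdot))$ and $\alpha^\vee(\eta(\cdot))$ with nonnegative coefficients.

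\textbf{Key steps, in order.} First I would fix notation $m(t) = \inf_{t\ge s\ge 0}\alpha^\vee(\eta(s))$ and record the two substitution formulas above. Second, I would handle $W$: it is an infimum of $\beta^\vee(\pi(\cdot))$ minus a nonnegative multiple of $\min\alpha^\vee(\eta(\cdot))$, so after substitution it becomes $a\inf_t[\beta^\vee(\eta(t)) + c\,m(t)] - (ac-b)m(T)$; since $m$ is nonincreasing, $m(T) = \inf_{T\ge s\ge 0}\alpha^\vee(\eta(s))$ and $m(T) \le m(t)$ for all $t$, which lets one absorb the $-(ac-b)m(T)$ term correctly — $W$ corresponds to the part of the infimum in $U$ where the outer index $t$ is pushed to $T$ (equivalently, where $a\beta^\vee(\eta(t))$ is small near $t=T$). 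Third, and symmetrically, $V$ after substitution should match the part of the chain-infimum in $U$ where the constraint between the two indices is active. Fourth, I would show $U \le V$ and $U \le W$ directly (each is an infimum over a larger/constrained family, hence the infimum in $U$ is no larger than either), giving $U \le \min(V,W)$; then show the reverse by taking a chain $t_0 \ge t_1$ nearly achieving $U$ and arguing that it is accounted for either in $V$ (if $t_1$ "matters") or in $W$ (if only the endpoint of $m$ matters), using monotonicity of $m$.

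\textbf{Main obstacle.} The routine part is the substitution algebra; the delicate part is the case analysis establishing $\min(V,W) \le U$. One must argue that in the infimum defining $U$, for any nearly-optimal chain $T \ge t_0 \ge t_1 \ge 0$, either the inner infimum $b\,m(t_0)$ is attained "at or past $t_0$ in a way that feeds into $V$", or the value is dominated by sending the inner point all the way to $0$ (resp.\ the outer behaviour is governed by $t_0 = T$), which is exactly what $W$ captures. Getting the bookkeeping of which coefficient multiplies which $\min$-term right — in particular checking that $ac-b\ge0$ and $a\ge0$ are precisely the sign conditions that make every coefficient appearing nonnegative so that infima can be freely interchanged and split — is where care is needed. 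I expect the proof to reduce, after the substitutions, to the elementary fact that for a nonincreasing function $m$ and nonnegative constants, $\inf_t\big[f(t) + \lambda\, m(t)\big]$ with $\lambda \ge 0$ can be split according to whether $m$ has already reached its global minimum by time $t$, which is a short monotonicity argument.
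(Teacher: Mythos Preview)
Your overall strategy --- substitute the Pitman relation, then prove $U \le \min(V,W)$ and $\min(V,W) \le U$ separately --- is exactly the shape of the paper's proof. Two points, one minor and one substantive.

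First, a minor issue of direction: the paper substitutes into $U$ (writing $\beta^\vee(\eta(t)) = \beta^\vee(\pi(t)) + c\,m(t)$) rather than into $V$ and $W$. This is cleaner: one gets immediately
\[
U \;=\; \min_{T\ge t\ge 0}\bigl[\,a\,\beta^\vee(\pi(t)) + (b-ac)\,m(t)\,\bigr],
\]
and then the monotonicity trick you allude to (if $g$ is nondecreasing then $\min_t[f(t)+g(t)] = \min_t[\min_{s\le t}f(s)+g(t)]$) replaces $a\,\beta^\vee(\pi(t))$ by $F(t):=a\min_{s\le t}\beta^\vee(\pi(s))$. Now $U$, $V$, $W$ are all written with the same building blocks $F(t)$ and $m(t)$, and $U \le V$, $U \le W$ follow at once from $\alpha^\vee(\pi(t)) \ge -m(t)$ and from evaluating at $t=T$. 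Substituting into $V$ instead, as you propose, would leave you with $\min_{s\le t}[\beta^\vee(\eta(s)) + c\,m(s)]$, a nested minimum that is awkward to decouple.

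Second, and this is the real gap: your proposed mechanism for $\min(V,W)\le U$ does not work as stated. The dichotomy ``whether $m$ has already reached its global minimum by time $t$'' is not the right one. On the set $\{t:m(t)=m(T)\}$ one does get that the $U$-integrand is at least $W$; but on the complementary set $\{t:m(t)>m(T)\}$ you would need the $U$-integrand to dominate the $V$-integrand, i.e.\ $-m(t)\ge\alpha^\vee(\pi(t))$, and the true inequality goes the other way. The paper's device is sharper: take the \emph{largest} $t_0\in[0,T]$ at which the rewritten $U$-integrand $F(t)-(ac-b)m(t)$ attains its minimum. If $t_0=T$ then $U=W$. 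If $t_0<T$, argue by contradiction that $\alpha^\vee(\eta(t_0))=m(t_0)$: otherwise $m$ is locally constant just to the right of $t_0$, while $F$ is nonincreasing, so the integrand does not increase past $t_0$ --- contradicting maximality of $t_0$. The equality $\alpha^\vee(\eta(t_0))=m(t_0)$ gives $\alpha^\vee(\pi(t_0))=-m(t_0)$, so the $V$-integrand and the $U$-integrand coincide at $t_0$, whence $V\le U$. This ``largest minimizer plus local constancy of $m$'' step is the one concrete idea your plan is missing.
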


\proof Since
$\pi=\PP_{\alpha}\eta$, 
$$\beta^\vee(\eta(t))=\beta^\vee(\pi(t))-c\min_{t\geq s\geq 0} \alpha^\vee(\eta(s)),$$ thus 
\begin{eqnarray*}
U
&=&\min_{T \geq t \geq 0}[a
\beta^\vee (\pi(t))+(b-ac)\min_{t\geq s\geq 0}\alpha^\vee(\eta(s))]\\
&=&\min_{T \geq t \geq 0}[
\min_{t\geq s\geq 0}a\beta^\vee (\pi(s))+(b-ac)\min_{t\geq s\geq 0} \alpha^\vee(\eta(s))].
\end{eqnarray*}
where we have used  the  fact that, 
if  $f,g:[0,T]\to {\mathbb R}$ are two continuous functions, and if $g$ is
non decreasing, then 
$$\min_{T\geq t\geq 0}[f(t)+g(t)]=\min_{T\geq t\geq 0}[\min_{t\geq s\geq 0}f(s)+g(t)].$$

 Since
 $\alpha^\vee(\pi(t))
\geq -\min_{t\geq s\geq 0}
\alpha^\vee(\eta(s)),$ 
$$
\min_{t\geq s\geq 0}a\beta^\vee (\pi(s))+(ac-b) \alpha^\vee(\pi(t))\geq
\min_{t\geq s\geq 0}a\beta^\vee (\pi(s))-(ac-b)\min_{t\geq s\geq 0}
\alpha^\vee(\eta(s)).$$ Let $t_0$ be the largest $t \leq�T$ where the minimum of the right hand side
is achieved. Suppose that $t_0 < T$. 
If $\alpha^\vee(\pi(t_0)) > -\min_{t_0\geq s\geq
0}
\alpha^\vee(\eta(s))$ then $\min_{t\geq s\geq 0}
\alpha^\vee(\eta(s))$ is locally constant on the right of $t_0$. Since
$\min_{t\geq s\geq 0}a\beta^\vee (\pi(s))$ is non increasing, it follows that
$t_0$ is not maximal. Therefore, when $t_0<T,$  
$$\alpha^\vee(\pi(t_0)) = -\min_{t_0\geq s\geq
0}
\alpha^\vee(\eta(s))$$ and 
$$U=\min_{T \geq t \geq 0}[
\min_{t\geq s\geq 0}a\beta^\vee
 (\pi(s))-(ac-b)\inf_{t\geq s\geq 0}\alpha^\vee(\eta(s))]=V\leq W.$$ When
$t_0=T,$ then $U=W\leq V$. Thus $U=\min(V,W)$.
$\square$
\medskip

We consider a realization of  the dihedral system $I(m)$ with two simple roots $\alpha, \beta$ and
 $c:=-\alpha ^{\vee}(\beta) = -\beta^{\vee} (\alpha) =2\cos\frac{\pi}{m}.$
  Let  $$a_n=\frac{\sin(n\pi/m)}{
\sin (\pi/m)}.$$ Then  $a_0=0, a_1=1,$ and $a_{n+1}+a_{n-1}=ca_n$, $a_n > 0$ if $1 \leq n \leq�m-1$ and
$a_m=0$. 
 Let
$w_0=s_{1}\ldots s_{m}$ be  a reduced decomposition of the longest element $w_0\in W$, ${\bf i} =(s_1,\cdots, s_m)$ and $\alpha_1,\cdots,\alpha_m$ be the simple
roots associated with $s_1,\cdots,s_m$. This sequence is either $(\alpha,\beta,\alpha,\cdots)$ or
$(\beta,\alpha,\beta,\cdots)$. Clearly the two roots play a symmetric role, and the cones associated with these two decompositions are the same. We define $\alpha_0$ as the simple root not equal to $\alpha_1$.
As before, when $\eta \in C^0_T(V)$, we define $\eta_m=\eta$ and  for $k=0,\cdots,m-1$,
    $\eta_{k}=\PP_{s_{{k+1}}}\ldots \PP_{s_{m}}\eta,$
  and
 $$x_k=-
    \min_{0\leq t\leq T}\alpha_{{k}}^{\vee}( \eta_k(t))\quad\text{for }k=1,\ldots,m.$$
\begin{prop} \label{dihedralcone}The cone for the dihedral system $I(m)$ is given by
$$C_{\bf i}=\{(x_1,\cdots,x_m)\in {\mathbb R}_+^m; 
\frac{x_{m-1}}{a_{m-1}}\geq \frac{x_{m-2}}{a_{m-2}} \geq \cdots \geq
\frac{x_{1}}{a_{1}}
\}.$$
\end{prop}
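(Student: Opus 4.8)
The plan is to imitate Littelmann's computation of string cones for Weyl groups \cite{littel2}, with the Pitman transforms in the role of the crystal operators $e_\alpha^{\max}$ and with Theorem \ref{formula} and Lemma \ref{lemme_di} as the combinatorial engine. Write $\alpha,\beta$ for the two simple roots of $I(m)$, so that $c=2\cos(\pi/m)$ and, with $\rho=c/2$, the Tchebycheff polynomials satisfy $T_k(\rho)=a_{k+1}$; note that $\rho\ge\cos(\pi/n)$ for every $n\le m$, so Theorem \ref{formula} applies to every product of at most $m$ alternating Pitman transforms. Since $C_{\bf i}$ is a convex cone contained in $\R_+^m$ (Theorem \ref{theo_poly}, Proposition \ref{scal}) and since $\bigcup_{\lambda\in\overline C}K_\pi=\R_+^m$ when $\pi$ runs over straight paths to points $\lambda$ running deep into the chamber, one has $C_{\bf i}=\{\varrho_{\bf i}(\eta):\eta\in C_T^0(V)\}$; so the statement amounts to identifying the image of the string-coordinate map $\eta\mapsto(x_1,\dots,x_m)$.

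The first and main step is an explicit formula for the string coordinates. By descending induction on $k$ -- equivalently, induction on the number $m-k$ of Pitman transforms separating $\eta_k=\PP_{s_{k+1}}\cdots\PP_{s_m}\eta$ from $\eta$ -- I would establish
$$x_k+x_{k+2}+x_{k+4}+\cdots\;=\;\phi_k(\eta)\;:=\;-\inf_{T\ge s_1\ge s_2\ge\cdots\ge s_{m-k+1}\ge 0}\ \sum_{i=1}^{m-k+1}a_i\,\alpha_{k+i-1}^\vee(\eta(s_i)),$$
the sum on the left over indices $\le m$, or equivalently $x_k=\phi_k(\eta)-\phi_{k+2}(\eta)$ with the convention $\phi_l=0$ for $l>m$. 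The induction step uses $\eta_k=\PP_{s_{k+1}}\eta_{k+1}$, so that $\alpha_j^\vee(\eta_k(t))=\alpha_j^\vee(\eta_{k+1}(t))+c\inf_{t\ge s\ge 0}\alpha_{k+1}^\vee(\eta_{k+1}(s))$ if $\alpha_j\neq\alpha_{k+1}$ and $=\alpha_j^\vee(\eta_{k+1}(t))-2\inf_{t\ge s\ge 0}\alpha_{k+1}^\vee(\eta_{k+1}(s))$ if $\alpha_j=\alpha_{k+1}$; applying Lemma \ref{lemme_di} with parameters $a,b$ chosen so that $ac-b$ runs through the numbers $a_{i+1}$ recombines the two nested infima that appear into a single one, and the recursion $a_{i+1}+a_{i-1}=ca_i$ together with $a_m=0$ is exactly what makes the coefficients collapse onto the $a_i$ above (the cases $m$ odd and $m$ even being treated alike, only the bookkeeping of indices differing, as in the proofs of Theorems \ref{piecewise} and \ref{theo_poly}). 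For $I(3)=A_2$ this reproduces the formulas for $x_1,x_2,x_3$ displayed in the proof of Theorem \ref{piecewise}.

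With this formula in hand, the inclusion $C_{\bf i}\subseteq\mathcal C:=\{x\in\R_+^m:x_1/a_1\le\cdots\le x_{m-1}/a_{m-1}\}$ is easy: $x_k\ge 0$ because $\eta_k(0)=0$, and for the chain one computes $a_{k-1}x_k-a_kx_{k-1}$ from the formula and, using the recursion among the $a_n$ once more, identifies it with $-\inf_{T\ge s_1\ge\cdots\ge s_N\ge 0}\sum_i b_i\,(\alpha\mbox{ or }\beta)^\vee(\eta(s_i))$ for a suitable $N$ and nonnegative coefficients $b_i$, which is $\le 0$ (evaluate the infimum at $s_1=\cdots=s_N=0$). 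For the reverse inclusion, observe that $\mathcal C$ is simplicial, its extreme rays being $v_m=(0,\dots,0,1)$ and, for $1\le j\le m-1$, $v_j=(0,\dots,0,a_j,a_{j+1},\dots,a_{m-1},0)$, which are $m$ linearly independent vectors; since $C_{\bf i}$ is a convex cone it therefore suffices to realize each $v_j$ as $\varrho_{\bf i}(\eta)$ for an explicit path. I would take $\eta=\HH_{s_m}^{1}\pi$ for $v_m$ and $\eta=\HH_{s_{m-1}}^{a_{m-1}}\HH_{s_{m-2}}^{a_{m-2}}\cdots\HH_{s_j}^{a_j}\pi$ for $v_j$ with $j<m$, where $\pi$ is the straight path to a point $\lambda\in\overline C$ chosen deep enough, and check, using Proposition \ref{pit}(iv) and Lemma \ref{lemme_di} to evaluate the relevant running infima, that all the operators involved are defined and that the remaining coordinates vanish (equivalently, that $\HH_{s_m}^{x_m}\cdots\HH_{s_1}^{x_1}\pi$ is legitimate for every $x\in\mathcal C$ and deep $\pi$). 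Together with the previous step this gives $C_{\bf i}=\mathcal C$; the reduced decomposition beginning with $\beta$ yields the same cone by the symmetry between $\alpha$ and $\beta$.

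The hard part is the first step: showing, through the repeated use of Lemma \ref{lemme_di}, that (a) the coefficients remain exactly the $a_i$ -- this is where $a_{i+1}+a_{i-1}=ca_i$ and $a_m=0$ enter, just as the Tchebycheff recursion enters Theorem \ref{formula} -- and (b) the ``$\min$ of two terms'' alternative in Lemma \ref{lemme_di} consistently collapses to the branch that keeps a single nested infimum, rather than producing a proliferating tree of infima. The same bookkeeping, in milder form, underlies the two verifications in the last step.
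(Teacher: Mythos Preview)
Your outline has a real gap, and it is not where you think it is. The formula in your ``first step'' is correct --- it is exactly Theorem~\ref{formula} applied to the product $\PP_{s_k}\cdots\PP_{s_m}$ (with $T_i(\rho)=a_{i+1}$), so $\phi_k(\eta)=x_k+x_{k+2}+\cdots$ needs no new induction. The trouble is the step you call ``easy'': deducing $a_{k-1}x_k-a_kx_{k-1}\ge 0$ from these formulas. Writing $x_j=\phi_j-\phi_{j+2}$ you get
\[
a_{k-1}x_k-a_kx_{k-1}\;=\;a_{k-1}\phi_k+a_k\phi_{k+1}-a_k\phi_{k-1}-a_{k-1}\phi_{k+2},
\]
a four-term signed combination of nested infima, and there is no reason this should collapse to a single $-\inf$ with nonnegative coefficients. (Already for $A_2$ one gets $x_2-x_1=\phi_2-\phi_1+\phi_3$, which is not of that form.) Your closing remark that the hard point is making the $\min(V,W)$ in Lemma~\ref{lemme_di} ``collapse to a single nested infimum'' diagnoses the obstruction correctly but proposes the wrong cure.

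The paper does the opposite: it \emph{keeps} the $\min$ structure. Instead of expressing the $x_k$'s via infima of the original path $\eta$, it applies Lemma~\ref{lemme_di} to the intermediate paths $\eta_{p-k}$, iterating from the dominant end $\eta_0=\pi$ outward. Each application produces $V_k=\min(V_{k+1},W_{k+1})$, and unfolding gives
\[
\min_{0\le t\le T}\alpha_{p+1}^\vee(\eta_p(t))\;=\;\min\bigl(a_2x_p-a_1x_{p-1},\dots,a_px_2-a_{p-1}x_1,\,a_{p+1}x_1,\,0\bigr),
\]
a $\min$ of linear forms in the $x_j$'s. Taking $p=m-1$ and using that $\eta_{m-1}=\PP_{\alpha_m}\eta$ is $\alpha_m$-dominant yields \emph{all} the inequalities $a_{k-1}x_k\ge a_kx_{k-1}$ at once. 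For the converse the same identity, together with the monotonicity $a_{n-1}/a_n\ge a_{n-2}/a_{n-1}$, shows by induction on $p$ that $\eta_p=\HH_{\alpha_p}^{x_p}\cdots\HH_{\alpha_1}^{x_1}\pi$ is $\alpha_{p+1}$-dominant for every $x$ in the claimed cone and every dominant $\pi$ with $x\in K_\pi$ --- so there is no need to single out extreme rays.
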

    \proof For any $p,k $ such that $0 \leq p \leq m, 0 \leq k \leq p$, let 
    \begin{eqnarray*}V_k&=&\min_{T \geq t
\geq 0}[a_{k+1} \alpha_{p+1-k}^\vee (\eta_{p-k}(t))+a_{k}
\min_{t\geq s\geq 0}
\alpha_{{p-k}}^\vee(\eta_{p-k}(s))],\\W_k&=&a_{k} \min_{T\geq t\geq 0}\alpha_{p-k}^\vee (\eta_{p-k}(t))-a_{k+1} \min_{T\geq t\geq 0}\alpha_{p+1-k}^\vee (\eta_{p+1-k}(t))
.\end{eqnarray*}
 Since $a_{k-1}+a_{k+1}=ca_k$,   the lemma above gives that $V_k=\min(W_{k+1},V_{k+1})$. Therefore
 $$V_0=\min(W_1,W_2, \cdots,W_p,V_p).$$
Notice that 
    $$V_p=\min_{T \geq t
\geq 0}[a_{p+1} \alpha_{1}^\vee (\eta_{0}(t))+a_{p}
\min_{t\geq s\geq 0}
\alpha_{{0}}^\vee(\eta_{0}(s))]=0$$ and $W_p=a_{p+1}x_1$
since $\eta_0=\PP_{w_0}\eta$ is dominant. Furthermore  
$$V_0=\min_{0\leq t\leq T}\alpha_{{p+1}}^{\vee}( \eta_p(t))$$ 
since $a_0=0$ and $a_1=1$. Hence,
 \begin{equation}\label{eq_min}
  \min_{0\leq t\leq T}\alpha_{{p+1}}^{\vee}( \eta_p(t))=\min(a_2x_{p}-a_1x_{p-1},\cdots,a_px_{2}-a_{p-1}x_1,a_{p+1}x_{1},0 ).
  \end{equation}
The path $\eta_{ m-1}=\PP_{\alpha_m}\eta$ is $\alpha_m$-dominant, 
therefore $\alpha_{{m}}^{\vee}( \eta_{m-1}(t))\geq 0$
 and it follows from $(\ref{eq_min})$ applied with $p=m-1$ that for $k=1,\cdots,m-2$
$$a_{m-k}x_{k+1}-a_{m-k-1}x_{k} \geq 0,$$
which is equivalent,  since $a_{m-k}=a_k$ to 
$$\frac{x_{m-1}}{a_{m-1}}\geq \frac{x_{m-2}}{a_{m-2}}
 \geq \cdots \geq
\frac{x_{1}}{a_{1}}\geq 0.$$
Conversely, we suppose that these inequalities hold, i.e. that for $k=1, \cdots,m-2$
\begin{equation}\label{eqnumer}a_{k+1}x_{m-k}-a_kx_{m-k-1} \geq 0,\end{equation}
$$a_{m-k}x_{k+1}-a_{m-k-1}x_{k} \geq 0,$$
 and that $(x_1,\cdots, x_m)\in K_\pi$ for some dominant path $\pi$.
Let us show that 
$$\eta=\HH_{\alpha_m}^{x_m}\cdots \HH_{\alpha_1}^{x_1}\pi$$
is well defined. Since the string parametrization of $\eta$ is $x$ this will prove the proposition. It is enough to show, by induction on $p=0,\cdots,m$ that 
$$\eta_p:=\HH_{\alpha_p}^{x_p}\HH_{\alpha_{p-1}}^{x_{p-1}}\cdots \HH_{\alpha_1}^{x_1}\pi$$
is $\alpha_{p+1}$-dominant. This is clear for $p=0$ since $\eta_0=\pi$ is dominant. If we suppose that this is true until $p-1$ can apply (\ref{eq_min}) and write that
$$  \min_{0\leq t\leq T}\alpha_{{p+1}}^{\vee}( \eta_p(t))=\min(a_2x_{p}-a_1x_{p-1},\cdots,a_px_{2}-a_{p-1}x_1,a_{p+1}x_{1},0 )$$
Since $c \leq 2$, it is easy to see that 
$$\frac{a_{n-1}}{a_n} \geq \frac{a_{n-2}}{a_{n-1}}$$
 for $n \leq�m-1$. Therefore,   $$\frac{x_{k+1}}{x_{k}} \geq \frac{a_{m-k-1}}{a_{m-k}}\geq \frac{a_{p-k}}{a_{p-k+1}}$$ and  $ \alpha_{{p+1}}^{\vee}( \eta_p(t) \geq�0$ for all $0 \leq t \leq T$. $\square$
 \medskip
 
 In the definition of $V_k$ and $W_k$ in the proof above,
 replace the sequence $(a_k)$ by the sequence $(a_{k+1})$.
 We obtain the following formula. \begin{prop}
 If $y_m= -\min_{T \geq t \geq 0} \alpha_{m-1}^\vee
(\eta_m(t))$, then
$$y_m=\max\{0,a_{m-1}x_{m-1}-a_{m-2}x_m,a_{m-2}x_{m-2}-a_{m-3}x_{m-1},\cdots,a_{2}x_{2}-a_{1}x_{3},a_{1}x_1\}$$
\end{prop}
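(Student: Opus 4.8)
The plan is to run the telescoping argument from the proof of Proposition \ref{dihedralcone} ``one notch higher'', i.e.\ with the sequence $(a_k)_{k\ge 0}$ replaced by $(a_{k+1})_{k\ge 0}$: this is legitimate because the shifted sequence obeys the very same three--term recurrence $a_k+a_{k+2}=c\,a_{k+1}$, which was the only algebraic input to Lemma \ref{lemme_di}.

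First I would peel off a single Pitman transform. Since $\eta_{m-1}=\PP_{\alpha_m}\eta_m$ and $\alpha_{m-1}^\vee(\alpha_m)=-c$, applying Lemma \ref{lemme_di} with $(\alpha,\beta)=(\alpha_m,\alpha_{m-1})$, $\pi=\eta_{m-1}$ and $(a,b)=(1,0)$ --- for which the hypotheses $a\ge 0$ and $ac-b=c\ge 0$ hold --- should give
$$\min_{T\ge t\ge 0}\alpha_{m-1}^\vee(\eta_m(t))=\min(V_1,W_1),$$
where, using $\min_t\alpha_{m-1}^\vee(\eta_{m-1}(t))=-x_{m-1}$ and $\min_t\alpha_m^\vee(\eta_m(t))=-x_m$, the term $W_1$ equals $a_2x_m-a_1x_{m-1}$, and
$$V_1=\min_{T\ge t\ge 0}\Bigl[a_2\,\alpha_m^\vee(\eta_{m-1}(t))+a_1\min_{t\ge s\ge 0}\alpha_{m-1}^\vee(\eta_{m-1}(s))\Bigr].$$

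Then I would observe that this $V_1$ is precisely the quantity called $V_1$ in the proof of Proposition \ref{dihedralcone} when one sets $p=m$ there, and simply re-run that iteration from this point: Lemma \ref{lemme_di}, used at stage $k$ with $a=a_{k+1}$ and $ac-b=a_{k+2}$, gives $V_k=\min(W_{k+1},V_{k+1})$ for $k=1,\dots,m-2$ (the positivity hypotheses hold since $a_j\ge 0$ for $0\le j\le m$, with $a_m=0$; the step $k=m-1$ is omitted because it would require $a_{m+1}\ge 0$, which is false). This yields $V_1=\min(W_2,\dots,W_{m-1},V_{m-1})$, and evaluating via $x_j=-\min_t\alpha_j^\vee(\eta_j(t))$ one gets $W_k=a_{k+1}x_{m+1-k}-a_kx_{m-k}$, together with $V_{m-1}=\min_t[a_m\alpha_2^\vee(\eta_1(t))+a_{m-1}\min_{t\ge s}\alpha_1^\vee(\eta_1(s))]=-a_{m-1}x_1=-x_1$ (using $a_m=0$, $a_{m-1}=1$). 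Assembling the pieces gives
$$\min_{T\ge t\ge 0}\alpha_{m-1}^\vee(\eta_m(t))=\min\{a_{k+1}x_{m+1-k}-a_kx_{m-k};\ 1\le k\le m-1\},$$
and after negating, reindexing by $j=m-k$, and using the symmetry $a_{m-j}=a_j$ (hence also $a_{m-j+1}=a_{j-1}$), one recovers the asserted expression for $y_m$; the constant $0$ may be adjoined for free, as $y_m\ge a_1x_1=x_1\ge 0$.

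I expect the one delicate point to be the boundary bookkeeping: it is the shift that lets the telescoping be anchored directly at $\min_t\alpha_{m-1}^\vee(\eta_m(t))$, through the extra preliminary step above --- the genuinely new ingredient, which rests on the fact that the reflection in $\alpha_m$ carries $\eta_m$ onto $\eta_{m-1}$ --- and one must stop one step sooner than in Proposition \ref{dihedralcone}, reading off $V_{m-1}$ by hand from $a_m=0$ rather than descending to a vanishing term ``$V_m=0$''. Everything else is word for word the argument already given there.
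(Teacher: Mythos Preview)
Your proof is correct and is essentially the paper's approach. The paper's one-line hint---replace $(a_k)$ by $(a_{k+1})$ in the definitions of $V_k$ and $W_k$---amounts, after reindexing, to running the very same telescoping with $p=m$ instead of $p=m-1$: your ``preliminary peel-off'' is precisely the step $V_0=\min(W_1,V_1)$ of that iteration, and your observation that one must halt at $V_{m-1}$ (read off by hand from $a_m=0$) rather than continue to a ``$V_m$'' step (which would need $a_{m+1}\ge 0$) is exactly the boundary adjustment the shift forces.
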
    
\subsection{Remark on Gelfand Tsetlin cones}

In the Weyl group case, the continuous cone $C_{\bf i}$ appears in the description of toric degenerations   (see Caldero \cite{cald}, Alexeev and Brion \cite{ab}). The polytopes $C_{\bf i}^{\pi}$ are called the string polytopes in Alexeev and Brion \cite{ab}.
Notice that they have shown that the classical Duistermaat-Heckman 
measure coincides with the one given below in Definition \ref{defDH}.
Explicit inequalities for the string cone $C_{\bf i}$ (and therefore for
the string polytopes) in the Weyl group case are given in full generality in 
Berenstein and Zelevinsky in \cite[Thm.3.12]{beze2}. Before, 
Littelmann \cite[Thm.4.2]{littel2} has described it 
for the so called "nice decompositions" of $w_0$. As explained in that paper they were introduced to generalize the Gelfand Tsetlin cones.

\medskip

For the convenience of the reader let us reproduce the description $C_{\bf i}$ in the $A_n$ case, considered explicitly in Alexeev and Brion \cite{ab}, for 
the standard reduced decomposition of the longest element in the symmetric group
$W=S_{n+1}$. This decomposition $ {\bf i}$ is 
\begin{displaymath}
w_0=(s_1)(s_2s_1)(s_3s_2s_1)\dots(s_ns_{n-1}\dots
s_1),
\end{displaymath}
where $s_i$ denotes the transposition exchanging $i$ with $i+1$.
Let us use on $V$ the coordinates $x_{i,j}$ with $i,j\ge 1$,
$i+j\le n+1$. The string cone
is defined by
$$ 
x_{n,1}\ge0; \quad x_{n-1,2}\ge x_{n-1,1}\ge 0; \quad \dots \quad
x_{1,n} \ge \dots \ge x_{1,1} \ge 0,
$$
and to define the polyhedron $C_{\bf i}^\pi$ one has to add the  
inequalities
\begin{displaymath}
x_{i,j} \le \alpha_j^{\vee}(\lambda)  - x_{i,j-1} +
\sum_{k=1}^{i-1}  (-x_{k,j-1} +2x_{k,j} - x_{k,j+1}).
\end{displaymath}
 where $\lambda=\pi(T)$.  A more familiar description of this cone
is in terms of Gelfand-Tsetlin patterns:
$$
g_{i,j} \ge g_{i+1,j} \ge g_{i,j+1}
$$
where $g_{0,j}=\lambda_j$ and  $g_{i,j}=  \lambda_j + \sum_{k=1}^i  
(x_{k,j-1} - x_{k,j} )$ for $i,j\ge 1$, $i+j\le n+1$.

\subsection{Crystal structure of the Littelmann module}
We now return to the general case of a finite Coxeter group.
Let $\pi$ be a dominant path in $C^0_T(V)$.   
   The geometry of the crystal $L_\pi$ is easy to describe, using the sets $C_{\bf i}^\pi$ which parametrize $L_\pi$. We have seen 
   (theorem \ref{theo_poly}) that $C_{\bf i}^\pi$ depend on the path $\pi$ only through $\pi(T)$. We put on $C_{\bf i}^\pi$ a continuous crystal structure in the following way.
Let ${\bf i}=(s_1,\cdots,s_q)$ where $w_0=s_{1}\cdots s_{q}$ is a reduced decomposition. If $x=(x_1,\cdots,x_q) \in C_{\bf i}^{\pi}$ we set
$$wt(x)=\pi(T)-\sum_{k=1}^q x_k \alpha_{s_k}.$$
If the simple root $\alpha$ is  $\alpha_{s_1}$ then first define $e_{\alpha,{\bf i}}^r$ for $r \in \R$ by
  $$e_{\alpha,{\bf i}}^r(x_1,x_2,\cdots,x_q)=(x_1+r,x_2,\cdots,x_q) \mbox { or } {\bf 0}$$
depending whether  $(x_1+r,\cdots,x_q)$ is in $C_{\bf i}^{\pi}$ or not. We let, for $b\in C_{\bf i}^{\pi}$,  $$\varepsilon_\alpha(b) = \max\{r \geq  0 ; e_{\alpha,{\bf i}}^r(b)\not={\bf 0}\} $$ and
$$\varphi_\alpha(b)=\max\{r \geq  0 ; e_{\alpha,{\bf i}}^{-r}(b)\not={\bf 0}\}.$$
We now consider the case where $\alpha$ is not $\alpha_1$. 
We choose a  reduced decomposition 
$w_0=s'_1s'_2 \cdots s'_q$ with $\alpha_{s'_1}=\alpha$ and let ${\bf j}=(s'_1,s'_2,\cdots, s'_q)$. 
We can define $e_{\alpha,{\bf j}}^r$ on
 $C_{\bf j}^{\pi}, \varepsilon_\alpha,  \phi_\alpha$ as 
 above and transport this action on $C_{\bf i}^{\pi}$ by 
 the piecewise linear map $\phi_{\bf i}^{\bf j}$ 
 introduced in theorem \ref{piecewise}. In other words 
$$e_{\alpha,{\bf i}}^r=\phi_{\bf i}^{\bf j}\circ e_{\alpha,{\bf j}}^r \circ \phi_{\bf j}^{\bf i}.$$
Finally we let we define the crystal operators by $e_{\alpha}^r=e_{\alpha,{\bf i}}^r$. Then  $\rho_{\bf i}:L_\pi \to C_{\bf i}^{\pi}$ 
 is an isomorphism of crystal. This first shows that our construction does not depend on the chosen decompositions $w_0=s'_1s'_2 \cdots s'_q$ and 
 then that the crystal structure on $L_\pi$ depends only on the extremity $\pi(T)$ of the path $\pi$:
  \begin{theorem}\label{uniq_iso}
  If $\pi$ and $\bar \pi$ are 
  two dominant paths such that $\pi(T)=\bar \pi(T)$ 
  then the crystals on $L_\pi$ and  $L_{\bar\pi}$ are  isomorphic. 
  \end{theorem}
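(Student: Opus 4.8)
The plan is to exhibit an explicit crystal isomorphism between $L_\pi$ and $L_{\bar\pi}$ by routing both through a common combinatorial model, namely the string parametrization. Fix a reduced decomposition $\mathbf i = (s_1,\dots,s_q)$ of $w_0$. By Theorem~\ref{theo_poly}, the set $C_{\mathbf i}^\pi = \varrho_{\mathbf i}(L_\pi)$ depends on $\pi$ only through the endpoint $\pi(T)$; since $\pi(T)=\bar\pi(T)$ by hypothesis, we have $C_{\mathbf i}^\pi = C_{\mathbf i}^{\bar\pi}$ as subsets of $\R^q$. The idea is then to transport the crystal structure of $L_\pi$ and of $L_{\bar\pi}$ onto this common set $C_{\mathbf i}^\pi$ via the respective string parametrizations $\varrho_{\mathbf i}$, and observe that the transported structures coincide, so that $\varrho_{\bar\pi,\mathbf i}^{-1}\circ\varrho_{\pi,\mathbf i}$ is the desired isomorphism.

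First I would make precise, following the construction given just before the statement of Theorem~\ref{uniq_iso}, the crystal structure on $C_{\mathbf i}^\pi$: the weight map is $wt(x) = \pi(T) - \sum_k x_k \alpha_{s_k}$, which depends only on $\pi(T)$; for $\alpha = \alpha_{s_1}$, the operator $e_{\alpha,\mathbf i}^r$ shifts the first coordinate and returns $\mathbf 0$ when the result leaves $C_{\mathbf i}^\pi$, so it too depends only on the set $C_{\mathbf i}^\pi$, hence only on $\pi(T)$; and $\varepsilon_\alpha,\varphi_\alpha$ are defined from these in the normal way. For a general simple root $\alpha$ one picks a reduced decomposition $\mathbf j$ starting with $s_\alpha$ and sets $e_{\alpha,\mathbf i}^r = \phi_{\mathbf i}^{\mathbf j}\circ e_{\alpha,\mathbf j}^r\circ\phi_{\mathbf j}^{\mathbf i}$, where $\phi_{\mathbf i}^{\mathbf j}$ is the piecewise linear map of Theorem~\ref{piecewise}. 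The key point, already noted in the text, is that $\phi_{\mathbf i}^{\mathbf j}$ is a fixed map, independent of the path, and in particular independent of $\pi$; combined with Proposition~\ref{homeo} (which guarantees $\varrho_{\mathbf i}$ is a bicontinuous bijection $L_\pi \to C_{\mathbf i}^\pi$), this shows that every structure map of the induced crystal on $C_{\mathbf i}^\pi$ is manufactured from data that only sees $\pi(T)$.

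The second step is to check that $\varrho_{\mathbf i}: L_\pi \to C_{\mathbf i}^\pi$ is a crystal isomorphism, i.e.\ that the crystal structure just described on $C_{\mathbf i}^\pi$ is precisely the push-forward of the crystal structure on $L_\pi$. For $\alpha = \alpha_{s_1}$ this is essentially the content of the formula $\eta = \HH_{s_q}^{x_q}\cdots\HH_{s_1}^{x_1}\pi$ together with the relation $\EE_\alpha^r\EE_\alpha^{x_1} = \EE_\alpha^{r+x_1}$ and the description of $\varepsilon_\alpha,\varphi_\alpha$ in terms of the Littelmann operators; for general $\alpha$ it follows because the braid relation for the $\HH$-operators (equation~(3.8) in the excerpt) says exactly that changing from $\mathbf i$ to $\mathbf j$ transforms string coordinates by $\phi_{\mathbf j}^{\mathbf i}$, so the conjugated operator $e_{\alpha,\mathbf i}^r$ agrees with the honest crystal operator on $L_\pi$. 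This is the step I expect to require the most care, since one must verify the compatibility of the ghost-element conventions and of $\varepsilon_\alpha,\varphi_\alpha$ under the piecewise linear transport, and check that the definition of $e_{\alpha,\mathbf i}^r$ does not depend on the auxiliary choice of $\mathbf j$ — but all the necessary ingredients (Theorem~\ref{piecewise}, Proposition~\ref{homeo}, Theorem~\ref{theo_poly}) are already available.

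Finally, the conclusion is immediate: applying the above to both $\pi$ and $\bar\pi$ gives crystal isomorphisms $\varrho_{\mathbf i}: L_\pi \xrightarrow{\sim} C_{\mathbf i}^\pi$ and $\varrho_{\mathbf i}: L_{\bar\pi} \xrightarrow{\sim} C_{\mathbf i}^{\bar\pi}$, and since $C_{\mathbf i}^\pi = C_{\mathbf i}^{\bar\pi}$ with identical crystal structures (both being built only from $\pi(T) = \bar\pi(T)$), the composite $\varrho_{\mathbf i}^{-1}\circ\varrho_{\mathbf i}: L_\pi \to L_{\bar\pi}$ is a crystal isomorphism. In fact this composite is characterized by $\PP_{w_0}\circ(\varrho_{\mathbf i}^{-1}\circ\varrho_{\mathbf i}) = \bar\pi$ and agreement of all string coordinates, which gives a clean closed description of the isomorphism.
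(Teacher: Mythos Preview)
Your proposal is correct and follows essentially the same route as the paper: the argument in the paragraph preceding Theorem~\ref{uniq_iso} defines the crystal structure on $C_{\bf i}^\pi$ using only $\pi(T)$, the set $C_{\bf i}^\pi$ (which by Theorem~\ref{theo_poly} depends only on $\pi(T)$), and the path-independent maps $\phi_{\bf i}^{\bf j}$ from Theorem~\ref{piecewise}, and then observes that $\varrho_{\bf i}$ is a crystal isomorphism---exactly the strategy you outline. Your write-up is in fact more explicit than the paper's about the verification that $\varrho_{\bf i}$ intertwines the crystal operators, which is indeed the point requiring the most care (and where one must be careful about whether it is the coordinate attached to the \emph{outermost} $\HH$ operator, namely $x_q$ for $\alpha=\alpha_{s_q}$, that is shifted by $\EE_\alpha^r$).
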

This is the analogue of Littelmann independence theorem (see \cite{littel}). 
\begin{definition}
When $W$ is finite, for $\lambda \in \bar C$, we denote $B(\lambda)$ the class of the  continuous crystals isomorphic to $L_{\pi}$ where $\pi$ is a dominant path such that $\pi(T)=\lambda$.
\end{definition}

\subsection{Concatenation and closed crystals}

The concatenation $\pi\star\eta$ of two paths $\pi: [0,T] \to V$, $\eta:  
[0,T] \to V$ is defined in Littelmann \cite{littel} as the
path
$\pi\star\eta:[0,T]\to V$ given by
$(\pi\star\eta)(t)=\pi(2t)$, and $(\pi\star\eta)(t+T/2)=\pi(T)+\eta(2t)$ when
$0
\leq t
\leq T/2$. The following theorem is instrumental to prove uniqueness.

\begin{theorem}\label{closed}
The map $$\Theta:C^0_T(V)\otimes C^0_T(V)\to C^0_T(V)$$ defined by 
$\Theta(\eta_1\otimes \eta_2)=\eta_1\star \eta_2$ is a crystal isomorphism.\end{theorem}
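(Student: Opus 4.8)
The plan is to verify that $\Theta$ intertwines all the crystal data (weight, $\varepsilon_\alpha$, $\varphi_\alpha$, and the operators $e_\alpha^r$) by direct computation, exploiting the explicit formulas for the Pitman and Littelmann operators from Section 3 together with the tensor product formulas from Subsection \ref{subtens}. First I would record the basic identities for a concatenation $\pi\star\eta$ on $[0,T]$: since $(\pi\star\eta)$ traces out $\pi$ on $[0,T/2]$ and then $\pi(T)+\eta(\cdot)$ on $[T/2,T]$, one has for any simple root $\alpha$ that $\alpha^\vee((\pi\star\eta)(T)) = \alpha^\vee(\pi(T))+\alpha^\vee(\eta(T))$, that $\inf_{0\le t\le T}\alpha^\vee((\pi\star\eta)(t)) = \min\bigl(\inf_{t}\alpha^\vee(\pi(t)),\ \alpha^\vee(\pi(T))+\inf_t\alpha^\vee(\eta(t))\bigr)$, and analogous formulas for the partial infima $\inf_{t\le s\le T}$. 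Plugging these into the formulas
$$\varepsilon_\alpha(\gamma)=-\inf_{0\le t\le T}\alpha^\vee(\gamma(t)),\qquad \varphi_\alpha(\gamma)=\alpha^\vee(\gamma(T))-\inf_{0\le t\le T}\alpha^\vee(\gamma(t))$$
from Subsection \ref{littelpit}, and setting $\sigma=\varphi_\alpha(\pi)-\varepsilon_\alpha(\eta)$ as in the tensor product definition, I expect to read off directly that $wt(\pi\star\eta)=wt(\pi)+wt(\eta)$, $\varepsilon_\alpha(\pi\star\eta)=\varepsilon_\alpha(\pi)+\sigma^-$, and $\varphi_\alpha(\pi\star\eta)=\varphi_\alpha(\eta)+\sigma^+$, matching the tensor product formulas.

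Next I would check the compatibility with the operators $e_\alpha^r$. The key point is that applying $\EE_\alpha^r$ to a concatenation $\pi\star\eta$ amounts to a Pitman-type modification by $\alpha$ that, depending on the sign of $\sigma$ and the size of $r$, either affects only the $\pi$-part, only the $\eta$-part, or splits across the junction; and this is precisely what the tensor-product formula $e_\alpha^r(b_1\otimes b_2)=e_\alpha^{\max(r,-\sigma)-\sigma^-}b_1\otimes e_\alpha^{\min(r,-\sigma)+\sigma^+}b_2$ records. Concretely I would substitute the explicit piecewise formula for $\EE_\alpha^x\eta(t)$ from Definition \ref{littelmanntransform} applied to $\gamma=\pi\star\eta$, using the infimum identities above to identify where the modifying function $\min(-x,\inf_{t\le s\le T}\alpha^\vee(\gamma(s))-\inf_{0\le s\le T}\alpha^\vee(\gamma(s)))$ "lives" relative to the junction $t=T/2$. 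A clean way to organize this is to treat the case $\sigma\ge0$ first, where by \eqref{sigmapos} the tensor operator acts only on $b_1$ for $r\in[-\sigma,\infty[$, and separately handle $r<-\sigma$ and the case $\sigma<0$ by the same bookkeeping; the $\sigma\le 0$ case can alternatively be deduced from the $\sigma\ge0$ case by an argument using the Sch\"utzenberger involution or by symmetry of the roles of $\varepsilon$ and $\varphi$ under time-reversal.

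Finally, bijectivity: $\Theta$ is a bijection from $C^0_T(V)\times C^0_T(V)$ onto $C^0_T(V)$ with inverse $\gamma\mapsto (\gamma|_{[0,T/2]},\, \gamma|_{[T/2,T]}-\gamma(T/2))$ suitably rescaled to $[0,T]$, and both $\Theta$ and $\Theta^{-1}$ are continuous; once $\Theta$ is shown to be a strict morphism intertwining the $e_\alpha^r$, the inverse map automatically intertwines them as well, so $\Theta$ is a crystal isomorphism in the sense of Subsection 2.2.

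I expect the main obstacle to be the case analysis in the operator-compatibility step: the Littelmann operator $\EE_\alpha^r$ is defined by different piecewise formulas on three $x$-ranges, the concatenation introduces a split of the time interval, and the tensor-product operator has the nested $\max$/$\min$ structure, so keeping track of which regime each of the three quantities is in — and checking that the boundaries of the regimes coincide — requires care. The identities for $\inf_{t\le s\le T}\alpha^\vee((\pi\star\eta)(s))$ in the two ranges $t\le T/2$ and $t\ge T/2$ are what make all the cases reconcile, so I would state and prove those infimum identities as a preliminary lemma before launching into the verification.
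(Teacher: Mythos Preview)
Your proposal is correct in outline but takes a different route from the paper's proof. You plan a direct piecewise verification: compute $\EE_\alpha^r(\pi\star\eta)$ from the explicit formula in Definition \ref{littelmanntransform}, split according to whether $t\le T/2$ or $t\ge T/2$, and match against the tensor-product formula case by case. The paper instead passes through the Pitman transform: it writes each factor as $\eta_i=\HH_\alpha^{x_i}(\PP_\alpha\eta_i)$, introduces an auxiliary operator $\PP_\alpha^x$, and proves a structural identity (Proposition \ref{prop_FF_star}) of the form $\HH_\alpha^x\pi_1\star\HH_\alpha^y\pi_2=\HH_\alpha^r(\pi_1\star\HH_\alpha^z\pi_2)$, which reduces $\EE_\alpha^s(\eta_1\star\eta_2)$ to $\HH_\alpha^{r-s}$ applied to a single concatenation. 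This packaging absorbs most of the case analysis into one clean proposition, whereas your approach keeps the cases visible throughout; both work, but the paper's version is shorter and yields Lemma \ref{lem:Px} and Proposition \ref{prop_FF_star} as reusable byproducts (they are invoked again later, e.g.\ in the proof of Theorem \ref{theo:action} and in Section 5).

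One caution: your suggestion to handle the $\sigma\le 0$ case via the Sch\"utzenberger involution would be circular, since the development in Section \ref{schutz} relies on Theorem \ref{closed} (through Theorem \ref{crysiso}). Your alternative, the time-reversal symmetry $\eta\mapsto\kappa\eta$, is fine and does the job without circularity. Also, be explicit about the ${\bf 0}$ cases: to get a \emph{strict} morphism you must verify that $\EE_\alpha^r(\pi\star\eta)={\bf 0}$ exactly when $e_\alpha^r(\pi\otimes\eta)={\bf 0}$, which amounts to checking $\varepsilon_\alpha(\pi\star\eta)=\varepsilon_\alpha(\pi\otimes\eta)$ and $\varphi_\alpha(\pi\star\eta)=\varphi_\alpha(\pi\otimes\eta)$ --- you have this, but it is worth stating that this is what closes the ${\bf 0}$ case.
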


\proof We have to show that, for simple roots $\alpha$, for
$\eta_1\in L_{\pi_1}, 
\eta_2\in L_{\pi_2}$, for all $s \in \R$,
$$\Theta[e_{\alpha}^s(\eta_1\otimes \eta_2)]=
\EE_{\alpha}^s(\eta_1\star \eta_2).$$
This is a purely one-dimensional statement, which uses only
one root, hence it follows from the similar fact for Littelmann
and Kashiwara crystals. For the convenience of the reader we
provide a proof.
For any $x \geq0$, 
let $$\PP_\alpha^x\eta(t)=\eta(t)-\min(0,x+ \inf_{0\leq s\leq t}
\alpha^\vee\eta(s))\alpha.$$
Thus, for $y=(-\inf_{0\leq s\leq T}
\alpha^\vee\eta(s)-x)\vee 0$,
\begin{equation}\label{eq:px}
\PP_\alpha^x\eta=\EE_\alpha^{y}\eta.
\end{equation}
\begin{lemma}\label{lem:Px} Let $\eta_1,\eta_2\in C^0_T(V)$,
then
\begin{enumerate}[(i)]
\item
$\PP_\alpha(\eta_1 \star \eta_2)=\PP_\alpha\eta_1\star \PP_\alpha^x\eta_2$
where $x=\alpha^\vee\eta_1(T)-\inf_{0\leq t\leq T}
\alpha^\vee\eta_1(t)$;
\item if $x\geq 0$, $\PP_\alpha\PP_\alpha^x=\PP_\alpha;$
\item if $x\geq 0$,
$y\in[0,\alpha^\vee\pi(T)]$, and
$\pi$ be an $\alpha$-dominant path, $\PP_\alpha^x\HH_\alpha^y\pi=\HH_\alpha^{x\wedge y}\pi$.
\end{enumerate}
\end{lemma}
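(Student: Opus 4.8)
The plan is to prove the three items in order, since item (i) is the heart of the matter and items (ii), (iii) are short one-variable computations that follow from the definitions. First I would prove (i) by a direct computation of $\inf_{0\le s\le t}\alpha^\vee(\eta_1\star\eta_2)(s)$. By definition of the concatenation, for $t\in[0,T/2]$ the path $\eta_1\star\eta_2$ agrees (up to time rescaling) with $\eta_1$, so on that half the identity $\PP_\alpha(\eta_1\star\eta_2)=\PP_\alpha\eta_1\star(\cdots)$ holds trivially. For $t\in[T/2,T]$ one has $(\eta_1\star\eta_2)(t)=\eta_1(T)+\eta_2(2t-T)$, and the running infimum of $\alpha^\vee$ splits as
$$
\inf_{0\le s\le t}\alpha^\vee((\eta_1\star\eta_2)(s))=\min\Bigl(\inf_{0\le s\le T}\alpha^\vee(\eta_1(s)),\ \alpha^\vee(\eta_1(T))+\inf_{0\le s\le 2t-T}\alpha^\vee(\eta_2(s))\Bigr).
$$
Subtracting this from $(\eta_1\star\eta_2)(t)$ and comparing with the definition of $\PP_\alpha^x$ with $x=\alpha^\vee\eta_1(T)-\inf_{0\le t\le T}\alpha^\vee\eta_1(t)$ gives exactly $\PP_\alpha\eta_1(T)+\PP_\alpha^x\eta_2(2t-T)$, which is what the concatenation $\PP_\alpha\eta_1\star\PP_\alpha^x\eta_2$ prescribes on the second half. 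Care is needed to check that $\PP_\alpha^x\eta_2$ is indeed a path starting at $0$ (true, since $x\ge 0$) and that the endpoint of $\PP_\alpha\eta_1$ is added correctly, but this is bookkeeping.

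Next, item (ii): if $x\ge 0$ then $\PP_\alpha^x\eta(t)=\eta(t)-\min(0,x+\inf_{0\le s\le t}\alpha^\vee\eta(s))\alpha$, so $\alpha^\vee(\PP_\alpha^x\eta(t))=\alpha^\vee(\eta(t))-2\min(0,x+\inf_{0\le s\le t}\alpha^\vee\eta(s))$ (using $\alpha^\vee(\alpha)=2$), and one checks that $\inf_{0\le s\le t}\alpha^\vee(\PP_\alpha^x\eta(s))=\inf_{0\le s\le t}\alpha^\vee(\eta(s))$ for every $t$: whenever the correction term is active the new value stays $\ge -x\le 0$... more precisely the running infimum is unchanged because adding a nonnegative constant $x$ before taking $\min(0,\cdot)$ only lifts the path where it was already below $-x$, never below the original infimum. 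Hence applying $\PP_\alpha$ afterwards produces the same running infimum, so $\PP_\alpha\PP_\alpha^x\eta=\PP_\alpha\eta$. I would write this out as a short line of inequalities, being careful not to insert a blank line into any display.

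Finally, item (iii): for an $\alpha$-dominant path $\pi$ and $y\in[0,\alpha^\vee\pi(T)]$, Proposition \ref{pit}(iv) gives the explicit formula $\HH_\alpha^y\pi(t)=\pi(t)-\min[y,\inf_{T\ge s\ge t}\alpha^\vee(\pi(s))]\alpha$. Then $\inf_{0\le s\le t}\alpha^\vee(\HH_\alpha^y\pi(s))$ can be computed, using $\alpha$-dominance of $\pi$, to equal $-\,\min[y,\,\inf_{0\le s\le T}\alpha^\vee\pi(s)\wedge\cdots]$; substituting into the definition of $\PP_\alpha^x$ and simplifying with $x\ge 0$ yields $\HH_\alpha^{x\wedge y}\pi$, where the $\min$ with $x$ enters precisely through the $\min(0,x+\cdots)$ in the definition of $\PP_\alpha^x$. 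The main obstacle is item (i): one must handle the two time-intervals of the concatenation separately and correctly track the shift by $\eta_1(T)$ in the running infimum, and then recognize the resulting expression as $\PP_\alpha^x\eta_2$ with the right $x$; once that split is set up cleanly, (ii) and (iii) are routine one-dimensional manipulations of running infima.
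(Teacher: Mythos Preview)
Your treatment of (i) is exactly the paper's: split at $T/2$, write the running infimum of $\alpha^\vee(\eta_1\star\eta_2)$ on the second half as the minimum of $\inf_{0\le s\le T}\alpha^\vee\eta_1(s)$ and $\alpha^\vee\eta_1(T)+\inf_{0\le s\le 2t-T}\alpha^\vee\eta_2(s)$, and recognise the result as $\PP_\alpha\eta_1(T)+\PP_\alpha^x\eta_2(2t-T)$.

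For (ii) and (iii) the paper takes a shorter route than yours: both are deduced in one line from the identity
\[
\PP_\alpha^x\eta=\EE_\alpha^{y}\eta,\qquad y=\bigl(-\inf_{0\le s\le T}\alpha^\vee\eta(s)-x\bigr)\vee 0
\]
(equation (\ref{eq:px})). Since $\PP_\alpha\EE_\alpha^y=\PP_\alpha$ by the very definition of the Littelmann operators, (ii) is immediate; and since $\inf_{0\le s\le T}\alpha^\vee(\HH_\alpha^y\pi(s))=-y$, (iii) reads $\PP_\alpha^x\HH_\alpha^y\pi=\EE_\alpha^{(y-x)\vee 0}\HH_\alpha^y\pi=\HH_\alpha^{y-(y-x)\vee 0}\pi=\HH_\alpha^{x\wedge y}\pi$.

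Your direct approach to (ii) contains a genuine error. The claim that $\inf_{0\le s\le t}\alpha^\vee(\PP_\alpha^x\eta(s))=\inf_{0\le s\le t}\alpha^\vee(\eta(s))$ for every $t$ is false. Write $m(t)=\inf_{0\le s\le t}\alpha^\vee\eta(s)$. When $m(t)<-x$ one has $\alpha^\vee(\PP_\alpha^x\eta(s))=\alpha^\vee\eta(s)-2(x+m(s))\ge -m(s)-2x\ge -x$ for $s$ in the region where the correction is active, while on the earlier region $\PP_\alpha^x\eta=\eta$ with infimum $-x$; so the running infimum of $\alpha^\vee(\PP_\alpha^x\eta)$ equals $-x$, not $m(t)$. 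The identity $\PP_\alpha\PP_\alpha^x=\PP_\alpha$ still holds, but because this shift of the infimum exactly cancels the correction term $\min(0,x+m(t))$ already present in $\PP_\alpha^x\eta$, not because the infimum is unchanged. A direct proof along your lines works once this is fixed, but the paper's appeal to (\ref{eq:px}) bypasses the computation altogether.
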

\proof For  all $t\in [0,  T/2]$,
$\PP_\alpha(\eta_1 \star \eta_2)(t)=\PP_\alpha\eta_1(t).$
Furthermore,
$$\begin{array}{l}
\PP_\alpha(\eta_1 \star \eta_2)((T+t)/{2})
\\ =(\eta_1 \star \eta_2)((T+t)/{2})-\min[\inf_{0\leq s\leq T}
\alpha^\vee\eta_1(s),\alpha^\vee\eta_1(T)+\inf_{0\leq s \leq t}
\alpha^\vee \eta_2(s)]\alpha\\
=\eta_1(T) -\inf_{0\leq s\leq T}
\alpha^\vee\eta_1(s)\alpha +\\ \qquad\eta_2(t)-
\min[0,\inf_{0\leq s \leq t} \alpha^\vee\eta_2(s)+
\alpha^\vee\eta_1(T)-\inf_{0\leq s\leq T}
\alpha^\vee\eta_1(s)]\alpha\\
=\PP_\alpha\eta_1(T)+\PP_\alpha^x \eta_2(t).
\end{array}
$$
This proves $(i)$, and 
 $(ii)$ follows from (\ref{eq:px}). 
 Furthermore, $\inf_{0\leq s\leq T}\alpha^{\vee}(\HH_\alpha^y\pi(s))=-y$,
 therefore $(iii)$ follows also from (\ref{eq:px}). 
 $\square$

\begin{prop}\label{prop_FF_star}
  Let $\pi_1, \pi_2$ be $\alpha$-dominant paths, 
  $ x\in[0, \alpha^\vee\pi_1(T)]$,
  $ y \in[0, \alpha^\vee\pi_2(T)]$,
 $z=\min(y,\alpha^\vee\pi_1(T)-x)$ and $r=x+y-z$, then
$$\HH_{\alpha}^{x} \pi_1 \star \HH_{\alpha}^{y}\pi_2=\HH_{\alpha}^{r} 
(\pi_1 \star \HH_{\alpha}^{z}  \pi_2),$$
  \end{prop}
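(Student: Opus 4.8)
The plan is to unwind both sides using the characterization of $\HH_\alpha^x\pi$ as the unique path $\eta$ with $\PP_\alpha\eta=\pi$ and $\eta(T)=\pi(T)-x\alpha$ (from Proposition \ref{pit}(iv) and the discussion of $\HH$). Since $\star$ and all the operators involved are one-dimensional in $\alpha$, everything is governed by the scalar function $t\mapsto\alpha^\vee$ of the path and its running infimum; so the whole statement can in principle be checked by tracking $\alpha^\vee$ of both sides, but it is cleaner to argue structurally. First I would apply Lemma \ref{lem:Px}(i) to $\PP_\alpha(\HH_\alpha^x\pi_1\star\HH_\alpha^y\pi_2)$: since $\PP_\alpha\HH_\alpha^x\pi_1=\pi_1$ and $\inf_{[0,T]}\alpha^\vee(\HH_\alpha^x\pi_1)=-x$, the relevant constant is $x' = \alpha^\vee\pi_1(T)-x$ (note $\HH_\alpha^x\pi_1(T)=\pi_1(T)-x\alpha$, and $\pi_1$ is $\alpha$-dominant so $\inf\alpha^\vee\pi_1=0$, giving $x'=\alpha^\vee\pi_1(T)-x\ge 0$), hence
$$\PP_\alpha(\HH_\alpha^x\pi_1\star\HH_\alpha^y\pi_2)=\pi_1\star\PP_\alpha^{x'}\HH_\alpha^y\pi_2.$$
By Lemma \ref{lem:Px}(iii), $\PP_\alpha^{x'}\HH_\alpha^y\pi_2=\HH_\alpha^{x'\wedge y}\pi_2=\HH_\alpha^{z}\pi_2$ since $z=\min(y,\alpha^\vee\pi_1(T)-x)=x'\wedge y$. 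Therefore
$$\PP_\alpha\bigl(\HH_\alpha^x\pi_1\star\HH_\alpha^y\pi_2\bigr)=\pi_1\star\HH_\alpha^{z}\pi_2.$$

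Next I would compute the endpoint: $(\HH_\alpha^x\pi_1\star\HH_\alpha^y\pi_2)(T)=\HH_\alpha^x\pi_1(T)+\HH_\alpha^y\pi_2(T)=\pi_1(T)+\pi_2(T)-(x+y)\alpha$, while $(\pi_1\star\HH_\alpha^z\pi_2)(T)=\pi_1(T)+\pi_2(T)-z\alpha$. So the endpoint of the left side is the endpoint of $\pi_1\star\HH_\alpha^z\pi_2$ shifted by $-(x+y-z)\alpha=-r\alpha$. Combined with the displayed identity for $\PP_\alpha$, this says precisely that $\HH_\alpha^x\pi_1\star\HH_\alpha^y\pi_2$ is a path $\eta$ with $\PP_\alpha\eta=\pi_1\star\HH_\alpha^z\pi_2$ and $\eta(T)=(\pi_1\star\HH_\alpha^z\pi_2)(T)-r\alpha$. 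By the uniqueness in Proposition \ref{pit}(iv), this forces $\HH_\alpha^x\pi_1\star\HH_\alpha^y\pi_2=\HH_\alpha^{r}(\pi_1\star\HH_\alpha^z\pi_2)$, provided $r$ lies in the admissible range $[0,\alpha^\vee(\pi_1\star\HH_\alpha^z\pi_2)(T)]$.

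The main obstacle is precisely this last range check, i.e. verifying that $\pi_1\star\HH_\alpha^z\pi_2$ is $\alpha$-dominant (so that $\HH_\alpha^r$ makes sense on it) and that $0\le r\le\alpha^\vee(\pi_1\star\HH_\alpha^z\pi_2)(T)=\alpha^\vee\pi_1(T)+\alpha^\vee\pi_2(T)-z$. For $\alpha$-dominance: $\pi_1$ is $\alpha$-dominant, and on the second half $\alpha^\vee(\pi_1\star\HH_\alpha^z\pi_2)(T/2+\cdot)=\alpha^\vee\pi_1(T)+\alpha^\vee\HH_\alpha^z\pi_2(\cdot)\ge \alpha^\vee\pi_1(T)-z\ge\alpha^\vee\pi_1(T)-(\alpha^\vee\pi_1(T)-x)=x\ge 0$ using $\inf\alpha^\vee\HH_\alpha^z\pi_2=-z$ and $z\le\alpha^\vee\pi_1(T)-x$. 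For $r\ge 0$: $r=x+y-z=x+y-\min(y,\alpha^\vee\pi_1(T)-x)=\max(x, x+y-\alpha^\vee\pi_1(T))\ge x\ge 0$. For the upper bound: if $z=y$ then $r=x\le\alpha^\vee\pi_1(T)\le\alpha^\vee\pi_1(T)+\alpha^\vee\pi_2(T)-y$ since $y\le\alpha^\vee\pi_2(T)$; if $z=\alpha^\vee\pi_1(T)-x$ then $r=x+y-\alpha^\vee\pi_1(T)+x = y - (\alpha^\vee\pi_1(T)-x)\cdot(-1)\cdots$ — one checks $r=\alpha^\vee\pi_1(T)+y-2(\alpha^\vee\pi_1(T)-x)$ is bounded by $\alpha^\vee\pi_1(T)+\alpha^\vee\pi_2(T)-z = \alpha^\vee\pi_2(T)+x$, which reduces to $y\le\alpha^\vee\pi_2(T)+2(\alpha^\vee\pi_1(T)-x)$, true since $y\le\alpha^\vee\pi_2(T)$ and $\alpha^\vee\pi_1(T)-x\ge 0$. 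These are elementary once organized by the two cases in the definition of $z$, so the argument is complete modulo bookkeeping.
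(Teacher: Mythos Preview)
Your proof is correct and follows essentially the same route as the paper: apply Lemma~\ref{lem:Px}(i) to compute $\PP_\alpha$ of the left side, use Lemma~\ref{lem:Px}(iii) to identify the second factor as $\HH_\alpha^{z}\pi_2$, compare endpoints, and conclude by the uniqueness characterization of $\HH_\alpha^r$. In fact you are slightly more careful than the paper, which does not explicitly verify that $\pi_1\star\HH_\alpha^z\pi_2$ is $\alpha$-dominant or that $r$ lies in the admissible range $[0,\alpha^\vee(\pi_1\star\HH_\alpha^z\pi_2)(T)]$; your case analysis for these checks is fine (modulo a small slip in the displayed expression for $r$ as a max, where $x+y-\alpha^\vee\pi_1(T)$ should read $2x+y-\alpha^\vee\pi_1(T)$, though this does not affect the conclusion $r\ge x\ge 0$).
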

\proof Let $s= \alpha^\vee(\HH_{\alpha}^{x} \pi_1(T))-
\inf_{0\leq t\leq T}
 \alpha^\vee (\HH_{\alpha}^{x} \pi_1)(t)$.
  By lemma \ref{lem:Px}:
 $$\PP_\alpha(\HH_{\alpha}^{x} \pi_1 \star \HH_{\alpha}^{y}\pi_2)=
 \PP_\alpha(\HH_{\alpha}^{x} \pi_1) \star \PP_\alpha^s
 ( \HH_{\alpha}^{y}\pi_2)$$
and  $\PP_\alpha^s\HH_\alpha^y\pi_2=
\HH_\alpha^{s\wedge y}\pi_2$. Since $ \PP_\alpha
\HH_{\alpha}^{x}\pi_1=\pi_1 $ one has
$$\PP_\alpha(\HH_{\alpha}^{x} 
\pi_1 \star \HH_{\alpha}^{y}\pi_2)=\pi_1 \star 
\HH_{\alpha}^{s\wedge y}\pi_2.$$ 
Notice that $s=  \alpha^\vee( \pi_1(T))-x$. 
On the other hand,
$$(\HH_{\alpha}^{x} \pi_1 \star \HH_{\alpha}^{y}\pi_2)(T)=
\HH_{\alpha}^{x} \pi_1(T)+ \HH_{\alpha}^{y}\pi_2(T)=
\pi_1(T)+\pi_2(T)-(x+y)\alpha$$
$$(\pi_1\star \HH_{\alpha}^{s\wedge y}\pi_2)(T)=\pi_1(T)+\pi_2(T)-(s\wedge y)\alpha$$
and we know that $\eta=\HH_\alpha^r\pi$ is characterized by the properties $\PP_\alpha\eta=\pi$ and $\eta(T)=\pi(T)-r\alpha$. Therefore
the proposition holds for $r+s\wedge y=x+y$. $\square$
\medskip

We now prove  that, for $\alpha\in \Sigma$, 
$\eta_1\in L_{\pi_1}, 
\eta_2\in L_{\pi_2}$, for all $s \in \R$,
$$\Theta[e_{\alpha}^s(\eta_1\otimes \eta_2)]=
\EE_{\alpha}^s(\eta_1\star \eta_2).$$
Since $e_{\alpha}^se_{\alpha}^t=e_{\alpha}^{s+t}$ and
 $\EE_{\alpha}^s\EE_{\alpha}^t=\EE_{\alpha}^{s+t}$ it is sufficient to check
 this for $s$ near $0$.
We write $\eta_1=\HH_\alpha^x \pi_1$ and $\eta_2=\HH_\alpha^y \pi_2$ 
where $\pi_1=\PP_\alpha(\eta_1), \pi_2=\PP_\alpha(\eta_2)$ are 
$\alpha$-dominant. By proposition \ref{prop_FF_star}, if 
 $z=\min(y,\alpha^\vee\pi_1(T)-x)$ and $r=x+y-z$, then
$$\EE_{\alpha}^s(\eta_1\star \eta_2)=\EE_{\alpha}^s(\HH_{\alpha}^{x} 
\pi_1 \star \HH_{\alpha}^{y}\pi_2)=\EE_{\alpha}^s\HH_{\alpha}^{r} 
(\pi_1 \star \HH_{\alpha}^{z}  \pi_2).$$
We first show that if
\begin{equation}\label{eq:bf0}\EE_{\alpha}^s(\eta_1\star \eta_2)=
{\bf 0}\end{equation}
then $e_{\alpha}^s(\eta_1\otimes \eta_2)={\bf 0}$. For $|s|$ 
small enough (\ref{eq:bf0}) holds only when $r=0$ and $s>0$ or when  
$s<0$ and\begin{equation}\label{eq:rr}r=\alpha^\vee((\pi_1 \star 
\HH_{\alpha}^{z}  \pi_2)(T))=\alpha^\vee\pi_1(T)+ \alpha^\vee\pi_2(T)-2z.
\end{equation}
If $r=0$, then $z=\min(y,\alpha^\vee\pi_1(T)-x)=x+y$ hence $x=0$ and 
$y \leq \alpha^\vee\pi_1(T)$. 
But 
$$\varepsilon_\alpha(\eta_1\otimes \eta_2)=
\varepsilon_\alpha(\eta_1)-\min(\varphi_\alpha(\eta_1)-
\varepsilon_\alpha(\eta_2),0)=\max(2x+y-\alpha^\vee\pi_1(T), x).$$
(notice that, in general, when  $\pi$ is $\alpha$-dominant, 
$\varepsilon_\alpha(\HH_\alpha^x\pi)=x$ and $\varphi_\alpha
(\HH_\alpha^x\pi)=\alpha^\vee\pi(T)-x$). 
Therefore $\varepsilon_\alpha(\eta_1\otimes \eta_2)=0
$ and $e_{\alpha}^s(\eta_1\otimes \eta_2)={\bf 0}$.
Now, if $r$ is given by (\ref{eq:rr}), then
$$z=\alpha^\vee\pi_1(T)-x+ \alpha^\vee\pi_2(T)-y$$
since $r=x+y-z$. We know that   $\alpha^\vee\pi_2(T)-y\geq 0$,
 hence $z=\min(y,\alpha^\vee\pi_1(T)-x)$ only if
$$z=\alpha^\vee\pi_1(T)-x, \alpha^\vee\pi_2(T)=y, y\geq \alpha^\vee\pi_1(T)-x.$$
Then 
$$\varepsilon_\alpha(\eta_1\otimes \eta_2)=2x+y-\alpha^\vee\pi_1(T).$$
On the other hand, 
$$wt(\eta_1\otimes \eta_2)=wt(\eta_1)+wt( \eta_2)=\pi_1(T)-x\alpha+\pi_2(T)-y\alpha,$$
thus, using $y=\alpha^\vee\pi_2(T)$,
$$\varphi_\alpha(\eta_1\otimes \eta_2)=\varepsilon_\alpha(\eta_1\otimes \eta_2)+\alpha^\vee(wt(\eta_1\otimes \eta_2))=0$$
and $e_{\alpha}^s(\eta_1\otimes \eta_2)={\bf 0}$ when $s<0$.

We now consider the case where (\ref{eq:bf0}) does not hold. Then 
for $s$ small enough,
$$\EE_{\alpha}^s(\eta_1\star \eta_2)=\EE_{\alpha}^s\HH_{\alpha}^{r} 
(\pi_1 \star \HH_{\alpha}^{z}  \pi_2)
=\HH_{\alpha}^{r-s} (\pi_1 \star \HH_{\alpha}^{z}  \pi_2).$$
Using   proposition \ref{prop_FF_star},
  if $s$ is small enough, and $y>\alpha^\vee\pi_1(T)-x,$
then 
$$\HH_{\alpha}^{r-s} (\pi_1 \star \HH_{\alpha}^{z}  
\pi_2)=\HH_{\alpha}^{x-s} \pi_1 \star \HH_{\alpha}^{y}\pi_2=
\Theta(e_\alpha^s (\HH_{\alpha}^{x} \pi_1 \otimes  
\HH_{\alpha}^{y}\pi_2))$$
and if $y<\alpha^\vee\pi_1(T)-x$, then
$$\HH_{\alpha}^{r-s} (\pi_1 \star \HH_{\alpha}^{z}  
\pi_2)=\HH_{\alpha}^{x} \pi_1 \star \HH_{\alpha}^{y-s}\pi_2=
\Theta(e_\alpha^s (\HH_{\alpha}^{x} \pi_1 \otimes  \HH_{\alpha}^{y}\pi_2)).$$
The end of the proof is straightforward. $\square$
  \medskip
  
 By theorem \ref{uniq_iso}, this proves that the family of crystals
   $B(\lambda), \lambda\in \bar C$ is closed. 
  From theorem \ref{thm:exis} and theorem \ref{thmuniq}, we get
  \begin{theorem}\label{uniq_closed}
  When $W$ is a finite Coxeter group, 
  there exists one and only one  closed family of highest 
  weight normal continuous crystals $B(\lambda), \lambda \in \bar C$.
  \end{theorem}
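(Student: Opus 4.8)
The plan is to build the family directly from the Littelmann modules $L_\pi$ and then invoke the uniqueness result already established. For each $\lambda\in\bar C$ fix a dominant path $\pi_\lambda\in C_T^0(V)$ with $\pi_\lambda(T)=\lambda$; the segment $\pi_\lambda(t)=(t/T)\lambda$ works, since $\bar C$ is a convex cone and hence $\alpha_s^\vee(\pi_\lambda(t))=(t/T)\,\alpha_s^\vee(\lambda)\ge 0$ for all $s\in S$. Let $B(\lambda)$ be the isomorphism class of $L_{\pi_\lambda}$. By Theorem \ref{thm:exis} this is a normal continuous crystal of highest weight $\lambda$, with primitive element $\pi_\lambda$ (indeed $\varepsilon_\alpha(\pi_\lambda)=-\inf_{0\le t\le T}\alpha^\vee(\pi_\lambda(t))=0$ for every simple root $\alpha$, so normality forces $e_\alpha^r(\pi_\lambda)={\bf 0}$ for $r>0$), and by Theorem \ref{uniq_iso} the class does not depend on the chosen $\pi_\lambda$, so $B(\lambda)$ is well defined.

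The substantive point for existence is to check that $(B(\lambda))_{\lambda\in\bar C}$ is closed, and here the concatenation theorem enters. Fix $\lambda,\mu\in\bar C$. The concatenation $\pi_\lambda\star\pi_\mu$ is again dominant: on the first half it is a reparametrization of $\pi_\lambda$, and on the second half its values are sums of $\pi_\lambda(T)$ with a point of $\pi_\mu$, both lying in the cone $\bar C$; its endpoint is $\pi_\lambda(T)+\pi_\mu(T)=\lambda+\mu$, so $L_{\pi_\lambda\star\pi_\mu}$ represents $B(\lambda+\mu)$. On the other hand, applying the tensor product formulas of \S\ref{subtens} to $b_1=\pi_\lambda$, $b_2=\pi_\mu$ with $\varepsilon_\alpha(\pi_\lambda)=\varepsilon_\alpha(\pi_\mu)=0$ shows that $e_\alpha^r(\pi_\lambda\otimes\pi_\mu)={\bf 0}$ for all $r>0$ and all simple $\alpha$; hence $\pi_\lambda\otimes\pi_\mu$ is a highest weight element of $L_{\pi_\lambda}\otimes L_{\pi_\mu}$, and $\FF(\pi_\lambda\otimes\pi_\mu)$ is the highest weight subcrystal it generates. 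Since $\Theta(\eta_1\otimes\eta_2)=\eta_1\star\eta_2$ is a crystal isomorphism by Theorem \ref{closed} and intertwines the operators $f_\alpha^r=e_\alpha^{-r}$, it carries $\FF(\pi_\lambda\otimes\pi_\mu)$ crystal-isomorphically onto $\FF(\pi_\lambda\star\pi_\mu)=L_{\pi_\lambda\star\pi_\mu}$. Thus $\FF(b_\lambda\otimes b_\mu)\cong B(\lambda+\mu)$, i.e. the family is closed.

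Uniqueness is then immediate: Theorem \ref{thmuniq} asserts that a closed family of highest weight normal continuous crystals is unique. Combining this with the construction above gives existence and uniqueness, as claimed.

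The only part demanding real care — everything else being an assembly of results proved earlier — is the closedness verification, and within it the point that $\Theta$ transports not merely the underlying set $\FF(\pi_\lambda\otimes\pi_\mu)$ but the full crystal data ($wt$, $\varepsilon_\alpha$, $\varphi_\alpha$, and the operators $e_\alpha^r$) onto that of $L_{\pi_\lambda\star\pi_\mu}$. This is exactly what Theorem \ref{closed} delivers once one knows that $\pi_\lambda\otimes\pi_\mu$ is a highest weight vector and that $\FF$-orbits are the connected components of the tensor crystal, and both reduce to the tensor product formulas of \S\ref{subtens}.
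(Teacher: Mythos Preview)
Your proof is correct and follows the same route as the paper: existence from Theorems~\ref{thm:exis} and~\ref{uniq_iso}, closedness from Theorem~\ref{closed}, uniqueness from Theorem~\ref{thmuniq}. You have simply spelled out in detail (the dominance of $\pi_\lambda\star\pi_\mu$, the highest weight property of $\pi_\lambda\otimes\pi_\mu$, the fact that $\Theta$ carries $\FF(\pi_\lambda\otimes\pi_\mu)$ onto $\FF(\pi_\lambda\star\pi_\mu)=L_{\pi_\lambda\star\pi_\mu}$) what the paper compresses into a single sentence citing those theorems.
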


\subsection{Action of  $W$ on the Littelmann crystal}

Following Kashiwara \cite{kash93}, \cite{kashbook} and Littelmann \cite{littel}, we show that we can define an action of the Coxeter group on each crystal $L_\pi$. 
We first notice that for each simple root $\alpha$, 
we can define an involution $S_\alpha$ on the set of
 paths by
$$S_\alpha \eta= \EE_\alpha^x \eta\quad\text{for}
\quad x = -\alpha^\vee(\eta(T)). $$
 In particular,
\begin{equation}\label{eq:Ss}
S_{\alpha}\eta(T)=s_\alpha(\eta(T)).
\end{equation}
\begin{lemma}
Let $\eta\in C^0_T(V)$ and $ \alpha \in \Sigma$ 
such that $\alpha^\vee(\eta(T)) <0$. 
For each $\gamma \in C^0_T(V)$ there exists
 $m \in \N$ such that, for all $ n\geq 0$,
$$\PP_\alpha(\gamma\star \eta^{\star (m+n)})
=\PP_\alpha(\gamma\star \eta^{\star m})\star
 S_\alpha(\eta)^{\star n}.$$
\end{lemma}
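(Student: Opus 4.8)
The plan is to prove, for a suitable integer $m$ and every $k\ge m$, the one-step identity
$$\PP_\alpha(\gamma\star\eta^{\star(k+1)})=\PP_\alpha(\gamma\star\eta^{\star k})\star S_\alpha(\eta),$$
and then to deduce the lemma by an immediate induction on $n$. For notation, write $f=\alpha^\vee(\eta)$, so that $f(0)=0$ and $f(T)=\alpha^\vee(\eta(T))=-a$ with $a>0$ by hypothesis; put $m_0=\inf_{[0,T]}f$, so $m_0\le f(T)=-a<0$ and $\varepsilon_\alpha(\eta)=-m_0$; and set $\lambda=\alpha^\vee(\gamma(T))$, $\gamma_0=\inf_{[0,T]}\alpha^\vee(\gamma)$. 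Reading $\alpha^\vee$ along $\gamma\star\eta^{\star k}$ one sees the graph of $\alpha^\vee(\gamma)$ followed by $k$ translates of the graph of $f$, the $j$-th of them at height $\lambda-(j-1)a$; so the first point I would record is that for every $k\ge 1$
\begin{equation}\label{eq:lemconc}
\alpha^\vee(\gamma\star\eta^{\star k})(T)=\lambda-ka,\qquad
\inf_{[0,T]}\alpha^\vee(\gamma\star\eta^{\star k})=\min\bigl(\gamma_0,\ \lambda-(k-1)a+m_0\bigr),
\end{equation}
the minimum over the translates being attained on the last one because $a>0$.

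Next I would fix $m\in\N$ so large that $\lambda-(m-1)a+m_0\le\gamma_0$; since $a>0$ this forces $\inf_{[0,T]}\alpha^\vee(\gamma\star\eta^{\star k})=\lambda-(k-1)a+m_0$ for all $k\ge m$. I would then apply Lemma \ref{lem:Px}(i) with $\eta_1=\gamma\star\eta^{\star k}$ and $\eta_2=\eta$; since concatenation is associative up to a monotone reparametrization, under which $\PP_\alpha$, $\PP_\alpha^x$, $\EE_\alpha^y$, $S_\alpha$ and $\star$ are all invariant, $\eta_1\star\eta_2$ may be read as $\gamma\star\eta^{\star(k+1)}$, and for $k\ge m$ one gets
$$\PP_\alpha(\gamma\star\eta^{\star(k+1)})=\PP_\alpha(\gamma\star\eta^{\star k})\star\PP_\alpha^{x}\eta,\qquad x=\alpha^\vee(\gamma\star\eta^{\star k})(T)-\inf_{[0,T]}\alpha^\vee(\gamma\star\eta^{\star k}).$$
By (\ref{eq:lemconc}), $x=(\lambda-ka)-\bigl(\lambda-(k-1)a+m_0\bigr)=-a-m_0\ge 0$ (using $m_0\le-a$). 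I would then identify $\PP_\alpha^x\eta$ with $S_\alpha\eta$ through (\ref{eq:px}): $\PP_\alpha^x\eta=\EE_\alpha^y\eta$ with $y=(-m_0-x)\vee 0=a\vee 0=a=-\alpha^\vee(\eta(T))$, which is precisely $S_\alpha\eta$, well defined and $\neq{\bf 0}$ since $0\le a\le -m_0=\varepsilon_\alpha(\eta)$. This establishes the one-step identity for every $k\ge m$.

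The lemma then follows by induction on $n$: the case $n=0$ is trivial, and the inductive step is obtained by applying the one-step identity with $k=m+n$ and then the induction hypothesis $\PP_\alpha(\gamma\star\eta^{\star(m+n)})=\PP_\alpha(\gamma\star\eta^{\star m})\star S_\alpha(\eta)^{\star n}$, once more invoking associativity of $\star$ up to reparametrization. I expect the only genuinely delicate point to be the bookkeeping behind (\ref{eq:lemconc}) — locating the infimum of $\alpha^\vee$ along the long concatenation and choosing $m$ so that it always falls on the last copy of $\eta$ — together with the routine but necessary remark that all the operators involved commute with monotone reparametrizations of paths; the remainder is a direct application of Lemma \ref{lem:Px} and (\ref{eq:px}).
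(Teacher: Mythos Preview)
Your proof is correct and follows essentially the same route as the paper: both apply Lemma~\ref{lem:Px}(i) to peel off the last copy of $\eta$, compute the resulting parameter $x$ once the infimum of $\alpha^\vee$ along the long concatenation is forced onto the final copy, identify $\PP_\alpha^x\eta$ with $S_\alpha\eta$ via (\ref{eq:px}), and conclude by induction. Your version is more explicit about the bookkeeping in (\ref{eq:lemconc}) and about reparametrization invariance, but the underlying argument is the same.
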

\proof By lemma \ref{lem:Px}, 
$$\PP_\alpha(\gamma\star \eta^{\star {(n+1)}})
=\PP_\alpha(\gamma\star \eta^{\star n})\star \PP_\alpha^x(\eta)$$
where $$x=\alpha^\vee(\gamma\star \eta^{\star n})(T)-\min_{0 \leq s \leq T}\alpha^\vee(\gamma\star \eta^{\star n})(s).$$
Let $\gamma_{\min}=\min_{0 \leq s \leq T}
\alpha^\vee\gamma(s)$ and $\eta_{\min}=\min_{0 \leq s \leq T}\alpha^\vee\eta(s)$. Since $\alpha^\vee\gamma(T)<0$, there exists $m > 0$ such that for $n \geq m$  one has, 
\begin{eqnarray*}
\min_{0 \leq s \leq T}\alpha^\vee(\gamma\star \eta^{\star n})(s)&=&\min(\gamma_{\min}, \alpha^\vee(\gamma(T)+k\eta(T))+\eta_{\min}; 0 \leq k \leq n-1)\\
&=& \alpha^\vee(\gamma(T)+(n-1)\eta(T))+\eta_{\min}.
\end{eqnarray*}
Using that  $(\gamma\star \eta^{\star m})(T)=\gamma(T)+m \eta(T)$ we have
$x =\alpha^\vee\eta(T)-\eta_{\min}$.
In this case, 
$\PP_\alpha^x(\eta)=S_\alpha(\eta)$,
which proves the lemma by induction on $n \geq m$. $\square$

\begin{theorem}\label{theo:action}There is an action $ \{S_w, w \in W\} $ of the Coxeter group $W$ on each $L_\pi$ such that $S_{s_\alpha}=S_\alpha$  when $\alpha$ is a simple root.
\end{theorem}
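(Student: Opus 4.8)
The plan is to verify that the involutions $S_\alpha$, $\alpha\in\Sigma$, satisfy on $C^0_T(V)$ the defining relations of $W$: namely $S_\alpha^2=\mathrm{id}$, and for each pair $\alpha,\beta$ with $m:=m(s_\alpha,s_\beta)<\infty$ the braid relation $\underbrace{S_\alpha S_\beta S_\alpha\cdots}_{m}=\underbrace{S_\beta S_\alpha S_\beta\cdots}_{m}$. Granting these, the presentation of a Coxeter group (cf.\ Matsumoto's lemma) yields a homomorphism $w\mapsto S_w$ with $S_{s_\alpha}=S_\alpha$, and since each $S_\alpha$ maps $L_\pi$ into itself this restricts to an action of $W$ on $L_\pi$.

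First the easy points. For every $\eta$ the scalar $x=-\alpha^\vee(\eta(T))$ lies in the range on which $\EE_\alpha^x\eta$ is defined (since $\inf_t\alpha^\vee(\eta(t))\le\min(0,\alpha^\vee(\eta(T)))$), so $S_\alpha$ is everywhere defined; it preserves $L_\pi$ because $\pa\EE_\alpha^x=\pa$ and $\PP_{w_0}\pa=\PP_{w_0}$, whence $\PP_{w_0}S_\alpha\eta=\PP_{w_0}\eta=\pi$; and $S_\alpha^2=\mathrm{id}$ follows from $\alpha^\vee(s_\alpha v)=-\alpha^\vee(v)$ together with $\EE_\alpha^{-x}\EE_\alpha^{x}=\EE_\alpha^0$. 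I would also record, from Theorem \ref{closed} and the tensor product formulas of \S\ref{subtens} (under which $\eta\mapsto\EE_\alpha^{-\alpha^\vee(wt(\eta))}\eta$ acts diagonally on a tensor product), the compatibility $S_\alpha(\eta_1\star\eta_2)=S_\alpha\eta_1\star S_\alpha\eta_2$; and I would use that $\star$ is injective in each argument.

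The core is the braid relation, which concerns only the finite dihedral parabolic $\langle s_\alpha,s_\beta\rangle$, whose longest element $w$ has the two reduced words $\underbrace{s_\alpha s_\beta\cdots}_m=\underbrace{s_\beta s_\alpha\cdots}_m$; thus $\underbrace{\pa\pb\cdots}_m=\PP_w=\underbrace{\pb\pa\cdots}_m$ by Theorem \ref{braidP}. Consider first a path $\eta$ with $\eta(T)$ \emph{strictly anti-dominant}, i.e.\ $\alpha_s^\vee(\eta(T))<0$ for all $s\in S$. Computing $\PP_w(\gamma\star\eta^{\star N})$ by applying the Pitman transforms of one of the two reduced words of $w$ one at a time, and invoking the preceding lemma at each application, the repeated block gets replaced successively by its images under $S_\alpha$, then $S_\beta$, and so on. The hypothesis the lemma needs at each step is that a certain coroot be negative on the current endpoint; by (\ref{eq:Ss}) this coroot is $u(\gamma^\vee)$ for a partial product $u$ of the reduced word and the simple root $\gamma$ next to be used, and $u(\gamma)$ is a positive root because the extended partial product is still a reduced expression -- so the coroot, being a positive coroot, is negative on the strictly anti-dominant $\eta(T)$, and all the hypotheses hold. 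Consequently, for $N$ large, $\PP_w(\gamma\star\eta^{\star N})$ equals a fixed initial piece concatenated with many copies of $\underbrace{S_\alpha S_\beta\cdots}_m\eta$; running the same computation along the other reduced word of $w$ produces, instead, many copies of $\underbrace{S_\beta S_\alpha\cdots}_m\eta$; since both express the same path, comparing repeated blocks in the tail forces $\underbrace{S_\alpha S_\beta\cdots}_m\eta=\underbrace{S_\beta S_\alpha\cdots}_m\eta$. For an arbitrary path $\eta$, pick $\delta$ with $\alpha_s^\vee(\delta(T))$ so negative for every $s$ that $(\delta\star\eta)(T)$ is again strictly anti-dominant; apply the case just proved to $\delta\star\eta$, then use the concatenation compatibility of the $S_\gamma$ and the injectivity of $\star$ to cancel $\delta$ and obtain the relation for $\eta$. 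Nothing new is needed for the non-crystallographic finite Coxeter groups, since every braid relation is between a pair of simple reflections and hence is one of the dihedral cases above, the hypotheses of Theorems \ref{braid}--\ref{braidP} being met by the realization.

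The step I expect to be the main obstacle is the bookkeeping in this iterated use of the lemma: checking that the hypothesis that the relevant coroot be negative on the current endpoint genuinely survives all $m$ steps -- which is why one reduces first to a strictly anti-dominant endpoint -- and that the stabilization index supplied by the lemma can be chosen uniformly over the $m$ steps, so that the two computations of $\PP_w(\gamma\star\eta^{\star N})$ really do share a common tail of full repeated blocks from which the two $m$-fold products of the $S_\gamma$ can be read off and identified. Everything after the braid relations is formal.
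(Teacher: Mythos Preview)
Your core step---the braid relation for paths $\eta$ with strictly anti-dominant endpoint, via iterated application of the preceding lemma to $\PP_w(\gamma\star\eta^{\star N})$---is essentially the paper's argument and is sound: at the $k$-th stage the coroot to be checked is $(s_{\alpha_1}\cdots s_{\alpha_{k-1}}\alpha_k)^\vee$, which is a positive coroot because the word is reduced, hence takes a negative value on the strictly anti-dominant $\eta(T)$; and the stabilization thresholds supplied by the lemma simply accumulate.

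The genuine gap is in your reduction from arbitrary $\eta$ to the anti-dominant case. The compatibility $S_\alpha(\eta_1\star\eta_2)=S_\alpha\eta_1\star S_\alpha\eta_2$ that you invoke is \emph{false}: the map $b\mapsto e_\alpha^{-\alpha^\vee(wt(b))}b$ does \emph{not} act diagonally on tensor products. With $r_i=-\alpha^\vee(wt(b_i))$ and $r=r_1+r_2$, the tensor rule of \S\ref{subtens} places exponents $\max(r,-\sigma)-\sigma^-$ and $\min(r,-\sigma)+\sigma^+$ on the two factors, and these equal $r_1$ and $r_2$ only in degenerate situations. For a rank-one counterexample, take $b_1$ with $\varepsilon_\alpha(b_1)=0$, $\varphi_\alpha(b_1)=2$ and $b_2$ with $\varepsilon_\alpha(b_2)=1$, $\varphi_\alpha(b_2)=0$: then $\sigma=1$, $r=-1$, and one finds $S_\alpha(b_1\otimes b_2)=e_\alpha^{-1} b_1\otimes b_2$, whereas $S_\alpha b_1\otimes S_\alpha b_2=e_\alpha^{-2} b_1\otimes e_\alpha^{1} b_2$. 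Consequently you cannot concatenate with a $\delta$ and then cancel it.

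The paper's reduction avoids this altogether: given $\eta$, it applies an alternating word $S_{\alpha_1}\cdots S_{\alpha_r}$ in $\{S_\alpha,S_\beta\}$ to move the endpoint into $-\bar C$, obtains the braid relation at $\tilde\eta=S_{\alpha_1}\cdots S_{\alpha_r}\eta$ by the concatenation argument (your step~1), and then deduces it back at $\eta=S_{\alpha_r}\cdots S_{\alpha_1}\tilde\eta$ using only $S_\gamma^2=\mathrm{id}$ together with the relation just established at $\tilde\eta$.
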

\proof By Matsumoto's lemma, 
it suffices to prove that the transformations 
$S_\alpha$ satisfy to the braid relations. Therefore we can assume that $W$ is a dihedral group $I(q)$. Consider
two
roots $\alpha,\beta$ generating $W$.
Let $\eta$ be a path, 
there exists a sequence 
$(\alpha_i)=\alpha,\beta,\alpha,\ldots $ or
$\beta,\alpha,\beta,\ldots$ such that
$s_{\alpha_1}s_{\alpha_2}\ldots s_{\alpha_r}
\eta(T)\in -\bar C$.
Let $\tilde\eta=S_{\alpha_1}S_{\alpha_2}\ldots S_{\alpha_r}
\eta$.
 Let $s_{\alpha_q}\cdots
  s_{\alpha_1}$ be a reduced decomposition. We show by
  induction on $k\leq q$ that there exists
   $m_k \geq0$ and a path $\gamma_k$ such that
  \begin{equation}\label{seta}
  \PP_{\alpha_k}\cdots 
\PP_{\alpha_1}(\tilde\eta^{\star (m_k+ n)})=
\gamma_k \star (S_{\alpha_k}\cdots S_{\alpha_1}\tilde\eta)^{\star n}
\end{equation}
For $k=1$, this is the preceding 
lemma. Suppose that this holds for some $k$. Then
$$\alpha_{k+1}^\vee(S_{\alpha_k}\cdots
S_{\alpha_1}\tilde \eta(T))\leq0$$
(cf. Bourbaki, \cite{bo2}, ch.5, no.4, Thm.\ 1).
Thus, by the lemma,  there exists
 $m$ such that, for $n \geq 0$,
$$\PP_{\alpha_{k+1}}(\gamma_k \star 
(S_{\alpha_k}\cdots S_{\alpha_1}\tilde\eta)^{\star (m+n)})=
\PP_{\alpha_{k+1}}(\gamma_k\star (S_{\alpha_k}\cdots
 S_{\alpha_1}\tilde\eta)^{\star m})\star (S_{\alpha_{k+1}}
 S_{\alpha_k}\cdots S_{\alpha_1}\tilde\eta)^{\star n}$$
Hence, by the induction hypothesis,
 if $\gamma_{k+1}=\PP_{\alpha_{k+1}}
 (\gamma_k\star (S_{\alpha_k}\cdots S_{\alpha_1}\tilde\eta)^{\star m})$, then
$$\PP_{\alpha_{k+1}}\PP_{\alpha_k}\cdots \PP_{\alpha_1}
((\tilde\eta^{\star (m_k+ m+n)})=\gamma_{k+1}\star
 (S_{\alpha_{k+1}}S_{\alpha_k}\cdots S_{\alpha_1}\tilde 
 \eta)^{\star n}$$
We apply (\ref{seta}) with $k=q$, then there exists two reduced
decompositions, and we see that 
$S_{\alpha_{q}}S_{\alpha_{q-1}}\cdots S_{\alpha_1}\tilde 
 \eta$ does not depend on the reduced decomposition because
 the left hand side does not, 
 by the braid relations for the $\PP_\alpha$. This implies easily
that $S_{\alpha_{q}}S_{\alpha_{q-1}}
 \cdots S_{\alpha_1}
 \eta$ also does not depend on the reduced decomposition.
 $\square$.
\medskip

Using the crystal isomorphism between 
$L_\pi$ and the crystal $B(\pi(T))$ we see that 
\begin{cor}
The Coxeter group $W$ acts on each crystal 
$B(\lambda)$, where $\lambda\in \bar C$, in such a
 way that, for $s=s_\alpha$ in $S$, and $b \in B(\lambda)$,
$$S_\alpha(b)=e_\alpha^x(b), 
\mbox { where } x=-\alpha^\vee (wt(b)).$$
\end{cor}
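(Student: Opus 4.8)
The plan is to transport the action of $W$ on the Littelmann modules, constructed in Theorem~\ref{theo:action}, across the crystal isomorphism that defines $B(\lambda)$, and then to observe that the transported action is given by the intrinsic formula in the statement, so that in particular it does not depend on the choices made.

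First I would fix $\lambda\in\bar C$ and choose a dominant path $\pi$ with $\pi(T)=\lambda$ together with a crystal isomorphism $\psi:L_\pi\to B(\lambda)$; such a $\psi$ exists by the very definition of $B(\lambda)$ and by Theorem~\ref{uniq_iso}. For $w\in W$ set $S_w^{B}=\psi\circ S_w\circ\psi^{-1}$, where $S_w$ is the operator on $L_\pi$ supplied by Theorem~\ref{theo:action}. Since $\psi$ is a bijection of $L_\pi\cup\{{\bf 0}\}$ onto $B(\lambda)\cup\{{\bf 0}\}$ and $w\mapsto S_w$ is a group action, $w\mapsto S_w^{B}$ is at once a group action of $W$ on $B(\lambda)$ with $S_{s_\alpha}^{B}=\psi\circ S_\alpha\circ\psi^{-1}$.

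Next I would compute $S_\alpha^{B}$ explicitly. Given $b\in B(\lambda)$, write $b=\psi(\eta)$ with $\eta\in L_\pi$. By definition $S_\alpha\eta=\EE_\alpha^x\eta$ with $x=-\alpha^\vee(\eta(T))=-\alpha^\vee(wt(\eta))$; note $\EE_\alpha^x\eta\neq{\bf 0}$, since $x=\varepsilon_\alpha(\eta)-\varphi_\alpha(\eta)$ by (C1) and $-\varphi_\alpha(\eta)\le x\le\varepsilon_\alpha(\eta)$ by normality, which is precisely the range of Definition~\ref{littelmanntransform} in which the Littelmann operator does not vanish. Since $\psi$ preserves $wt$ we have $x=-\alpha^\vee(wt(b))$, and since $\psi$ is a strict morphism it intertwines $\EE_\alpha^x$ with $e_\alpha^x$; hence $S_\alpha^{B}(b)=\psi(\EE_\alpha^x\eta)=e_\alpha^x(\psi(\eta))=e_\alpha^x(b)$, which is the asserted formula.

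Finally, independence of the construction from the choices of $\pi$ and $\psi$ follows because this last formula expresses $S_\alpha^{B}$ purely in terms of $wt$ and the crystal operators $e_\alpha^r$, which are preserved by any crystal isomorphism, while the operators $S_w^{B}$ for general $w$ are determined by the $S_{s_\alpha}^{B}$ through the braid relations established in the proof of Theorem~\ref{theo:action}. The only points requiring any care are this well-definedness and the non-vanishing of $e_\alpha^x(b)$ noted above; everything else is a routine transport of structure.
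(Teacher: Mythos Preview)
Your proposal is correct and follows exactly the approach the paper intends: the paper's proof is simply the remark ``Using the crystal isomorphism between $L_\pi$ and the crystal $B(\pi(T))$ we see that\ldots'', and you have faithfully unpacked this transport of structure, including the useful verification that $e_\alpha^x(b)\ne{\bf 0}$ via normality and (C1).
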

Notice that these $S_\alpha$ are not crystal morphisms. 

 \subsection{Sch\"utzenberger involution} \label{schutz}
 The classical
   Sch\"utzenberger involution
     associates to a Young tableau $T$ another Young tableau
     $\hat T$ of the same shape. If $(P,Q)$ is the pair associated 
     by Robinson-Schensted-Knuth (RSK) algorithm to the word $u_1\cdots u_n$ in the letters $1,\cdots,k$, 
     then $(\hat P, \hat Q)$ is the pair associated with 
     $u_n^*\cdots u_1^*$ where $i^*=k+1-i$, see e.g.\ Fulton \cite{Fulton}.
      It is remarkable that $\hat P$ depends only on $P$, 
      and that $\hat Q$ depends only on $Q$. We will establish 
      an analogous property for the analogue of the Sch\"utzenberger
       involution defined in \cite{bbo}
      for finite Coxeter groups. 
      The crystallographic case has been recently investigated
       by Henriques and Kamnitzer \cite{hen-kamn1}, 
        \cite{hen-kamn2},  and Morier-Genoud \cite{morier}.

   For any path $\eta \in C^0_T(V)$, let
   $\kappa \eta(t)=\eta(T-t)-\eta(T), \;\; 0 \leq t \leq T,$
 and
   $$S\eta=-w_0\kappa \eta.$$
 Since $w_0^2=id$, $S$ is  an involution of $C^0_T(V)$.  The following is proved in \cite{bbo}.
   \begin{prop}\label{SQ}
   For any $\eta\in C^0_T(V),$
   $\PP_{w_0}S\eta(T)=\PP_{w_0}\eta(T).$
   \end{prop}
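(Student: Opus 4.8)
The plan is to reduce Proposition \ref{SQ} to a symmetric identity between $\pwo$ and the reversal $\kappa$, to prove that identity for Weyl groups by a density argument, and then to reach the remaining finite Coxeter groups by embedding them into Weyl groups.

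The first step is the commutation $\pwo\circ(-w_0)=(-w_0)\circ\pwo$. For any linear automorphism $g$ of $V$ one reads off Definition \ref{pitman-transform} that $\pa\circ g=g\circ\mathcal P_{(g^{-1}\alpha,\,g^{-1}\alpha^{\vee})}$, since $\alpha^{\vee}(g\eta(s))=(g^{-1}\alpha^{\vee})(\eta(s))$. As conjugation by $w_0$ permutes $S$, the involution $-w_0$ carries each simple pair $(\alpha_s,\alpha_s^{\vee})$ to $(\alpha_{s^{*}},\alpha_{s^{*}}^{\vee})$, where $s^{*}:=w_0sw_0^{-1}$, up to the rescaling which, by the remark after Definition \ref{pitman-transform}, does not affect $\pa$; pushing $-w_0$ leftwards through a reduced word $w_0=s_1\cdots s_q$ turns it into $s_1^{*}\cdots s_q^{*}$, again a reduced word for $w_0$, so the commutation follows from Theorem \ref{braidP}. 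Hence $\pwo(S\eta)=\pwo(-w_0\,\kappa\eta)=-w_0\,\pwo(\kappa\eta)$, and since $(-w_0)^{-1}=-w_0$, Proposition \ref{SQ} is equivalent to the identity
$$\pwo(\kappa\eta)(T)=-w_0\,\pwo\eta(T),\qquad \eta\in C_T^0(V),\qquad(\star)$$
which one checks directly in rank one: setting $\pa^{\kappa}:=\kappa\,\pa\,\kappa$, one has $\pa^{\kappa}\mu(T)=s_\alpha\bigl(\pa\mu(T)\bigr)=\mu(T)+\bigl(\inf_{0\le t\le T}\alpha^{\vee}(\mu(t))-\alpha^{\vee}(\mu(T))\bigr)\alpha$, and $\pa^{\kappa}\mu(T)=-\pwo(\kappa\mu)(T)$ when $W=\{1,s_\alpha\}$.

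For a finite Weyl group, both sides of $(\star)$ are continuous and positively homogeneous in $\eta$, so by Propositions \ref{scal} and \ref{dense} it suffices to verify $(\star)$ for integral paths $\eta$. For such $\eta$, $\pwo\eta$ is the dominant path generating the connected component of $\eta$ and $\pwo\eta(T)$ is its highest weight, and, by the commutation, $(\star)$ for integral $\eta$ is equivalent to $\pwo(S\eta)(T)=\pwo\eta(T)$, i.e.\ to the assertion that $S$ restricts to a bijection of each discrete Littelmann module and hence preserves highest weights --- this is the path-model form of a classical fact about the Sch\"utzenberger (or $*$-) involution, see Littelmann \cite{littel} and Kashiwara \cite{kash93}. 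For the non-crystallographic finite Coxeter groups one runs through the classification $I(m),H_3,H_4$ and uses the embeddings into the Weyl groups of $A_{m-1},D_6,E_8$ employed in the proofs of Theorems \ref{piecewise} and \ref{theo_poly}: on paths valued in the relevant subspace $\Pi$ one has $\pwo^{W}=\pwo^{\mathrm{ambient}}$, the reversal $\kappa$ preserves $\Pi$, and the longest element of $W$ coincides on $\Pi$ with the restriction of the ambient longest element, so $S^{W}$ and $S^{\mathrm{ambient}}$ agree on $\Pi$-valued paths and the proposition descends from the Weyl-group case. The step I expect to be the main work is precisely these two compatibilities --- that $\pwo$ and $-w_0$ genuinely descend to $\Pi$ --- which rest on the Tchebycheff-coefficient identities and the computations with the $\tau_i$ in the proof of Theorem \ref{theo_poly}; alternatively, after reducing to the dihedral case by Matsumoto's lemma, $(\star)$ can be obtained directly from the nested-infimum expression for $\pwo$ furnished by Theorem \ref{formula}.
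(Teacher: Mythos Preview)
The paper does not prove Proposition~\ref{SQ} here; it simply invokes \cite{bbo}. So there is no ``paper's own proof'' in this text to compare to, and your task is really to give a self-contained argument.

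Your reduction $\pwo S\eta(T)=\pwo\eta(T)\iff\pwo(\kappa\eta)(T)=-w_0\,\pwo\eta(T)$ via the commutation $\pwo\circ(-w_0)=(-w_0)\circ\pwo$ is correct and clean, and the embedding strategy for $I(m),H_3,H_4$ is reasonable. The gap is in the Weyl-group step. You assert that, for integral paths, ``$S$ restricts to a bijection of each discrete Littelmann module''. This is false as stated: $S$ maps $L_\pi$ to $L_{I\pi}$ with $I\pi=\pwo S\pi$, and in general $I\pi\neq\pi$ as \emph{paths} (this is exactly why the paper has to introduce $J$ and set $\tilde S=J\circ S$ after Proposition~\ref{SQ}). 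What you actually need is only that $I\pi(T)=\pi(T)$, but that is precisely the proposition. Invoking ``the path-model form of a classical fact about the Sch\"utzenberger involution'' from \cite{littel},\cite{kash93} begs the question: those references give you an abstract anti-involution on $B(\lambda)$, but identifying the concrete path operation $S$ with it, even at the level of highest weights, is the content to be proved.

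A repair is available, but you have to do the work rather than cite it. From the explicit formulas one checks the two intertwining relations
\[
\kappa\,\EE_\alpha^{r}=\EE_\alpha^{-r}\,\kappa,\qquad (-w_0)\,\EE_\alpha^{r}=\EE_{\tilde\alpha}^{r}\,(-w_0),
\]
hence $S\,e_\alpha^{r}=f_{\tilde\alpha}^{r}\,S$. Thus $S$ sends the highest-weight path $\pi$ to a \emph{lowest}-weight element of its component $L_{\pi'}$, where $\pi'=\pwo S\pi$. Now one needs the (independent) fact that in a normal highest-weight continuous crystal the lowest weight equals $w_0$ times the highest weight; then $w_0\pi'(T)=S\pi(T)=w_0\pi(T)$ gives $\pi'(T)=\pi(T)$, and the proposition follows for all $\eta\in L_\pi$. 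None of this requires discretisation or density, so Propositions~\ref{scal} and~\ref{dense} are unnecessary, and the argument works directly for all finite Coxeter groups without the $A_{m-1},D_6,E_8$ detour. Your alternative ``reduce to the dihedral case by Matsumoto's lemma'' is not a reduction: Matsumoto gives independence of $\pwo$ from the reduced word, not a rank-$2$ localisation of the identity $(\star)$.
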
 As remarked in \cite{bbo}, this implies that the transformation on dominant paths   $$\pi \mapsto I\pi=\PP_{w_0} S\pi$$
    gives the analogue of the Sch\"utzenberger involution on the $Q'$s. We will consider the action on the crystal itself, i.e.\  the analogue of the Sch\"utzenberger involution on the $P's$.  
    For each dominant path $\pi\in C^0_T(V)$ the crystals $L_\pi$ and $L_{I\pi}$ are isomorphic, since $\pi(T)=I\pi(T)$. Therefore there is a unique isomorphism $J_\pi:L_\pi\to L_{I\pi}$, it satisfies $J_\pi(\pi)=I\pi$. For each path $\eta \in C^0_T(V)$, let $J(\eta)=J_{\pi}(\eta)$, where $\pi=\PP_{w_0}\eta$. This defines  an involutive isomorphism of crystal  $J:C^0_T(V)\to C^0_T(V).$
    We will see that $$\tilde S=J\circ S$$
    is the analogue of the Sch\"utzenberger involution on crystals. Although $\tilde S$ is not a crystal isomorphism, and contrary to $S$,  it conserves the cristal connected components since  $\tilde S(L_\pi)=L_\pi$, for each dominant path $\pi$, this is the main reason for introducing it.
    
If $\alpha$ is a simple root, then $\tilde \alpha=-w_0\alpha$ is also a simple root and $\tilde \alpha^\vee=-\alpha^\vee w_0$. The following property is straightforward.  In the $A_n$ case, it was shown  by Lascoux, Leclerc and Thibon \cite{llt}  and  Henriques and Kamnitzer \cite{hen-kamn1}  that it characterizes the Sch\"utzenberger involution.
   \begin{lemma}\label{lem_Schu}
   For any path $\eta$ in $C^0_T(V)$, any $r\in\mathbb R$, and any simple root $\alpha$, one has
  $$ \begin{array}{c}
   \EE_\alpha^r\tilde S \eta= \tilde S\EE_{\tilde \alpha}^{-r} \eta\\
    \varepsilon_{\tilde \alpha}(\tilde S\eta))=\varphi_\alpha(\eta),\,
    \varphi_{\tilde \alpha}(\tilde S\eta))=\varepsilon_\alpha(\eta)\\
\tilde S\eta(T)=w_0\eta(T).
\end{array}$$
   \end{lemma}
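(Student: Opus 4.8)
The plan is to reduce the three identities to properties of the path transformation $S$ alone, and then to verify those by a short computation with the formulas of Definition~\ref{littelmanntransform}. Since $J$ is a crystal isomorphism of $C^0_T(V)$ it is, in particular, a strict morphism: it is a bijection with $J(\mathbf 0)=\mathbf 0$, it commutes with every operator $\EE_\alpha^r$, and it preserves $wt$, $\varepsilon_\alpha$ and $\varphi_\alpha$. Writing $\tilde S=J\circ S$ and recalling $\tilde\alpha=-w_0\alpha$, the asserted identities are therefore equivalent, respectively, to
\[
\EE_\alpha^r S\eta=S\,\EE_{\tilde\alpha}^{-r}\eta,\qquad S\eta(T)=w_0\eta(T),
\]
\[
\varepsilon_{\tilde\alpha}(S\eta)=\varphi_\alpha(\eta),\qquad \varphi_{\tilde\alpha}(S\eta)=\varepsilon_\alpha(\eta).
\]
The identity for endpoints is immediate: $S\eta(T)=-w_0\bigl(\eta(0)-\eta(T)\bigr)=w_0\eta(T)$ since $\eta(0)=0$.

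For the remaining statements, set $f(t)=\alpha^\vee(\eta(t))$ and $g(t)=\tilde\alpha^\vee(\eta(t))$; from $\tilde\alpha^\vee=-\alpha^\vee w_0$ and $w_0^2=\mathrm{id}$ one computes $\alpha^\vee\!\bigl(S\eta(t)\bigr)=g(T-t)-g(T)$ and $\tilde\alpha^\vee\!\bigl(S\eta(t)\bigr)=f(T-t)-f(T)$. Plugging the latter into the explicit formulas $\varepsilon_\beta(\zeta)=-\inf_{[0,T]}\beta^\vee(\zeta)$ and $\varphi_\beta(\zeta)=\beta^\vee(\zeta(T))-\inf_{[0,T]}\beta^\vee(\zeta)$ and changing variables $s\mapsto T-s$ in the infima (using $f(0)=0$) yields the $\varepsilon$/$\varphi$ statements at once. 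For the operator identity, observe that $\EE_\alpha^r(S\eta)$ differs from $S\eta$ only by a scalar function times $\alpha$, while — because $\EE_{\tilde\alpha}^{-r}$ moves $\eta$ only along $\tilde\alpha$ and $w_0\tilde\alpha=-\alpha$ — so does $S\bigl(\EE_{\tilde\alpha}^{-r}\eta\bigr)$: if $\EE_{\tilde\alpha}^{-r}\eta(t)=\eta(t)-d(t)\tilde\alpha$ then $S\bigl(\EE_{\tilde\alpha}^{-r}\eta\bigr)(t)=S\eta(t)-\bigl(d(T-t)-d(T)\bigr)\alpha$. Thus it is enough to check that the two sides vanish for the same values of $r$ — both reduce, via $\alpha^\vee(S\eta(t))=g(T-t)-g(T)$, to the single condition $\inf_{[0,T]}g\le r\le g(T)-\inf_{[0,T]}g$ — and that, otherwise, the coefficient $c(t)$ of $\alpha$ in $S\eta(t)-\EE_\alpha^r(S\eta)(t)$ equals $d(T-t)-d(T)$.

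This last verification is the only actual computation, and it splits along the two branches $r\le 0$, $r\ge 0$ of Definition~\ref{littelmanntransform}. For $r\le 0$ one finds $c(t)=\min\!\bigl(-r,\ \inf_{[0,T-t]}g-\inf_{[0,T]}g\bigr)$ and $d(t)=\min\!\bigl(0,\ r+\inf_{[0,t]}g-\inf_{[0,T]}g\bigr)$; for $r\ge 0$ one finds $c(t)=\min\!\bigl(0,\ \inf_{[T-t,T]}g-\inf_{[0,T]}g-r\bigr)$ and $d(t)=\min\!\bigl(r,\ \inf_{[t,T]}g-\inf_{[0,T]}g\bigr)$. In both branches the relation $d(T-t)-d(T)=c(t)$ follows using only $d(T)=\min(0,r)=r$, respectively $d(T)=\min\!\bigl(r,\,g(T)-\inf_{[0,T]}g\bigr)=r$ (this last equality being precisely the range condition above), together with the elementary identity $\min(a,b)-b=\min(a-b,0)$. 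I expect the only obstacle to be bookkeeping: propagating the reversal $t\mapsto T-t$, the sign change, and the exchange $\alpha^\vee\leftrightarrow\tilde\alpha^\vee$ consistently through both cases; once the reduction to $S$ is in place there is no conceptual difficulty.
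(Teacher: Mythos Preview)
Your proposal is correct and is precisely the direct verification that the paper leaves implicit (it merely calls the property ``straightforward''). The reduction to $S$ via the fact that $J$ is a crystal isomorphism, the endpoint and $\varepsilon/\varphi$ computations via $\tilde\alpha^\vee=-\alpha^\vee w_0$, and the case-by-case check of $c(t)=d(T-t)-d(T)$ from the explicit formulas of Definition~\ref{littelmanntransform} are all accurate; there is nothing to add.
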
    
An important consequence of this lemma is that $\tilde S:L_\pi\to L_\pi$ depends only on the crystal structure of $L_\pi$. Indeed, if $\eta =  \EE_{\alpha_1}^{r_1}\cdots  \EE_{\alpha_k}^{r_k}\pi$ then $\tilde S(\eta)=\EE_{\tilde \alpha_1}^{-r_1}\cdots  \EE_{\tilde \alpha_k}^{-r_k}\tilde S(\pi)$ and $\tilde S(\pi)$ is the unique element of $L_\pi$ which has the lowest weight $w_0\pi(T)$, namely $S_{w_0}\pi$, where  $S_{w_0}$ is given by theorem \ref{theo:action}. 
In particular, using the isomorphism between $L_\pi$ and $B(\lambda)$ where $\lambda=\pi(T)$, we can transport the action of $\tilde S$ on each $B(\lambda), \lambda \in \bar C$.

Notice that $S \circ J$ also satisfies to this lemma. Therefore, by uniqueness, 
$$S \circ J=J \circ S$$
thus $\tilde S$ is an involution. Following Henriques and Kamnitzer 
        \cite{hen-kamn2}, let us show:
                         \begin{theorem}\label{crysiso}
                 The map  $\tau:C^0_T(V)\to C^0_T(V)$ defined by 
                 $$\tau(\eta_1\star \eta_2)=\tilde S (\tilde S\eta_2\star \tilde S\eta_1)$$
                  is an involutive crystal isomorphism.
                  \end{theorem}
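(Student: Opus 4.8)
\emph{Reformulation.} The plan is to read $\tau$ through the concatenation isomorphism $\Theta$ of Theorem~\ref{closed}. Writing $\mathrm{fl}(\eta_1\otimes\eta_2)=\eta_2\otimes\eta_1$ for the flip of tensor factors, the defining formula $\tau(\eta_1\star\eta_2)=\tilde S(\tilde S\eta_2\star\tilde S\eta_1)$ together with the fact that every $\zeta\in C^0_T(V)$ has a unique decomposition $\zeta=\eta_1\star\eta_2$ gives
$$\tau=\tilde S\circ\Theta\circ\mathrm{fl}\circ(\tilde S\otimes\tilde S)\circ\Theta^{-1}.$$
Since $\Theta,\Theta^{-1},\tilde S,\mathrm{fl}$ are all bijections, $\tau$ is a bijection; hence it suffices to prove that $\tau$ is a strict crystal morphism and that $\tau^2=\mathrm{id}$, for then $\tau$ is a crystal isomorphism equal to its own inverse.

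\emph{$\tau$ is a strict crystal morphism.} Fix $\alpha\in\Sigma$, a path $\zeta=\eta_1\star\eta_2$, and set $\sigma=\varphi_\alpha(\eta_1)-\varepsilon_\alpha(\eta_2)$ as in \S\ref{subtens}. With $\tilde\alpha=-w_0\alpha$, Lemma~\ref{lem_Schu} shows that the analogous quantity for the pair $(\tilde S\eta_2,\tilde S\eta_1)$ and the root $\tilde\alpha$ is $\varphi_{\tilde\alpha}(\tilde S\eta_2)-\varepsilon_{\tilde\alpha}(\tilde S\eta_1)=\varepsilon_\alpha(\eta_2)-\varphi_\alpha(\eta_1)=-\sigma$, so that $\sigma^+$ and $\sigma^-$ get interchanged. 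Feeding this into the tensor‑product formulas of \S\ref{subtens} and using the four identities of Lemma~\ref{lem_Schu} ($wt(\tilde S\eta)=w_0\,wt(\eta)$, $\varepsilon_\alpha(\tilde S\eta)=\varphi_{\tilde\alpha}(\eta)$, $\varphi_\alpha(\tilde S\eta)=\varepsilon_{\tilde\alpha}(\eta)$, $\EE_\alpha^r\tilde S=\tilde S\EE_{\tilde\alpha}^{-r}$), a direct computation yields $wt(\tau\zeta)=wt(\zeta)$, $\varepsilon_\alpha(\tau\zeta)=\varepsilon_\alpha(\eta_1)+\sigma^-=\varepsilon_\alpha(\zeta)$ and $\varphi_\alpha(\tau\zeta)=\varphi_\alpha(\eta_2)+\sigma^+=\varphi_\alpha(\zeta)$. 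For the operators: the tensor rule gives $e_\alpha^r\zeta=\EE_\alpha^{r_1}\eta_1\star\EE_\alpha^{r_2}\eta_2$ with $r_1=\max(r,-\sigma)-\sigma^-$ and $r_2=\min(r,-\sigma)+\sigma^+$; on the other hand $e_\alpha^r(\tau\zeta)=\tilde S\,\EE_{\tilde\alpha}^{-r}(\tilde S\eta_2\star\tilde S\eta_1)$, and expanding $\EE_{\tilde\alpha}^{-r}$ by the tensor rule with parameter $-\sigma$ produces exactly $\EE_{\tilde\alpha}^{-r_2}\tilde S\eta_2\star\EE_{\tilde\alpha}^{-r_1}\tilde S\eta_1=\tilde S(\EE_\alpha^{r_2}\eta_2)\star\tilde S(\EE_\alpha^{r_1}\eta_1)$, i.e.\ $e_\alpha^r(\tau\zeta)=\tau(e_\alpha^r\zeta)$ for all $r\in\R$; the ${\bf 0}$‑cases are forced by the $\varepsilon_\alpha,\varphi_\alpha$ identities just obtained. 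Thus $\tau$ is a strict crystal morphism.

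\emph{$\tau$ is an involution.} Recall that $\tilde S^2=\mathrm{id}$ (established above via $S\circ J=J\circ S$) and that $\PP_{w_0}\circ\tilde S=\PP_{w_0}$, since $\tilde S(L_\pi)=L_\pi$. As a bijective strict morphism, $\tau$ permutes the connected components $\{L_\pi,\ \pi\text{ dominant}\}$ of $C^0_T(V)$; it is therefore enough to show that $\tau^2$ maps each $L_\pi$ onto itself, because a highest‑weight normal crystal has no nontrivial automorphism (an automorphism fixes the unique primitive element and commutes with $\FF$, while $L_\pi=\FF(\pi)$ by Theorem~\ref{thm:exis}), whence $\tau^2|_{L_\pi}=\mathrm{id}$. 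That $\tau^2$ preserves connected components is checked by computing $\PP_{w_0}(\tau^2\zeta)$ using $\PP_{w_0}\circ\tilde S=\PP_{w_0}$ and Lemma~\ref{lem:Px} (which describes $\PP_\alpha$, hence $\PP_{w_0}$, on a concatenation), reducing it to $\PP_{w_0}\zeta$; the relation $S(\eta_1\star\eta_2)=S\eta_2\star S\eta_1$ — a one‑line reparametrisation computation from the definitions of $\kappa$ and $\star$ — together with the rigidity of $J$ enters here.

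\emph{Main obstacle.} The crux is the intertwining $e_\alpha^r(\tau\zeta)=\tau(e_\alpha^r\zeta)$: one must match the piecewise‑linear tensor formula $e_\alpha^r(b_1\otimes b_2)=\EE_\alpha^{\max(r,-\sigma)-\sigma^-}b_1\otimes\EE_\alpha^{\min(r,-\sigma)+\sigma^+}b_2$ against its image under the substitution $\sigma\mapsto-\sigma$, $\EE_\beta^s\mapsto\EE_{\tilde\beta}^{-s}$, $b_1\leftrightarrow b_2$, while keeping track of the ${\bf 0}$‑thresholds; each identity is elementary, but the bookkeeping is delicate. A secondary point requiring care is the proof that $\tau^2$ fixes connected components, where the interaction of $\PP_{w_0}$ with concatenation (via Lemma~\ref{lem:Px}) and the uniqueness of the isomorphisms involved must be handled precisely.
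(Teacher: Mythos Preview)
Your direct verification that $\tau$ is a strict crystal morphism is correct: the computation matching the tensor rule under $\sigma\mapsto-\sigma$ and $\EE_\alpha^r\mapsto\EE_{\tilde\alpha}^{-r}$ goes through exactly as you say. The paper takes a slicker route here, observing that $S(\eta_1\star\eta_2)=S\eta_2\star S\eta_1$ and $\tilde S=JS=SJ$ give $\tau=J\circ J^{(2)}$ with $J^{(2)}(\eta_1\star\eta_2)=J\eta_1\star J\eta_2$; since $J$ is a crystal isomorphism and $J^{(2)}=\Theta\circ(J\otimes J)\circ\Theta^{-1}$, $\tau$ is instantly a crystal isomorphism without any tensor bookkeeping. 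Your approach trades elegance for concreteness, but is valid.

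The involution argument, however, has a genuine gap. You correctly reduce to showing that $\tau^2$ sends each $L_\pi$ to itself, but your justification---``compute $\PP_{w_0}(\tau^2\zeta)$ via $\PP_{w_0}\circ\tilde S=\PP_{w_0}$ and Lemma~\ref{lem:Px}''---does not work as stated. Unwinding $\tau^2\zeta$ leads to $\PP_{w_0}$ of a concatenation of paths of the form $\tilde S\mu_i$, and Lemma~\ref{lem:Px} only tells you how a \emph{single} $\PP_\alpha$ splits over a concatenation; iterating this for $\PP_{w_0}$ gives a formula that still depends on the individual pieces in a way that does not visibly reduce to $\PP_{w_0}\zeta$. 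The interaction of $J$ with concatenation (which is what ``rigidity of $J$'' would have to mean here) is precisely the nontrivial point, and you have not established it.

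The fix is already contained in your morphism computation. Writing $\tau=\tilde S\circ\tilde S^{(2)}$ with $\tilde S^{(2)}(\eta_1\star\eta_2)=\tilde S\eta_2\star\tilde S\eta_1$, your identity $e_\alpha^r\tau=\tau e_\alpha^r$ combined with $e_\alpha^r\tilde S=\tilde S e_{\tilde\alpha}^{-r}$ gives $\tilde S^{(2)}\EE_\alpha^r=\EE_{\tilde\alpha}^{-r}\tilde S^{(2)}$; hence $\tilde S^{(2)}$ maps each connected component into a single connected component. Since $\tilde S$ preserves components, both $\tau(L_\pi)$ and $\tau^{-1}(L_\pi)=\tilde S^{(2)}(\tilde S(L_\pi))=\tilde S^{(2)}(L_\pi)$ equal the component containing $\tilde S^{(2)}(L_\pi)$; thus $\tau$ and $\tau^{-1}$ agree on components and rigidity finishes. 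This is exactly the paper's argument, and it is the step you should substitute for the unproven $\PP_{w_0}$-computation.
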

           \proof  Remark first that any path can be written uniquely as the concatenation of two paths, hence $\tau$ is well defined, furthermore 
          $S(\eta_1\star \eta_2)=S(\eta_2)\star S(\eta_1)$, therefore, since $\tilde S=SJ=JS$, and $S$ is involutive,
          $$\tau(\eta_1\star \eta_2)=J S ( SJ\eta_2\star SJ\eta_1)=J S^2 ( J\eta_1\star J\eta_2)=J ( J\eta_1\star J\eta_2).
$$
Consider the map 
$J^{(2)}: C^0_T(V)\to C^0_T(V)$ defined by
$J^{(2)}(\eta_1\star \eta_2)=J\eta_1\star J\eta_2$.
Remark that 
$J^{(2)}=\Theta\circ (J\otimes J)\circ \Theta^{-1}$
where $\Theta:C^0_T(V) \otimes C^0_T(V)\to C^0_T(V)$ is the crystal isomorphism defined in theorem \ref{closed} and $(J\otimes J)(\eta_1\otimes \eta_2)=J(\eta_1)\otimes J(\eta_2)$. Since $J$ is an isomorphism, this implies that $J^{(2)}$ is an isomorphism, thus $\tau =J\circ J^{(2)}$ is an isomorphism.

Let $\tilde S^{(2)}$ be defined by $\tilde S^{(2)}(\eta_1\star \eta_2)=\tilde S(\eta_2)\star \tilde S(\eta_1)$. Then $\tau= \tilde S \circ \tilde S^{(2)}$, and, since $\tilde S$ is an involution, the inverse of $\tau $ is  $ \tilde S^{(2)}\circ \tilde S$. So to prove that $\tau$ is an involution we have to show that  $ \tilde S \circ \tilde S^{(2)}=\tilde S^{(2)}\circ \tilde S$. 
Both these maps are crystal isomorphisms, so it is enough to check that for any $\eta\in C_T^0(V)$, the two paths  $(\tilde S \circ \tilde S^{(2)})(\eta)$ and $(\tilde S^{(2)}\circ \tilde S)(\eta)$ are in the same connected crystal component. Since $\tilde S$ conserves each connected component, $\eta$ and $\tilde S(\eta)$ on the one hand, and $\tilde S^{(2)}(\eta)$ and $\tilde S ( \tilde S^{(2)}(\eta))$ on the other hand, are in the same component. Therefore is it sufficient to show that if $\eta$ and $\mu$ are in  the same component then $\tilde S^{(2)}(\eta)$ and $\tilde S^{(2)}(\mu)$ are in the same component. Let us write $\eta=\eta_1\star \eta_2$. Then if $\mu=\EE^r_\alpha(\eta)$, $\sigma=\varphi_\alpha(\eta_1)-\varepsilon_\alpha(\eta_2)$ and $\tilde \sigma=-\sigma$,
           $$ \tilde S(\EE^{\min(r,-\sigma)+\sigma^+}_\alpha \eta_2) = \EE^{-\min(r,-\sigma)-\sigma^+}_{\tilde \alpha}  \tilde S\eta_2=\EE^{\max(-r,\tilde\sigma)-\tilde\sigma^-}_{\tilde \alpha} \tilde S\eta_2$$
          and  $$
      \tilde S(\EE^{\max(r,-\sigma)-\sigma^-}_\alpha\eta_1)= \EE^{-\max(r,-\sigma)+\sigma^-}_{\tilde \alpha}\tilde \eta_1=
 \EE^{\min(-r,-\tilde\sigma)+\tilde\sigma^+}_{\tilde \alpha}\tilde S\eta_1$$
therefore
\begin{eqnarray*}
\tilde S^{(2)}(\mu)&=& \tilde S^{(2)}(\EE^{\max(r,-\sigma)-\sigma^-}_\alpha\eta_1\star\EE^{\min(r,-\sigma)+\sigma^+}_\alpha \eta_2)\\
&=&\tilde S(\EE^{\min(r,-\sigma)+\sigma^+}_\alpha \eta_2) \star \tilde S(\EE^{\max(r,-\sigma)-\sigma^-}_\alpha\eta_1))\\
&=& \EE^{\max(-r,\tilde\sigma)-\tilde\sigma^-}_{\tilde \alpha} \tilde S\eta_2 \star  \EE^{\min(-r,-\tilde\sigma)+\tilde\sigma^+}_{\tilde \alpha}\tilde S\eta_1\\
&=&\EE^{-r}_{\tilde \alpha}(\tilde S\eta_2 \star \tilde S\eta_1)\\
&=&\EE^{-r}_{\tilde \alpha}\tilde S^{(2)}(\eta).
\end{eqnarray*} 
So in this case $\tilde S^{(2)}(\mu)$ and $\tilde S^{(2)}(\eta)$ are in the same component. One concludes easily by induction. $\square$
\medskip

   We can now define an involution $\tilde S_\lambda$ on each continuous crystal of the family $\{B(\lambda), \lambda \in \bar C\}$   by transporting the action of $\tilde S$ on $C^0_T(V)$. Let  $\lambda, \mu \in \bar C$. For $b_1\in B(\lambda)$ and $b_2\in B(\mu)$ let 
$$\tau_{\lambda,\mu}(b_1\otimes b_2)=\tilde S_\gamma (\tilde S_\mu b_2\otimes \tilde S_\lambda b_1)$$
where $\gamma\in \bar C$ is such that $\tilde S_\mu b_2\otimes \tilde S_\lambda b_1 
\in B(\gamma).$
 
\begin{theorem}
For $\lambda, \mu \in \bar C$, the map $$\tau_{\lambda,\mu}:B(\lambda)\otimes B(\mu)\to B(\mu)\otimes B(\lambda)$$
is a crystal isomorphism.
\end{theorem}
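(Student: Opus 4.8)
The plan is to obtain $\tau_{\lambda,\mu}$ as the transport, under the concatenation isomorphism $\Theta$ of Theorem~\ref{closed}, of the restriction of the involution $\tau$ of Theorem~\ref{crysiso} to a suitable union of connected components; in this sense the statement is the abstract-crystal shadow of Theorem~\ref{crysiso}. First I would fix dominant paths $\pi_1,\pi_2\in C_T^0(V)$ with $\pi_1(T)=\lambda$ and $\pi_2(T)=\mu$, and take $L_{\pi_1}$, $L_{\pi_2}$ as representatives of the classes $B(\lambda)$, $B(\mu)$, so that $\tilde S_\lambda=\tilde S|_{L_{\pi_1}}$ and $\tilde S_\mu=\tilde S|_{L_{\pi_2}}$. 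Since $L_{\pi_i}\cup\{{\bf 0}\}$ is invariant under the Littelmann operators, $L_{\pi_1}\otimes L_{\pi_2}$ is a subcrystal of $C_T^0(V)\otimes C_T^0(V)$, and $\Theta$ restricts to a crystal embedding $L_{\pi_1}\otimes L_{\pi_2}\hookrightarrow C_T^0(V)$, $\eta_1\otimes\eta_2\mapsto\eta_1\star\eta_2$, whose image $\Sigma_{12}:=\{\eta_1\star\eta_2;\ \eta_1\in L_{\pi_1},\ \eta_2\in L_{\pi_2}\}$ is, being closed under the $\EE_\alpha^r$, a union of connected components of $C_T^0(V)$. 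I would define $\Sigma_{21}$ in the same way from $L_{\pi_2}\otimes L_{\pi_1}$.

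Next I would check that $\tau$ maps $\Sigma_{12}$ bijectively onto $\Sigma_{21}$. Indeed $\tau(\eta_1\star\eta_2)=\tilde S(\tilde S\eta_2\star\tilde S\eta_1)$; since $\tilde S(L_\pi)=L_\pi$ for every dominant $\pi$, we have $\tilde S\eta_2\in L_{\pi_2}$ and $\tilde S\eta_1\in L_{\pi_1}$, hence $\tilde S\eta_2\star\tilde S\eta_1\in\Sigma_{21}$, and applying $\tilde S$ once more keeps us inside $\Sigma_{21}$ because $\tilde S$ preserves each connected component of $C_T^0(V)$ and $\Sigma_{21}$ is a union of such components. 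Thus $\tau(\Sigma_{12})\subseteq\Sigma_{21}$; the symmetric argument gives $\tau(\Sigma_{21})\subseteq\Sigma_{12}$, and since $\tau$ is an involution this forces $\tau(\Sigma_{12})=\Sigma_{21}$, so that $\tau|_{\Sigma_{12}}\colon\Sigma_{12}\to\Sigma_{21}$ is a bijection. Conjugating by $\Theta$ then produces an involutive crystal isomorphism $\tau_{12}:=\Theta^{-1}\circ\tau\circ\Theta\colon L_{\pi_1}\otimes L_{\pi_2}\to L_{\pi_2}\otimes L_{\pi_1}$.

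It remains to identify $\tau_{12}$ with $\tau_{\lambda,\mu}$. For $\eta_1\in L_{\pi_1}$, $\eta_2\in L_{\pi_2}$ one has $\tau_{12}(\eta_1\otimes\eta_2)=\Theta^{-1}\bigl(\tilde S(\tilde S\eta_2\star\tilde S\eta_1)\bigr)$. Writing $\tilde S\eta_2\star\tilde S\eta_1=\Theta(\tilde S_\mu\eta_2\otimes\tilde S_\lambda\eta_1)$, this element lies in the connected component of $C_T^0(V)$ corresponding to some $B(\gamma)$, and applying $\tilde S$ there followed by $\Theta^{-1}$ returns $\tilde S_\gamma(\tilde S_\mu\eta_2\otimes\tilde S_\lambda\eta_1)$, i.e.\ exactly $\tau_{\lambda,\mu}(\eta_1\otimes\eta_2)$. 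The one step that needs genuine care — and the main obstacle — is this last identification: one must know that the abstractly defined involutions $\tilde S_\lambda$, $\tilde S_\mu$, $\tilde S_\gamma$ are compatible with the concrete realizations $L_{\pi_1}$, $L_{\pi_2}$ and with the restriction of $\tilde S$ on $C_T^0(V)$ to each connected component. This is precisely where one invokes that $\tilde S$ restricted to $L_\pi$ depends only on the crystal structure (the discussion following Lemma~\ref{lem_Schu}) together with the independence Theorem~\ref{uniq_iso}, which together guarantee that $\tilde S_\nu$ is unambiguous and coincides with $\tilde S|_{L_\nu}$ for every connected component $L_\nu$. Once this is in place, $\tau_{\lambda,\mu}=\tau_{12}$ is an involutive crystal isomorphism and the theorem follows.
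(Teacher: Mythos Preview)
Your proposal is correct and is exactly the argument the paper has in mind: the paper's proof is the single line ``This follows from theorem~\ref{crysiso}'', and what you have written is a careful unpacking of that sentence---restricting $\tau$ to the union of components $\Sigma_{12}=\Theta(L_{\pi_1}\otimes L_{\pi_2})$, checking it lands in $\Sigma_{21}$, and then identifying the conjugated map with $\tau_{\lambda,\mu}$ via the crystal-intrinsic characterization of $\tilde S$ after Lemma~\ref{lem_Schu}. Your identification of the one genuine point to verify (that the abstractly defined $\tilde S_\gamma$ on a component of $B(\mu)\otimes B(\lambda)$ agrees with the transported $\tilde S$ on the corresponding component of $C_T^0(V)$) is exactly right, and the ingredients you cite suffice.
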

  This follows  from theorem \ref{crysiso}.                
 As in the construction of Henriques and Kamnitzer  \cite{hen-kamn1},  \cite{hen-kamn2} these isomorphisms do not obey the axioms for a braided monoidal category, but instead we have that:\begin{enumerate} 
\item  $ \tau_{\mu,\lambda} \circ \tau_{\lambda,\mu} = 1; $
\item The following diagram commutes:
\begin{equation*}   
\xymatrix{
B(\lambda) \otimes B(\mu) \otimes B(\sigma) 
\ar[d]_{\tau_{(\lambda,\mu)} \otimes 1} 
\ar[r]^{1 \otimes \tau_{ (\mu, \sigma)}} & 
B(\lambda) \otimes B(\sigma) \otimes B(\mu) 
\ar[d]^{\tau_{(\lambda, (\sigma,\mu))}}\\
B(\mu) \otimes B(\lambda) \otimes B(\sigma) 
\ar[r]_{\tau_{((\mu,\lambda), \sigma)}} & 
B(\sigma) \otimes B(\mu) \otimes B(\lambda) \\
}
\end{equation*}
\end{enumerate}
which makes of $B(\lambda), \lambda \in \bar C$, a coboundary category.

\section{The Duistermaat-Heckman measure and Brownian motion}
\subsection{} In this section, we  consider a finite Coxeter group, with a
realization in some Euclidean space $V$ identified with its dual
so that, for each root $\alpha$, 
$\alpha^{\vee}=\frac{2\alpha}{\Vert \alpha\Vert^2}$.
We will introduce an analogue, for continuous crystals, of the Duistermaat-Heckman measure, compute its Laplace transform (the analogue of the Harish-Chandra formula), and study its connections with Brownian motion.
\subsection{Brownian motion and the Pitman transform}
Fix a reduced decomposition of the longest word
$$w_0=s_1 s_2 \cdots s_q$$
and let ${\bf i}=(s_1,\cdots,s_q)$.
 Recall that for any $\eta \in C^0_T(V)$, 
 its string parameters 
 $x=(x_1,\cdots,x_q)=\varrho_{\bf i}(\eta)$ satisfy 
\begin{equation}\label{p-ineq1}
0\le x_i\le \alpha_{s_i}^{\vee}(\lambda-\sum_{j=1}^{i-1} x_j\alpha_{s_j}), \qquad i\le q;
\end{equation}
where $\lambda=\PP_{w_0}\eta(T)$.
For each simple root $\alpha$ choose a reduced decomposition
${\bf i_\alpha}=(s_1^\alpha,\cdots,s_q^\alpha)$ such that $s_1^\alpha=s_\alpha$ and denote the corresponding string parameters $\varrho_{\bf i_\alpha}(\eta)$  by 
$(x_1^\alpha,\cdots,x_q^\alpha)$.
Using the map ${\phi_{\bf i}^{\bf i_\alpha}}$  
given by  theorem \ref{piecewise} we obtain
 a continuous piecewise linear  function $\Psi^{\bf i}_\alpha:\R^q \to \R$ such that 
\begin{equation}\label{x1}x_1^\alpha=\Psi^{\bf i}_\alpha(x).\end{equation}
Of course 
\begin{equation}\label{p-ineq3}
\Psi^{\bf i}_\alpha(x) \ge 0, \qquad \text{for all} \  \alpha\in\Sigma .
\end{equation}
Denote by $M_{\bf i}$ the set of $(x,\lambda)\in\R_+^q\times C$ which satisfy the inequalities (\ref{p-ineq1})
and (\ref{p-ineq3}),  and set 
\begin{equation}
\label{milambda}M_{\bf i}^\lambda=\{x\in\R_+^q:\ (x,\lambda)\in M_{\bf i}\}.
\end{equation}
Let $\Prob$ be a probability measure on 
$C^0_T(V)$ under which 
$\eta$ is a standard Brownian motion in $V$. 
We recall the following theorem from \cite{bbo}.
\begin{theorem}\label{P-brown}
The stochastic process $\PP_{w_0}\eta$ is a Brownian motion in $V$
conditioned, in Doob's sense, to stay in the Weyl chamber 
$\bar C$.
\end{theorem}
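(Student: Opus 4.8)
\emph{Proof proposal.}
The plan is to reduce to the rank-one case --- Pitman's $2M-X$ theorem --- and then to build up the general statement along a reduced decomposition $w_0=s_1\cdots s_q$ of the longest element. In rank one, where $V=\R\alpha$ and $\eta$ is a suitably normalised Brownian motion, Definition \ref{pitman-transform} gives
$$\alpha^\vee(\PP_\alpha\eta(t))=\alpha^\vee(\eta(t))-2\inf_{0\le s\le t}\alpha^\vee(\eta(s)),$$
and after the sign change $\beta=-\alpha^\vee(\eta)$ this is precisely the Pitman transform $2\sup_{0\le s\le t}\beta(s)-\beta(t)$ of a real Brownian motion; by Pitman's theorem \cite{pitman} it is a three-dimensional Bessel process, equivalently the Doob $h$-transform of Brownian motion killed at $0$ with $h(x)=x$, i.e. Brownian motion conditioned in Doob's sense to stay in the rank-one chamber $\{v:\alpha^\vee(v)\ge0\}$.

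For the general case, let $(P_t)$ be the Brownian semigroup on $V$, let $h$ be a positive multiple of the product of the positive coroots (the essentially unique harmonic function on the interior of $\bar C$ that vanishes on the walls), and let $(Q_t)$ be the associated Doob $h$-transform, the semigroup of Brownian motion conditioned to stay in $\bar C$ with its entrance law from $0$. Set $\eta_q=\eta$ and $\eta_{k-1}=\PP_{s_k}\eta_k$ as in the construction leading to (\ref{xy}), so that $\eta_0=\PP_{w_0}\eta$. Because each $\PP_{s_i}\zeta(t)$ depends only on $\zeta|_{[0,t]}$, the differences $\eta_k(t)-\eta_{k-1}(t)$ recover the relevant running infima, so the tuple $X(t)=(\eta_q(t),\dots,\eta_0(t))$ is a time-homogeneous Markov process with transition semigroup $\Pi_t$, of which $\PP_{w_0}\eta=\eta_0$ is a coordinate projection. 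The key step is then to produce a Markov kernel $\Lambda$ from the state space of $\eta_0$ to that of $X$, with $\Lambda(\lambda,\cdot)$ carried by $\{x:x_0=\lambda\}$, such that $\Lambda\,\Pi_t=Q_t\,\Lambda$ and the conditional law of $X(t)$ given $(\eta_0(s))_{s\le t}$ is $\Lambda(\eta_0(t),\cdot)$; a classical Markov-function criterion in the spirit of Rogers and Pitman then forces $\eta_0$ to be Markov with semigroup $(Q_t)$ started at $0$, which is the assertion.

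The construction of $\Lambda$ is where the real work lies, and I would carry it out by induction along the reduced word. At each stage $\eta_{k-1}=\PP_{s_k}\eta_k$ is $\alpha_{s_k}$-dominant and its law relative to $\eta_k$ is governed by a one-dimensional Pitman transform in the $\alpha_{s_k}$-direction (on the orthogonal complement $\eta_{k-1}$ agrees with $\eta_k$), so the rank-one result can be fed in one factor at a time: one identifies each partial transform with a Brownian motion conditioned to stay in an explicit polyhedral cone and assembles the intertwining kernel step by step. The nested infima produced by several consecutive factors are kept under control by the explicit product formula of Theorem \ref{formula}, and Theorem \ref{braidP} guarantees that the resulting kernel and the limiting $h$-process are independent of the chosen reduced decomposition. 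The main obstacle is exactly this inductive assembly: one must show that, conditionally on $\PP_{w_0}\eta$, the string coordinates $(x_1,\dots,x_q)$ of $\eta$ carry precisely the law that makes $\Lambda$ Markovian, pin down the boundary behaviour of $h$ needed to identify the conditioned semigroup $(Q_t)$, and give meaning to the entrance law at the intersection of the walls, where the process is started.
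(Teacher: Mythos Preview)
The paper does not prove this theorem at all: it is stated as a result recalled from \cite{bbo}, and nothing more is said. So there is no ``paper's own proof'' to compare against, only the original argument in \cite{bbo}.

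Your sketch is broadly in the spirit of \cite{bbo}: reduce to Pitman's $2M-X$ theorem in rank one and use a Markov-functions/intertwining criterion of Rogers--Pitman type to show that the projection $\eta_0=\PP_{w_0}\eta$ is Markov with the $h$-transformed semigroup. Where your outline diverges from \cite{bbo} is in the mechanism you propose for constructing the intertwining kernel. In \cite{bbo} the argument does not proceed by a bare induction along a reduced word feeding in rank-one Pitman at each step; rather, one first introduces a drift $\mu\in C$, works with the transforms $\mathcal Q_{\beta}=\PP_\beta s_\beta$ (the ones appearing in Section~\ref{pf-punif} of the present paper), and uses the one-dimensional output/Burke theorem for the $M/M/1$ queue to obtain independence and exponential laws for the successive infima. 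One then lets $\mu\to 0$ and identifies the limit via the explicit Harish-Chandra--type density. This queuing input is what makes the inductive step tractable: the successive $\mathcal Q_{\beta_k}$ preserve the Brownian-with-drift law and spit out independent exponentials, so there is no need to control the joint law of nested running infima directly.

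Your proposal, by contrast, tries to work directly with the $\PP_{s_k}$ and assemble $\Lambda$ step by step. You are right that ``the real work lies'' there, and honest that the inductive assembly is the main obstacle; but as written this is an outline with an acknowledged gap rather than a proof. In particular, the step ``one identifies each partial transform with a Brownian motion conditioned to stay in an explicit polyhedral cone'' is not obviously available: after several applications of $\PP_{s_k}$ the process $\eta_{k-1}$ is not in general a Doob-conditioned Brownian motion in any simple cone, and Theorem~\ref{formula} controls the endpoint formula but not the law. If you want to push your route through, the cleanest fix is to switch to the $\mathcal Q$-transforms with drift as in Proposition~\ref{bmfacts} and \cite{bbo}, which is exactly what makes the induction go.
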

This means that  $\PP_{w_0}\eta$ is the $h$-process of
 the standard Brownian motion in $V$ killed when it exits $\bar C$, for the
 harmonic function
$$h(\lambda)=\prod_{ \alpha \in R_+} \alpha^\vee (\lambda) ,$$
 for $\lambda \in V$, where $R_+$ is the set of all positive roots. Let
 $c_t=t^{q/2}\int_V e^{-\|\lambda\|^2/2t}\ d\lambda$ and $$k=c_1^{-1}\int_C h(\lambda)^2 e^{-\|\lambda\|^2/2} \ d\lambda.$$
\begin{theorem}\label{p-unif}
For $(\sigma,\lambda)\in M_{\bf i}$,
\begin{equation}\label{uniform}
\Prob(\varrho_{\bf i}(\eta) \in d\sigma,
 \mathcal P_{w_0}\eta(T) \in d\lambda) 
= c_T^{-1}  h(\lambda) e^{- \| \lambda \|^2 /2T} \ d\sigma
\ d\lambda . 
\end{equation}
The conditional law of $\varrho_{\bf i}(\eta)$, given $(\mathcal P_{w_0} \eta(s), s\le T)$ and 
$\mathcal P_{w_0} \eta(T)=\lambda$, is the normalized
Lebesgue measure 
 on $M_{\bf i}^\lambda$, and the volume 
 of $M_{\bf i}^\lambda$ 
is $k^{-1}h(\lambda)$.
\end{theorem}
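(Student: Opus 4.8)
The plan is to combine the string parametrization map $\varrho_{\bf i}$ with the explicit change-of-variables it induces, then feed in Theorem \ref{P-brown}. The first step is to observe that, since $w_0=s_1\cdots s_q$ is a reduced decomposition, the recursive construction of section \ref{littelpit} (equations (\ref{kash}), (\ref{inegx}), (\ref{xy})) shows that for a path $\eta\in C_T^0(V)$, the pair $(\varrho_{\bf i}(\eta),\PP_{w_0}\eta(T))$ lies in $M_{\bf i}$, and conversely that $\eta$ is recovered from $(\sigma,\lambda,\PP_{w_0}\eta)$ via $\eta=\HH_{s_q}^{\sigma_q}\cdots\HH_{s_1}^{\sigma_1}\pi$ where $\pi=\PP_{w_0}\eta$. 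The key analytic point is a change-of-variables formula: writing $\eta(T)=\lambda-\sum_j\sigma_j\alpha_{s_j}$ and fixing the dominant path $\pi=\PP_{w_0}\eta$, the map $\sigma\mapsto\eta(T)$ is affine with a constant Jacobian determinant $J=|\det(\alpha_{s_1},\dots,\alpha_{s_q})|$ (in suitable coordinates), so Lebesgue measure $d\eta(T)$ on $V$ pulls back to $J\,d\sigma$ on $M_{\bf i}^\lambda$.

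Next I would make this precise at the level of the Brownian bridge decomposition. Under $\Prob$, $\eta$ is a standard Brownian motion, and by Theorem \ref{P-brown}, $\PP_{w_0}\eta$ is the $h$-process with $h(\lambda)=\prod_{\alpha\in R_+}\alpha^\vee(\lambda)$. The law of $\PP_{w_0}\eta$ can be disintegrated as: the law of $(\PP_{w_0}\eta(s),s\le T)$ has $\PP_{w_0}\eta(T)$ distributed as $c_T^{-1}h(\lambda)^2 e^{-\|\lambda\|^2/2T}\,d\lambda$ on $C$, times a bridge. The remaining randomness in $\eta$, after conditioning on the whole path $(\PP_{w_0}\eta(s),s\le T)$, is precisely encoded by the string coordinates $\sigma=\varrho_{\bf i}(\eta)$, since $\eta$ is a deterministic function of $(\sigma,\PP_{w_0}\eta)$. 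One now invokes a transfer/symmetry argument as in \cite{bbo}: the conditional law of $\sigma$ given $\PP_{w_0}\eta$ has a density with respect to Lebesgue measure on $M_{\bf i}^\lambda$ that is proportional to $h(\lambda)^{-1}$ — the factor $h(\lambda)$ is absorbed because the Pitman transform is, roughly, a $q$-fold iterate whose "fibers" have the flat conditional law, and the $h$ appears exactly as the Radon-Nikodym normalization. Combining the $h(\lambda)^2 e^{-\|\lambda\|^2/2T}$ from the marginal with the $h(\lambda)^{-1}$ from the fiber yields (\ref{uniform}), i.e.\ the joint density $c_T^{-1}h(\lambda)e^{-\|\lambda\|^2/2T}$ on $M_{\bf i}$.

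The statement that the conditional law of $\varrho_{\bf i}(\eta)$ given $(\PP_{w_0}\eta(s),s\le T)$ with endpoint $\lambda$ is normalized Lebesgue measure on $M_{\bf i}^\lambda$ then follows by integrating out the path of $\PP_{w_0}\eta$ while keeping its endpoint fixed: (\ref{uniform}) factorizes, and the $\sigma$-part is flat on $M_{\bf i}^\lambda$. Finally, the volume computation: integrating (\ref{uniform}) over $\sigma\in M_{\bf i}^\lambda$ gives $c_T^{-1}h(\lambda)e^{-\|\lambda\|^2/2T}\operatorname{vol}(M_{\bf i}^\lambda)$ as the density of $\PP_{w_0}\eta(T)$; but that density is also known directly to be $c_T^{-1}h(\lambda)^2 e^{-\|\lambda\|^2/2T}$ times the bridge normalization, and comparing with the definition $k=c_1^{-1}\int_C h(\lambda)^2 e^{-\|\lambda\|^2/2}\,d\lambda$ (which is exactly the total mass normalization at $T=1$) forces $\operatorname{vol}(M_{\bf i}^\lambda)=k^{-1}h(\lambda)$. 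A scaling argument in $T$, using Proposition \ref{scal}(iii) ($M_{\bf i}^{t\lambda}=tM_{\bf i}^\lambda$, so the volume is homogeneous of degree $q$ in $\lambda$... but here $h$ is also homogeneous, and the degree matches since $q=|R_+|$ for finite Coxeter... wait, that equality of degrees is the consistency check) pins down the constant independent of $T$.

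\medskip
\noindent\emph{Main obstacle.} The delicate step is justifying that the fiber of $\varrho_{\bf i}$ over a fixed dominant path carries the \emph{flat} conditional law with the precise $h(\lambda)^{-1}$ normalization — i.e.\ controlling how the Brownian measure disintegrates under the $q$-fold composition of Pitman transforms $\PP_{s_1}\cdots\PP_{s_q}$. One expects to do this inductively on $q$, peeling off one Pitman transform $\PP_{s}$ at a time: a single $\PP_\alpha$ applied to a Brownian path produces, by the classical Pitman / Matsumoto-Yor type identity, a reflected/conditioned path together with an independent uniform "overshoot" coordinate $x_i$, flat on $[0,\alpha^\vee(\cdot)]$; iterating and keeping track of the product of the one-dimensional $h$-function factors reassembles the full $h$. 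Getting the bookkeeping of these one-dimensional identities to combine into the global formula — and checking the inequalities (\ref{p-ineq1}), (\ref{p-ineq3}) cut out exactly the support $M_{\bf i}^\lambda$ at each stage — is where the real work lies; everything else is change of variables and normalization.
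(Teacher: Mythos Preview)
Your inductive plan has a genuine gap at the ``main obstacle'' you yourself identify. Peeling off one Pitman transform at a time does not iterate cleanly: after applying $\PP_{s_q}$ to a Brownian motion, the resulting path $\PP_{s_q}\eta$ is \emph{not} a Brownian motion (it is a Bessel-type process in the $\alpha_{s_q}$-direction), so the one-dimensional Pitman identity cannot simply be applied again to $\PP_{s_{q-1}}$ acting on it. The ``independent uniform overshoot'' property you invoke is a statement about Brownian input, and the intermediate processes $\PP_{s_k}\cdots\PP_{s_q}\eta$ are not Brownian. Your Jacobian remark is also confused: the map $\sigma\mapsto\eta(T)=\lambda-\sum_j\sigma_j\alpha_{s_j}$ goes from $\R^q$ (with $q=|R_+|$) to $V$ of dimension $|S|<q$ in rank $\ge 2$, so there is no determinant to speak of.

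The paper's proof circumvents exactly this obstruction by passing to the transforms $\mathcal Q_\beta=\PP_\beta s_\beta$, working with a \emph{two-sided} Brownian motion with drift $\mu\in C$, and using an M/M/1 output-theorem argument: under $\mathbb P_\mu$ each $\mathcal Q_{\beta_i}$ preserves the law of the driving Brownian motion with drift, so the iteration \emph{does} go through, and the resulting coordinates $\omega_i(t)$ are mutually independent exponentials with explicit rates $2\beta_i^\vee(\mu)$, independent of the past of $\mathcal Q_{w_0}\pi$. One then conditions on the transformed path staying in $C-\xi$ and takes the double limit $\xi\to 0$, $\mu\to 0$; the exponentials flatten to Lebesgue, and a careful computation with the alternating-sum kernel $\phi(a,b)=\sum_W\varepsilon(w)e^{\langle wa,b\rangle}$ produces the factor $h(\lambda)$. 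A change of variables (Lemma relating the $x_i$ and $y_i$) then transfers the result from the $\mathcal Q$-coordinates back to the string coordinates $\varrho_{\bf i}(\eta)$. The drift-plus-output-theorem device is the missing idea in your proposal.
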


This theorem has the following interesting corollary, 
which gives a new proof of the fact that the set $C_{\bf i}^\pi$
 depends only on $\pi(T)$, and is polyhedral.
\begin{cor}\label{cor_cone} For any dominant path $\pi$, let $\lambda=\pi(T)$, then
$C_{\bf i}^\pi=M_{\bf i}^\lambda,$ and
$$C_{\bf i}=\{x \in \R_+^q; \Psi^{\bf i}_\alpha(x) \ge 0, \mbox{ for all } \alpha\in\Sigma\}.$$
\end{cor}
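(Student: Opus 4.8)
The plan is to read off both statements directly from Theorem \ref{p-unif}, since that theorem identifies the "shape" of the set of possible string parameters at a fixed endpoint $\lambda$ with the polytope $M_{\bf i}^\lambda$. First I would note that for a dominant path $\pi$ with $\pi(T)=\lambda$, the Littelmann module $L_\pi$ consists exactly of those $\eta\in C_T^0(V)$ with $\PP_{w_0}\eta=\pi$, and in particular $\PP_{w_0}\eta(T)=\lambda$; conversely, by Theorem \ref{uniq_iso} and the scaling/density arguments of the previous section, the string parametrization $\varrho_{\bf i}$ restricted to the fibre $\{\eta:\PP_{w_0}\eta(T)=\lambda\}$ has image independent of which dominant $\pi$ with endpoint $\lambda$ we pick. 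The point is then to match $C_{\bf i}^\pi=\varrho_{\bf i}(L_\pi)$ with $M_{\bf i}^\lambda$.

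For the inclusion $C_{\bf i}^\pi\subseteq M_{\bf i}^\lambda$: any $\eta\in L_\pi$ satisfies the inequalities (\ref{p-ineq1}) by (\ref{inegx}) (these are exactly the constraints (\ref{p-ineq1}) written out along the reduced word, with $\lambda=\pi(T)$), and it satisfies (\ref{p-ineq3}) because $\varepsilon_\alpha(\eta)\ge 0$ in a normal crystal forces $x_1^\alpha=\varrho_{{\bf i}_\alpha}(\eta)_1=\Psi^{\bf i}_\alpha(\varrho_{\bf i}(\eta))\ge 0$; indeed $x_1^\alpha=-\inf_{0\le t\le T}\alpha^\vee(\eta(t))\ge 0$ since $\eta(0)=0$. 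Hence $\varrho_{\bf i}(\eta)\in M_{\bf i}^\lambda$. For the reverse inclusion $M_{\bf i}^\lambda\subseteq C_{\bf i}^\pi$, I would invoke Theorem \ref{p-unif}: run a standard Brownian motion $\eta$, so that $\PP_{w_0}\eta$ is the conditioned Brownian motion of Theorem \ref{P-brown}; the theorem says that, conditioned on $\PP_{w_0}\eta(T)=\lambda$, the law of $\varrho_{\bf i}(\eta)$ is Lebesgue measure on $M_{\bf i}^\lambda$, which has full support $M_{\bf i}^\lambda$. Thus every point of the interior of $M_{\bf i}^\lambda$ (and hence, by Proposition \ref{homeo}, using that $C_{\bf i}^\pi$ is compact, all of $M_{\bf i}^\lambda$ by taking closures) arises as $\varrho_{\bf i}(\eta)$ for some $\eta$ with $\PP_{w_0}\eta(T)=\lambda$; replacing $\eta$ by an element of $L_\pi$ with the same string parameters (which exists since $\varrho_{\bf i}^{-1}$ is defined on all of $M_{\bf i}^\lambda$ by the explicit formula $\HH_{s_q}^{x_q}\cdots\HH_{s_1}^{x_1}\pi$ whenever the composition is defined, and definedness is exactly membership in $M_{\bf i}^\lambda$), we get $M_{\bf i}^\lambda\subseteq C_{\bf i}^\pi$. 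This proves $C_{\bf i}^\pi=M_{\bf i}^\lambda$, which depends on $\pi$ only through $\lambda=\pi(T)$ and is polyhedral, being cut out by the finitely many affine inequalities (\ref{p-ineq1}) and the finitely many piecewise-linear homogeneous inequalities (\ref{p-ineq3}).

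For the formula $C_{\bf i}=\{x\in\R_+^q:\Psi^{\bf i}_\alpha(x)\ge 0\ \forall\alpha\in\Sigma\}$, recall from Theorem \ref{theo_poly} that $C_{\bf i}^\pi=C_{\bf i}\cap K_\lambda$, where $K_\lambda$ is defined precisely by the inequalities (\ref{p-ineq1}). Since we have just shown $C_{\bf i}^\pi=M_{\bf i}^\lambda$ and $M_{\bf i}^\lambda$ is defined by (\ref{p-ineq1}) together with $x\in\R_+^q$ and (\ref{p-ineq3}), we get $C_{\bf i}\cap K_\lambda=\{x\in\R_+^q:\Psi^{\bf i}_\alpha(x)\ge 0\ \forall\alpha\}\cap K_\lambda$ for every $\lambda\in C$. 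I would then let $\lambda$ range over a sequence with all coordinates $\alpha^\vee(\lambda)\to+\infty$, so that the polytopes $K_\lambda$ exhaust the cone: more precisely, by the scaling property (Proposition \ref{scal}(iii)) $C_{\bf i}^{a\pi}=aC_{\bf i}^\pi$, so $C_{\bf i}=\overline{\bigcup_{a>0}a^{-1}C_{\bf i}^{a\pi}}$ (the cone generated by any one non-degenerate string polytope), and likewise $\{x\in\R_+^q:\Psi^{\bf i}_\alpha(x)\ge0\}$ is a cone, so the two cones agree because they agree after intersecting with a fixed $K_\lambda$ of nonempty interior and then homogenizing.

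The main obstacle I anticipate is the bookkeeping in the reverse inclusion: being careful that "definedness of $\HH_{s_q}^{x_q}\cdots\HH_{s_1}^{x_1}\pi$" is genuinely equivalent to $(x,\lambda)\in M_{\bf i}^\lambda$ rather than merely implied by it, i.e. that the support statement in Theorem \ref{p-unif} really does cover every point of $M_{\bf i}^\lambda$ and not just a dense open subset — this is where one needs the compactness of $L_\pi$ from Proposition \ref{homeo} and the continuity of $\varrho_{\bf i}^{-1}$ to pass from interior points to boundary points. Everything else is a direct translation of earlier results.
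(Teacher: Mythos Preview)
Your overall strategy matches the paper's. Two remarks.

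For the reverse inclusion $M_{\bf i}^\lambda\subseteq C_{\bf i}^\pi$, you are not using the full strength of Theorem~\ref{p-unif}: the conditioning there is on the \emph{entire} trajectory $(\PP_{w_0}\eta(s),\,s\le T)$, not merely on the endpoint $\lambda$. Conditioning on $\PP_{w_0}\eta=\pi$ itself, the law of $\varrho_{\bf i}(\eta)$ is already uniform on $M_{\bf i}^\lambda$; since $\eta\in L_\pi$ under this conditioning, that law is supported on $C_{\bf i}^\pi$, and closedness of $C_{\bf i}^\pi$ gives $M_{\bf i}^\lambda\subseteq C_{\bf i}^\pi$ directly. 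Your ``replacing $\eta$ by an element of $L_\pi$ with the same string parameters'' step is therefore unnecessary --- and as written it is circular: asserting that $\HH_{s_q}^{x_q}\cdots\HH_{s_1}^{x_1}\pi$ is defined for all $x\in M_{\bf i}^\lambda$ is precisely the inclusion you are proving. (Both you and the paper pass silently from an almost-sure statement in $\pi$ to all dominant $\pi$; this is where one genuinely needs continuity of the $\HH$ operators and density of Brownian trajectories.)

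For the identification of $C_{\bf i}$, the paper's argument is shorter. Once $C_{\bf i}^\pi=M_{\bf i}^\lambda$ for every dominant $\pi$, one simply writes $C_{\bf i}=\bigcup_\pi C_{\bf i}^\pi=\bigcup_{\lambda\in C}M_{\bf i}^\lambda$; as $\lambda$ ranges over $C$ the inequalities~(\ref{p-ineq1}) become vacuous (any $x\in\R_+^q$ satisfies them for $\lambda$ deep in the chamber), leaving exactly $\{x\in\R_+^q:\Psi^{\bf i}_\alpha(x)\ge 0\ \text{for all}\ \alpha\in\Sigma\}$. Your scaling-and-homogenization route via Theorem~\ref{theo_poly} reaches the same conclusion but is a detour.
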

\proof It is clear that $C_{\bf i}^\pi$ is contained in $M_{\bf i}^\lambda$ and the theorem 
implies that  $C_{\bf i}^\pi$, equal by definition to the set of 
$\varrho_{\bf i}(\eta)$  when $\PP_{w_0}\eta=\pi$,  contains 
 $M_{\bf i}^\lambda$. The description of $C_{\bf i}$ follows, since  $C_{\bf i}=\cup\{C_{\bf i}^\pi, \pi \text{ dominant path}\}. $ $\qed$

Theorem \ref{p-unif} is proved in section \ref{pf-punif}.
\subsection{The Duistermaat-Heckman measure}
Let $G$ be a compact semi-simple Lie group with maximal torus $T$. If
${\mathcal O}_{\lambda}$ is a coadjoint orbit of $G$, corresponding to a dominant
regular weight, endowed with its canonical symplectic structure $\omega$, 
then this maximal torus
acts  on the symplectic manifold $({\mathcal O}_{\lambda},\omega)$, and the image of the
Liouville measure on ${\mathcal O}_{\lambda}$ by the moment map, which takes values in 
the dual of the Lie algebra of $T$, is called the Duistermaat-Heckman measure.
It is proved in \cite{ab} that this measure is the image of the Lebesgue measure on the
Berenstein-Zelevinsky polytope by an affine map. In analogy with this case, we define
for a realization of a  finite Coxeter group,
 the Duistermaat-Heckman measure, and prove some properties
which generalize the case of crystallographic groups. 
\begin{definition}\label{defDH}
For any $\lambda \in  C$, the Duistermaat-Heckman measure 
$m^\lambda_{DH}$ on $V$ is the image 
of the Lebesgue measure on $M_{\bf i}^\lambda$ (defined by (\ref{milambda})) by the map
 \begin{equation}
 \label{mapp}x=(x_1,\cdots,x_q)\in M_{\bf i}^\lambda 
 \mapsto \lambda-\sum_{j=1}^qx_j\alpha_j\in V.
\end{equation}
\end{definition}
In the following, $V^*$ denotes the complexification of $V$.
\begin{theorem}\label{p-2}
The Laplace transform of the Duistermaat-Heckman measure is given,  
 for $z \in V^*$, by
\begin{equation}\label{harish}
\int_V e^{\langle z, v \rangle} m_{DH}^\lambda(dv) =\frac{\sum_{w\in W} \varepsilon(w)e^{\langle z, w\lambda \rangle}}{h(z)},
\end{equation}
 where $\varepsilon(w)$ is the signature of $w \in W$.

With the notations of theorem \ref{p-unif},
the conditional law of $\eta(T)$, 
given $(\mathcal P_{w_0} \eta(s), 0\leq s\le T)$ and 
$\mathcal P_{w_0} \eta(T)=\lambda$, is the probability measure $\mu_{DH}^\lambda={k \,m^\lambda_{DH}/h(\lambda)}$.
\end{theorem}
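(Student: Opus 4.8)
The plan is to derive both assertions from Theorem~\ref{p-unif} together with the Brownian description of $\mathcal P_{w_0}\eta$. The statement about the conditional law of $\eta(T)$ is essentially a reformulation of Theorem~\ref{p-unif}: since $\eta(T)=wt(\eta)=\lambda-\sum_{j=1}^q x_j\alpha_j$ with $x=\varrho_{\bf i}(\eta)$ and $\lambda=\mathcal P_{w_0}\eta(T)$, the random variable $\eta(T)$ is the image of $\varrho_{\bf i}(\eta)$ under the affine map $(\ref{mapp})$. By Theorem~\ref{p-unif} the conditional law of $\varrho_{\bf i}(\eta)$ given $(\mathcal P_{w_0}\eta(s),s\le T)$ and $\mathcal P_{w_0}\eta(T)=\lambda$ is normalized Lebesgue measure on $M_{\bf i}^\lambda$, so the conditional law of $\eta(T)$ is its image under $(\ref{mapp})$, which by Definition~\ref{defDH} is $m^\lambda_{DH}/\mathrm{vol}(M_{\bf i}^\lambda)$; since $\mathrm{vol}(M_{\bf i}^\lambda)=k^{-1}h(\lambda)$ this is exactly $\mu^\lambda_{DH}=k\,m^\lambda_{DH}/h(\lambda)$.

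For the Laplace transform, the above gives $\int_V e^{\langle z,v\rangle}m^\lambda_{DH}(dv)=\frac{h(\lambda)}{k}\,\E\big[e^{\langle z,\eta(T)\rangle}\mid\mathcal P_{w_0}\eta(T)=\lambda\big]$, so it suffices to compute that conditional Laplace transform; since both sides are entire in $z$ (the right side of $(\ref{harish})$ is entire because its numerator is $W$-antiinvariant, hence divisible by $h$), I may take $z=\theta$ real in the interior of $\bar C$. By Girsanov, under the measure with density $e^{\langle\theta,\eta(T)\rangle-T\|\theta\|^2/2}$ the path $\eta$ is a Brownian motion with drift $\theta$, so $\E[e^{\langle\theta,\eta(T)\rangle};\mathcal P_{w_0}\eta(T)\in d\lambda]=e^{T\|\theta\|^2/2}\,\Prob^\theta(\mathcal P_{w_0}\eta(T)\in d\lambda)$. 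I then use the drifted analogue of Theorem~\ref{P-brown} from \cite{bbo}: for $\theta\in C$, $\mathcal P_{w_0}\eta$ under $\Prob^\theta$ is the Doob $h_\theta$-transform of Brownian motion with drift $\theta$ killed on exiting $\bar C$, with $h_\theta(x)=e^{-\langle\theta,x\rangle}\sum_{w}\varepsilon(w)e^{\langle w\theta,x\rangle}$; using the reflection formula $p^C_T(x,y)=\sum_w\varepsilon(w)p_T(x,wy)$ and the entrance-law limit $\lim_{x\to 0}\sum_w\varepsilon(w)e^{\langle x,w\mu\rangle}/h(x)=c'\,h(\mu)$, one gets that $\mathcal P_{w_0}\eta(T)$ under $\Prob^\theta$ has density on $C$ proportional to
$$\frac{h(\lambda)}{h(\theta)}\Big(\sum_{w\in W}\varepsilon(w)e^{\langle w\theta,\lambda\rangle}\Big)e^{-\|\lambda\|^2/2T},$$
the constant $c'$ being the same one occurring in $\sum_w\varepsilon(w)e^{\langle z,w\lambda\rangle}\sim c'\,h(z)h(\lambda)$ as $z\to 0$. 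Dividing by the driftless density $\propto h(\lambda)^2 e^{-\|\lambda\|^2/2T}$ coming from $(\ref{uniform})$ and using $h_\theta(\lambda)e^{\langle\theta,\lambda\rangle}=\sum_w\varepsilon(w)e^{\langle\theta,w\lambda\rangle}$ yields
$$\E\big[e^{\langle\theta,\eta(T)\rangle}\mid\mathcal P_{w_0}\eta(T)=\lambda\big]=D_\theta\,\frac{k}{h(\lambda)}\cdot\frac{\sum_{w}\varepsilon(w)e^{\langle\theta,w\lambda\rangle}}{h(\theta)}$$
for a constant $D_\theta$ independent of $\lambda$.

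It remains to see $D_\theta=1$. It is independent of $\theta$: integrating the last identity against the law of $\mathcal P_{w_0}\eta(T)$, using $\E[e^{\langle\theta,\eta(T)\rangle}]=e^{T\|\theta\|^2/2}$ and the Gaussian identity $\int_V h(\mu)e^{\langle\theta,\mu\rangle}e^{-\|\mu\|^2/2T}\,d\mu=(2\pi T)^{\dim V/2}T^q h(\theta)e^{T\|\theta\|^2/2}$ (write $h(\mu)e^{\langle\theta,\mu\rangle}=h(\partial_\theta)e^{\langle\theta,\mu\rangle}$ and note $h(\partial)e^{T\|\theta\|^2/2}=T^q h(\theta)e^{T\|\theta\|^2/2}$, since $h(\partial)$ maps the $W$-invariant $e^{T\|\cdot\|^2/2}$ to an antiinvariant of degree $\le q$) forces $D_\theta$ constant. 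Letting $\theta\to 0$, the left side tends to $1$ and the right side to $D_\theta k c'$; comparing with the total mass $\mathrm{vol}(M_{\bf i}^\lambda)=k^{-1}h(\lambda)=\lim_{z\to 0}\int_V e^{\langle z,v\rangle}m^\lambda_{DH}(dv)=D_\theta c'h(\lambda)$ gives $kc'=1$, hence $D_\theta=1$. Substituting back gives $(\ref{harish})$ for real $z\in C^\circ$, and then for all $z\in V^*$ by analytic continuation.

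The step I expect to be the main obstacle is the input from \cite{bbo}, namely identifying the law of $\mathcal P_{w_0}\eta(T)$ when $\eta$ is Brownian with drift $\theta$ (the drifted version of Theorem~\ref{P-brown}), and the accompanying bookkeeping of normalizing constants: the one genuinely nontrivial normalization is the identity $kc'=1$, equivalently that the volume $\mathrm{vol}(M_{\bf i}^\lambda)$ matches the leading term of the Harish--Chandra kernel. (Alternatively one could try to compute the Laplace transform of Lebesgue measure on the polytope $M_{\bf i}^\lambda$ directly via a vertex expansion, but this seems harder to carry out uniformly over $\bf i$ and over all finite Coxeter systems, so the Brownian route is preferable.)
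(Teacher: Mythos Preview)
Your argument is correct and follows essentially the same route as the paper's proof. Both establish the second assertion directly from Theorem~\ref{p-unif} via the affine map \eqref{mapp}, and both obtain the Laplace transform by comparing the law of $\mathcal P_{w_0}\eta(T)$ under a drifted Brownian motion with its driftless law, relying on the result of \cite{bbo} that $\mathcal P_{w_0}\eta$ under drift $\theta\in C$ is the Brownian motion with drift $\theta$ conditioned to stay in $C$.

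The one point where the paper is cleaner is the treatment of constants. You introduce an undetermined factor $D_\theta$ and then argue indirectly that $D_\theta=1$ via a Gaussian integration and the volume identity. In fact this detour is unnecessary: if you track the entrance law from $0$ explicitly (using the limit $\phi(\xi,\cdot)/h(\xi)\to k^{-1}h(\cdot)$, which is exactly Lemma~\ref{tech}), both the drifted and driftless densities come with the same factor $z_T^{-1}T^{-q}$, and the ratio gives
\[
\E\big[e^{\langle\theta,\eta(T)\rangle}\mid \mathcal P_{w_0}\eta(T)=\lambda\big]
=\frac{k\,\phi_\theta(\lambda)}{h(\theta)h(\lambda)}
\]
immediately, with no residual constant. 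The paper achieves the same thing in a slightly more abstract guise: it uses the intertwining $KP_t=Q_tK$ (which encodes your Girsanov step) to see that $Ke_\theta$ is a positive eigenfunction of $Q_t$, hence a multiple of $\phi_\theta/h$, and fixes the multiple by the normalization $\lim_{\xi\to 0}Ke_\theta(\xi)=1$ together with Lemma~\ref{tech}. Your identity $kc'=1$ is precisely the statement $f(0,\cdot)=k^{-1}$ of that lemma, so you are rediscovering it rather than proving something new.
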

Formula \ref{harish} is the analogue, in our setting of the famous formula of Harish-Chandra \cite{harish}.
Theorem \ref{p-2} is proved in section \ref{pf_p-2}.
\begin{prop} The Duistermaat-Heckman measure $m^\lambda_{DH}$ has 
a continuous piecewise polynomial density, invariant under 
$W$ and with support equal to  the convex hull $co(W\lambda)$ of $W\lambda$.
\end{prop}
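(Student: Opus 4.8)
The three assertions to establish are: (i) $m^\lambda_{DH}$ has a continuous piecewise polynomial density; (ii) this density is $W$-invariant; (iii) its support is exactly $\mathrm{co}(W\lambda)$. For (i), I would use that $m^\lambda_{DH}$ is by Definition \ref{defDH} the pushforward of Lebesgue measure on the polytope $M_{\bf i}^\lambda$ under the affine surjection $x\mapsto \lambda-\sum_j x_j\alpha_j$ onto $V$. By Corollary \ref{cor_cone} the polytope is $M_{\bf i}^\lambda=C_{\bf i}\cap K_\pi$, so its defining inequalities depend \emph{linearly} on $\lambda$ (the cone $C_{\bf i}$ is fixed, and the constraints (\ref{p-ineq1}) are affine in $(x,\lambda)$). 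The density of such a pushforward is, by a standard Fubini/fibre-integration argument, the volume of the fibre polytope $\{x\in M_{\bf i}^\lambda:\ \lambda-\sum_j x_j\alpha_j=v\}$ with respect to Lebesgue measure on the affine fibre; since the whole data moves linearly in $(v,\lambda)$, this volume is a continuous piecewise polynomial function of $(v,\lambda)$ of degree $q-\dim V$ — this is the classical fact that the volume of a polytope whose facets move affinely with a parameter is piecewise polynomial in the parameter (Brion--Vergne type). The continuity is automatic since the fibre volume varies continuously as facets move.

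\textbf{$W$-invariance.} The cleanest route is via the probabilistic interpretation: by Theorem \ref{p-2}, $\mu_{DH}^\lambda = k\,m^\lambda_{DH}/h(\lambda)$ is the conditional law of $\eta(T)$ given $(\PP_{w_0}\eta(s),\ 0\le s\le T)$ and $\PP_{w_0}\eta(T)=\lambda$. Alternatively, and more directly, I would invoke the Harish-Chandra-type formula (\ref{harish}): its right-hand side $\sum_{w\in W}\varepsilon(w)e^{\langle z,w\lambda\rangle}/h(z)$ is manifestly invariant under replacing $z$ by $w_0 z$ for any $w_0\in W$ — both numerator and denominator pick up the factor $\varepsilon(w_0)$, which cancels. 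Hence $\int e^{\langle z,v\rangle}m^\lambda_{DH}(dv)=\int e^{\langle w^{-1}z,v\rangle}m^\lambda_{DH}(dv)=\int e^{\langle z,wv\rangle}m^\lambda_{DH}(dv)$ for all $w\in W$, so $m^\lambda_{DH}$ is $W$-invariant as a measure, and since it has a density (from step (i)) the density is $W$-invariant a.e., hence everywhere by continuity.

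\textbf{Support.} The support of $m^\lambda_{DH}$ is the closure of the image of the polytope $M_{\bf i}^\lambda$ under the affine map (\ref{mapp}), i.e. a polytope $P_\lambda\subseteq V$. One inclusion $P_\lambda\subseteq \mathrm{co}(W\lambda)$ follows because for any $\eta$ the endpoint $\eta(T)=\lambda-\sum x_j\alpha_j$ satisfies $wt(\eta)\le\lambda$ in the order of the excerpt (every element of a highest weight crystal $L_\pi$ has weight $\le\pi(T)=\lambda$), and combining this with $W$-invariance of $P_\lambda$ (which we just proved, since $\mathrm{supp}$ of a $W$-invariant measure is $W$-stable) forces $P_\lambda\subseteq\bigcap_{w\in W}w\,(\lambda-\overline C)=\mathrm{co}(W\lambda)$ — the last equality is the standard description of the weight polytope. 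For the reverse inclusion $\mathrm{co}(W\lambda)\subseteq P_\lambda$, it suffices, since both sets are convex and $W$-stable, to show each vertex $w\lambda$ lies in $P_\lambda$; the point $\lambda$ itself is the image of $x=0\in M_{\bf i}^\lambda$, and then applying the $S_\alpha$ action of Theorem \ref{theo:action} (which on weights acts by the reflection $s_\alpha$, cf. (\ref{eq:Ss})) and noting that $L_\pi$ contains an element of every weight in $W\lambda$, one gets each $w\lambda$ in the image; alternatively one reads off $w_0\lambda$ as the lowest weight and interpolates by continuity/convexity.

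\textbf{Main obstacle.} The genuinely delicate point is (i): proving that the fibre-volume function is \emph{piecewise polynomial} (not merely continuous and piecewise real-analytic) and pinning down that the pieces are genuine polyhedral regions in $v$. This is where one must be careful that the combinatorial type of the fibre $\{x\in M_{\bf i}^\lambda:\ \sum x_j\alpha_j=\lambda-v\}$ changes only across finitely many hyperplanes as $v$ varies, which uses that $M_{\bf i}$ is a fixed rational (here, real) polyhedral cone intersected with the affine family $K_\pi$ — i.e. it relies essentially on the polyhedrality established in Corollary \ref{cor_cone}. Everything else ($W$-invariance, support) is then short, once (\ref{harish}) and the weight-polytope description are in hand.
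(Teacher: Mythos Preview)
Your approach is essentially the same as the paper's: piecewise polynomial density from pushforward of Lebesgue measure on a polytope by an affine map, $W$-invariance from the Harish-Chandra formula (\ref{harish}), and the support computation via convexity, the $S_\alpha$ action to reach the orbit $W\lambda$, and $W$-invariance for the reverse inclusion.

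One correction is needed in your support argument. You write
\[
P_\lambda\subseteq\bigcap_{w\in W}w\,(\lambda-\overline C)=\mathrm{co}(W\lambda),
\]
invoking ``$wt(\eta)\le\lambda$ in the order of the excerpt''. But with the paper's order (defined via $\overline C$) neither the inclusion nor the equality holds: already in type $A_2$ with $\lambda=\rho$ one has $\lambda-s_1\lambda=\alpha_1\notin\overline C$, so $\lambda\notin s_1(\lambda-\overline C)$ and the right-hand intersection does not even contain $\lambda$. The correct cone here is the positive root cone $Q_+=\sum_{\alpha\in\Sigma}\R_{\ge 0}\alpha$: from the string parametrization one has $\lambda-\eta(T)=\sum_j x_j\alpha_j\in Q_+$, and the standard identity is $\bigcap_{w\in W}w(\lambda-Q_+)=\mathrm{co}(W\lambda)$. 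Equivalently (and this is how the paper phrases it), one uses $x_j\ge 0$ to get $S(\lambda)\cap\overline C\subseteq\{v\in\overline C:\lambda-v\in Q_+\}=\mathrm{co}(W\lambda)\cap\overline C$, and then $W$-invariance finishes. With this fix your argument goes through and matches the paper's.
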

\proof The measure $m^\lambda_{DH}$ is the image by an affine map of the Lebesgue measure on the convex polytope $C_{\bf i}^\pi$ when $\pi(T)=\lambda$. Therefore it has a piecewise polynomial density and a convex support.  Its Laplace transform is invariant under $W$ so  $m^\lambda_{DH}$ itself  is invariant under $W$.  The support $S(\lambda)$ of $m^\lambda_{DH}/h(\lambda)$  is equal to $\{\eta(T); \eta \in L_{\pi}\}$. Notice that if $\eta$ is in  $L_\pi$, then when $x=\alpha^\vee(\eta(T))$, $\EE_{\alpha}^x\eta$ is in $L_\pi$ and $\EE_{\alpha}^x\eta(T)=s_\alpha \eta(T)$. Starting from $\pi(T)=\lambda$ we thus see that $W\lambda$ is contained in $S(\lambda)$.
So $co(W\lambda)$ is contained in $S(\lambda)$. 
The components of $x \in M_{\bf i}^\pi$ are non negative, 
therefore $co(W\lambda)$ contains $S(\lambda) \cap \bar C$ and,  
by $W$-invariance it contains $S(\lambda)$ itself. 
 $\square$

 \subsection{Proof of theorem \ref{p-unif}}\label{pf-punif}
 
 First we recall some further path transformations which were  
 introduced in~\cite{bbo}.
 For any  positive root $\beta\in R_+$ (not necessarily simple), define $\qb= 
 \pb s_\beta$. Then, for $\psi\in C^0_T(V)$,
 $$\qb\psi(t)=\psi(t)-\inf_{t\ge s\ge 0}\beta^\vee(\psi(t)-\psi(s)) 
 \beta ,\qquad T\ge t\ge 0.$$
 Let  $w_0=s_1s_2\cdots s_q$ be a reduced
 decomposition, and let $\alpha_i=\alpha_{s_i}$. Since
 $ s_{\alpha}\pb=\PP_{s_{\alpha} \beta}s_\alpha$, for roots $\alpha\ne 
 \beta$,
 the following holds
 $$\mathcal Q_{w_0}:=\mathcal P_{w_0} w_0
 =\mathcal Q_{\beta_{1}}\, \ldots \, \mathcal Q_{\beta_{q}},$$
 where
 $\beta_1=\alpha_1,\ \ \beta_i=s_1\ldots s_{i-1}\alpha_i$, when  $i\le  
 q$.
 Set $\psi_q=\psi$ and, for $i\le q$,
 \begin{equation}\label{zetadef}
 \psi_{i-1}=\mathcal Q_{\beta_{i}}\ldots \mathcal Q_{\beta_{q}}\psi  
 \qquad
 y_i=-\inf_{T\ge t\ge 0}\beta_i^{\vee}(\psi_i(T)-\psi_i(t)).
 \end{equation}
 Then $\psi_0=\mathcal Q_{w_0}\psi$ and, for each $i\le q$,
 $$\mathcal Q_{w_0}\psi(T)=\psi_i(T)+\sum_{j=1}^i y_j \beta_j .$$
 Define $\varsigma_{\bf i}(\psi):=(y_1,y_2,\ldots,y_q)$.
 Now let $\eta=w_0\psi$, so that $\mathcal Q_{w_0}\psi=\mathcal P_ 
 {w_0} \eta$.
 Set $\eta_q=\eta$ and, for $i\le q$,
 \begin{equation}\label{xdef}
 \eta_{i-1}=\mathcal P_{\alpha_{i}}\ldots \mathcal P_{\alpha_{q}}\eta  
 \qquad
 x_i=-\inf_{T\ge t\ge 0}\alpha_i^{\vee}(\eta_i(t)).
 \end{equation}
 Then $\eta_0=\mathcal P_{w_0}\eta$ and, for each $i\le q$,
 $$\mathcal P_{w_0}\eta(T)=\eta_i(T)+\sum_{j=1}^i x_j \alpha_j .$$
 The parameters $\varrho_{\bf i}(\eta)=(x_1,\ldots,x_q)$ are related
 to $\varsigma_{\bf i}(\psi)=(y_1,y_2,\ldots,y_q)$ as follows.
 \begin{lemma}\label{x-y}
 For each $i\le q$, we have:
 \begin{enumerate}[(i)]
 \item $\eta_i=s_i\ldots s_1 \psi_i ,$
 \item $x_i=y_i+\beta_i^\vee(\psi_i(T))
 =\beta_i^\vee (\mathcal Q_{w_0} \psi(T)-\sum_{j=1}^{i-1}y_j\beta_j)- 
 y_i ,$
 \item $
 y_i=x_i+\alpha_i^\vee(\eta_i(T))
 =\alpha_i^\vee
 (\mathcal P_{w_0}\eta(T)-\sum_{j=1}^{i-1}x_j\alpha_j) -x_i.$
 \end{enumerate}
 \end{lemma}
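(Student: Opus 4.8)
The plan is to reduce everything to part (i), the identity $\eta_i=s_i\cdots s_1\,\psi_i$, since (ii) and (iii) will then drop out by elementary manipulations together with the telescoping identities $\mathcal P_{w_0}\eta(T)=\eta_i(T)+\sum_{j\le i}x_j\alpha_j$ and $\mathcal Q_{w_0}\psi(T)=\psi_i(T)+\sum_{j\le i}y_j\beta_j$ recorded just above the lemma. I would prove (i) by \emph{downward} induction on $i$, from $i=q$ to $i=0$. For the base case $i=q$ one has $\eta_q=\eta=w_0\psi$; since $w_0$ is an involution, $w_0=w_0^{-1}=(s_1\cdots s_q)^{-1}=s_q\cdots s_1$, so $\eta_q=s_q\cdots s_1\,\psi=s_q\cdots s_1\,\psi_q$. (It is here that the longest-element hypothesis is genuinely used: the reversed reduced word is again reduced for $w_0$.)

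For the inductive step, assume $\eta_i=s_i\cdots s_1\,\psi_i$, equivalently $\psi_i=s_1\cdots s_i\,\eta_i$. Using $\mathcal Q_{\beta_i}=\pb[\beta_i]s_{\beta_i}$ (with the convention $\pb[\beta_i]=\mathcal P_{\beta_i}$) and $s_{\beta_i}=(s_1\cdots s_{i-1})s_i(s_{i-1}\cdots s_1)$, which comes from $\beta_i=s_1\cdots s_{i-1}\alpha_i$, a short cancellation gives $s_{\beta_i}\,s_1\cdots s_i=s_1\cdots s_{i-1}$, hence
$$\psi_{i-1}=\mathcal Q_{\beta_i}\psi_i=\mathcal P_{\beta_i}\,s_{\beta_i}\,s_1\cdots s_i\,\eta_i=\mathcal P_{\beta_i}\,(s_1\cdots s_{i-1})\,\eta_i.$$
Applying $s_{i-1}\cdots s_1$ and invoking the conjugation rule $w\,\mathcal P_\gamma\,w^{-1}=\mathcal P_{w\gamma}$ — immediate from Definition \ref{pitman-transform}, and already recorded in the text in the form $s_\alpha\pb=\PP_{s_\alpha\beta}s_\alpha$ — together with $(s_{i-1}\cdots s_1)\beta_i=\alpha_i$, we obtain $s_{i-1}\cdots s_1\,\psi_{i-1}=\mathcal P_{\alpha_i}\eta_i=\eta_{i-1}$, which closes the induction.

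Granting (i), parts (ii) and (iii) are a direct computation. From $\psi_i=s_1\cdots s_i\,\eta_i$ one gets $\beta_i^\vee(\psi_i(t))=\bigl((s_i\cdots s_1)\beta_i\bigr)^\vee(\eta_i(t))=(s_i\alpha_i)^\vee(\eta_i(t))=-\alpha_i^\vee(\eta_i(t))$ for all $t$, since $(s_i\cdots s_1)\beta_i=s_i\alpha_i=-\alpha_i$. Substituting into $y_i=-\inf_t\beta_i^\vee(\psi_i(T)-\psi_i(t))$ and $x_i=-\inf_t\alpha_i^\vee(\eta_i(t))$ gives at once $y_i=\alpha_i^\vee(\eta_i(T))+x_i$ and, rearranging, $x_i=y_i+\beta_i^\vee(\psi_i(T))$ — the first equalities in (iii) and (ii). The second equalities follow by plugging in the telescoping formulas $\mathcal P_{w_0}\eta(T)=\eta_i(T)+\sum_{j\le i}x_j\alpha_j$ and $\mathcal Q_{w_0}\psi(T)=\psi_i(T)+\sum_{j\le i}y_j\beta_j$ and using $\alpha_i^\vee(\alpha_i)=\beta_i^\vee(\beta_i)=2$.

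The only delicate point I anticipate is the Coxeter-combinatorial bookkeeping in the inductive step of (i): correctly tracking left versus right multiplication, verifying the identities $s_{\beta_i}\,s_1\cdots s_i=s_1\cdots s_{i-1}$ and $(s_{i-1}\cdots s_1)\beta_i=\alpha_i$, and using that conjugation of a Pitman transform $\mathcal P_\gamma$ by $w\in W$ merely transports $\gamma$ to $w\gamma$. Everything else is routine, and no property of the Pitman transforms beyond this conjugation identity and the defining formulas for $x_i$ and $y_i$ is required.
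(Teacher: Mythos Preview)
Your proof is correct and follows essentially the same route as the paper: downward induction for (i) using the conjugation identity $w\,\mathcal P_\gamma=\mathcal P_{w\gamma}\,w$ (which the paper packages as $\mathcal Q_{\beta_i}=s_1\cdots s_{i-1}\,\mathcal P_{\alpha_i}\,s_i\cdots s_1$), then a short computation for (ii) and (iii). The only noteworthy difference is in that last step: the paper reads off $x_i$ from the endpoint difference $\eta_{i-1}(T)-\eta_i(T)$ expressed via (i), whereas you use the cleaner pointwise relation $\beta_i^\vee(\psi_i(t))=-\alpha_i^\vee(\eta_i(t))$ (valid in the Euclidean realization of Section~5, where $(w\gamma)^\vee=w\gamma^\vee$) and then apply the definitions of $x_i,y_i$ directly---a mild streamlining of the same argument.
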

 \begin{proof}
 We prove {\it (i)} by induction on $i\le q$.  For $i=q$ it holds because
 $\eta_q=\eta=w_0\psi=w_0\psi_q$ and $s_q\ldots s_1=w_0$.
 Note that, for each $i\le q$, we can write
 $$\mathcal Q_{\beta_i} = \mathcal P_{\beta_i} s_{\beta_i}
 = s_1\ldots s_{i-1} \mathcal P_{\alpha_i} s_i\ldots s_1 .$$
 Therefore, assuming the induction hypothesis $\eta_i=s_i\ldots s_1  
 \psi_i$,
 \begin{eqnarray*}
 \eta_{i-1} &=& \mathcal P_{\alpha_i} \eta_i = \mathcal P_{\alpha_i}  
 s_i\ldots s_1 \psi_i \\
 &=& s_{i-1}\ldots s_1 \mathcal Q_{\beta_i} \psi_i \\
 &=& s_{i-1}\ldots s_1 \psi_{i-1} ,
 \end{eqnarray*}
 as required.
 This implies {\it (ii)}, using $\eta_{i-1}(T)=\eta_i(T)+x_i\alpha_i$
 and $\psi_{i-1}(T)=\psi_i(T)+y_i\beta_i$:
 \begin{eqnarray*}
 2 x_i &=& \alpha_i^\vee(\eta_{i-1}(T)-\eta_i(T)) \\
 &=& \alpha_i^\vee( s_{i-1}\ldots s_1 \psi_{i-1}(T) - s_i\ldots s_1  
 \psi_i(T) ) \\
 &=& \alpha_i^\vee( s_{i-1}\ldots s_1 (\psi_i(T)+y_i\beta_i) - s_i 
 \ldots s_1 \psi_i(T) ) \\
 &=& 2y_i + \alpha_i^\vee( \alpha_i^\vee( s_{i-1}\ldots s_1 \psi_i(T))  
 \alpha_i) \\
 &=& 2y_i + 2\beta_i^\vee(\psi_i(T)) .
 \end{eqnarray*}
 Finally, {\it (iii)} follows immediately from {\it (ii)} and {\it (i)}.
 \end{proof}
 
This lemma shows that, when $W$ is a Weyl group, then $(y_1,\cdots,y_q)$ are the Lusztig coordinates with respect to the decomposition ${\bf i^*}$ of the image of  the path $\eta$ with string coordinates $(x_1,\cdots, x_q)$ with respect to the decomposition ${\bf i}$ under the Schutzenberger involution, where ${\bf i^*}$ is obtained from ${\bf i}$ by the map $\tilde \alpha= -w_0 \alpha$ (see Morier-Genoud \cite{morier}, Cor.\ 2.17). By $(iii)$ of the preceding lemma,
 we can define a mapping $F:M_{\bf i} \to \R_+^q\times C$ such that
 $$(\varsigma_{\bf i}(\psi),\mathcal Q_{w_0} \psi(T))=
 F(\varrho_{\bf i}(\eta),\mathcal P_{w_0}\eta(T)).$$
 Let $L_{\bf i}=F(M_{\bf i})$.
 It follows from $(ii)$ that $F^{-1}(y,\lambda)=(G(y,\lambda),\lambda) 
 $, where
 $$G(y,\lambda)=\beta_i^\vee (\lambda-\sum_{j=1}^{i-1}y_j\beta_j)-y_i .$$
 Thus, $L_{\bf i}$ is the set of $(y,\lambda)\in\R_+^q\times C$ which  
 satisfy
 \begin{equation}\label{ineq1}
 0\le y_i\le \beta_i^{\vee}(\lambda-\sum_{j=1}^{i-1} y_j\beta_j)  
 \qquad (i\le q)
 \end{equation}
 and
 \begin{equation}\label{ineq3}
 \Psi^{\bf i}_\alpha( G(y,\lambda) )\ge 0 \qquad \alpha\in\Sigma .
 \end{equation}
 The analogue of theorem~\ref{piecewise} also holds for the parameters
 $\varsigma_{\bf i}(\psi)=(y_1,y_2,\ldots,y_q)$, and can be proved  
 similarly.
 More precisely, for any two
 reduced decompositions ${\bf i}$ and ${\bf j}$, there is a piecewise  
 linear map
 $\theta_{\bf i}^{\bf j}:\R^q\to\R^q$ such that
 $\varsigma_{\bf j}(\psi) = \theta_{\bf i}^{\bf j}(\varsigma_{\bf i} 
 (\psi))$.
 In particular, for each simple root $\alpha$, we can define a  
 piecewise linear map
 $\Theta^{\bf i}_\alpha:\R^q\to\R$ such that, if ${\bf i}_\alpha=(s_1^ 
 \alpha,\ldots,s_q^\alpha)$
 is a reduced decomposition with $s_1^\alpha=s_\alpha$, and
 $\varsigma_{{\bf i}_\alpha}(\psi)=(y^\alpha_1,y^\alpha_2,\ldots,y^ 
 \alpha_q)$,
 then $y^\alpha_1=\Theta^{\bf i}_\alpha(y)$ where  $\varsigma_{\bf i}(\psi)=(y_1,y_2,\ldots,y_q)$.
 By lemma \ref{x-y}, we have
 \begin{equation}
 \Theta^{\bf i}_\alpha(y) = \alpha^{\vee} (\lambda) - \Psi^{\bf i}_ 
 \alpha(G(y,\lambda) ),
 \end{equation}
 and the inequalities (\ref{ineq3}) can be written as
 \begin{equation}\label{ineq3a}
 \alpha^{\vee} (\lambda) - \Theta^{\bf i}_\alpha( y )\ge 0 \qquad  
 \alpha\in\Sigma .
 \end{equation}
 As in~\cite{bbo}, we extend the definition of $\qb$ to two-sided paths.
 Denote by $C^0_{\mathbb R}(V)$ the set of continuous paths
 $\pi:{\mathbb R}\to V$ such that $\pi(0)=0$ and $\alpha^\vee(\pi(t)) 
 \to\pm\infty$
 as $t\to\pm\infty$ for all simple $\alpha$.
 For $\pi\in C^0_{\mathbb R}(V)$ and $\beta$ a positive root, define
 $\qb\pi$ by
 $$ \qb \pi(t) = \pi(t) + [\omega(t)-\omega(0)]\beta ,$$ where
 $$\omega(t)=-\inf_{t\ge s>-\infty}\beta^{\vee}(\pi(t)-\pi(s)). $$
 It is easy to see that $\qb\pi\in C^0_{\mathbb R}(V)$.
 Thus, we can set $\pi_q=\pi$ and, for $i\le q$,
 $$\pi_{i-1}=\mathcal Q_{\beta_{i}}\ldots \mathcal Q_{\beta_{q}}\pi  
 \qquad
 \omega_i(t)=-\inf_{s\le t}\beta_i^{\vee}(\pi_i(t)-\pi_i(s)).$$
 Then $$\pi_0=\mathcal Q_{w_0}\pi:=
 \mathcal Q_{\beta_{1}}\, \ldots \, \mathcal Q_{\beta_{q}}\pi$$ and,  
 for each $i\le q$,
 $$\mathcal Q_{w_0}\pi(t)=\pi_{i}(t)+\sum_{j=1}^i[\omega_j(t)-\omega_j 
 (0)]\beta_j .$$
 For each $t\in\R$, write $\omega(t)=(\omega_1(t),\ldots,\omega_q(t))$.
 \begin{lemma}\label{future}
 If $\mathcal Q_{w_0}\pi(t)= \lambda$ and $\omega(t)=y$ then
 $$\inf_{u\ge t} \alpha^{\vee}(\mathcal Q_{w_0}\pi(u)) =
 \alpha^{\vee}(\lambda) - \Theta^{\bf i}_\alpha(y).$$
 \end{lemma}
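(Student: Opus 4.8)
The plan is to reduce the identity to a one–dimensional statement about the transform $\mathcal Q_\alpha$ acting on two–sided paths, after normalizing the reduced decomposition.

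\textbf{Reduction to $s_1=s_\alpha$.} First I would observe that it suffices to prove the lemma when the fixed reduced decomposition ${\bf i}=(s_1,\ldots,s_q)$ actually begins with $s_\alpha$. Indeed, by the braid relations for the $\mathcal Q_\beta$ --- which, like those for the $\mathcal P_\beta$, remain valid on $C^0_{\mathbb R}(V)$ (cf.\ \cite{bbo}) --- the path $\mathcal Q_{w_0}\pi$ does not depend on the chosen decomposition of $w_0$, and the families of $\varsigma$–parameters relative to two decompositions are intertwined by the piecewise linear maps $\theta_{\bf i}^{\bf j}$ (the analogue of theorem \ref{piecewise} for the $\varsigma$–parameters). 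Since $\Theta^{\bf i}_\alpha$ is by construction the first component of $\theta_{\bf i}^{{\bf i}_\alpha}$ for a decomposition ${\bf i}_\alpha$ beginning with $s_\alpha$, the general identity follows from the one for ${\bf i}_\alpha$ applied to $\pi$. When $s_1=s_\alpha$ one has $\beta_1=\alpha$ and $\Theta^{\bf i}_\alpha(y)=y_1=\omega_1(t)$, so the claim becomes $\inf_{u\ge t}\alpha^\vee(\mathcal Q_{w_0}\pi(u))=\alpha^\vee(\lambda)-\omega_1(t)$.

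\textbf{Cutting the path at $t$.} Assume $s_1=s_\alpha$ and put $\sigma=\mathcal Q_{\beta_2}\cdots\mathcal Q_{\beta_q}\pi\in C^0_{\mathbb R}(V)$, so that $\mathcal Q_{w_0}\pi=\mathcal Q_\alpha\sigma$ and $\omega_1(t)=-\inf_{s\le t}\alpha^\vee(\sigma(t)-\sigma(s))=:g(t)$, where $\mathcal Q_\alpha\sigma(u)=\sigma(u)+[g(u)-g(0)]\alpha$. Splitting the infimum defining $g(t+w)$ at the time $t$, I would obtain, for $w\ge0$ and with $\xi(w):=\alpha^\vee(\sigma(t+w)-\sigma(t))$,
$$g(t+w)=-\xi(w)+\max\bigl(\sup_{0\le v\le w}\xi(v),\,\omega_1(t)\bigr),$$
and hence, applying $\alpha^\vee$ to $\mathcal Q_\alpha\sigma(t+w)-\mathcal Q_\alpha\sigma(t)$ and using $\alpha^\vee(\alpha)=2$,
$$\alpha^\vee(\mathcal Q_{w_0}\pi(t+w))=\alpha^\vee(\lambda)-\xi(w)+2\bigl(\sup_{0\le v\le w}\xi(v)-\omega_1(t)\bigr)^{+},\qquad w\ge0 .$$
Since $\sigma\in C^0_{\mathbb R}(V)$ and $\alpha$ is simple, $\xi$ is continuous, $\xi(0)=0$, and $\xi(w)\to+\infty$ as $w\to\infty$.

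\textbf{An elementary one–dimensional lemma, and conclusion.} It then remains to check that for every continuous $\xi:[0,\infty)\to{\mathbb R}$ with $\xi(0)=0$ and $\xi\to+\infty$, and every $a\ge0$,
$$\inf_{w\ge0}\Bigl[-\xi(w)+2\bigl(\sup_{0\le v\le w}\xi(v)-a\bigr)^{+}\Bigr]=-a .$$
The bound $\ge -a$ is a two–line case split according to whether $\sup_{0\le v\le w}\xi(v)\le a$ or not, and the value $-a$ is attained at the first time $\xi$ reaches $a$ (or at $w=0$ when $a=0$). Taking $a=\omega_1(t)\ge0$ gives $\inf_{u\ge t}\alpha^\vee(\mathcal Q_{w_0}\pi(u))=\alpha^\vee(\lambda)-\omega_1(t)$, which is the desired identity. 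The main obstacle, to my mind, is the reduction step: one must know that the braid relations for the $\mathcal Q_\beta$ and the $\varsigma$–reparametrization identities persist on the space $C^0_{\mathbb R}(V)$ of two–sided paths. This is handled by the standard localization argument --- for $N$ large enough every infimum occurring in the definitions of $\mathcal Q_{w_0}\pi(u)$ (for $u$ in a bounded set) and of the $\omega_j(t)$ is attained at times $\ge -N$, so these quantities agree with their finite–interval counterparts computed from $\pi$ restricted to $[-N,\cdot]$, whence the finite–interval results of the previous sections and of \cite{bbo} apply. The two displayed computations above are routine.
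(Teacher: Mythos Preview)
Your proof is correct and follows essentially the same route as the paper: reduce to a decomposition starting with $s_\alpha$, verify the one--dimensional identity for $\mathcal Q_\alpha$, and use a localization argument to transfer the finite--interval reparametrization identity $\varsigma_{{\bf i}_\alpha}=\theta_{\bf i}^{{\bf i}_\alpha}\circ\varsigma_{\bf i}$ to two--sided paths. The paper is more explicit about the localization: rather than a generic ``$N$ large enough'', it defines backward stopping times $\tau_i=\max\{s\le\tau_{i-1}:\omega_i(s)=0\}$ (and $\tau_i^\alpha$ for the other decomposition), sets $\tau=\min\{\tau_q,\tau_q^\alpha\}$, and observes that the finite path $\gamma(s)=\pi(\tau+s)-\pi(\tau)$ on $[0,t-\tau]$ has $\varsigma_{\bf i}(\gamma)=\omega(t)$ and $\varsigma_{{\bf i}_\alpha}(\gamma)=\omega^\alpha(t)$; this pins down exactly why the cutoff works simultaneously for both decompositions. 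Conversely, your treatment of the one--dimensional step is more detailed than the paper's, which simply calls the identity $\inf_{u\ge t}\beta_1^\vee(\mathcal Q_{w_0}\pi(u)-\mathcal Q_{w_0}\pi(t))=-\omega_1(t)$ ``straightforward''.
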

 \begin{proof}
 It is straightforward to see that
 $$\inf_{u\ge t} \beta_1^{\vee}(\mathcal Q_{w_0}\pi(u)-\mathcal Q_{w_0} 
 \pi(t)) =
 \omega_1(t).$$
 In particular, if ${\bf i}_\alpha=(s_1^\alpha,\ldots,s_q^\alpha)$
 is a reduced decomposition with $s_1^\alpha=s_\alpha$ and we denote the
 corresponding $\omega(\cdot)$ (defined as above) by $\omega^\alpha(\cdot)$, then
 $$\inf_{u\ge t} \alpha^{\vee}(\mathcal Q_{w_0}\pi(u)-\mathcal Q_{w_0} 
 \pi(t)) =  \omega_1^\alpha(t).$$
 Now let $\tau_0=\tau^\alpha_0=t$ and, for $0< i \le q$,
 $$\tau_i=\max\{ s\le \tau_{i-1}:\ \omega_i(s)=0\},\qquad \tau^\alpha_i=\max\{ s\le \tau^\alpha_{i-1}:\ \omega^\alpha_i(s)=0\}.$$
 Set $ \tau=\min\{\tau_q,\tau^\alpha_q\}$.
It is not hard to see that the path $\gamma\in  C^0_{t-\tau}(V)$, defined by
 $$\gamma(s)=\pi(\tau+s)-\pi(\tau),\qquad t-\tau \ge s\ge 0,$$ satisfies
 $ \varsigma_{\bf i} (\gamma) = \omega(t) =  y$ and
 $\varsigma_{{\bf i}_\alpha} (\gamma) = \omega^\alpha(t) $.
Thus, $\omega_1^\alpha(t)=\Theta^{\bf i}_\alpha(y)$, as required.
 \end{proof}
 
 Introduce a probability measure ${\mathbb P}_\mu$ under which $\pi$  
 is a two-sided
 Brownian motion in $V$ with drift $\mu\in C$.  Set $\psi=(\pi(t), t  
 \ge 0)$.
   \begin{prop} \label{bmfacts}
 Under ${\mathbb P}_\mu$, the following statements hold:
 \begin{enumerate}
 \item $\mathcal Q_{w_0}\pi$ has the same law as $\pi$.
 \item For each $t\in\R$, the random variables $\omega_1(t),\ldots, 
 \omega_q(t)$
 are mutually independent and exponentially distributed with
 parameters $2\beta^\vee_1(\mu),\ldots,2\beta^\vee_q(\mu)$.
 \item For each $t\in\R$, $\omega(t)$
 is independent of $(\mathcal Q_{w_0}\pi(s), -\infty < s\le t)$.
 \item The random variables $\inf_{u\ge 0} \alpha^{\vee}(\mathcal Q_
 {w_0}\pi(u))$,
 $\alpha$ a simple root, are independent of the $\sigma$-algebra
 generated by $(\pi(t),t\ge 0)$.
 \end{enumerate}
 \end{prop}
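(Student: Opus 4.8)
The plan is to prove Proposition~\ref{bmfacts} by induction on $q=\ell(w_0)$, peeling off the transforms $\mathcal Q_{\beta_i}$ one at a time, with a one-dimensional Pitman-type identity as base case. That base case is a Brownian analogue of Burke's theorem (see \cite{bbo}): if $X$ is a two-sided real Brownian motion with drift $c>0$ and $X(0)=0$, and $M(t)=\sup_{s\le t}X(s)$, $\omega(t)=M(t)-X(t)$, then $\hat X(t):=2M(t)-X(t)-2M(0)$ is again a two-sided Brownian motion with drift $c$, and for each fixed $t$ the variable $\omega(t)$ is exponential with parameter $2c$ and independent of $(\hat X(s),\ s\le t)$. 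Reading this in the direction $\beta$ for a two-sided Brownian motion $\pi$ with drift $\nu$ satisfying $\beta^\vee(\nu)>0$ (decompose $V=\mathbb R\beta\oplus\ker\beta^\vee$ and note that $\mathcal Q_\beta$ alters only the $\beta$-component $\beta^\vee\pi$, turning it into $\hat X$): it says exactly that $\mathcal Q_\beta\pi$ is a two-sided Brownian motion with drift $\nu$, and that $\omega^\beta(t)=-\inf_{s\le t}\beta^\vee(\pi(t)-\pi(s))$ is $\mathrm{Exp}(2\beta^\vee(\nu))$ and independent of $(\mathcal Q_\beta\pi(s),\ s\le t)$.

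For the inductive step I would formulate the statement for an arbitrary reduced word $w=s_1\cdots s_p$ and any regular drift $\nu$ with $\beta_j^\vee(\nu)>0$ for all associated roots $\beta_j=s_1\cdots s_{j-1}\alpha_j$ (for $w=w_0$ this set is all of $R_+$, and the condition reads $\nu\in C$). The key algebraic fact is the conjugation identity $\mathcal Q_{s_1\gamma}=s_1\mathcal Q_\gamma s_1$, which follows from $s_1\mathcal P_\gamma s_1=\mathcal P_{s_1\gamma}$ and $s_1 s_\gamma s_1=s_{s_1\gamma}$. Writing $\gamma_i=s_2\cdots s_{i-1}\alpha_i$, so that $\beta_i=s_1\gamma_i$, this gives $\mathcal Q_{\beta_2}\cdots\mathcal Q_{\beta_q}=s_1\,\mathcal Q_{\gamma_2}\cdots\mathcal Q_{\gamma_q}\,s_1$, hence $\pi_1=s_1\bigl(\mathcal Q_{w'}(s_1\pi)\bigr)$ where $w'=s_2\cdots s_q$ is reduced of length $q-1$ with associated roots $\gamma_2,\dots,\gamma_q$, and $s_1\pi$ is a two-sided Brownian motion with drift $s_1\mu$. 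Since $\gamma_i^\vee(s_1\mu)=\beta_i^\vee(\mu)>0$, the induction hypothesis applies to $\mathcal Q_{w'}$ acting on $s_1\pi$: it yields that $\pi_1$ is a two-sided Brownian motion with drift $\mu$ (which in turn makes $\mathcal Q_{w_0}\pi=\mathcal Q_{\alpha_1}\pi_1$ Brownian with drift $\mu$ by the base case, proving~(1)), and that the increments $\tilde\omega_i(t)$ attached to $\mathcal Q_{w'}$ are independent, $\mathrm{Exp}(2\beta_i^\vee(\mu))$, and jointly independent of the past of $\mathcal Q_{w'}(s_1\pi)$, hence of the past of $\pi_1$. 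The same conjugation identity applied to $\mathcal Q_{\beta_{i+1}}\cdots\mathcal Q_{\beta_q}$ shows $\pi_i=s_1\tilde\pi_i$, and since $\beta_i^\vee(s_1 v)=\gamma_i^\vee(v)$ this forces $\omega_i(t)=\tilde\omega_i(t)$ for $i\ge2$.

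To finish~(2) and~(3), splice in $\mathcal Q_{\beta_1}=\mathcal Q_{\alpha_1}$: by the base case applied to $\pi_1$ in the direction $\alpha_1$, the variable $\omega_1(t)$ is $\mathrm{Exp}(2\alpha_1^\vee(\mu))$ and independent of $(\mathcal Q_{w_0}\pi(s),\ s\le t)$. Now both $\omega_1(t)$ and $(\mathcal Q_{w_0}\pi(s),\ s\le t)$ are measurable with respect to $\sigma(\pi_1(s),\ s\le t)$ (the transforms are causal), whereas $(\omega_2(t),\dots,\omega_q(t))$ was just shown to be independent of that $\sigma$-algebra; a short measure-theoretic argument (if $A\perp(B,C)$ and $B\perp C$ then $(A,B)\perp C$ and $A\perp B$) then gives that $\omega_1(t),\dots,\omega_q(t)$ are independent with the asserted laws and that the vector $\omega(t)$ is independent of $(\mathcal Q_{w_0}\pi(s),\ s\le t)$. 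For~(4), Lemma~\ref{future} at $t=0$ together with $\mathcal Q_{w_0}\pi(0)=0$ gives $\inf_{u\ge0}\alpha^\vee(\mathcal Q_{w_0}\pi(u))=-\Theta^{\bf i}_\alpha(\omega(0))$; each $\omega_i(0)=\sup_{s\le0}\beta_i^\vee(\pi_i(s))$ depends only on $\pi$ restricted to $(-\infty,0]$ because every $\mathcal Q_\beta$ is causal, and that restriction is independent of $\psi=(\pi(t),\ t\ge0)$ by the Markov property of the two-sided Brownian motion at $0$; hence the whole collection in~(4) is independent of $\sigma(\psi)$.

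The main obstacle I expect is the base case itself: proving cleanly that $2M-X$ (suitably shifted) is again a Brownian motion with the same drift and that the ``current queue length'' $M(0)-X(0)$ is exponential and independent of the transformed past. This is best handled either by an accurate citation or by a time-reversal argument exploiting that $\omega=M-X$ is a stationary, reversible reflected Brownian motion with negative drift. A secondary nuisance is purely bookkeeping: one must track carefully which random variables are measurable with respect to which $\sigma$-algebras, so that the independence statements compose exactly as claimed in the inductive step.
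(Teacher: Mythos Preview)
Your proposal is correct and follows the same core strategy as the paper: iterate the one-dimensional Brownian Burke/output theorem across the factors of $\mathcal Q_{w_0}=\mathcal Q_{\beta_1}\cdots\mathcal Q_{\beta_q}$. The paper's write-up is a bit leaner, though. Rather than conjugating by $s_1$ and strengthening the induction hypothesis to arbitrary reduced words and reflected drifts, it simply runs a backward induction on $k=q,\dots,1$: assuming $\pi_k=\mathcal Q_{\beta_{k+1}}\cdots\mathcal Q_{\beta_q}\pi$ is a two-sided Brownian motion with drift $\mu$, the one-dimensional Burke theorem in the direction $\beta_k$ (which is positive, so $\beta_k^\vee(\mu)>0$) gives immediately that $\pi_{k-1}=\mathcal Q_{\beta_k}\pi_k$ has the same law, that $\omega_k(t)$ is $\mathrm{Exp}(2\beta_k^\vee(\mu))$, and that it is independent of $(\pi_{k-1}(s),s\le t)$. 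This avoids tracking $s_1\mu$, the roots $\gamma_i$, and the identity $\omega_i=\tilde\omega_i$. For (4), the paper also bypasses Lemma~\ref{future}: it reads off directly from the last application of $\mathcal Q_{\beta_1}$ that $\inf_{t\ge0}\beta_1^\vee(\mathcal Q_{w_0}\pi(t))=-\inf_{s\le0}\beta_1^\vee(\pi_1(s))$, which visibly depends only on $(\pi(s),s\le0)$; since $\beta_1$ can be chosen to be any simple root $\alpha$, this gives the claim. Your route via $\Theta^{\bf i}_\alpha(\omega(0))$ is equally valid but uses a heavier lemma. In short: same idea, and your version works, but the paper's direct peeling is shorter and needs no auxiliary conjugation or strengthened hypothesis.
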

 \proof We see by backward induction on $k=q,\cdots,1$ that
 $\mathcal  Q_{\beta_{k}}\cdots \mathcal Q_{\beta_q}\pi(s), s \leq t$  
 has the
 same distribution as $\mathcal Q_{\beta_{k-1}}\cdots \mathcal Q_
 {\beta_q}\pi(s), s \leq t$, is independent of  $\omega_k(t)$, and
 that $\omega_k(t)$ has an exponential distribution with parameter $2
 \beta^\vee_k(\mu)$. At each step, this is a one dimensional statement
 which can be checked directly or seen as a consequence of the classical
 output theorem for the $M/M/1$ queue (see, for example, \cite{Neil}).
 This implies that (1),
 (2), and (3) hold. Moreover
 $$\inf_{t\geq 0}\beta_1^\vee(\mathcal Q_{w_0}\pi(t))=-\inf_{s \leq 0}
 \beta_1^\vee(\mathcal Q_{\beta_{2}}\cdots \mathcal Q_{\beta_{q}}\pi
 (s))$$
 is independent of $\pi(t), t\geq0$. Since $\beta_1$ can be chosen as
 any simple root $\alpha$, this proves (4).
 \qed
 
 \medskip
 
 Let $T>0$. For $\xi \in C$, denote by $E_\xi$ the event that
 $\mathcal Q_{w_0}\pi(s)\in C-\xi$ for all $s\ge 0$ and by
 $E_{\xi,T}$ the event that
 $\mathcal Q_{w_0}\pi(s)\in C-\xi$ for all $T\ge s\ge 0$.
 By proposition~\ref{bmfacts}, $E_\xi$ is independent of $\psi$.
 
 For $r>0$, define $$B(\lambda,r)=\{\zeta\in V:\ \|\zeta-\lambda\|<r\}$$
 and $$R(z,r)=(z_1-r,z_1+r)\times\cdots\times (z_q-r,z_q+r).$$
 Fix $(z,\lambda)$ in the interior of $L_{\bf i}$ and choose $ 
 \epsilon>0$ sufficiently small
 so that $R(z,\epsilon)$ is contained in $ L_{\bf i} \times B(\lambda, 
 \epsilon)$
 and
 \begin{equation}\label{hyp}
 \inf_{\lambda'\in B(\lambda,\epsilon), z'\in R(z,\epsilon) }
 \alpha^{\vee}(\lambda') - \Theta^{\bf i}_\alpha(z') \ge 0.
 \end{equation}
 \begin{lemma}\label{exit}
 \begin{eqnarray*}
 \lefteqn{
 {\mathbb P}_\mu(\mathcal Q_{w_0} \psi(T) \in B(\lambda,\epsilon),
 \ \varsigma_{\bf i}(\psi)\in R(z,\epsilon))}\\
 &=&  \lim_{C\ni \xi\to 0} {\mathbb P}_\mu(E_\xi)^{-1}
 {\mathbb P}_\mu(\mathcal Q_{w_0} \pi(T) \in B(\lambda,\epsilon),
 \ \omega(T)\in R(z,\epsilon), E_{\xi,T}).
 \end{eqnarray*}
 \end{lemma}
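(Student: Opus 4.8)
The plan is to convert the conditioned two‑sided probability on the right‑hand side into the unconditioned one‑sided probability on the left, in two steps: first, show that inside the $\epsilon$‑box the events $E_{\xi,T}$ and $E_\xi$ may be interchanged, so that the right‑hand side is a limit of conditional probabilities $\Prob_\mu(\,\cdot\mid E_\xi)$; second, identify that limit using the independence properties of Proposition~\ref{bmfacts} together with Lemma~\ref{future}.

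For the first step, put $A=\{\qwo\pi(T)\in B(\lambda,\epsilon),\ \omega(T)\in R(z,\epsilon)\}$ and decompose $E_\xi=E_{\xi,T}\cap E'_{\xi,T}$, where $E'_{\xi,T}=\{\qwo\pi(s)\in C-\xi$ for all $s\ge T\}$. Lemma~\ref{future} with $t=T$ gives $\inf_{u\ge T}\alpha^\vee(\qwo\pi(u))=\alpha^\vee(\qwo\pi(T))-\Theta^{\bf i}_\alpha(\omega(T))$ for every simple $\alpha$, whence $E'_{\xi,T}=\bigcap_{\alpha\in\Sigma}\{\alpha^\vee(\qwo\pi(T))-\Theta^{\bf i}_\alpha(\omega(T))\ge-\alpha^\vee(\xi)\}$. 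Since $\xi\in C$ we have $\alpha^\vee(\xi)\ge0$, while on $A$ the choice (\ref{hyp}) of $\epsilon$ ensures $\alpha^\vee(\qwo\pi(T))-\Theta^{\bf i}_\alpha(\omega(T))\ge0$; hence $A\subseteq E'_{\xi,T}$, and therefore $A\cap E_{\xi,T}=A\cap E_\xi$. Consequently the right‑hand side of the lemma equals $\lim_{C\ni\xi\to0}\Prob_\mu(E_\xi)^{-1}\Prob_\mu(A\cap E_\xi)=\lim_{C\ni\xi\to0}\Prob_\mu(A\mid E_\xi)$.

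For the second step, observe that $E_\xi=\bigcap_{\alpha\in\Sigma}\{\inf_{s\ge0}\alpha^\vee(\qwo\pi(s))\ge-\alpha^\vee(\xi)\}$ is measurable with respect to the family $(\inf_{s\ge0}\alpha^\vee(\qwo\pi(s)))_{\alpha\in\Sigma}$, which by Proposition~\ref{bmfacts}(4) is independent of $\sigma(\psi)$. Thus $E_\xi$ is independent of $\psi$, and the left‑hand side $\Prob_\mu(A_\psi)$, with $A_\psi=\{\qwo\psi(T)\in B(\lambda,\epsilon),\ \varsigma_{\bf i}(\psi)\in R(z,\epsilon)\}$, equals $\Prob_\mu(A_\psi\mid E_\xi)$ for every $\xi$; so it remains to prove $\Prob_\mu(A\bigtriangleup A_\psi\mid E_\xi)\to0$ as $\xi\to0$. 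Here one uses the ``queueing'' structure of the composition $\qwo=\q_{\beta_1}\cdots\q_{\beta_q}$: the path $(\qwo\pi(t))_{t\ge0}$ and the occupation process $(\omega(t))_{t\ge0}$ are continuous functions of the one‑sided path $\psi$ and of the initial vector $\omega(0)$ alone, and when $\omega(0)=0$ they coincide with $(\qwo\psi(t))_{t\ge0}$ and with the parameters $\varsigma_{\bf i}$ taken over $[0,t]$; in particular $A$ and $A_\psi$ agree on $\{\omega(0)=0\}$, so $A\bigtriangleup A_\psi\subseteq\{\omega(0)\ne0\}$. On the other hand, Lemma~\ref{future} at $t=0$ (using $\qwo\pi(0)=0$) shows $E_\xi=\bigcap_{\alpha\in\Sigma}\{0\le\Theta^{\bf i}_\alpha(\omega(0))\le\alpha^\vee(\xi)\}$, and the polyhedral description of the relevant cone (the $\varsigma_{\bf i}$‑analogue of Corollary~\ref{cor_cone}) forces $\omega(0)=0$ to be the only admissible vector with all $\Theta^{\bf i}_\alpha(\omega(0))=0$, so $\{\omega(0)\ne0\}\cap E_\xi$ shrinks to the empty set as $\xi\to0$. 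A ratio estimate for $\Prob_\mu(\{\omega(0)\ne0\}\cap E_\xi)/\Prob_\mu(E_\xi)$, of the same nature as the estimates underlying Theorem~\ref{P-brown}, then gives $\Prob_\mu(\{\omega(0)\ne0\}\mid E_\xi)\to0$, hence $\Prob_\mu(A\bigtriangleup A_\psi\mid E_\xi)\to0$, which finishes the proof.

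The hard part is the second step, and within it the two nonalgebraic ingredients. First, the pathwise ``flow'' statement that $(\qwo\pi,\omega)$ on $[0,\infty)$ depends on $\pi$ only through $\psi$ and the initial occupation vector $\omega(0)$, continuously, and reduces to the one‑sided objects when $\omega(0)=0$; this is the pathwise counterpart of the one‑dimensional output/Burke‑type identities behind Proposition~\ref{bmfacts} and must be assembled from the rank‑one transformations. Second, the ratio‑limit estimate showing that, conditionally on $E_\xi$, the probability of the set where $\omega(0)$ fails to be negligible tends to $0$ as $\xi\to0$ — a genuine small‑ball/harmonic‑function computation, in which one must in particular check that $E_\xi$ squeezes $\omega(0)$ all the way to $0$ and not merely onto a proper face of its domain, which is exactly where the polyhedral structure of the string cone is used.
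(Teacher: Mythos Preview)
Your first step (replacing $E_{\xi,T}$ by $E_\xi$ on the event $A$ via Lemma~\ref{future} and the hypothesis~(\ref{hyp})) is correct and is exactly how the paper concludes its argument.

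The second step, however, has a genuine gap. Your reduction to showing $\Prob_\mu(\{\omega(0)\ne0\}\mid E_\xi)\to0$ fails outright: under $\Prob_\mu$ the components of $\omega(0)$ are independent exponentials (Proposition~\ref{bmfacts}(2)), so $\{\omega(0)\ne0\}$ has full probability and $\Prob_\mu(\{\omega(0)\ne0\}\mid E_\xi)=1$ for every $\xi$. The inclusion $A\bigtriangleup A_\psi\subseteq\{\omega(0)\ne0\}$ is therefore trivially true and carries no information. What you actually need is not that $A$ and $A_\psi$ coincide on the null set $\{\omega(0)=0\}$, but that the random vectors $(\qwo\pi(T),\omega(T))$ and $(\qwo\psi(T),\varsigma_{\bf i}(\psi))$ are \emph{quantitatively close} whenever $\omega(0)$ is small; a zero-measure identity cannot substitute for this. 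Your closing ``hard part'' paragraph hints at the right continuity statement, but the argument above it does not use it.

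The paper supplies exactly this missing ingredient. An elementary induction on the rank-one transforms $\q_{\beta_i}$ shows that on $E_\xi$ one has the uniform bound
\[
\max_{i\le q}|y_i-\omega_i(T)|\ \vee\ \|\qwo\psi(T)-\qwo\pi(T)\|\ \le\ C\,\|\xi\|,
\]
so that $A$ is sandwiched between the $A_\psi$-events with radii $\epsilon\pm C\|\xi\|$. Dividing by $\Prob_\mu(E_\xi)$, using the independence of $E_\xi$ from $\psi$, and letting $\xi\to0$ finishes the proof directly---no ratio-limit estimate is needed. Note also that your appeal to ``the $\varsigma_{\bf i}$-analogue of Corollary~\ref{cor_cone}'' would be circular here: that corollary is deduced from Theorem~\ref{p-unif}, whose proof relies on the present lemma.
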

 \begin{proof} An elementary induction argument on the recursive  
 construction of $\mathcal Q_{w_0}$
 shows that, on the event $E_\xi$, there is a constant $C$ for which
 $$\max_{i\le q} \|y_i-\omega_i(T)\|\vee \|\mathcal Q_{w_0}\psi(T)- 
 \mathcal Q_{w_0}\pi(T)\|
 \le C \|\xi\| .$$
 Hence, for $\xi$ sufficiently small,
 \begin{eqnarray*}
 \lefteqn{
 {\mathbb P}_\mu(\mathcal Q_{w_0}\psi(T)\in B(\lambda,\epsilon-C\|\xi 
 \|),\ \varsigma_{\bf i}(\psi) \in
 R(z,\epsilon-C\|\xi\|),\ E_\xi) }\\
 &\leq&
 {\mathbb P}_\mu(\mathcal Q_{w_0}\pi(T)\in B(\lambda,\epsilon),\ \omega 
 (T)\in R(z,\epsilon),\ E_\xi) \\
 &\leq& {\mathbb P}_\mu(\mathcal Q_{w_0}\psi(T)\in B(\lambda,\epsilon+C 
 \|\xi\|),\ \varsigma_{\bf i}(\psi)   \in
 R(z,\epsilon+C\|\xi\|),\ E_\xi) .
 \end{eqnarray*}
 Now $E_\xi$ is independent of $\psi$, and so
 \begin{eqnarray*}
 \lefteqn{
 {\mathbb P}_\mu(\mathcal Q_{w_0}\psi(T)\in B(\lambda,\epsilon-C\|\xi\|),
 \ \varsigma_{\bf i}(\psi) \in R(z,\epsilon-C\|\xi\|))}\\
 &\leq& {\mathbb P}_\mu(E_\xi)^{-1}
 {\mathbb P}_\mu(\mathcal Q_{w_0}\pi(T)\in B(\lambda,\epsilon),\ \omega 
 (T)\in R(z,\epsilon),\  E_\xi)\\
 &\leq& {\mathbb P}_\mu(\mathcal Q_{w_0}\psi(T)\in B(\lambda,\epsilon+C 
 \|\xi\|),\ \varsigma_{\bf i}(\psi) \in
 R(z,\epsilon+C\|\xi\|)) .
 \end{eqnarray*}
 Letting $\xi\to 0$, we obtain that
 \begin{equation}\label{Exi}
 \begin{array}{l}
 \lefteqn{
 {\mathbb P}_\mu(\mathcal Q_{w_0} \psi(T) \in B(\lambda,\epsilon),
 \ \varsigma_{\bf i}(\psi)\in R(z,\epsilon))}\\
 \ \quad =  \lim_{C\ni \xi\to 0} {\mathbb P}_\mu(E_\xi)^{-1}
 {\mathbb P}_\mu(\mathcal Q_{w_0} \pi(T) \in B(\lambda,\epsilon),
 \ \omega(T)\in R(z,\epsilon), E_\xi).
 \end{array}
 \end{equation}
 Finally observe that, on the event
 $$\{ \mathcal Q_{w_0} \pi(T) \in B(\lambda,\epsilon),
 \ \omega(T)\in R(z,\epsilon) \},$$ we have,
 by Lemma \ref{future} and (\ref{hyp}),
 \begin{eqnarray*}
 \inf_{u\ge T} \alpha^{\vee}(\mathcal Q_{w_0}\pi(u))
 &=&
 \alpha^{\vee}(Q_{w_0} \pi(T)) - \Theta^{\bf i}_\alpha\left(\omega(T) 
 \right) \\
 &\ge& \inf_{\lambda'\in B(\lambda,\epsilon), z'\in R(z,\epsilon) }
 \alpha^{\vee}(\lambda') - \Theta^{\bf i}_\alpha(z') \ge 0.
 \end{eqnarray*}
 Thus, we can replace $E_\xi$ by $E_{\xi,T}$ on the right hand side of  
 (\ref{Exi}),
 and this concludes the proof of the lemma.
 \end{proof}
 For $a,b\in C$, define
 $\phi(a,b)=\sum_{w\in W}\varepsilon(w) e^{\langle wa,b\rangle}$.
 \begin{lemma}\label{tech}
 Fix $\mu\in C$.  The functions $f(a,b)=\phi(a,b)/[h(a)h(b)]$
 and $g_\mu(a,b)=\phi(a,b)/\phi(a,\mu)$ have unique analytic  
 extensions to
 $V\times V$.  Moreover, $f(0,b)=k^{-1}$ and $g_\mu(0,b)=h(b)/h(\mu)$.
 \end{lemma}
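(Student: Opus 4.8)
The plan is to run the classical Weyl--denominator / Harish--Chandra argument, adapted from Weyl groups to arbitrary finite Coxeter groups, in three steps: first the analytic continuation of $f$, then the identification $f(0,\cdot)\equiv k^{-1}$ by a Gaussian computation, and finally the corresponding statements for $g_\mu$ by dividing out a common factor.

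For the first step, I would observe that $\phi(a,b)=\sum_{w\in W}\varepsilon(w)e^{\langle wa,b\rangle}$ is an entire function on the complexification of $V\times V$ which is alternating under $W$ in each variable separately: reindexing the sum and using $\varepsilon(w)=\varepsilon(w^{-1})$ gives $\phi(\sigma a,b)=\phi(a,\sigma b)=\varepsilon(\sigma)\phi(a,b)$. An alternating function vanishes on the mirror $\{\alpha^\vee=0\}$ of each reflection, since $s_\alpha$ fixes that mirror pointwise; and a holomorphic function vanishing on the zero hyperplane of a linear form is divisible by that form with holomorphic quotient. Dividing successively by the pairwise non-proportional forms $\alpha^\vee$, $\alpha\in R_+$ (non-proportionality is where reducedness of the system enters), first in $a$ and then in $b$, shows that $f=\phi/[h(a)h(b)]$ is entire; its restriction to the real $V\times V$ is the desired analytic extension, which is unique by the identity theorem since the original domain $\{h(a)h(b)\neq0\}$ is open and dense.

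For the second step, expand $\phi(a,b)$ in homogeneous components in $a$: the degree-$d$ component $\frac{1}{d!}\sum_w\varepsilon(w)\langle wa,b\rangle^d$ is $W$-alternating in $a$; since the least degree of a nonzero $W$-alternating polynomial is $N:=|R_+|$ (see \cite{humphreys}), all components of degree $<N$ vanish, and the degree-$N$ one, being alternating of minimal degree in $a$ and (via $\langle wa,b\rangle=\langle a,w^{-1}b\rangle$) also in $b$, must be a scalar multiple of $h(a)h(b)$. Hence $f(0,b)=\lim_{a\to0}\phi(a,b)/[h(a)h(b)]$ is a constant $C_0$, independent of $b$. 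To evaluate $C_0$ I would use that $h$ is harmonic (its Laplacian is $W$-alternating of degree $N-2<N$, hence zero; in the Euclidean normalisation $h$ is a constant times $\prod_{\alpha\in R_+}\langle\alpha,\cdot\rangle$), so that, completing the square and applying $\E[h(Z+c)]=(e^{\Delta/2}h)(c)=h(c)$ for a standard Gaussian $Z$,
\[
\int_V e^{\langle c,b\rangle}h(b)e^{-\|b\|^2/2}\,db=(2\pi)^{n/2}e^{\|c\|^2/2}h(c),\qquad n=\dim V .
\]
Summing over $w$ with $c=wa$ and $h(wa)=\varepsilon(w)h(a)$ gives $\int_V\phi(a,b)h(b)e^{-\|b\|^2/2}\,db=(2\pi)^{n/2}|W|\,e^{\|a\|^2/2}h(a)$, while the left side equals $h(a)\int_V h(b)^2f(a,b)e^{-\|b\|^2/2}\,db$; cancelling $h(a)$ and letting $a\to0$ yields $C_0\int_V h(b)^2e^{-\|b\|^2/2}\,db=(2\pi)^{n/2}|W|$. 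Since $h^2$ is $W$-invariant, $\int_V h(b)^2e^{-\|b\|^2/2}\,db=|W|\int_C h(\lambda)^2e^{-\|\lambda\|^2/2}\,d\lambda$, and $c_1=\int_V e^{-\|\lambda\|^2/2}\,d\lambda=(2\pi)^{n/2}$, so $C_0\int_C h(\lambda)^2e^{-\|\lambda\|^2/2}\,d\lambda=(2\pi)^{n/2}=c_1$, i.e.\ $C_0=k^{-1}$.

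For $g_\mu$, writing $\phi(a,b)=h(a)h(b)f(a,b)$ and $\phi(a,\mu)=h(a)h(\mu)f(a,\mu)$ shows that the factor $h(a)$ cancels, so $g_\mu(a,b)=h(b)f(a,b)/[h(\mu)f(a,\mu)]$; this removes the apparent singularities of $\phi(a,b)/\phi(a,\mu)$ along $\{h(a)=0\}$, and by Step~2 it evaluates at $a=0$ to $g_\mu(0,b)=h(b)f(0,b)/[h(\mu)f(0,\mu)]=h(b)/h(\mu)$; uniqueness is again the identity theorem. I expect the one genuinely delicate point to be analyticity on \emph{all} of $V\times V$: this needs $a\mapsto f(a,\mu)$ to be zero-free on $V$, i.e.\ the strict positivity of the generalized Bessel function $f(\cdot,\mu)$ for $\mu\in C$ (it is $\ge0$ with $f(0,\mu)=k^{-1}>0$), which I would either quote as a known property of these functions or derive from the nonnegativity of the measure whose Laplace transform it is. Everything else is the soft part: divisibility of holomorphic functions by linear forms, the structure theory of alternating polynomials, and a single Gaussian integral.
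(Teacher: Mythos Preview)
Your proof is correct and follows essentially the same route as the paper: divisibility of the alternating entire function $\phi$ by $h(a)h(b)$ for the extension of $f$, the bidegree argument showing $f(0,\cdot)$ is constant, a Gaussian/harmonicity identity to pin down that constant as $k^{-1}$ (you integrate over $V$ and fold to $C$ via $W$-invariance of $h^2$, the paper integrates directly over $C$; these are equivalent), and then $g_\mu$ by cancelling the common factor $h(a)$. You are also right to flag the non-vanishing of $f(\cdot,\mu)$ on $V$ as the one genuinely delicate point for the global extension of $g_\mu$; the paper glosses over this with ``an elementary analytic functions argument'' and in fact only ever uses $g_\mu$ near $a=0$, where $f(0,\mu)=k^{-1}\neq0$ already suffices.
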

 \begin{proof} It is clear that the function $\phi$ is analytic in $ 
 (a,b)$, futhermore
 it vanishes on the hyperplanes $\langle\beta,a\rangle=0, \langle 
 \beta,b\rangle=0$,
 for all roots $\beta$.
 The first claim follows from an elementary analytic functions argument.
 In the expansion of  $\phi$ as an entire function,
 the term of homogeneous
 degree $d$ is a polynomial in $a,b$ which is antisymmetric under $W$,  
 therefore
   a multiple of $h(a)h(b)$. In particular the term of lowest degree is a
   constant multiple of $h(a)h(b)$. This constant is nonzero, as can  
 be seen by taking
   derivatives in the definition of $\phi$.
 By l'H\^opital's rule, $\lim_{a\to 0} g_\mu(a,b)=h(b)/h(\mu)$.
 It follows that $\lim_{a\to 0} f(a,b)$ is a constant. To evaluate
 this constant, note that, since $h$ is harmonic and vanishes at the  
 boundary
 of $C$,
 $$\int_C h(\lambda)^2 e^{-\|\lambda\|^2/2} f(a,\lambda) d\lambda =
 e^{|a|^2/2} \int_V e^{-\|\lambda\|^2/2} d\lambda .$$
 Letting $a\to 0$, we deduce that $f(0,\lambda)=k^{-1}$, as required.
 \end{proof}
 
 Denote by $F_\xi$ the event that  $\psi(s)\in C-\xi$ for
 all $s\ge 0$ and by $F_{\xi,T}$ the event that
 $\psi(s)\in C-\xi$ for all $T\ge s\ge 0$.
 \begin{lemma} \label{meander}
 For $B\subset C$, bounded and measurable,
 \begin{eqnarray*}
 \lefteqn{
 \lim_{C\ni \xi\to 0} {\mathbb P}_\mu(F_\xi)^{-1}
 {\mathbb P}_\mu(\psi(T) \in B ,\ F_{\xi,T}) }\\ &=&
 c_T^{-1} h(\mu)^{-1} \int_{B} e^{\langle\mu,\lambda\rangle-\|\mu\|^2  
 T/2 }
 e^{- \|\lambda\|^2/2T } h(\lambda)\  d\lambda .\end{eqnarray*}
 \end{lemma}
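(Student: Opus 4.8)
The plan is to reduce the two probabilities to Dirichlet heat kernels of the chamber $C$, simplify them by the reflection principle, and then let $\xi\to 0$ using Lemma \ref{tech}, whose auxiliary function $g_\mu$ is tailored precisely for this computation. First I would express the numerator in heat-kernel form. Under ${\mathbb P}_\mu$, $\psi=(\pi(t),t\ge 0)$ is a Brownian motion in $V$ with drift $\mu$ issued from $0$, so ${\mathbb P}_\mu(\psi(T)\in B,\,F_{\xi,T})=\int_B p^\mu_T(\xi,\lambda+\xi)\,d\lambda$, where $p^\mu_T$ is the transition density of drifted Brownian motion killed on exiting $C$ (after translating $C-\xi$ to $C$). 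By Girsanov and the method of images --- legitimate because $\overline C$ is a fundamental domain of the finite reflection group $W$ with the walls as reflecting hyperplanes --- one has $p^\mu_T(\xi,\eta)=e^{\langle\mu,\eta-\xi\rangle-\|\mu\|^2T/2}(2\pi T)^{-\dim V/2}\sum_{w\in W}\varepsilon(w)e^{-\|w\xi-\eta\|^2/2T}$, which by expanding the squares and using orthogonality of $W$ equals $e^{\langle\mu,\eta-\xi\rangle-\|\mu\|^2T/2}(2\pi T)^{-\dim V/2}e^{-(\|\xi\|^2+\|\eta\|^2)/2T}\,\phi(\xi,\eta/T)$, with $\phi(a,b)=\sum_{w}\varepsilon(w)e^{\langle wa,b\rangle}$ as in Lemma \ref{tech}. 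For the denominator, ${\mathbb P}_\mu(F_\xi)$ is the probability that drifted Brownian motion from $\xi$ stays in $C$ forever; the function $\zeta\mapsto e^{-\langle\mu,\zeta\rangle}\phi(\mu,\zeta)$ is bounded and $(\frac12\Delta+\mu\cdot\nabla)$-harmonic on $C$, vanishes on $\partial C$ (since $\phi(\mu,\zeta)=h(\mu)h(\zeta)f(\mu,\zeta)$ and $h$ vanishes there), and tends to $1$ deep inside $C$, and these properties characterise the survival probability (equivalently, let the time horizon tend to infinity in the formula for $p^\mu_T$ and use a Gaussian Laplace asymptotic); hence ${\mathbb P}_\mu(F_\xi)=e^{-\langle\mu,\xi\rangle}\phi(\mu,\xi)=e^{-\langle\mu,\xi\rangle}\phi(\xi,\mu)$.

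Then I would form the ratio and let $\xi\to 0$. The factor $\phi(\xi,\mu)$ cancels, leaving the quotient $\phi(\xi,(\lambda+\xi)/T)/\phi(\xi,\mu)=g_\mu(\xi,(\lambda+\xi)/T)$; by Lemma \ref{tech} this extends analytically with $g_\mu(0,b)=h(b)/h(\mu)$, so, $h$ being homogeneous of degree $q=|R_+|$, it converges to $h(\lambda/T)/h(\mu)=T^{-q}h(\lambda)/h(\mu)$, locally uniformly in $\lambda$, while the remaining exponential factors converge to $e^{\langle\mu,\lambda\rangle-\|\mu\|^2T/2}e^{-\|\lambda\|^2/2T}$. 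As $B$ is bounded, $g_\mu$ continuous, and the dominating function can be taken constant on $B$, dominated convergence gives
$$\lim_{C\ni\xi\to 0}\frac{{\mathbb P}_\mu(\psi(T)\in B,\,F_{\xi,T})}{{\mathbb P}_\mu(F_\xi)}=(2\pi T)^{-\dim V/2}T^{-q}\,h(\mu)^{-1}\int_B e^{\langle\mu,\lambda\rangle-\|\mu\|^2T/2}e^{-\|\lambda\|^2/2T}h(\lambda)\,d\lambda ,$$
and since $(2\pi T)^{-\dim V/2}T^{-q}=c_T^{-1}$ by the definition of $c_T$, this is exactly the asserted identity.

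The part I expect to be delicate is the evaluation ${\mathbb P}_\mu(F_\xi)=e^{-\langle\mu,\xi\rangle}\phi(\mu,\xi)$: it rests either on a uniqueness statement for positive harmonic functions of the drifted generator on the cone $C$, or on a careful Laplace/Gaussian asymptotic of the killed heat kernel as the horizon tends to infinity, the inward drift $\mu\in C$ being exactly what makes the survival probability positive; this is also the step that fixes the normalising constant to $c_T$. A minor point is that the reflection formula is being invoked for a possibly non-crystallographic $W$, but this is harmless since $W$ is a finite reflection group whose chamber is a fundamental domain. Everything else is routine bookkeeping once Lemma \ref{tech} is available.
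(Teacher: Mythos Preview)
Your proposal is correct and follows essentially the same route as the paper: the reflection principle gives the killed heat-kernel expression for the numerator, the survival probability ${\mathbb P}_\mu(F_\xi)=\sum_{w}\varepsilon(w)e^{\langle w\xi-\xi,\mu\rangle}$ is obtained by letting the time horizon tend to infinity (the paper cites \cite{bbo} for this step rather than invoking a harmonic-function uniqueness argument), and the ratio is then passed to the limit using $g_\mu$ from Lemma~\ref{tech} together with bounded convergence. The bookkeeping with $h(\lambda/T)=T^{-q}h(\lambda)$ and the identification of the constant $c_T^{-1}$ is exactly what the paper does.
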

 \begin{proof}
 Set $z_T=\int_V e^{-\|\lambda\|^2/2T}\ d\lambda$.
 By the reflection principle,
 $$ {\mathbb P}_\mu(\psi(T) \in d\lambda ,\ F_{\xi,T}) =
 e^{\langle\mu,\lambda\rangle-\|\mu\|^2 T/2}
 \sum_{w\in W}\varepsilon(w)p_T(w\xi,\xi+ \lambda)d\lambda,$$
 where $p_t(a,b)=z_t^{-1}e^{-\|b-a\|^2/2t}$ is the transition density of
 a standard Brownian motion in $V$.
 Integrating over $\lambda$ and letting $T\to\infty$, we obtain (see~ 
 \cite{bbo})
 $${\mathbb P}_\mu(F_\xi)= \sum_{w\in W}\varepsilon(w)
 e^{\langle w\xi-\xi,\mu\rangle}.$$
 Thus, using lemma~\ref{tech} and the bounded convergence theorem,
 \begin{eqnarray*}
 \lefteqn{
 \lim_{C\ni \xi\to 0} {\mathbb P}_\mu(F_\xi)^{-1}
 {\mathbb P}_\mu(\psi(T) \in B ,\ F_{\xi,T}) }\\
 &=&
 z_T^{-1}  \lim_{C\ni \xi\to 0}
 \int_{B}
 e^{\langle\mu,\lambda\rangle-\|\mu\|^2 T/2}
 e^{- (|\xi|^2+|\xi+\lambda|^2)/2T }
 \phi(\xi,\mu)^{-1} \phi\left(\xi,\frac{\xi+\lambda}{T}\right) d 
 \lambda \\
 &=&
 z_T^{-1}  \lim_{C\ni \xi\to 0}
 \int_{B}
 e^{\langle\mu,\lambda\rangle-\|\mu\|^2 T/2}
 e^{- (\|\xi\|^2+\|\xi+\lambda\|^2)/2T }
 g_\mu\left(\xi,\frac{\xi+\lambda}{T}\right) d\lambda \\
 &=& z_T^{-1} h(\mu)^{-1} \int_{B} e^{\langle\mu,\lambda\rangle-\|\mu 
 \|^2 T/2 }
 e^{- |\lambda|^2/2T } h(\lambda/T)\   d\lambda \\
 &=& c_T^{-1} h(\mu)^{-1} \int_{B} e^{\langle\mu,\lambda\rangle-\|\mu 
 \|^2 T/2 }
 e^{- \|\lambda\|^2/2T } h(\lambda) \  d\lambda ,
 \end{eqnarray*}
 as required.
 \end{proof}
 
 Applying lemmas \ref{exit}, \ref{meander} and proposition~\ref 
 {bmfacts}, we obtain
 \begin{eqnarray*}
 \lefteqn{
 {\mathbb P}_\mu(\mathcal Q_{w_0} \psi(T) \in B(\lambda,\epsilon),
 \ \varsigma_{\bf i}(\psi)\in R(z,\epsilon))}\\
 &=&  \lim_{C\ni \xi\to 0} {\mathbb P}_\mu(E_\xi)^{-1}
 {\mathbb P}_\mu(\mathcal Q_{w_0} \pi(T) \in B(\lambda,\epsilon),
 \ \omega(T)\in R(z,\epsilon), E_{\xi,T})\quad \text{(lemma \ref{hyp})}\\
 &=&  \lim_{C\ni \xi\to 0} {\mathbb P}_\mu(E_\xi)^{-1}
 {\mathbb P}_\mu( \omega(T)\in R(z,\epsilon) )
 {\mathbb P}_\mu(\mathcal Q_{w_0} \pi(T) \in B(\lambda,\epsilon),\ E_ 
 {\xi,T})
 \quad\text{(lemma \ref{bmfacts}(3))}\\
 &=&  \lim_{C\ni \xi\to 0} {\mathbb P}_\mu(E_\xi)^{-1}
 {\mathbb P}_\mu( \omega(T)\in R(z,\epsilon) )
 {\mathbb P}_\mu(\psi(T) \in B(\lambda,\epsilon),\ F_{\xi,T})\\
 &=& \prod_{i=1}^q e^{-\beta_i^\vee(\mu)z_i}
 [e^{\epsilon\beta_i^\vee(\mu)}-e^{-\epsilon\beta_i^\vee(\mu)}]
 \lim_{C\ni \xi\to 0} {\mathbb P}_\mu(E_\xi)^{-1}
 {\mathbb P}_\mu(\psi(T) \in B(\lambda,\epsilon),\ F_{\xi,T})\\
 &&\qquad\qquad \text{(lemma \ref{bmfacts} (2))}\\
 &=& \prod_{i=1}^q e^{-\beta_i^\vee(\mu)z_i}
 [e^{\epsilon\beta_i^\vee(\mu)}-e^{-\epsilon\beta_i^\vee(\mu)}]  \\
 & & \qquad \times
 c_T^{-1} h(\mu)^{-1} \int_{B_V(\lambda,\epsilon)} e^{\mu(\lambda')-\| 
 \mu\|^2 T/2 }
 e^{- \|\lambda'\|^2/2T } h(\lambda') \  d\lambda' .\quad\text{(lemma \ref 
 {meander})}
 \end{eqnarray*}
 Now divide by $\|B(y,\epsilon)\| (2\epsilon)^{q}$ and let $\epsilon$  
 tend to zero
 to obtain
 \begin{eqnarray*}
 \lefteqn{ {\mathbb P}_\mu(\mathcal Q_{w_0} \psi(T) \in d\lambda,\  
 \varsigma_{\bf i}(\psi)\in dz)}\\
 &=&  \prod_{i=1}^q e^{-\beta_i^\vee(\mu)z_i} e^{\langle\mu,\lambda 
 \rangle-\|\mu\|^2 T/2}
 c_T^{-1} h(\lambda) e^{-\|\lambda\|^2/2T} \ d\lambda\ dz .
 \end{eqnarray*}
 Letting $\mu\to 0$ this becomes, writing $\mathbb P=\mathbb P_0$,
 \begin{equation}
 \mathbb P(\mathcal Q_{w_0} \psi(T) \in d\lambda,\ \varsigma_{\bf i} 
 (\psi)\in dz)
 =  c_T^{-1} h(\lambda) e^{-\|\lambda\|^2/2T} \ d\lambda\ dz .
 \end{equation}
 Using lemma \ref{x-y},
 it follows that, for $(w,\lambda)$ in the interior of $M_{\bf i}$,
 \begin{equation}\label{p-uniform}
 \mathbb P(\varrho_{\bf i}(\eta)\in dw,\ \mathcal P_{w_0} \eta(T) \in d 
 \lambda)
 =  c_T^{-1} h(\lambda) e^{-\|\lambda\|^2/2T} \ dw\ d\lambda .
 \end{equation}
 
 Under the probability measure $\mathbb P$, $\eta$ is a standard  
 Brownian motion
 in $V$ with transition density given by $p_t(a,b)=z_t^{-1}e^{-\|b-a\| 
 ^2/2t}$.
 By theorem \ref{P-brown}  under $\mathbb P$, $\mathcal P_{w_0} \eta$  
 is a
 Brownian motion in $C$. Its transition density is given, for $\xi, 
 \lambda\in C$,
 by $$q_t(\xi,\lambda)
 =\frac{h(\lambda)}{h(\xi)}\sum_{w\in W}\varepsilon(w)p_t(w\xi, 
 \lambda).$$
 As remarked in \cite{bbo}, this transition density can be extended by  
 continuity
 to the boundary of $C$.  From lemma~\ref{tech} we see that
 $q_T(0,\lambda)=k^{-1}h(\lambda)^2e^{-\|\lambda\|^2/2T}$. Thus,
 \begin{equation}\label{gue}
 \mathbb P(\mathcal P_{w_0} \eta(T)\in d\lambda)=k^{-1}h(\lambda)^2e^{- 
 \|\lambda\|^2/2T}d\lambda.
 \end{equation}
 To complete the proof of the theorem, first note that since $ 
 \varsigma_{\bf i}(\psi)$ is measurable
 with respect to the $\sigma$-algebra
 generated by $(\mathcal Q_{w_0} \psi(u),\ u\ge T)$, $\varrho_{\bf i} 
 (\eta)$
 is measurable with respect to the $\sigma$-algebra
 generated by $(\mathcal P_{w_0} \eta(u),\ u\ge T)$.
 Thus, by the Markov property of $\mathcal P_{w_0} \eta$, the conditional
 distribution of $\varrho_{\bf i}(\eta)$, given $(\mathcal P_{w_0} \eta 
 (s), s\le T)$,
 is measurable with respect to the $\sigma$-algebra generated by $ 
 \mathcal P_{w_0} \eta(T)$.
 Combining this with (\ref{p-uniform}) and (\ref{gue}), we conclude  
 that the conditional law of
 $\varrho_{\bf i}(\eta)$, given $(\mathcal P_{w_0} \eta(s), s\le T)$ and
 $\mathcal P_{w_0} \eta(t)=\lambda$, is almost surely uniform on $M^ 
 \lambda_{\bf i}$,
 and that the Euclidean volume of $M_{\bf i}^\lambda$ is $k^{-1}h 
 (\lambda)$,
 as required.

\subsection{Proof of theorem \ref{p-2}}\label{pf_p-2}

Let $\psi=w_0\eta$ and $\mathcal Q_{w_0}=\mathcal P_{w_0} w_0$.
Denote by $P_t$ (respectively $Q_t$) the semigroup of Brownian motion in $V$
(respectively $C$). Under $\mathbb P$, by \cite[Theorem 5.6]{bbo}, 
$\mathcal Q_{w_0} \psi$ is a Brownian motion in $C$. 
Let $\delta\in C$.  The function $e_\delta(v)=e^{\langle\delta,v\rangle}$ 
is an eigenfunction of $P_t$ and the $e_\delta$-transform of $P_t$ is a Brownian motion 
with drift $\delta$. Setting $\phi_\delta(v)=\sum_{w\in W}\varepsilon(w)e^{\langle w\delta,v\rangle},$
the function $\phi_\delta/h$ is an eigenfunction of $Q_t$ and the 
$(\phi_\delta/h)$-transform of $Q_t$ is a Brownian motion with drift $\delta$ conditioned never 
to exit $C$ (see \cite[Section 5.2]{bbo} for a definition of this process).  
By theorem \ref{p-unif},
 the conditional law of $\eta(T)$, given $(\mathcal P_{w_0} \eta(s),\ s\le T)$
and $\mathcal P_{w_0} \eta(T)=\lambda$, 
is almost surely given by $\mu_{DH}^\lambda$.
It follows that the conditional law of $\psi(T)$, given $(\mathcal Q_{w_0} \psi(s),\ s\le T)$
and $\mathcal Q_{w_0} \psi(T)=\lambda$, is almost surely given by $\mu_{DH}^\lambda$.
Denote the corresponding Markov operator by $K(\lambda,\cdot)=\mu_{DH}^\lambda(\cdot)$.
By \cite[Theorem 5.6]{bbo} we automatically have the intertwining $K P_t= Q_tK $. 
Note that $Ke_\delta$ is an eigenfunction of $Q_t$.  By construction, the $Ke_\delta$-transform 
of $Q_t$, started from the origin, has the same law as $\mathcal Q_{w_0} \psi^{(\delta)}$, 
where $\psi^{(\delta)}$ is a Brownian motion in $V$ with drift $\delta$.  Recalling the proof of 
\cite[Theorem 5.6]{bbo} we note that $\mathcal Q_{w_0} \psi^{(\delta)}$ 
has the same law as a Brownian motion with drift $\delta$ conditioned never to exit $C$.
It follows that $Ke_\delta=\phi_\delta/(c(\delta)h)$, for some $c(\delta)\ne 0$.
Now observe (using lemma \ref{tech} for example) that $\lim_{\xi\to 0} Ke_\delta(\xi)=1$.
Thus, by lemma~\ref{tech}, $c(\delta)=\lim_{\xi\to 0} \phi_\delta(\xi)/h(\xi)=k^{-1} h(\delta)$.
We conclude that
$$\int_V e^{\langle\delta,v\rangle} \mu_{DH}^\lambda(dv) = k 
\frac{\sum_{w\in W}\varepsilon(w)e^{\langle w\delta,\lambda\rangle}}{h(\delta)h(\lambda)}.$$
This formula extends to $\delta\in V^*$ by analytic continuation
 (see lemma~\ref{tech} again),
and the proof is complete.

\subsection{A Littlewood-Richardson property} 
In usual Littelmann path theory, the concatenation of paths is used to describe tensor products of representations, and give a combinatorial formula for the Littlewood-Richardson coefficients.
In our setting of continuous crystals, the representation theory does not exist in general, and the analogue of the Littlewood-Richardson coefficients is a certain conditional distribution  
of the Brownian path. In this section we describe this distribution in theorem \ref{the_cond}.

Let ${\bf i}=(s_1,\ldots, s_q)$ where $w_0=s_1\ldots s_q$ is a reduced
decomposition. For $\eta\in C_T^0(V)$, let $x=\rho_{\bf i}(\eta)$.

For each simple root $\alpha$ choose now 
${\bf j_\alpha}=(s_1^\alpha,\cdots,s_q^\alpha)$, a reduced decomposition of
$w_0$, such that $s_q^\alpha=s_\alpha$, 
and denote the corresponding string parameters of the path $\eta$ by 
$(\tilde x_1^\alpha,\cdots,\tilde x_q^\alpha)=\varrho_{\bf j_\alpha}(\eta)$. As in
(\ref{x1}),  there is a continuous function $\Psi'_\alpha:\R^q \to \R$ such that 
$\tilde x_q^\alpha=\Psi'_\alpha(x).$ 
 Fix   $\lambda,\mu\in C$ and suppose
that    $\lambda+\eta(s) \in C$   for $ 0 \leq  s  \leq T$. Then
$\tilde x^\alpha_q=-\inf_{s\le T}\alpha^{\vee}(\eta(s))  \leq  \alpha ^{\vee}(\lambda).$ In other words,
\begin{equation}\label{InegLR} \Psi'_\alpha(x)    \leq \alpha^\vee(\lambda), \qquad \alpha \in \Sigma.\end{equation}
Let $M_{\bf i}^{\lambda,\mu}$  denote the set of $x \in M_{\bf i}^\mu$ which satisfy the additional constraints
(\ref{InegLR}).
This is a compact convex polytope. Let $\nu^{\lambda,\mu}$
be the uniform probability distribution on $M_{\bf i}^{\lambda,\mu}$ and let
 $\nu_{\lambda,\mu}$ be its image on $V$ by the map 
 $$x=(x_1,\cdots,x_q)\in M_{\bf i}^{\lambda,\mu} \mapsto \lambda+
 \mu-\sum_{j=1}^qx_j\alpha_j\in V.$$
  Let $\eta$ be the Brownian motion in $V$ starting from $0$.
   Observe that, by theorem \ref{piecewise},
    the event $\{{\eta (s) \in C-\lambda  , 0 \leq  s \leq  T} \}$ is
measurable with respect to the  $\sigma$-algebra generated by  $\rho_{\bf i}(\eta)$. Combining
this with theorem \ref{p-unif} we obtain:
\begin{cor}\label{corcondi} The conditional law of $ \rho_{\bf i}(\eta)$, given $\PP_{w_0}\eta(s) , s \leq  T, \PP_{w_0} \eta(T) = \mu$ and $\lambda+\eta (s) \in C$ for $0   \leq s \leq  T$, is $\nu^{\lambda,\mu}$ and  the conditional law  of $\lambda+\eta(T)$ is $\nu_{\lambda,\mu}$.
\end{cor}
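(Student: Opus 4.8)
The strategy is to reduce the statement to Theorem \ref{p-unif} by observing that both conditioning events and both target random variables can be expressed purely in terms of the string parameters $\rho_{\bf i}(\eta)$ and of $\PP_{w_0}\eta$. First I would record the key measurability fact, which is already hinted at: by Theorem \ref{piecewise} (and the function $\Psi'_\alpha$ constructed just above), for each simple root $\alpha$ we have $\tilde x_q^\alpha=-\inf_{s\le T}\alpha^\vee(\eta(s))=\Psi'_\alpha(\rho_{\bf i}(\eta))$, so the event $\{\lambda+\eta(s)\in C\text{ for }0\le s\le T\}=\{\Psi'_\alpha(\rho_{\bf i}(\eta))\le\alpha^\vee(\lambda),\ \alpha\in\Sigma\}$ is $\sigma(\rho_{\bf i}(\eta))$-measurable. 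Likewise $\{\PP_{w_0}\eta(T)=\mu\}$ is determined by $\PP_{w_0}\eta$, and from Theorem \ref{p-unif} the whole path $(\PP_{w_0}\eta(s),s\le T)$ together with $\rho_{\bf i}(\eta)$ form the natural pair whose joint law is the product-form measure \eqref{uniform}.

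Next I would invoke Theorem \ref{p-unif} directly: conditionally on $(\PP_{w_0}\eta(s),s\le T)$ and $\PP_{w_0}\eta(T)=\mu$, the law of $\rho_{\bf i}(\eta)$ is the normalized Lebesgue measure on $M_{\bf i}^\mu$, and in particular $\rho_{\bf i}(\eta)$ is then \emph{conditionally independent} of the past path $(\PP_{w_0}\eta(s),s\le T)$ given $\PP_{w_0}\eta(T)=\mu$. Now I further condition on the event $E_\lambda:=\{\lambda+\eta(s)\in C,\ 0\le s\le T\}$, which by the first paragraph equals $\{\rho_{\bf i}(\eta)\in A_\lambda\}$ for the polyhedral set $A_\lambda=\{x:\Psi'_\alpha(x)\le\alpha^\vee(\lambda)\}$. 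Conditioning a uniform distribution on $M_{\bf i}^\mu$ on the further linear constraints defining $A_\lambda$ yields precisely the uniform distribution on $M_{\bf i}^\mu\cap A_\lambda=M_{\bf i}^{\lambda,\mu}$ (which is a nonempty compact convex polytope whenever the conditioning event has positive probability, e.g. when $\lambda\in C$). Hence the conditional law of $\rho_{\bf i}(\eta)$ given the stated data is $\nu^{\lambda,\mu}$.

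Finally, for the statement about $\lambda+\eta(T)$: from the string parametrization one has $\eta(T)=\PP_{w_0}\eta(T)-\sum_{j=1}^q x_j\alpha_{s_j}$ with $x=\rho_{\bf i}(\eta)$, so on the event $\PP_{w_0}\eta(T)=\mu$ we get $\lambda+\eta(T)=\lambda+\mu-\sum_j x_j\alpha_{s_j}$, which is exactly the affine image of $x$ appearing in the definition of $\nu_{\lambda,\mu}$. Pushing $\nu^{\lambda,\mu}$ forward under this affine map gives $\nu_{\lambda,\mu}$, which completes the argument. The only genuinely delicate point is justifying the conditioning on $E_\lambda$ rigorously: one must check that $\Prob(E_\lambda\mid\PP_{w_0}\eta(s),s\le T,\ \PP_{w_0}\eta(T)=\mu)>0$ for the relevant $\lambda,\mu$ and that restricting the uniform measure \eqref{uniform} to the slice $\{x\in A_\lambda\}$ indeed produces the uniform measure on $M_{\bf i}^{\lambda,\mu}$ — this is where the product/Lebesgue structure of \eqref{uniform}, together with the conditional independence from the past furnished by Theorem \ref{p-unif}, is essential, and it is the main obstacle to a fully careful write-up.
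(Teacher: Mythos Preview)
Your proof is correct and follows essentially the same approach as the paper: the paper's argument consists only of the sentence preceding the corollary, namely that the event $\{\lambda+\eta(s)\in C,\ 0\le s\le T\}$ is $\sigma(\rho_{\bf i}(\eta))$-measurable (via Theorem~\ref{piecewise}) and hence the result follows by combining this with Theorem~\ref{p-unif}. You have simply spelled out the details of that combination --- conditioning the uniform law on $M_{\bf i}^\mu$ further on the constraints (\ref{InegLR}) to obtain the uniform law on $M_{\bf i}^{\lambda,\mu}$, and then pushing forward by the affine map --- exactly as the paper intends.
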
 
For $s, t \geq 0$ let
 $$(\tau_s \eta)(t)=\eta(s+t)-\eta(s), (\tau_s\PP_{w_0}\eta )(t) = \PP_{w_0}\eta( s+t )- \PP_{w_0}\eta(s).$$
\begin{lemma}
For all $s \geq 0$,
$$\PP_{w_0} (\tau_s\PP_{w_0}\eta) = \PP_{w_0}\tau_s\eta.$$
 \end{lemma}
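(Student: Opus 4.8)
The claim to prove is the identity $\PP_{w_0}(\tau_s \PP_{w_0}\eta) = \PP_{w_0}(\tau_s\eta)$ for every $s\geq 0$. The plan is to exploit the idempotence of $\PP_{w_0}$ together with the concatenation machinery of Lemma~\ref{lem:Px} (and its iterates, in particular Proposition~\ref{SQ}'s neighbourhood in the text). First I would write $\eta$ as a concatenation: set $\eta^{(1)} = (\eta(u))_{0\le u\le s}$ and $\eta^{(2)} = \tau_s\eta = (\eta(s+u)-\eta(s))_{u\ge 0}$, so that, up to the time-rescaling convention, $\eta = \eta^{(1)}\star\eta^{(2)}$. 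The point is that $\PP_{w_0}$ interacts with $\star$ through the one-variable formula of Lemma~\ref{lem:Px}(i): applying each simple reflection $\PP_{\alpha}$ in turn in a reduced word $w_0 = s_1\cdots s_q$, one gets $\PP_{w_0}(\eta^{(1)}\star\eta^{(2)}) = \PP_{w_0}\eta^{(1)}\star G(\eta^{(2)})$, where $G$ is a composition of operators of the form $\PP_{\alpha}^{x}$, the constants $x$ depending only on the already-transformed first piece. In particular the second piece of $\PP_{w_0}\eta$ is some explicit transform of $\tau_s\eta$ by a product of $\PP_{\alpha}^{x}$'s.

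The key algebraic fact I would isolate is that for any $x\geq 0$ one has $\PP_{\alpha}\PP_{\alpha}^{x} = \PP_{\alpha}$ (Lemma~\ref{lem:Px}(ii)), and more generally that $\PP_{\alpha}^{x}$ applied to an $\alpha$-dominant path leaves it $\alpha$-dominant (indeed $\PP_{\alpha}^{x}\eta = \EE_{\alpha}^{y}\eta$ with $y\ge 0$ by \eqref{eq:px}, hence $\PP_{\alpha}\PP_{\alpha}^{x}\pi = \PP_{\alpha}\pi = \pi$ for $\pi$ $\alpha$-dominant). From this one deduces that $\PP_{w_0}\circ(\text{product of }\PP_{\alpha}^{x}\text{'s over a reduced word})$, when further hit by $\PP_{w_0}$, collapses back to $\PP_{w_0}$: applying $\PP_{w_0}$ once more to $\PP_{w_0}\eta^{(1)}\star G(\tau_s\eta)$, the first piece $\PP_{w_0}\eta^{(1)}$ is dominant and, running through a reduced word, each $\PP_{\alpha}$ eats the corresponding $\PP_{\alpha}^{x}$ factor inside $G$ by the idempotence-type identities above, leaving exactly $\PP_{w_0}(\tau_s\eta)$ on the second piece and something dominant on the first. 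Since $\PP_{w_0}$ of any path lands in $\bar C$ and $\PP_{w_0}$ is idempotent, the first-piece contribution is irrelevant to the second-piece output, which is precisely what the lemma asserts about the whole path after the cut at time $s$ — but note that $\PP_{w_0}(\tau_s\PP_{w_0}\eta)$ and $\PP_{w_0}(\tau_s\eta)$ are paths on $[0,T]$ (the increment process from time $s$), so one must track that the first, ``past'', piece has been quotiented out on both sides.

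Concretely, the cleanest route is: (1) Prove $\PP_{w_0}(\zeta_1\star\zeta_2) = \PP_{w_0}\zeta_1\star H(\zeta_2)$ where $H = \PP_{\alpha_q}^{x_q}\cdots\PP_{\alpha_1}^{x_1}$ is a product of one-root operators with nonnegative superscripts, by inducting on the length of a reduced word using Lemma~\ref{lem:Px}(i); (2) Observe that, $\zeta_1$ being replaced by the dominant path $\PP_{w_0}\zeta_1$, the analogous decomposition of $\PP_{w_0}(\PP_{w_0}\zeta_1\star \zeta_2')$ has first piece again $\PP_{w_0}\zeta_1$ (idempotence) and second piece $H'(\zeta_2')$ with the \emph{same} operators $H'$ evaluated at possibly different constants; (3) Feed $\zeta_2' = H(\zeta_2)$ and use $\PP_{\alpha}\PP_{\alpha}^{x} = \PP_{\alpha}$ repeatedly to see $H'(H(\zeta_2)) = \PP_{w_0}\zeta_2$ as a path transform. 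Taking $\zeta_1 = \eta|_{[0,s]}$, $\zeta_2 = \tau_s\eta$ gives the claim. The main obstacle I anticipate is bookkeeping in step (2)--(3): one must verify that the two-sided (rather, ``future'') infima defining the constants match up so that the chain of $\PP_{\alpha}\PP_{\alpha}^{x}$ collapses cleanly rather than producing an extra $\PP_{\alpha}^{x}$ residue; this is exactly the content of Lemma~\ref{lem:Px}(ii)--(iii) iterated along a reduced word, and checking that no domination hypothesis fails along the way (each intermediate path is $\alpha$-dominant for the next reflection, by Proposition~\ref{pit}(ii)) is where the care is needed.
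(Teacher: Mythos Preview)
Your overall strategy---write $\eta$ as a concatenation at time $s$ and iterate Lemma~\ref{lem:Px}(i) along a reduced word---is exactly the paper's approach. Step~(1) is correct and already gives everything you need: you obtain $\PP_{w_0}\eta = \PP_{w_0}\eta^{(1)} \star_s G(\tau_s\eta)$ with $G = \PP_{\alpha_{i_1}}^{x_1}\cdots \PP_{\alpha_{i_q}}^{x_q}$, hence $\tau_s\PP_{w_0}\eta = G(\tau_s\eta)$. The lemma then reduces to showing $\PP_{w_0}\circ G = \PP_{w_0}$.

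Your steps~(2)--(3), however, go astray. By idempotence of $\PP_{w_0}$, applying the concatenation decomposition to $\PP_{w_0}\zeta_1 \star H(\zeta_2)$ and comparing second pieces yields $H'(H(\zeta_2)) = H(\zeta_2)$, \emph{not} $\PP_{w_0}\zeta_2$; these differ in general since $\tau_s\PP_{w_0}\eta$ need not be dominant. Moreover the identity $\PP_\alpha\PP_\alpha^{x}=\PP_\alpha$ cannot be applied ``repeatedly'' inside $H'\circ H = \PP_{\alpha_{i_1}}^{x_1'}\cdots\PP_{\alpha_{i_q}}^{x_q'}\PP_{\alpha_{i_1}}^{x_1}\cdots\PP_{\alpha_{i_q}}^{x_q}$, because no factor $\PP_\alpha$ without superscript ever appears adjacent to a $\PP_\alpha^{x}$ with the same $\alpha$.

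The clean finish (and the paper's) is immediate from your own ingredients: by \eqref{eq:px} each $\PP_\alpha^{x}$ equals some $\EE_\alpha^{y}$ with $y\ge 0$, hence preserves the connected component $L_\pi$, i.e.\ $\PP_{w_0}\PP_\alpha^{x} = \PP_{w_0}$. (Equivalently: pick a reduced decomposition of $w_0$ ending in $s_\alpha$ and apply Lemma~\ref{lem:Px}(ii) once.) Peeling off the factors of $G$ one by one gives $\PP_{w_0}G(\tau_s\eta)=\PP_{w_0}(\tau_s\eta)$, which is the lemma. No second pass through the concatenation is needed.
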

\proof If $\pi_1,\pi_2 : \R^+\to V$ are continuous path starting at $0$, let $ \pi_1 \star_s \pi_2$
be the path defined by $ \pi_1 \star_s \pi_2(r) = \pi_1(r)$ when $0 \leq r \leq s$ and $ \pi_1 \star_s \pi_2(r) = \pi_1(s)+\pi_2(r-s)$
when $s\leq r$. By lemma \ref{lem:Px}, $\PP_{w_0} (\pi_1 \star_s \pi_2) = \PP_{w_0} (\pi_1) \star_s \tilde \pi_2 $ where $\tilde \pi_2$ is a path such that 
$ \PP_{w_0}(\tilde \pi_2) =\PP_{w_0}(  \pi_2)$. Since $\tau_s(\pi_1 \star_s \pi_2)=\pi_2$, 
this gives the lemma. $\square$
\bigskip

 Let $\gamma_{\lambda,\mu}$ be the 
measure on $C$ given by
$$\gamma_{\lambda,\mu}(dx) = \frac{h(x)}{
h(\lambda)} \nu_{\lambda,\mu}(dx).$$ 
It will follow from theorem \ref{the_cond} that this is a probability measure.
Consider 
 the following $\sigma$-algebra
 $${\mathcal  G}_{s,t}=\sigma(\PP_{w_0} \eta(a), a\leq s, \PP_{w_0} \tau_s\eta(r), r\leq t).$$
The following result is a continuous analogue of the Littelmann interpretation of
the Littlewood-Richardson decomposition of a tensor product. 
 \begin{theorem}\label{the_cond} For $s, t > 0,$ $\gamma_{\lambda,\mu}$  is the conditional distribution of $\PP_{w_0} \eta(s + t)$ given
${\mathcal  G}_{s,t}$, $\PP_{w_0} \eta(s) = \lambda $ and $\PP_{w_0}\tau_s \eta(t) = \mu.$ 
  \end{theorem}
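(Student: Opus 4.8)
The plan is to reduce the two--interval statement to the one--interval result of Corollary~\ref{corcondi} by combining the Markov property of $\PP_{w_0}\eta$ (Theorem~\ref{P-brown}) with the description of the Weyl--chamber--conditioned Brownian motion as a Doob $h$--transform.

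Set $\zeta=\PP_{w_0}\eta$. On the event $\{\zeta(s)=\lambda\}$ one has trivially $\PP_{w_0}\eta(s+t)=\zeta(s+t)=\lambda+\tau_s\zeta(t)$, where $\tau_s\zeta(r)=\zeta(s+r)-\zeta(s)$, and by the identity $\PP_{w_0}(\tau_s\PP_{w_0}\eta)=\PP_{w_0}\tau_s\eta$ proved just above, conditioning on the path $\PP_{w_0}\tau_s\eta|_{[0,t]}$ (with endpoint $\mu$) is the same as conditioning on $\PP_{w_0}(\tau_s\zeta)|_{[0,t]}$. By Theorem~\ref{P-brown}, $\zeta$ is a time--homogeneous Markov process, so conditionally on $\mathcal G_{s,t}$ and on $\{\zeta(s)=\lambda\}$ the future $(\zeta(s+r),\ r\le t)$ is distributed as the $\bar C$--conditioned Brownian motion $\zeta^\lambda$ issued from $\lambda$, and the additional conditioning on $(\PP_{w_0}\eta(a),\ a\le s)$ has no effect, by the Markov property. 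It therefore suffices to show that, for $\zeta^\lambda$ the $\bar C$--conditioned Brownian motion from $\lambda$, the conditional law of $\zeta^\lambda(t)$ given the path $\PP_{w_0}(\zeta^\lambda-\lambda)|_{[0,t]}$ with endpoint $\mu$ equals $\gamma_{\lambda,\mu}$.

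For this I would invoke the $h$--transform description: writing $\zeta^\lambda=\lambda+W$ with $W$ a standard Brownian motion from $0$, the law of $(\zeta^\lambda(r),\ r\le t)$ equals the law of $(\lambda+W(r),\ r\le t)$ re--weighted by $\mathbf 1\{\lambda+W(r)\in\bar C\text{ for all }r\le t\}\,h(\lambda+W(t))/h(\lambda)$, the weight being a genuine Radon--Nikodym derivative because $h$ is positive harmonic, vanishes on $\partial C$, and the conditioned process is conservative (this is implicit in Theorem~\ref{P-brown}). Under this representation $\PP_{w_0}(\zeta^\lambda-\lambda)=\PP_{w_0}W$, so we are conditioning on the path of $\PP_{w_0}W$ up to time $t$ with endpoint $\mu$. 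By Theorem~\ref{p-unif} with horizon $t$, conditionally on that path $\varrho_{\bf i}(W)$ is uniform on $M_{\bf i}^\mu$ and $\lambda+W(t)=\lambda+\mu-\sum_j x_j\alpha_j$ with $x=\varrho_{\bf i}(W)$; moreover, as recalled just before Corollary~\ref{corcondi}, the event $\{\lambda+W(r)\in\bar C,\ r\le t\}$ is $\varrho_{\bf i}(W)$--measurable and, through the inequalities (\ref{InegLR}), cuts $M_{\bf i}^\mu$ down to $M_{\bf i}^{\lambda,\mu}$. Reweighting the uniform law on $M_{\bf i}^{\lambda,\mu}$ by $h(\lambda+\mu-\sum_j x_j\alpha_j)/h(\lambda)$ and pushing it forward by $x\mapsto v=\lambda+\mu-\sum_j x_j\alpha_j$ produces, for the conditional law of $\PP_{w_0}\eta(s+t)=\lambda+W(t)$, a measure proportional to $h(v)\,\nu_{\lambda,\mu}(dv)$; since this conditional law is automatically a probability measure, its total mass $h(\lambda)^{-1}\int_V h\,d\nu_{\lambda,\mu}$ must equal $1$, so the conditional law is exactly $\gamma_{\lambda,\mu}$ and, as a by--product, $\gamma_{\lambda,\mu}$ is a probability measure.

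The delicate point is the passage through the $h$--transform together with the bookkeeping of conditional independence: one must check that the conditioning on $\mathcal G_{s,t}$ really decouples into ``fix $\zeta(s)=\lambda$'' plus ``condition the centred future on its $\PP_{w_0}$--path'' (which rests on the Markov property of $\zeta$ and on the identity $\PP_{w_0}(\tau_s\PP_{w_0}\eta)=\PP_{w_0}\tau_s\eta$), and that the weight $h(\lambda+W(t))/h(\lambda)$ and the conditioning of Theorem~\ref{p-unif} combine to give precisely the density appearing in $\gamma_{\lambda,\mu}$, with no spurious normalization. The remaining ingredients --- measurability of the conditioning events with respect to $\varrho_{\bf i}$, the uniformity statement of Theorem~\ref{p-unif}, and the affine change of variables yielding $\nu_{\lambda,\mu}$ --- are routine.
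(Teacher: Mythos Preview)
Your proposal is correct and follows essentially the same route as the paper's proof: both use the Markov property of $\PP_{w_0}\eta$ together with the identity $\PP_{w_0}(\tau_s\PP_{w_0}\eta)=\PP_{w_0}\tau_s\eta$ to reduce to a one--interval computation for the $h$--process started at $\lambda$, then invoke the $h$--transform description of the $\bar C$--conditioned Brownian motion and the uniformity statement of Theorem~\ref{p-unif} (packaged in the paper as Corollary~\ref{corcondi}) to obtain the density $h(v)/h(\lambda)$ against $\nu_{\lambda,\mu}$. The only cosmetic difference is that the paper states the Markov step via the abstract identity $\E(Z\circ\theta_s\mid\sigma(X_r,r\le s,\;Y\circ\theta_s))=\E_{X_0}(Z\mid\sigma(Y))\circ\theta_s$, whereas you spell it out directly; the substance is the same.
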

  \proof 
 When $(X_t,(\theta_t),\mathbb P_x)$ is a Markov process with shift $\theta_t$ (i.e. $X_{s+t}=X_s\circ \theta_t$), for any $\sigma(X_r, r \geq 0)$-measurable random variables $Z, Y \geq0$, one has
 $$\E(Z\circ \theta_t|\sigma(X_s, s \leq t, Y\circ \theta_t))=\E_{X_0}(Z|\sigma( Y))\circ \theta_t.$$
Let us apply this relation to  the Markov process $X=\mathcal P_{w_0} \eta$ (see \cite{bbo}). Notice that since $\PP_{w_0}(\tau_sX)=\PP_{w_0}(\tau_0X)\circ \theta_s$, it follows from the lemma that
$${\mathcal  G}_{s,t}=\sigma(X_a, \PP_{w_0}(\tau_0X)(r)\circ \theta_s, a \leq s, r \leq t).$$
Therefore,
  for any Borel nonnegative function $f: V\to \R$,
 $$\E[f(\PP_{w_0} \eta(s+t)| {\mathcal  G}_{s,t}]
=\E_{X_0}[f(X_{t})|\sigma(\mathcal P_{w_0} (\tau_0X)(r), r \leq t)]
\circ \theta_s.$$
One knows (Theorem 5.1 in  \cite{bbo}) that $X$ is the $h$--process 
of the Brownian motion killed at the boundary of $C$. In other words,
 starting from $X_0=\lambda$,  $X$ is the $h$-- process of $\lambda+\eta(t)$ 
 conditionally on $\lambda+\eta(s)\in C$, for $0 \leq s \leq t$.
  It thus follows from corollary \ref{corcondi} that
$$\E_{\lambda}[f(X_{t})|\sigma(\mathcal P_{w_0} (\tau_0X)(r), r \leq t)]=
\frac{1}{h(\lambda)}\int f(x)h(x)\, d\nu_{\lambda,\mu}(x)$$
when $\PP_{w_0} (\tau_0X)(t)=\mu$. This proves that
$$\E[f(\PP_{w_0} \eta(s+t))| {\mathcal  G}_{s,t}]=\int f(x)\, d\mu_{\lambda,\mu}(x)$$
when $\PP_{w_0}\eta(s)=\lambda$ and $\PP_{w_0}\tau_s\eta(t)=\mu$.

\subsection{A product formula}

Consider the Laplace transform of $\mu^\lambda_{DH} $ given, 
for $\lambda \in C, z \in V^*$, by
\begin{equation}\label{eqJl1}J_\lambda(z)=
k\frac{\sum_W \varepsilon(w) e^{\langle z, w\lambda \rangle}}{h(z)h(\lambda)}.\end{equation}
This is an example of a generalized Bessel function, 
following the terminology of Helgason \cite{Helg}
 in the Weyl group case  and Opdam \cite{Opdam} in the general Coxeter case.
  It was a conjecture in Gross and Richards \cite{gross-rich} that these 
  are Laplace transform of  positive measures 
(this also follows from R\"osler \cite{roesler1}).
They are positive eigenfunctions of the Laplace and of the Dunkl operators on the Weyl chamber $C$ with eigenvalue $\|\lambda\|^2$ and Dirichlet boundary conditions and $J_\lambda(0)=1$. 
Let $f_\lambda$ be the density of the probability measure $\mu^\lambda_{DH}$. One has
\begin{equation}\label{eqJl2}\int_V e^{\langle z, v \rangle} f_\lambda(v) 
\,dv=J_\lambda(z).\end{equation}
Let, for $v \in C$,
 $$f_{\lambda,\mu}(v)=
 \frac{1}{h(\mu)} \sum_{w\in W} h(wv)f_\lambda(wv-\mu).$$ 
 It follows from the next result that  $ f_{\lambda,\mu}(v) \geq 0$.
   \begin{theorem}\label{prod_J}
 (i) For $\lambda, \mu \in C$ and $z \in V^*$,
 $$J_\lambda(z)J_\mu(z)=\int_C J_v(z)f_{\lambda,\mu}(v)\, dv.$$
 (ii)  $$\gamma_{\lambda,\mu}(dx)=f_{\lambda,\mu}(x) dx.$$
 \end{theorem}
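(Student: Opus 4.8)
The plan is to prove (ii) first and then deduce (i) as a formal consequence by taking Laplace transforms. For (ii), I would combine Theorem \ref{the_cond} with Theorem \ref{p-2}. Theorem \ref{the_cond} identifies $\gamma_{\lambda,\mu}$ as the conditional law of $\PP_{w_0}\eta(s+t)$ given $\mathcal G_{s,t}$, $\PP_{w_0}\eta(s)=\lambda$ and $\PP_{w_0}\tau_s\eta(t)=\mu$; since $\PP_{w_0}\eta$ is the $h$-process of Brownian motion killed at $\partial C$ (Theorem \ref{P-brown}), this conditional law is, up to the $h$-weighting, the bridge-type quantity one gets by running the Markov process $\PP_{w_0}\eta$ from $\lambda$ for time $t$ and conditioning on the increment of $\PP_{w_0}$ of the shifted path being $\mu$. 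By the definition of $\gamma_{\lambda,\mu}$ and of $\nu_{\lambda,\mu}$, and by Corollary \ref{corcondi}, one has $\gamma_{\lambda,\mu}(dx)=\tfrac{h(x)}{h(\lambda)}\nu_{\lambda,\mu}(dx)$ where $\nu_{\lambda,\mu}$ is the image of the uniform measure on $M_{\bf i}^{\lambda,\mu}$. The key point is then to recognize that integrating out the extra constraints $\Psi'_\alpha(x)\le\alpha^\vee(\lambda)$ in $M_{\bf i}^{\lambda,\mu}$ against the $h$-weight, and comparing with the unconstrained measure on $M_{\bf i}^\mu$, reproduces exactly the sum $\tfrac{1}{h(\mu)}\sum_{w\in W}h(wv)f_\lambda(wv-\mu)$: the reflections $w\in W$ arise precisely from the reflection-principle expansion already used in Lemma \ref{meander}, each term $f_\lambda(wv-\mu)$ being the Duistermaat-Heckman density evaluated at the ``unfolded'' increment $wv-\mu$, and $h(wv)$ being the harmonic weight carried by the $h$-process. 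Making this identification rigorous is what I expect to be the main obstacle: one must carefully track how the killing/conditioning on $\{\lambda+\eta(s)\in C\}$ interacts with the string parametrization, essentially re-running the $\xi\to0$ limiting argument of Section \ref{pf-punif} but now with two successive dominant-path increments $\lambda$ and $\mu$, so that the relevant transition density is $q_t$ rather than $p_t$, producing the alternating sum over $W$.

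Once (ii) is established, (i) is immediate: by \eqref{eqJl2}, $J_\lambda(z)=\int_V e^{\langle z,v\rangle}f_\lambda(v)\,dv$, and the probabilistic content of Theorem \ref{the_cond} is exactly that the conditional law $\gamma_{\lambda,\mu}$ describes the composition of two steps of the $(\phi_\delta/h)$-transformed semigroup of Brownian motion conditioned to stay in $C$, as in the proof of Theorem \ref{p-2}. Concretely, $K e_\delta=\phi_\delta/(c(\delta)h)$ with $c(\delta)=k^{-1}h(\delta)$ gives $J_\lambda(z)=\int_V e^{\langle z,v\rangle}\mu_{DH}^\lambda(dv)$, and the Chapman--Kolmogorov identity $KP_tKP_s$-type composition translates, after dividing through by the harmonic functions, into
\[
J_\lambda(z)J_\mu(z)=\int_C J_v(z)\,\frac{h(v)}{h(\lambda)}\nu_{\lambda,\mu}(dv)=\int_C J_v(z)f_{\lambda,\mu}(v)\,dv,
\]
using (ii) for the last equality. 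The positivity $f_{\lambda,\mu}(v)\ge0$ then follows because $\gamma_{\lambda,\mu}$ is, by Theorem \ref{the_cond}, an honest conditional probability distribution, hence a positive measure; and plugging $z=0$ (using $J_v(0)=1$) shows it is a probability measure, as claimed in the remark preceding the theorem.

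I would organize the write-up as: (1) recall from the definitions that $\gamma_{\lambda,\mu}=\tfrac{h}{h(\lambda)}\nu_{\lambda,\mu}$ and that $\mu_{DH}^\lambda$ has density $f_\lambda$; (2) use the reflection-principle / $h$-process computation to express the law of $\PP_{w_0}\eta(s+t)$ conditioned as in Theorem \ref{the_cond} in the ``folded'' form $\tfrac{1}{h(\mu)}\sum_{w\in W}h(wv)f_\lambda(wv-\mu)\,dv$, giving (ii); (3) take Laplace transforms, invoke Theorem \ref{p-2} and \eqref{eqJl1}--\eqref{eqJl2}, and read off (i); (4) note positivity and total mass $1$. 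The delicate bookkeeping in step (2) — in particular justifying that the boundary constraints $\Psi'_\alpha(x)\le\alpha^\vee(\lambda)$ exactly correspond to the event $\{\lambda+\eta(s)\in C\}$ and that the alternating sum over $W$ emerges with the correct signs and harmonic weights — is the heart of the argument, and it parallels, but is slightly more involved than, the corresponding step in the proof of Theorem \ref{p-unif}.
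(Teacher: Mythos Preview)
Your plan reverses the order of the paper's proof and, in doing so, trades an easy step for a hard one that you do not actually carry out.

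In the paper, part (i) is proved \emph{first}, by a short purely analytic computation: starting from $J_\lambda(z)J_\mu(z)=\int_V e^{\langle z,v\rangle}J_\mu(z)f_\lambda(v)\,dv$, one expands $J_\mu$ via \eqref{eqJl1}, uses the $W$-invariance $f_\lambda(wv)=f_\lambda(v)$ of the Duistermaat--Heckman density, shifts variables, and decomposes $V=\bigcup_{w}w^{-1}C$ to fold the integral into $\int_C J_v(z)f_{\lambda,\mu}(v)\,dv$. No polytopes, no reflection principle, no Brownian motion are needed for (i).

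Part (ii) is then obtained by comparing (i) with a second, probabilistic, product formula. Using the independence of Brownian increments and Theorem~\ref{p-2}, one gets $\E(e^{\langle z,\eta(s+t)\rangle}\mid\mathcal G_{s,t})=J_\lambda(z)J_\mu(z)$; conditioning through $\sigma(\PP_{w_0}\eta(r),r\le s+t)$ and applying Theorem~\ref{the_cond} gives the same expectation as $\int_C J_v(z)\,d\gamma_{\lambda,\mu}(v)$. Equating with (i) and invoking injectivity of the Laplace transform yields $\gamma_{\lambda,\mu}(dv)=f_{\lambda,\mu}(v)\,dv$.

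Your step (2) --- proving $\gamma_{\lambda,\mu}(dv)=f_{\lambda,\mu}(v)\,dv$ directly by ``tracking how the killing/conditioning interacts with the string parametrization'' and letting the alternating sum over $W$ ``emerge'' --- is precisely what the paper avoids. You acknowledge this is the ``main obstacle'' and the ``heart of the argument'', but you do not give a mechanism for it: $\nu_{\lambda,\mu}$ is the pushforward of Lebesgue measure on the polytope $M_{\bf i}^{\lambda,\mu}$, and there is no evident reflection-principle identity that converts this piecewise-polynomial density into the folded sum $\tfrac{1}{h(\mu)}\sum_w h(wv)f_\lambda(wv-\mu)$ without passing through Laplace transforms. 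Your step (3), by contrast, is essentially the paper's probabilistic argument --- but the paper uses it in the opposite direction, to deduce (ii) from the already-established (i), rather than (i) from (ii).

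In short: prove (i) first by the direct algebraic manipulation above; then your probabilistic identity $J_\lambda J_\mu=\int J_v\,d\gamma_{\lambda,\mu}$ combined with (i) and injectivity gives (ii). The ``delicate bookkeeping'' you anticipate is then unnecessary.
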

 \proof
 The first part is given by the following computation,  similar to the one in Dooley et al \cite{Dooley}, we give it for the convenience of the reader.
 It follows from (\ref{eqJl1}) and  (\ref{eqJl2}) that
$$J_\lambda(z)J_\mu(z)=  \int_V e^{\langle z,v\rangle } J_\mu(z)f_\lambda(v)\, dv=k \sum_W \varepsilon(w)\int_V \frac{ e^{\langle z, w\mu+v \rangle}}{h(\mu)h(z)}\, f_\lambda(v)\, dv.$$
Using the invariance of the measure $\mu^\lambda_{DH}$ under $W$, 
 $f_\lambda(wv)=f_\lambda(v)$ for $w \in W$. One has
\begin{eqnarray*}J_\lambda(z)J_\mu(z)
&=&k \sum_W \varepsilon(w)\int_V \frac{ e^{\langle z, w(\mu+v) \rangle}}{h(\mu)h(z)} f_\lambda(v)\, dv\\
&=&k \sum_W \varepsilon(w)\int_V \frac{ e^{\langle z, wv \rangle}}{h(\mu)h(z)} f_\lambda(v-\mu)\, dv\\
&=&\frac{1}{h(\mu)} \int_V J_v(z) h(x)f_\lambda(v-\mu)\, dv\\
&=&\frac{1}{h(\mu)} \sum_{w\in W}\int_{w^{-1}C}  J_v(z) h(v)f_\lambda(v-\mu)\, dv\\
&=&\frac{1}{h(\mu)} \sum_{w\in W}\int_{C}  J_v(z) h(wv)f_\lambda(wv-\mu)\, dv\\
&=&\int_{C}  J_z(v) f_{\lambda,\mu}(v)\, dv\end{eqnarray*}
where we have used that, up to  a set of measure zero,
$V=\cup_{w\in W}w^{-1}C$. This proves {\it (i)}.

Let us now prove {\it (ii)}, using theorem \ref{the_cond}.
 Since $\eta$ is a standard Brownian motion in $V$, 
 $\{\eta(r), r \leq s\}$ and $ \tau_s\eta$ are independent, hence, for   $z \in V^*$,
\begin{eqnarray*}\E(e^{\langle z, \eta(s+t) \rangle})|{\mathcal  G}_{s,t})&=&\E(e^{\langle z, \eta(s) \rangle}e^{\langle z, \tau_s\eta(t)\rangle}|{\mathcal  G}_{s,t})\\
&=& \E(e^{\langle z, \eta(s) \rangle}| \sigma(\mathcal P_{w_0} \eta(a), a\le s))\E(e^{\langle z, \tau_s\eta(t)\rangle}|\sigma(\mathcal P_{w_0} \tau_s\eta(b), b\le t)).\end{eqnarray*}
By  theorem \ref{p-2},
$$J_\lambda(z)= \E(e^{\langle z, \eta(s) \rangle}| \sigma(\mathcal P_{w_0} \eta(a), a\le s) $$when 
 $\mathcal P_{w_0} \eta(s)=\lambda$ and, since $ \tau_s\eta$ and $\eta$ have the same law,
$$J_\mu(z)=\E(e^{\langle z, \tau_s\eta(t)\rangle}|\sigma(\mathcal P_{w_0} \tau_s\eta(b), b\le t))$$ when $\mathcal P_{w_0} \tau_s\eta(t)=\mu $.
 Therefore
$$
\E(e^{\langle z, \eta(s+t) \rangle}|{\mathcal  G}_{s,t})=J_\lambda(z)J_\mu(z).
$$
On the other hand, by lemma \ref{lem:Px},
 ${\mathcal  G}_{s,t}$ is contained in $\sigma(\mathcal P_{w_0} \eta(r), r\le s+t)$, thus
$$ \E(e^{\langle z, \eta(s+t) \rangle}|{\mathcal  G}_{s,t})= 
\E
(\E(e^{\langle z, \eta(s+t) \rangle}|\sigma(\mathcal P_{w_0} \eta(r), r\le s+t))
|{\mathcal  G}_{s,t})$$
$$= \E(J_z(\mathcal P_{w_0} \eta( s+t))|{\mathcal  G}_{s,t}).$$
It thus follows from theorem \ref{the_cond} that 
$$J_\lambda(z)J_\mu(z)=\int J_v(z) \, d\gamma_{\lambda,\mu}(v).$$
Therefore, for all $z \in V^*, $
$$\int J_v(z) \, f_{\lambda,\mu}(v)\, dv=\int J_v(z) \, d\gamma_{\lambda,\mu}(v).$$
By injectivity of  the Fourier-Laplace transform this implies that 
$$d\gamma_{\lambda,\mu}(v)=f_{\lambda,\mu}(v)\, dv. \qquad \square$$

The positive product formula gives a positive answer to a question of 
R\"osler \cite{roesler2} for the radial Dunkl kernel. 
It shows that one can generalize the structure of Bessel-Kingman hypergroup 
to any Weyl chamber,  for the so called geometric parameter.

\section{Littelmann modules and geometric lifting.}
\subsection{}It was observed some time ago by Lusztig that the combinatorics of the canonical basis
is closely related to the geometry of the totally positive varieties. This connection was made precise by 
Berenstein and Zelevinsky in \cite{beze2}, in terms of transformations  called
 "tropicalization" and "geometric lifting".  In this section we show how some simple considerations 
on Sturm-Liouville equations lead to a natural way of lifting Littelmann paths, which take values in a Cartan algebra,  
to the corresponding Borel group. Using this lift, an application of Laplace's method explains the connection between the canonical basis
and the totally positive varieties.

This section is organized as follows.
We  first recall the notions of tropicalization and geometric lifting in the next subsection, as well as the connection between the totally positive varieties and the canonical basis. Then we make some observations on Sturm-Liouville equations and their relation to Pitman transformations and the Littelmann path model in type $A_1$. 
We extend  these observations to higher rank in the next subsections then we show, in theorem \ref{the:string}
 how they explain  the link between string parametrization of the canonical basis and the totally positive varieties.
\subsection{Tropicalization and geometric lifting}
A subtraction free rational expression is a rational function in several variables, with positive real coefficients and without minus sign, e.g.
$$t_1+2t_2 /t_3,(1-t^3)/(1-t)\ \text{or}\ 1/(t_1t_2+3t_3t_4)$$ are such expressions, but not $t_1-t_2$.
Any such expression $F(t_1,\ldots,t_n)$ can be tropicalized, which means that 
$$F_{trop}(x_1,,\ldots,x_n)=\lim_{\varepsilon\to 0_+}\varepsilon\log(F(e^{x_1/\varepsilon},\ldots,
e^{x_n/\varepsilon}))$$
exists as a piecewise linear function of the real variables $(x_1,\ldots,x_n)$, and is given by an expression in the maxplus algebra over the variables $x_1,\ldots,x_n$. More precisely, 
the tropicalization $F\to F_{trop}$
 replaces each occurence of $+$ by $\vee$ (the $\max$ 
 sign $x\vee y=\max(x,y)$), each product by a $+$, 
and each fraction  by a $-$, and each positive real number by 0. For example 
the three expressions above give
$$(t_1+2t_2/t_3)_{trop}=x_1\vee(x_2 -x_3),((1-x^3)/(1-x))_{trop}= 0\vee x\vee 2x, $$
and $$
 (1/(t_1t_2+3t_3t_4))_{trop}=-\left((x_1+x_2)\vee(x_3+x_4)\right).$$
Tropicalization is not a one to one transformation, and there exists in general many 
subtraction free rational expressions which have the same tropicalization. Given some expression $G$ in the maxplus algebra, any
 subtraction free rational expression  
 whose tropicalization is $G$ is called a geometric 
 lifting of $G$, cf \cite{beze2}.

\subsection{Double Bruhat cells and string coordinates}\label{DBCSC}
We recall some  
standard terminology, using the notations of \cite{beze2}. We
consider
a simply connected complex semisimple Lie group $G$, associated with a  
root system $R$.
Let $H$ be a maximal torus, and $B,B_-$ be corresponding
opposite Borel subgroups with unipotent
radicals $N,N_-$.
Let $\alpha_i,i\in I,$ and $\alpha_i^{\vee},i\in I,$ be the
simple positive roots and coroots,  and $s_i$ the
corresponding reflections
in the Weyl group $W$.
Let $e_i,f_i, h_i,i\in I,$ be  Chevalley
generators of the Lie algebra of $G$. 
One can  choose
representatives $\overline w\in G$ for  $w\in W$ by putting
$\overline{s_i}=\exp(-e_i)\exp(f_i)\exp(-e_i)$ and
$\overline{vw}=\overline v\,\overline w$ if
$l(v)+l(w)=l(vw)$ (see \cite{foze} (1.8), (1.9)).  The Lie algebra of  
$H$, denoted by
     $\mathfrak h$ has a Cartan decomposition $\mathfrak
    h=\mathfrak a+i\mathfrak a$ such that the roots
     $\alpha_i$ take real values on the real vector space $\mathfrak
    a$. Thus $\mathfrak a$ is generated by $\alpha_i^{\vee}, i\in I$ and  
its dual $\mathfrak a^*$ by $\alpha_i, i\in I$.

A double Bruhat cell is associated with each pair $u,v\in W$ as
$$L^{u,v}=N{\bar u}N\cap B_-\bar vB_-.$$
We will be mainly interested here in the double Bruhat cells $L^{w,e}$. 
As shown in \cite{beze2},
given a reduced decomposition $w=s_{i_1}\ldots s_{i_q}$ 
every element $g\in L^{w,e}$ has a unique decomposition
$g=x_{-i_1}(r_1)\ldots x_{-i_q}(r_q)$ with non zero complex numbers
$(r_1,\ldots, r_q)$, where $x_{-i}(s)=\varphi_i\begin{pmatrix}s&0\\
1&s^{-1}\end{pmatrix}$ (where $\varphi_i$ is the embedding of $SL_2$ into $G$
given by $e_i,f_i,h_i$). The totally positive part of the double Bruhat cell corresponds to the set of 
elements with positive real
 coordinates.
For two different reduced decompositions,
the transition map between two sets of coordinates of the form
$(r_1,\ldots, r_q)$ is given by a subtraction free rational map, which is therefore 
 subject to tropicalization.

As a simple example consider the case of type
$A_2$. Let the coordinates on the double Bruhat cell $L^{w_0,e}$ for the reduced
decompositions $w_0 =s_1s_2s_1$, and 
 $w_0=s_2s_1s_2$ be $(u_1,u_2,u_3)$ and $(t_1,t_2,t_3)$ respectively, 
 then 
\begin{equation}
\begin{pmatrix}t_2&0&0\\t_1&t_1t_3/t_2&0\\1&t_3/t_2+1/t_1&1/t_1t_3\end{pmatrix}=
\begin{pmatrix}u_1u_3&0&0\\u_3+u_2/u_1&u_2/u_1u_3&0\\1&1/u_3&1/u_2\end{pmatrix}
\end{equation}
which yields transition maps
$$
\begin{array}{rcl}\label{tropi} t_1&=&u_3+u_2/u_1\\ t_2&=&u_1u_3\\ t_3&=&u_1u_2/(u_2+u_1u_3).\end{array}
$$

On the other hand, for each reduced expression $w_0=s_{i_1}\ldots s_{i_q}$ we can consider the
parametrization of the canonical basis by means of string coordinates. For any two such reduced decompositions, the transition maps between the two sets of string coordinates are given by 
piecewise linear expressions. As shown by Berenstein and Zelevinsky,  these expressions are the tropicalizations of the transition maps between the two parametrizations of the  double Bruhat cell $L^{w_0,e}$, associated  to the Langlands dual group.
For example, in type $A_2$ (which is its own Langlands dual)
let $(x_1,x_2,x_3)$ be the Kashiwara, or string, coordinates of the canonical basis, 
using the reduced 
decomposition $w_0 =s_1s_2s_1$, and $(y_1,y_2,y_3)$ the ones corresponding
to $w_0=s_2s_1s_2$. 
The transition map between the two is given by
$$\begin{array}{rcl} y_1&=&x_3\vee(x_2-x_1)\\ y_2&=&x_1+x_3\\ y_3&=&x_1\wedge (x_2-x_3)\end{array}$$
which is the tropicalization of (\ref{tropi}).

We will show how some elementary considerations on the Sturm-Liouville
equation, and the method of variation of constants, together with the
Littelmann path model explain these connections.
\subsection{Sturm-Liouville equations}
We consider the  Sturm-Liouville equation
\begin{equation}\label{ Sturm_L}\varphi''+q\varphi=\lambda \varphi
\end{equation}
on some  interval of the real line, say $[0,T]$ to fix notations.
In general there exists no closed form for the solution to such an equation.
 However, if one solution $\varphi_0$ is known,
  which does not vanish in the interval then all the 
  solutions can be found by quadrature. Indeed 
using for example the "method of variation of constants" 
 one sees that every
other solution $\varphi$ 
of this equation in the same interval can be written in the form
$$\varphi(t)=u\varphi_0(t)+v\varphi_0(t)\int_0^t\frac{1}{\varphi_0^2(s)}ds$$
for some constants $u,v$.
If this new solution does not vanish in the
interval $I$,  we can use it to generate other solutions of the equation by
the same kind of formula.
This leads us to investigate the composition  of two maps of the form 
 $$E_{u,v}:\varphi\mapsto
 u\varphi(t)+v\varphi(t)\int_0^t\frac{1}{\varphi^2(s)}ds\label{action}$$
 acting on non vanishing continuous functions.
 It is easy to see, using integration by parts,  that whenever 
 the composition is well defined, one has
 $$E_{u,v}\circ E_{u',v'}=E_{uu',uv'+v/u'}$$ therefore these maps define a 
 partial  right
 action of   the group of unimodular 
 lower triangular matrices 
 $$\begin{pmatrix}u&0\\v&u^{-1}\end{pmatrix}$$
 on the set of continuous paths which do not vanish in $I$.  
 Of course this is equivalently a partial left action of the upper triangular
 group, but for reasons which will soon appear we choose this formulation.
 In particular if we start from $\varphi$ and construct 
 $$\psi(t)=u\varphi(t)+v\varphi(t)\int_0^t\frac{1}{\varphi^2(s)}ds$$ which does not
 vanish on $[0,T]$, then $\varphi$ can be recovered from $\psi$ by the formula
 $$\varphi(t)=u^{-1}\psi(t)-v\psi(t)\int_0^t\frac{1}{\psi^2(s)}ds.$$
 Coming back to the Sturm-Liouville equation, let $\eta,\rho$ be a fundamental
 basis of solutions at 0, namely $\eta(0)=\rho'(0)=1$, $\eta'(0)=\rho(0)=0$.
 Then in the two-dimensional space spanned by $\rho,\eta$ the transformation is
 given by $$(x,y)\mapsto (ux,uy+v/x)$$ and it is defined on $x\ne 0$. 
 Again it is easy to check, using
 this formula,
 that this defines a right action of the lower triangular group.
 
  Let us now investigate the limiting case $u=0$, which gives (assuming $v=1$
  for simplicity)
  \begin{equation}\label{TPit}{\mathcal T}\varphi(t)=\varphi(t)\int_0^t\frac{ds}{\varphi(s)^2}.\end{equation}
  This map provides a ``geometric lifting" of the one-dimensional
  Pitman transformation. Indeed 
 set $\varphi(t)=e^{a(t)}$, then using Laplace's method
  \begin{equation}\lim_{\varepsilon\to 0_+}
  \varepsilon\log\left(e^{a(t)/\varepsilon}\int_0^t
  e^{-2a(s)/\varepsilon}ds\right)=a(t)-2\inf_{0\leq s\leq t}a(s).\end{equation}
  This time the function $\varphi$ cannot be recovered from ${\mathcal T}\varphi$.
  If we
  compute the same transformation with 
$\varphi_v(t):=\varphi(t)(1+v\int_0^t\frac{1}{\varphi(s)^2}ds)$ we get
$$
\begin{array}{rcl}{\mathcal T}\varphi_v(t)&=&\varphi_v(t)
\int_0^t\frac{1}{\varphi_v(s)^2}ds\\
&=&\varphi(t)(1+v\int_0^t\frac{1}{\varphi(s)^2}ds)\left(\frac{1}{
v}-\frac{1}{
v(1+v\int_0^t\frac{1}{\varphi(s)^2}ds)}\right)\\
&=&\varphi(t)\int_0^t\frac{1}{\varphi(s)^2}ds\\
&=&{\mathcal T}\varphi(t).
\end{array}
$$
This is of course not surprising, since 
${\mathcal T}\varphi$ vanishes at 0, it thus belongs to
a one-dimensional subspace of the space of solutions to the Sturm-Liouville
equation, and ${\mathcal T}$ is not invertible.
In order to recover the function $\varphi$ from $\psi={\mathcal T}\varphi$ we thus need to 
specify some real number. A convenient choice is to impose the value of 
$$\xi=\int_0^T\frac{1}{\varphi(s)^2}ds=
\frac{\psi(T)}{\varphi(T)}.$$
With this we can compute
$$\int_t^T\frac{1}{\psi(s)^2}ds=
\frac{1}{\int_0^t\frac{1}{\varphi(s)^2}ds}-
\frac{1}{\int_0^T\frac{1}{\varphi(s)^2}ds}=
\frac{\varphi(t)}{\psi(t)}-\frac{1}{\xi}.$$
\begin{prop}Assume that $\psi={\mathcal T}\varphi$ for some nonvanishing
$\varphi$, then the set ${\mathcal T}^{-1}(\psi)$ can be parametrized by
 $\xi\in]0,+\infty[$.
For each such $\xi$ there exists a unique $\varphi_\xi\in{\mathcal T}^{-1}(\psi)$
such that $\xi=\int_0^T\frac{1}{\varphi_\xi(s)^2}ds$, given by 
$$\varphi_\xi(t)=\psi(t)\left(\frac{1}{\xi}+
\int_t^T\frac{1}{\psi(s)^2}ds\right).$$
\end{prop}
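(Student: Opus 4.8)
The plan is to reduce everything to an elementary computation with the first-order relation defining $\mathcal T$. First I would normalize the sign: since $\varphi$ is continuous and nowhere vanishing on the connected set $[0,T]$ it keeps a constant sign, so after replacing $(\varphi,\psi)$ by $(-\varphi,-\psi)$ if needed I may assume $\varphi>0$; then $\psi(t)=\varphi(t)\int_0^t\varphi(s)^{-2}\,ds$ is $>0$ on $]0,T]$ and vanishes only at $t=0$. The argument then has two halves: a uniqueness half showing that every $\mathcal T$-preimage of $\psi$ is forced into the stated shape (which simultaneously gives injectivity of $\xi\mapsto\varphi_\xi$ and the uniqueness claim), and an existence half showing that for each $\xi>0$ the stated formula genuinely produces a nonvanishing preimage with $\int_0^T\varphi_\xi(s)^{-2}\,ds=\xi$.

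For the uniqueness half, I would take any nowhere-vanishing $\tilde\varphi$ with $\mathcal T\tilde\varphi=\psi$ and set $h(t)=\int_0^t\tilde\varphi(s)^{-2}\,ds$, so $h$ is strictly increasing, $h(0)=0$, and $\psi=\tilde\varphi\,h$. Differentiating gives $h'=\tilde\varphi^{-2}=h^2/\psi^2$ on $]0,T]$, i.e.\ $\bigl(1/h\bigr)'=-1/\psi^2$; integrating from $t$ to $T$ and writing $\xi=h(T)=\int_0^T\tilde\varphi(s)^{-2}\,ds\in\,]0,\infty[$ yields $1/h(t)=1/\xi+\int_t^T\psi(s)^{-2}\,ds$, hence $\tilde\varphi(t)=\psi(t)/h(t)=\psi(t)\bigl(1/\xi+\int_t^T\psi(s)^{-2}\,ds\bigr)$ for $t\in\,]0,T]$. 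In particular $\tilde\varphi$ is determined by the single number $\xi$.

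For the existence half, given $\xi\in\,]0,\infty[$ I would not manipulate the singular-looking formula directly but rather, using the fixed known preimage $\varphi$, set $\xi_0=\int_0^T\varphi(s)^{-2}\,ds$, $v=1/\xi-1/\xi_0$, and $\varphi_\xi:=\varphi\bigl(1+v\int_0^\cdot\varphi(s)^{-2}\,ds\bigr)=\varphi+v\psi$ on $[0,T]$. The factor $1+v\int_0^t\varphi^{-2}$ is monotone in $t$, equal to $1$ at $t=0$ and to $\xi_0/\xi$ at $t=T$, so it is bounded below by $\min(1,\xi_0/\xi)>0$ and $\varphi_\xi$ is continuous and nowhere zero. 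The identity $\mathcal T\varphi_v=\mathcal T\varphi$ established just before the statement (applicable since $\varphi_\xi$ is nonvanishing) then gives $\mathcal T\varphi_\xi=\psi$, while the substitution $u=\int_0^t\varphi(s)^{-2}\,ds$ gives $\int_0^T\varphi_\xi(s)^{-2}\,ds=\int_0^{\xi_0}(1+vu)^{-2}\,du=\xi_0/(1+v\xi_0)=\xi$. Finally, to check that this $\varphi_\xi$ coincides with the formula in the statement I would apply the uniqueness computation to $\tilde\varphi=\varphi$ itself, obtaining $\varphi(t)=\psi(t)\bigl(1/\xi_0+\int_t^T\psi(s)^{-2}\,ds\bigr)$, and substitute into $\varphi_\xi=\varphi+v\psi$.

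The step I expect to require the most care is the behaviour at the endpoint $t=0$. Near $0$ one has $\psi(s)\sim s/\varphi(0)$, so $\int_0^{\varepsilon}\psi(s)^{-2}\,ds=+\infty$; this divergence is precisely what forces $1/h(0)=0$, so that the integration in the uniqueness half acquires no boundary term at $0$, and it is also why the expression $\psi(t)\bigl(1/\xi+\int_t^T\psi(s)^{-2}\,ds\bigr)$, an indeterminate product $0\cdot\infty$ at $t=0$, nonetheless extends continuously across $0$ --- a fact made transparent only after rewriting it as $\varphi+v\psi$. Checking this (together with $\psi>0$ on $]0,T]$) is the one genuinely careful point; the rest is routine calculus.
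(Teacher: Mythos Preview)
Your proposal is correct and follows essentially the same route as the paper. The paper's argument, given in the paragraph immediately preceding the proposition, is precisely your uniqueness computation: from $\psi=\tilde\varphi h$ with $h(t)=\int_0^t\tilde\varphi^{-2}$ one has $\psi^{-2}=h'h^{-2}=-(1/h)'$, hence $\int_t^T\psi^{-2}=1/h(t)-1/\xi$, which rearranges to the stated formula; your existence half via the one-parameter family $\varphi_v$ and the identity $\mathcal T\varphi_v=\mathcal T\varphi$ is exactly what the paper sets up just before the statement, and your explicit treatment of the sign and of the endpoint $t=0$ fills in details the paper leaves to the reader.
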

Identifying the positive halfline with the Weyl chamber for $SL_2$, we see that
sets of the form ${\mathcal T}^{-1}(\psi)$ are geometric liftings of 
the Littelmann modules for $SL_2$.
 The formula in the proposition gives a geometric
lifting of the operator ${\HH}^x$ since 
$${\HH}^xa(t)=a(t)-x\wedge2\inf_{t\leq s\leq T}a(s)=\lim_{\varepsilon\to0_+}
\varepsilon\log\left(e^{a(t)/\varepsilon}(e^{-x/\varepsilon}+\int_t^Te^{-2a(s)/\varepsilon}ds)\right).
$$

We shall now find the geometric liftings of the
Littelmann operators. For this we have, knowing an element $\varphi_{\xi_1}\in
{\mathcal T}^{-1}(\psi)$, to find the solution corresponding to $\xi_2$. 
Since 
$$\varphi_{\xi_i}(t)=\psi(t)\left(\frac{1}{\xi_i}+
\int_t^T\frac{1}{\psi(s)^2}ds\right)\qquad i=1,2$$
one has
$$\varphi_{\xi_1}=\varphi_{\xi_2}+\psi(\frac{1}{\xi_1}-\frac{1}{\xi_2})
=\varphi_{\xi_2}\left(1+(\frac{1}{\xi_1}-
\frac{1}{\int_0^T\frac{1}{\varphi_{\xi_2}(s)^2}ds}
)\int_0^t\frac{1}{\varphi_{\xi_2}(s)^2}ds\right).$$
Using Laplace
method again one can
 recover the formula for the operators ${\mathcal E}^x_{\alpha}$,
see definition \ref{littelmanntransform}.
\subsection{A $2\times 2$ matrix interpretation}

We shall now recast the above computations using a $2\times 2$ matrix
differential equation of order one,
and the Gauss decomposition of matrices. This will allow us 
in the next section to extend these
constructions to higher rank groups.

Let $N_+$ be the
nilpotent group of  upper triangular invertible $2\times 2$ matrices, let $N_-$ be the
corresponding group of lower triangular matrices, and $A$ the group of diagonal
matrices, then  an
invertible
$2\times 2$ matrix $g$ has a Gauss decomposition if it can be written as
$g=[g]_-[g]_0[g]_+$ with $[g]_-\in N_-, [g]_0\in A$ and $[g]_+\in N_+$.
  We will use also the decomposition
$g=[g]_-[g]_{0+}$ with $[g]_{0+}=[g]_0[g]_+\in B=AN_+$.
The condition for such a decomposition to exist
is exactly that the upper left coefficient of the matrix $g$ be non zero.

Let  us consider a smooth path $a:[0,T]\to{\R}$, such that $a(0)=0$,
 and let  the matrix  $b(t)$ be the solution to
\begin{equation}
\label{eqdiff}\frac{db}{dt}=\begin{pmatrix}\frac{da}{dt}&1
\\0&-\frac{da}{dt}\end{pmatrix}b;\qquad b(0)=Id.
\end{equation}
Then one has
$$b(t)=\begin{pmatrix}e^{a(t)}&e^{a(t)}\int_0^te^{-2a(s)}ds\\
0&e^{-a(t)}\end{pmatrix}.$$
Now let $g=\begin{pmatrix}u&0\\
v&u^{-1}\end{pmatrix}$ and consider the Gauss decomposition of the matrix
$$bg=\begin{pmatrix}ue^{a(t)}+ve^{a(t)}\int_0^te^{-2a(s)}ds
&u^{-1}e^{a(t)}\int_0^te^{-2a(s)}ds\\ ve^{-a(t)}& u^{-1}e^{-a(t)}\end{pmatrix}.$$
One finds that
$$[bg]_-=\begin{pmatrix}1&0\\\frac{ve^{-a(t)}}
{ue^{a(t)}+ve^{a(t)}\int_0^te^{-2a(s)}ds}&
1\end{pmatrix}$$
and 
$$[bg]_{0+}=\begin{pmatrix}ue^{a(t)}+ve^{a(t)}\int_0^te^{-2a(s)}ds &u^{-1}e^{a(t)}\int_0^te^{-2a(s)}ds\\0&
(ue^{a(t)}+ve^{a(t)}\int_0^te^{-2a(s)}ds)^{-1}\end{pmatrix}.$$
One can check the following proposition.
\begin{prop}
The upper triangular matrix $[bg]_{0+}$  satisfies the 
differential equation
$$\frac{d}{dt}[bg]_{0+}=\begin{pmatrix}\frac{d}{dt}T_{u,v}a(t)&1\\0&-\frac{d}{
dt}T_{u,v}a(t)\end{pmatrix}[bg]_{0+}$$
where $T_{u,v}a(t)=\log (E_{u,v}e^{a(t)})$.
\end{prop}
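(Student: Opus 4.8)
The plan is to use the first-order linear differential equation (\ref{eqdiff}) directly rather than differentiating the closed forms of $[bg]_{0+}$. Since $g$ is constant, the curve $bg$ solves $\frac{d}{dt}(bg)=A(t)\,bg$ with $A(t)=\begin{pmatrix}a'(t)&1\\0&-a'(t)\end{pmatrix}$. On the interval where the upper left entry $P(t):=ue^{a(t)}+ve^{a(t)}\int_0^te^{-2a(s)}ds$ of $bg$ does not vanish, the Gauss decomposition $bg=n_-\,p$ exists with $n_-=[bg]_-\in N_-$ and $p=[bg]_{0+}\in B$, and both are smooth. From $p=n_-^{-1}bg$ one gets
$$\frac{dp}{dt}\,p^{-1} = n_-^{-1}A(t)\,n_- - n_-^{-1}\frac{dn_-}{dt},$$
so the whole task is to identify the right-hand side, which a priori is just some smooth curve in the matrix algebra.

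The key point is structural: $p(t)$ takes values in $B$ for all $t$ in the interval, hence $\frac{dp}{dt}\,p^{-1}$ lies in the Lie algebra of $B$, i.e.\ is upper triangular; and since $\det(bg)\equiv1$ forces $\det p\equiv1$, we may write $\frac{dp}{dt}\,p^{-1}=\begin{pmatrix}c(t)&d(t)\\0&-c(t)\end{pmatrix}$. I then read off $c$ and $d$ from the explicit form $p=\begin{pmatrix}P&Q\\0&P^{-1}\end{pmatrix}$, with $P$ as above and $Q(t)=u^{-1}e^{a(t)}\int_0^te^{-2a(s)}ds$, already computed in the excerpt. The $(1,1)$ entry of $\frac{dp}{dt}\,p^{-1}$ is $P'/P=(\log P)'$, and since $P=E_{u,v}e^{a}$ by the very definition of $E_{u,v}$, this equals $\frac{d}{dt}\log\bigl(E_{u,v}e^{a(t)}\bigr)=\frac{d}{dt}T_{u,v}a(t)$, which is the claimed diagonal entry.

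It remains to show $d\equiv1$, i.e.\ that the $(1,2)$ entry $Q'P-P'Q$ equals $1$. Setting $w(t)=\int_0^te^{-2a(s)}ds$ we have $Q/P=u^{-1}w/(u+vw)$, hence $\frac{d}{dt}(Q/P)=e^{-2a}/(u+vw)^2$; multiplying by $P^2=e^{2a}(u+vw)^2$ yields $Q'P-P'Q=P^2\frac{d}{dt}(Q/P)=1$, which finishes the proof. There is no real obstacle here: the argument is a short computation, and the only points needing care are to restrict to the interval where $P(t)\neq0$ (so that the Gauss decomposition, and hence $p$, is defined and, for the logarithm, $P>0$), and to recognize the Wronskian-type identity $Q'P-P'Q\equiv1$, which becomes transparent after the substitution $w$. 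The same mechanism — a linear ODE, its Gauss factorization, and the observation that the Borel part again satisfies an ODE of the same shape whose drift is a geometric lifting of the Pitman/Littelmann operator — is exactly what will be invoked in the higher-rank computations of the subsequent subsections.
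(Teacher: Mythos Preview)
Your proof is correct. The paper itself offers no argument for the $2\times2$ case beyond ``one can check'', so your treatment is more detailed than the original; the structural step you use (that $p'p^{-1}$ lies in the Lie algebra of $B$) is exactly the mechanism the paper invokes for the general result in the next subsection.

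One small remark: your Wronskian computation showing $d\equiv1$ is perfectly valid, but you could also have read it off directly from the identity $p'p^{-1}=n_-^{-1}An_- - n_-^{-1}n_-'$ that you wrote down. Since $n_-\in N_-$ is unipotent lower triangular, conjugation by $n_-$ leaves the $(1,2)$ entry of $A$ unchanged (equal to $1$), and $n_-^{-1}n_-'$ is strictly lower triangular; hence the $(1,2)$ entry of $p'p^{-1}$ is $1$ with no further calculation. This is the $2\times2$ instance of Lemma~\ref{nei}, and is how the paper handles the off-diagonal part in higher rank.
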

This equation is of 
the same kind as the equation (\ref{eqdiff})
 satisfied by the original matrix $b$, but with a
different initial point. The right action 
$E_{u,v}$ is thus obtained by taking the matrix solution to (\ref{eqdiff}),
multiplying it on the right by $g=\begin{pmatrix}u&0\\v&u^{-1}\end{pmatrix}$
 and looking at the 
 diagonal part of the Gauss decomposition of the
resulting matrix.
Actually in this way the partial action $T_{u,v}$
 extends to a partial action $T_g$ of the whole group of invertible 
 real $2\times 2$ matrices. One starts from the path $a$, constructs the matrix
 $b$ by the differential equation and then takes the 0-part in the Gauss
 decomposition of $bg$. This yields a path $T_ga$. The statement of the
 proposition above remains true for $[bg]_{0+}$. The importance of this
 statement is that one can iterate the procedure and see that
 $T_{g_1g_2}=T_{g_2}\circ T_{g_1}$ when defined.

Consider now the element $s=\begin{pmatrix}0&-1\\1&0\end{pmatrix}$, then 
 $$T_sa(t)=a(t)+\log\left( \int_0^te^{-2a(s)}ds\right).$$
This is the geometric lifting of the Pitman operator obtained in (\ref{TPit}).
In the next section we shall extend these considerations to groups of
higher rank.
\subsection{Paths in the Cartan algebra}\label{path-cartan}
We work now in the general framework of the beginning of section \ref{DBCSC}.

 One has the usual decomposition  
${\mathfrak g}={\mathfrak n}_-+{\mathfrak a}+{\mathfrak n}_+$.
 Correspondingly  there is a Gauss decomposition $g=[g]_{-}[g]_0[g]_+$ with
 $[g]_-\in N_-,[g]_0\in A, [g]_+\in N$, defined
on an open dense subset. We denote by $[g]_{0+}=[g]_0[g]_+$ the $B=AN_+$
 part of
the decomposition.

The following is easy to check, and provides a useful characterization of 
 the vector space generated by the $e_i$.
\begin{lemma}\label{nei}Let  $n\in {\mathfrak n}_+$, then one has 
$[h^{-1}nh]_{+}=n$  for all $h\in N_-$
if and only if $n$ belongs to the vector space generated by the $e_i$.
\end{lemma}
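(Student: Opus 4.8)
The plan is first to unwind what the assertion means: for $h\in N_-$ and $n\in\mathfrak n_+$, the expression $h^{-1}nh$ is to be read as the adjoint action $\mathrm{Ad}(h^{-1})n\in\mathfrak g$, and $[\,\cdot\,]_+$ is the projection onto $\mathfrak n_+$ with respect to the vector space decomposition $\mathfrak g=\mathfrak n_-\oplus\mathfrak h\oplus\mathfrak n_+$. The structural tool I would use is the $\mathbb Z$-grading of $\mathfrak g$ by height: put $\mathfrak g^{(0)}=\mathfrak h$ and $\mathfrak g^{(d)}=\bigoplus_{\mathrm{ht}(\beta)=d}\mathfrak g_\beta$ for $d\neq 0$, so that $\mathfrak n_+=\bigoplus_{d>0}\mathfrak g^{(d)}$, $\mathfrak n_-=\bigoplus_{d<0}\mathfrak g^{(d)}$, one has $[\mathfrak g^{(d)},\mathfrak g^{(e)}]\subseteq\mathfrak g^{(d+e)}$, and the Chevalley generators $e_i$ span exactly $\mathfrak g^{(1)}$.

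\textbf{The ``if'' direction.} Assume $n=\sum_i c_i e_i\in\mathfrak g^{(1)}$. Since $\exp$ restricts to a bijection $\mathfrak n_-\to N_-$, write $h=\exp(X)$ with $X\in\mathfrak n_-$; then $\mathrm{Ad}(h^{-1})=e^{-\mathrm{ad}(X)}$, and $\mathrm{ad}(X)$ strictly lowers the grading because $X$ has only negative-degree components. Hence $\mathrm{Ad}(h^{-1})$ equals the identity plus an operator carrying each $\mathfrak g^{(d)}$ into $\bigoplus_{e<d}\mathfrak g^{(e)}$. Applied to $n\in\mathfrak g^{(1)}$ this gives $\mathrm{Ad}(h^{-1})n=n+(\text{terms of degree}\le 0)$, whose $\mathfrak n_+$-component, i.e. its part of degree $\ge 1$, is precisely $n$. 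By linearity, $[h^{-1}nh]_+=n$ for all $h\in N_-$.

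\textbf{The ``only if'' direction.} Here I would only need the one-parameter subgroups $h=\exp(tf_j)$, $j\in I$, $t\in\mathbb R$. Then $\mathrm{Ad}(h^{-1})n=e^{-t\,\mathrm{ad}(f_j)}n$, and since by hypothesis its $\mathfrak n_+$-component equals $n$ for every $t$, the coefficient of $t$ in that component must vanish, i.e. $\mathrm{ad}(f_j)(n)$ has zero $\mathfrak n_+$-part. Writing $n=\sum_{\beta\in R_+}n_\beta$ with $n_\beta\in\mathfrak g_\beta$, the bracket $[f_j,n_\beta]$ lies in $\mathfrak g_{\beta-\alpha_j}$, which is contained in $\mathfrak n_+$ exactly when $\beta-\alpha_j$ is a positive root; for such $\beta$ the map $\mathrm{ad}(f_j)\colon\mathfrak g_\beta\to\mathfrak g_{\beta-\alpha_j}$ is a nonzero linear map between one-dimensional spaces (standard $\mathfrak{sl}_2$-root-string theory), hence injective, so $n_\beta=0$. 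Thus $n_\beta=0$ whenever $\beta-\alpha_j$ is a positive root for some $j\in I$. Invoking the elementary fact that a positive root $\beta$ is non-simple if and only if $\beta=\alpha_j+\gamma$ for some simple $\alpha_j$ and some $\gamma\in R_+$ (if $\mathrm{ht}(\beta)\ge 2$ then $\langle\beta,\beta\rangle>0$ forces $\langle\beta,\alpha_j\rangle>0$ for some $j$ with positive coefficient in $\beta$, whence $\beta-\alpha_j\in R_+$), we conclude $n$ is supported on the simple root spaces, i.e. $n\in\sum_i\mathbb C e_i$.

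\textbf{Main obstacle.} There is no deep difficulty here; the only points requiring care are the correct identification of $[h^{-1}nh]_+$ with the graded projection of $\mathrm{Ad}(h^{-1})n$ (which is what makes the clean degree argument available in the ``if'' direction), and the two standard root-system inputs used at the end — injectivity of $\mathrm{ad}(f_j)$ along a root string, and the description of non-simple positive roots. Accordingly the written proof should be short.
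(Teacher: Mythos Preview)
Your argument is correct in both directions. The paper does not supply a proof of this lemma at all; it simply introduces it with the phrase ``The following is easy to check,'' so there is nothing to compare against beyond confirming that your approach is sound --- which it is: the height grading immediately handles the ``if'' direction, and for ``only if'' the first-order term in $t$ along $h=\exp(tf_j)$ together with injectivity of $\mathrm{ad}(f_j)$ on a root space not at the bottom of its $\alpha_j$-string, plus the standard fact that every non-simple positive root has a simple root that can be subtracted, gives exactly what is needed.
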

Let $a$ be a path in the Cartan algebra $\mathfrak a$ 
and let $b$ be a solution to the
equation
$$\frac{d}{dt} b=(\frac{d}{dt}a+n)b$$
where  $n\in \oplus_i\mathbb C e_i$.
\begin{prop} Let $g\in G$, and assume that 
$bg$ has a Gauss decomposition, then the upper part
 $[bg]_{0+}$ in the Gauss
decomposition of $bg$ satisfies the equation 
\begin{equation}
\frac{d}{dt}[bg]_{0+}=(\frac{d}{dt}T_g a+n)[bg]_{0+}\label{eqdiff2}
\end{equation}
where $T_ga(t)$ is a path in the Cartan algebra.

\end{prop}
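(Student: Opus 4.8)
The plan is to reproduce, in the general setting, the one-line matrix computation behind the $2\times2$ version above, the new ingredient being Lemma \ref{nei}. Write $bg=[bg]_{-}[bg]_{0+}$ and set $h=[bg]_{-}$, a path in $N_-$, and $p=[bg]_{0+}$, a path in $B$. Since $a$ is smooth, $b$ is the smooth solution of the linear equation $\frac{db}{dt}=(\frac{da}{dt}+n)b$ with $b(0)=\mathrm{Id}$; interpreting the hypothesis as saying that $t\mapsto b(t)g$ remains in the open dense set where the Gauss decomposition is regular, the paths $h$ and $p$ are then smooth. Differentiating $bg=hp$, using $\frac{d}{dt}(bg)=(\frac{da}{dt}+n)\,bg=(\frac{da}{dt}+n)\,hp$, and multiplying on the left by $h^{-1}$, one obtains
$$\frac{dp}{dt}\,p^{-1}=\mathrm{Ad}(h^{-1})\Big(\frac{da}{dt}+n\Big)-h^{-1}\frac{dh}{dt}.$$

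The next step is to read off the components of this identity in the triangular decomposition $\mathfrak g=\mathfrak n_-\oplus\mathfrak h\oplus\mathfrak n_+$. The left-hand side lies in $\mathfrak b=\mathfrak h\oplus\mathfrak n_+$ since $p$ is a path in $B$. On the right-hand side, $h^{-1}\frac{dh}{dt}\in\mathfrak n_-$ because $h$ is a path in $N_-$; $\mathrm{Ad}(h^{-1})\frac{da}{dt}\in\mathfrak h\oplus\mathfrak n_-$ because the adjoint action of $N_-$ carries $\mathfrak h$ into $\mathfrak h\oplus\mathfrak n_-$; and, by Lemma \ref{nei} applied to $n\in\oplus_i\mathbb C e_i$, the $\mathfrak n_+$-component of $\mathrm{Ad}(h^{-1})n$ is equal to $n$ for every $h\in N_-$. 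Projecting the displayed identity onto $\mathfrak n_+$ therefore gives $[\frac{dp}{dt}\,p^{-1}]_{+}=n$, a constant independent of $t$, so that $\frac{dp}{dt}\,p^{-1}=A(t)+n$, where $A(t)\in\mathfrak h$ is the smooth, $\mathfrak h$-valued Cartan component of $\frac{dp}{dt}\,p^{-1}$.

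Finally I would set $T_g a(t)=\int_0^t A(s)\,ds$, a path in the Cartan algebra (any additive constant is allowed, since only $\frac{d}{dt}T_g a$ enters the equation); then $\frac{dp}{dt}=(\frac{d}{dt}T_g a+n)\,p$, which is precisely equation (\ref{eqdiff2}), and $T_g a$ is $\mathfrak h$-valued as asserted. The one substantial point in the argument is the equality $[\mathrm{Ad}(h^{-1})n]_{+}=n$: this is exactly where the hypothesis $n\in\oplus_i\mathbb C e_i$ is used, via Lemma \ref{nei}, and it is what forces the $\mathfrak n_+$-part of the logarithmic derivative of $[bg]_{0+}$ to remain the same constant $n$ that governs $b$ itself. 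Everything else is routine bookkeeping with the triangular decomposition, together with the standard smoothness of the Gauss factors on the open set where they are defined.
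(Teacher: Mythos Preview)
Your proof is correct and follows essentially the same route as the paper: differentiate the Gauss factorization $bg=[bg]_-[bg]_{0+}$, rearrange to isolate $\frac{d}{dt}[bg]_{0+}\,[bg]_{0+}^{-1}$ (the paper writes the equivalent identity solved for $[bg]_-^{-1}\frac{d}{dt}[bg]_-$), and then project onto $\mathfrak n_+$ using Lemma~\ref{nei} to see that the upper-nilpotent part of the logarithmic derivative of $[bg]_{0+}$ is the constant $n$. Your version is slightly more explicit about smoothness and about defining $T_g a$ as the primitive of the $\mathfrak h$-component, but the argument is the same.
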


\proof Let us write the equation
$$\frac{d}{dt}([bg]_-[bg]_{0+})=(\frac{d}{dt}a+n)[bg]_-[bg]_{0+}$$
in the form
$$[bg]_{-}^{-1}\frac{d}{dt}[bg]_{-}=[bg]_{-}^{-1}(\frac{d}{dt}a+n)[bg]_{-}
-\frac{d}{dt}[bg]_{0+}[bg]^{-1}_{0+}.$$
Since the left hand side of this equation is lower triangular, 
the right hand  side has zero upper
triangular part therefore, 
 by  
lemma \ref{nei}
$$n=\left[[bg]_{-}^{-1}(\frac{d}{dt}a+n)[bg]_{-}\right]_{+}=
\left[\frac{d}{dt}[bg]_{0+}[bg]^{-1}_{0+}\right]_{
+}$$
therefore there exists a path $T_g a$ such that equation (\ref{eqdiff2}) holds.
\qed

\medskip

We now assume that $$n=\sum_in_ie_i$$ with all $n_i>0$.
When $g=\bar s_i$ is a fundamental reflection, one gets a geometric lifting of
the Pitman
operator 
$$T_{s_i}a(t)=a(t)+\log\left(\int_0^te^{-\alpha_i(a(s))}ds
\right)\alpha^{\vee}_i$$
associated with the dual root system, i.e. 
$$\lim_{\varepsilon\to 0}\varepsilon T_{s_i}(\frac{1}{\varepsilon}a)={\mathcal
P}_{\alpha_i^{\vee}}a.$$
Thanks to the above proposition, one can prove that these geometric liftings
satisfy the braid relations, and $T_w$ provides a geometric lifting
of the 
Pitman operator $\pw$ for all $w\in W$.
  
 Analogously the Littelmann raising and lowering operators also
  have geometric liftings.
 
\subsection{Reduced double Bruhat cells}\label{redbru}
In this section we show how our considerations on Littelmann's
 path model allow us to
make the connection with the work of Berenstein and Zelevinsky \cite{beze2}.
 We consider a path $a$ on the Cartan Lie algebra, with $a(0)=0$, then belongs to
the Littelmann module $L_{\PP_{w_0}a}$.

Consider the solution $b$ to $\frac{d}{dt} b=(\frac{d}{dt}a+n)b$, $b(0)=I$.
Then $[[b]_+w_0]_{-0}\in L^{w_0,e}$, thus if
\begin{equation}\label{dec}
w_0=s_{i_1}\ldots s_{i_q}
\end{equation}
 is a reduced decomposition, then one has  
$$[[b]_+w_0]_{-0}=x_{-i_1}(r_1)\ldots
x_{-i_q}(r_q)$$
for some uniquely defined
 $r_1(a),\ldots, r_q(a)> 0$ (see \cite{beze2}).
 Let $u_k(a)=r_k(a)e^{-\alpha_{i_k}(a(T)}$.
\begin{theorem}\label{the:string}
Let $(x_1,\ldots, x_q)$ be the string parametrization of $a$ in $L_{\PP_{w_0}a}$, associated with the decomposition (\ref{dec}), then 
$$(x_1,\ldots, x_q)=\lim_{\varepsilon \to 0 } \varepsilon (\log u_1(a/\varepsilon ),\ldots, \log u_q(a/\varepsilon )).$$
\end{theorem}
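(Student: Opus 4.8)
The goal is to show that the string parameters $(x_1,\dots,x_q)$ of a path $a$ in $L_{\PP_{w_0}a}$ are the tropicalizations (as $\varepsilon\to 0$) of the functions $\log u_k(a/\varepsilon)$ coming from the factorization of $[[b]_+w_0]_{-0}$ in the double Bruhat cell $L^{w_0,e}$. The strategy is to compare two recursive descriptions of the same data. On the geometric side, recall from section~\ref{redbru} that $T_{s_i}$ geometrically lifts the Pitman transform $\PP_{\alpha_i^\vee}$: $\lim_{\varepsilon\to 0}\varepsilon T_{s_i}(a/\varepsilon)=\PP_{\alpha_i^\vee}a$, and that the $T_w$ satisfy braid relations, so $T_{w_0}$ lifts $\PP_{w_0}$. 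On the combinatorial side, equation~(\ref{xy}) gives $\eta_k=\HH^{x_k}_{s_k}\cdots\HH^{x_1}_{s_1}\PP_w\eta$ with the $x_k$ uniquely determined by (\ref{kash}), $x_k=-\inf_{0\le t\le T}\alpha_{s_k}^\vee(\eta_k(t))$. The plan is to match the factorization $x_{-i_1}(r_1)\cdots x_{-i_q}(r_q)$ of $[[b]_+w_0]_{-0}$ step by step with the iterated application of the $\HH$-operators, after the rescaling $a\mapsto a/\varepsilon$.

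\textbf{Key steps.} First, I would recall (or re-derive) the precise recursion for the coordinates $r_k(a)$: writing $w_0=s_{i_1}(s_{i_2}\cdots s_{i_q})$, the first factor $x_{-i_1}(r_1)$ is extracted by looking at the $\bar s_{i_1}$-component of $[b]_+w_0$, and the remaining matrix, after stripping $x_{-i_1}(r_1)$ and conjugating appropriately, is again of the form $[[b']_+w']_{-0}$ for $w'=s_{i_2}\cdots s_{i_q}$ and a new path $b'$ obtained from $b$ by applying the partial action $T_{s_{i_1}}$ (this is exactly the content of the Proposition in section~\ref{path-cartan} about $[bg]_{0+}$ solving an equation of the same type with $a$ replaced by $T_ga$). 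Second, I would identify $r_1(a)$ explicitly: up to the normalization $u_1(a)=r_1(a)e^{-\alpha_{i_1}(a(T))}$, the quantity $\log u_1(a/\varepsilon)$, after multiplying by $\varepsilon$ and letting $\varepsilon\to 0$, should converge by Laplace's method to precisely $-\inf_{0\le t\le T}\alpha_{i_1}^\vee(a(t))$ when $a$ is $\alpha_{i_1}$-dominant — and more generally, following the computation in the $SL_2$ case (the formula $\HH^x a(t)=a(t)-x\wedge 2\inf_{t\le s\le T}a(s)=\lim_\varepsilon \varepsilon\log(\cdots)$), the role of the extra parameter $\xi$ is played here by the coordinate $r_1$ itself. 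Third, I would set up an induction on $q$ (equivalently on the length of $w_0$): the recursion for $(r_1,\dots,r_q)$ mirrors the recursion $\eta_{k-1}=\PP_{s_k}\eta_k$, $x_k=-\inf\alpha_{s_k}^\vee(\eta_k)$, under tropicalization, because $\varepsilon\log$ turns the $B=AN_+$-Gauss decomposition of the matrix ODE into the additive Pitman recursion, and turns the factor $r_k$ into the infimum $x_k$. Finally, I would invoke uniqueness: the $(x_1,\dots,x_q)$ are uniquely characterized by (\ref{xy}), so once the tropicalized geometric recursion is shown to satisfy the same characterizing equations, the two must agree.

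\textbf{Main obstacle.} The delicate point is making the Laplace-method limits uniform and legitimate along the whole recursion. At each step one multiplies $b$ on the right by a reflection matrix and takes a Gauss $0+$-part; the entries are iterated integrals of exponentials $e^{\mp\alpha(a(s))/\varepsilon}$, and one must check that $\varepsilon\log$ of these converges to the expected max-plus (piecewise-linear) expression with no cross-terms surviving and no degeneracy (e.g. the upper-left entry of the relevant matrix staying nonzero so the Gauss decomposition exists). Concretely, the hard part will be to verify that the geometric factorization $x_{-i_1}(r_1)\cdots x_{-i_q}(r_q)$ tropicalizes term-by-term to the string recursion — i.e. that $\varepsilon\log r_k(a/\varepsilon)\to$ the $k$-th Berenstein–Zelevinsky string coordinate — rather than to some other subtraction-free lift with a different tropicalization. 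I would handle this by reducing, via the braid relations for $T_w$ (section~\ref{path-cartan}) and Matsumoto's lemma, to a rank-one check (the $SL_2$ computation already carried out in section~\ref{path-cartan}, around the formula for $\mathcal T^{-1}(\psi)$ and $\HH^x$) together with the commuting-factors case, exactly paralleling the structure of the proof of theorem~\ref{piecewise}; the uniqueness statement at the end of that section (the map is unique on the cone of actual path coordinates) then pins down the answer.
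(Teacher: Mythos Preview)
Your proposal is essentially correct and follows the same route as the paper: strip off the factors $x_{-i_k}(r_k)$ one at a time by right-multiplying by $\bar s_{i_k}$ and taking Gauss decompositions, identify each $r_k$ explicitly as an exponential integral whose logarithm tropicalizes via Laplace's method to the string coordinate $x_k$ from~(\ref{kash}), and iterate using the proposition in section~\ref{path-cartan} that $[bg]_{0+}$ again solves an equation of the same form with $a$ replaced by $T_ga$.

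The one place where you diverge from the paper is in your treatment of the ``main obstacle.'' The paper does not reduce to a rank-one check via braid relations and Matsumoto's lemma in the style of theorem~\ref{piecewise}; it simply iterates directly. At each step the stripping is already a rank-one ($SL_2$) computation embedded via $\varphi_{i_k}$, yielding the explicit formula $r_1=e^{\alpha_{i_1}(a(T))}\int_0^T e^{-\alpha_{i_1}(a(s))}\,ds$ (and similarly at later steps with $a$ replaced by the successive $T_{s_{i_1}}\cdots T_{s_{i_{k-1}}}a$). Since each $T_{s_i}$ already tropicalizes to $\PP_{\alpha_i^\vee}$, the composite tropicalizes to the Pitman recursion defining the $\eta_k$, and no separate uniformity argument or braid-reduction is needed. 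Your suggested detour would work, but it is more machinery than the problem requires.
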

\proof
When
we multiply $b$ on the right by $\bar s_{i_1}$, and take its Gauss decomposition
$$[bs_{i_1}]_-[bs_{i_1}]_0[bs_{i_1}]_+=[b]_0[b]_+s_{i_1}$$
then $$[b]_+s_{i_1}[bs_{i_1}]_+^{-1}=[b]_0^{-1}
[bs_{i_1}]_-[bs_{i_1}]_0\in
Ns_{i_1}N\cap B_-L^{s_{i_1},e}$$ and 
$$[b]_+s_{i_1}[bs_{i_1}]_+^{-1}=x_{-i_1}(r_1)$$
 for some  $r_1$. In fact, using our formula for
 Littelmann operators,
  $$r_1=e^{\alpha_1(a(T))}\int_0^Te^{-\alpha_1(a(s))}ds.$$
 Comparing with (\ref{kash}) we see that $r_1e^{-\alpha_1(a(T))}$
  gives a geometric lifting of the first  string
coordinate for the Littelmann module.
We can continue the process starting from $[bs_{i_1}]_+$, to get
$$[bs_{i_1}]_+s_{i_2}[bs_{i_1}s_{i_2}]_+^{-1}=x_{-i_2}(r_2)$$
(using the fact that $[g_1g_2]_+=[[g_1]_+g_2]_+$ for $g_1,g_2\in G$)
obtaining 
 successive
decompositions
$$[b]_+s_{i_1}\ldots s_{i_k}[bs_{i_1}\ldots s_{i_k}]_+^{-1}=x_{-i_1}(r_1)\ldots
x_{-i_k}(r_k).$$
This gives  the  coordinates of $[[b]_+w_0]_{-0}\in L^{w_0,e}$, which
are thus seen to correspond to the string coordinates by a geometric
lifting.
\qed

\section{Appendix}
This appendix is devoted to the proof of theorem \ref{thmuniq}.
\begin{lemma}\label{clnfcry} If  $B(\lambda), \lambda \in \bar C,$ 
is a closed normal family of highest weight continuous crystals 
 then for each $\lambda, \mu \in \bar C$ such that $\lambda \leq \mu$ there exists an injective map 
$\Psi_{\lambda,\mu}:B(\lambda)\to B(\mu)$ with the following properties
\begin{enumerate}[(i)]
\item
 $\Psi_{\lambda,\mu}(b_\lambda)=b_\mu,$

\item  $\Psi_{\lambda,\mu}e_\alpha^r(b)=e_\alpha^r\Psi_{\lambda,\mu}(b)$, 
for all $b \in B(\lambda),\alpha\in\Sigma,r\geq 0$,

\item  $\Psi_{\lambda,\mu}f_\alpha^r(b)=f_\alpha^r\Psi_{\lambda,\mu}(b)$ 
if $f_\alpha^r(b)\in B(\lambda)$.

\end{enumerate}
\end{lemma}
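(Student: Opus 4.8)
The plan is to realise $\Psi_{\lambda,\mu}$ as the composite of the ``insertion'' map $b\mapsto b\otimes b_{\mu-\lambda}$ from $B(\lambda)$ into $B(\lambda)\otimes B(\mu-\lambda)$ with the crystal isomorphism $\FF(b_\lambda\otimes b_{\mu-\lambda})\simeq B(\mu)$ furnished by the closedness hypothesis. Here $\mu-\lambda\in\bar C$ because $\lambda\leq\mu$, so $B(\mu-\lambda)$ and its primitive element $b_{\mu-\lambda}$ make sense. The mechanism that makes everything work is formula (\ref{sigmapos}): tensoring on the right with a highest weight vector is transparent to the crystal operators.

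Concretely, I would first record two elementary facts. Since $B(\mu-\lambda)$ is normal and $e_\alpha^r(b_{\mu-\lambda})={\bf 0}$ for $r>0$, one has $\varepsilon_\alpha(b_{\mu-\lambda})=0$ for all $\alpha\in\Sigma$; and by normality of $B(\lambda)$, $\varphi_\alpha(b)\geq 0$ for every $b\in B(\lambda)$. Hence, in the tensor product rule applied to $b\otimes b_{\mu-\lambda}$, the scalar $\sigma=\varphi_\alpha(b)-\varepsilon_\alpha(b_{\mu-\lambda})=\varphi_\alpha(b)$ is $\geq 0$, so (\ref{sigmapos}) gives
$$e_\alpha^r(b\otimes b_{\mu-\lambda})=e_\alpha^r(b)\otimes b_{\mu-\lambda}\qquad\text{for all }r\in[-\varphi_\alpha(b),+\infty[\,.$$
This covers both all $r\geq 0$ (needed for (ii)) and all $r=-s$ with $0\leq s\leq\varphi_\alpha(b)$, which by lower normality is exactly the range in which $f_\alpha^s(b)\neq{\bf 0}$ (needed for (iii)).

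Next I would check that $b\otimes b_{\mu-\lambda}$ actually lies in $\FF(b_\lambda\otimes b_{\mu-\lambda})$ for every $b\in B(\lambda)$. Writing $b=f_{\alpha_1}^{s_1}\cdots f_{\alpha_k}^{s_k}(b_\lambda)$ (possible since $B(\lambda)=\FF(b_\lambda)$), each partial product $f_{\alpha_j}^{s_j}\cdots f_{\alpha_k}^{s_k}(b_\lambda)$ is nonzero, so every exponent $-s_j$ falls in the range of the displayed identity, and peeling the factors off one at a time yields $f_{\alpha_1}^{s_1}\cdots f_{\alpha_k}^{s_k}(b_\lambda\otimes b_{\mu-\lambda})=b\otimes b_{\mu-\lambda}$. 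The case $b=b_\lambda$ of the same computation shows that $b_\lambda\otimes b_{\mu-\lambda}$ has weight $\mu$ and is killed by all $e_\alpha^r$ with $r>0$, hence is the primitive element of $\FF(b_\lambda\otimes b_{\mu-\lambda})$, so the isomorphism carries it to $b_\mu$, giving (i). Defining $\Psi_{\lambda,\mu}$ as the announced composite, injectivity is immediate (insertion is injective, the isomorphism bijective), and (ii), (iii) follow by combining the displayed identity with the fact that a crystal isomorphism is strict, hence commutes with every $e_\alpha^r$ and sends ${\bf 0}$ to ${\bf 0}$.

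I do not anticipate a genuine obstacle. The only point needing care is the inductive argument just above: one must verify that the successive exponents never leave the regime $r\geq-\sigma$ in which (\ref{sigmapos}) is valid — which is exactly guaranteed by the nonvanishing of $b$ together with lower normality — and keep the ${\bf 0}$-conventions straight so that (ii) and (iii) hold verbatim, including in the degenerate cases where the relevant operator produces ${\bf 0}$.
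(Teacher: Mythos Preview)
Your proposal is correct and follows essentially the same approach as the paper: set $\nu=\mu-\lambda$, define $\Psi_{\lambda,\mu}$ as the composite of the insertion $b\mapsto b\otimes b_\nu$ with the closedness isomorphism $\FF(b_\lambda\otimes b_\nu)\to B(\mu)$, and use $\sigma=\varphi_\alpha(b)\geq 0$ together with (\ref{sigmapos}) to verify the commutation relations. Your inductive check that $b\otimes b_\nu$ actually lands in $\FF(b_\lambda\otimes b_\nu)$ is a detail the paper leaves implicit, but it is precisely the right point to spell out.
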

\proof Let $\nu = \mu-\lambda$. First consider the map 
$\phi_{\lambda,\mu}:B(\lambda)\to B(\lambda)\otimes B(\nu)$ given by 
$\phi_{\lambda,\mu}(b)=b\otimes b_\nu$, when $b \in B(\lambda)$.
Since $b_\nu$ is a highest weight $\varepsilon_\alpha(b_\nu)=0$. By normality, for all $b \in B(\lambda), 
\varphi_\alpha(b) \geq 0$.  Therefore
$\sigma:=\varphi_\alpha(b)-\varepsilon_\alpha(b_\nu)= \varphi_\alpha(b)\geq 0$. By definition, this implies that
$\varepsilon_\alpha(b\otimes b_\nu)=\varepsilon_\alpha(b)$, $
\varphi_\alpha(b\otimes b_\nu)=\varphi_\alpha(b)$, 
$wt(b\otimes b_\nu)=wt(b)+\nu.$ Using (\ref{sigmapos}) we see also that, for $r \geq 0$,
$
e_\alpha^r(b\otimes b_\nu)=
 e_\alpha^{r}b\otimes b_\nu$ and that, when $f_\alpha^r(b)\in B(\lambda)$, $r \leq \varphi_\alpha(b) = \sigma$ by normality, and therefore $f_\alpha^r(b\otimes b_\nu)=
f_\alpha^{r}b\otimes b_\nu$. Since the family is closed there is an isomorphim $i_{\lambda,\mu}: \FF(b_\lambda\otimes b_\nu) \to B(\mu)$. One has $i_{\lambda,\mu}(b_\lambda\otimes b_\nu)=b_{\mu}.$ One can take $\Psi_{\lambda,\mu}=i_{\lambda,\mu}\circ \phi_{\lambda,\mu}$. $\square$
\medskip

The family $\Psi_{\lambda,\mu}$ constructed above satisfies
 $\Psi_{\lambda, \lambda} =id$ and, when $\lambda \leq \mu\leq\nu$,
 $\Psi_{\mu, \nu} \circ \Psi_{\lambda,\mu}= \Psi_{\lambda,\nu}$,
so that we can consider the direct limit $B(\infty)$ of 
the family $B(\lambda), \lambda \in \bar C,$ with the injective maps
 $\Psi_{\lambda, \mu}: B(\lambda) \to  B(\mu), \lambda \leq\mu$. 
 Still following Joseph \cite{Joseph-notes},
  we define a crystal structure on $B(\infty)$. 
\begin{prop}
The direct limit $B(\infty)$  is a  highest weight upper normal continuous
 crystal with highest weight $0$.
\end{prop}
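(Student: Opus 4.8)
The plan is to realize $B(\infty)$ as the direct limit of the $B(\lambda),\lambda\in\bar C$, along the injections $\Psi_{\lambda,\mu}$ of Lemma \ref{clnfcry}, which form a directed system by the compatibility relations recorded above; I write $[b_\lambda]$ for the image in $B(\infty)$ of $b_\lambda\in B(\lambda)$, noting that two elements are identified exactly when they have a common image under some $\Psi$. The key observation is that $\Psi_{\lambda,\mu}$ preserves $\varepsilon_\alpha$ but translates the rest of the data: since $\Psi_{\lambda,\mu}(b)$ corresponds, under a crystal isomorphism, to $b\otimes b_{\mu-\lambda}$ and $\varepsilon_\alpha(b_{\mu-\lambda})=0$, the tensor product formulas of \ref{subtens} give
\[
\varepsilon_\alpha(\Psi_{\lambda,\mu}(b))=\varepsilon_\alpha(b),\qquad wt(\Psi_{\lambda,\mu}(b))=wt(b)+(\mu-\lambda),
\]
and hence $\varphi_\alpha(\Psi_{\lambda,\mu}(b))=\varphi_\alpha(b)+\alpha^\vee(\mu-\lambda)$ by (C1). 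Accordingly I would define, for $b=[b_\lambda]$,
\[
wt(b)=wt(b_\lambda)-\lambda,\qquad \varepsilon_\alpha(b)=\varepsilon_\alpha(b_\lambda),\qquad \varphi_\alpha(b)=\varphi_\alpha(b_\lambda)-\alpha^\vee(\lambda);
\]
the first observation shows each of these is independent of the chosen representative, and (C1) on $B(\infty)$ then reduces to (C1) in $B(\lambda)$.

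For the operators, the case $r\geq 0$ is painless: set $e_\alpha^r([b_\lambda])=[e_\alpha^r(b_\lambda)]$ (with $[{\bf 0}]={\bf 0}$). This is well defined because Lemma \ref{clnfcry}(ii) gives $\Psi_{\lambda,\mu}(e_\alpha^r(b_\lambda))=e_\alpha^r(\Psi_{\lambda,\mu}(b_\lambda))$ for every $r\geq 0$ and every $b_\lambda$, and $\Psi_{\lambda,\mu}$ is injective, so in particular $e_\alpha^r(b_\lambda)={\bf 0}$ iff $e_\alpha^r(\Psi_{\lambda,\mu}(b_\lambda))={\bf 0}$. The delicate case is $r<0$, and this is the main obstacle: the ``room to lower'' grows in the system, since $\varphi_\alpha(\Psi_{\lambda,\mu}(b_\lambda))=\varphi_\alpha(b_\lambda)+\alpha^\vee(\mu-\lambda)\to\infty$ as $\mu$ moves up the chamber. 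I would therefore define $e_\alpha^r([b_\lambda])$ by first replacing $b_\lambda$ by a representative $b_\mu=\Psi_{\lambda,\mu}(b_\lambda)$ that is $(\alpha,r)$-large, meaning $\mu\geq\lambda$ and $e_\alpha^r(b_\mu)\neq{\bf 0}$ in $B(\mu)$, and then putting $e_\alpha^r([b_\lambda])=[e_\alpha^r(b_\mu)]$. One must check that such $\mu$ exists and that the result does not depend on its choice. For existence, pick $\mu$ with $\alpha^\vee(\mu-\lambda)$ large enough that $\varphi_\alpha(b_\mu)\geq -r$; using the standard fact that in a normal crystal the set $\{s\in\R:\ e_\alpha^s(b)\neq{\bf 0}\}$ is the interval $[-\varphi_\alpha(b),\varepsilon_\alpha(b)]$, this forces $e_\alpha^r(b_\mu)\neq{\bf 0}$. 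For independence, given two $(\alpha,r)$-large representatives one passes to a common upper bound $\mu'$ (the chamber is directed, $\mu_1,\mu_2\leq\mu_1+\mu_2$) and applies Lemma \ref{clnfcry}(iii), which yields $\Psi_{\mu_i,\mu'}(e_\alpha^r(b_{\mu_i}))=e_\alpha^r(b_{\mu'})$ since $e_\alpha^r(b_{\mu_i})\in B(\mu_i)$. This definition makes $e_\alpha^r$ never equal to ${\bf 0}$ for $r<0$, which is why $B(\infty)$ will be upper normal but not lower normal.

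With the structure in place I would check the axioms: (C1) holds by construction; (C2) and (C3) follow from the corresponding identities inside any single $B(\mu)$ large enough to contain a chosen representative and all the relevant $e_\alpha^s$-images, combined with the translation rules for $wt,\varepsilon_\alpha,\varphi_\alpha$; and (C4) is vacuous because each $B(\lambda)$ is normal, so the $\varphi_\alpha$ are finite. Upper normality is then the chain
\[
\varepsilon_\alpha([b_\lambda])=\varepsilon_\alpha(b_\lambda)=\max\{r\geq 0:\ e_\alpha^r(b_\lambda)\neq{\bf 0}\}=\max\{r\geq 0:\ e_\alpha^r([b_\lambda])\neq{\bf 0}\},
\]
where the middle equality is upper normality in $B(\lambda)$ and the last uses the definition of $e_\alpha^r$ on $B(\infty)$ for $r\geq 0$ together with injectivity of $b_\lambda\mapsto[b_\lambda]$.

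Finally, for the highest weight statement, set $b_\infty=[b_\lambda]$ where $b_\lambda$ is the primitive element of $B(\lambda)$; this is independent of $\lambda$ because $\Psi_{\lambda,\mu}(b_\lambda)=b_\mu$ (Lemma \ref{clnfcry}(i)). Then $wt(b_\infty)=\lambda-\lambda=0$, and $e_\alpha^r(b_\infty)={\bf 0}$ for all $r>0$ since this already holds for $b_\lambda$ in $B(\lambda)$. It remains to see $B(\infty)=\FF(b_\infty)$: given any $[b_\mu]$, use $B(\mu)=\FF(b_\mu^0)$, with $b_\mu^0$ the primitive of $B(\mu)$, to write $b_\mu=f_{\alpha_1}^{r_1}\cdots f_{\alpha_k}^{r_k}b_\mu^0$; every partial product is a nonzero element of $B(\mu)$, so each $f_{\alpha_j}^{r_j}$ may be applied inside $B(\mu)$ itself and, by Lemma \ref{clnfcry}(iii), $f_\alpha^r([c])=[f_\alpha^r(c)]$ whenever $c\in B(\mu)$ and $f_\alpha^r(c)\neq{\bf 0}$. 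Hence $f_{\alpha_1}^{r_1}\cdots f_{\alpha_k}^{r_k}b_\infty=[f_{\alpha_1}^{r_1}\cdots f_{\alpha_k}^{r_k}b_\mu^0]=[b_\mu]$, so $B(\infty)=\FF(b_\infty)$, completing the proof.
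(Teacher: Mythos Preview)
Your proof is correct and follows essentially the same approach as the paper: both construct the crystal structure on the direct limit by defining $wt,\varepsilon_\alpha$ via any representative (using the translation rules under $\Psi_{\lambda,\mu}$), define $e_\alpha^r$ for $r\geq 0$ directly and $f_\alpha^r$ by passing to a representative with $\varphi_\alpha$ large enough, and deduce upper normality and the highest weight property from the corresponding facts in each $B(\lambda)$. Your version is simply more detailed, in particular in verifying $B(\infty)=\FF(b_\infty)$, which the paper dispatches in a sentence.
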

\proof By definition, the direct limit $B(\infty)$ is the quotient set
 $B/\sim$ where $B=\cup_{\lambda\in \bar C}B(\alpha)$ is the disjoint union of
  the $B(\lambda)'s$ and where $b_1 \sim b_2$ for $b_1\in B(\lambda), b_2
   \in B(\mu)$, when there exists a  $\nu \in \bar C$ such that 
   $\nu \geq\lambda, \nu\geq�\mu$ and
    $\Psi_{\lambda,\nu}(b_1)=\Psi_{\mu,\nu}(b_2)$. 
Let $\bar b$ be the image in  $B(\infty)$ of $b \in B$. 
 If $b \in B(\lambda)$, then we define
$wt(\bar b)=wt(b)-\lambda$, 
$\varepsilon_\alpha(\bar b)=\varepsilon_\alpha( b),$
 $\varphi_\alpha(\bar b)=\varepsilon_\alpha( \bar b)+\alpha^\vee(wt(\bar b))$ 
 and, when $r \geq 0$, 
$e^r_{\alpha}(\bar b)=\overline{ e^r_{\alpha}(b)}$. 
These do not depend on $\lambda$, since if
 $\mu \geq\lambda$ and
 $b'=\Psi_{\lambda,\mu}(b)$, then one has 
 $\bar b'=\bar b$ and $wt(b')=wt(b)+\mu-\lambda$. In order to define
  $ f_\alpha^r(\bar b)$ for $r \geq 0$, let us choose  $\mu \geq\lambda$ large enough to ensure that 
 $$\varphi_\alpha( b')=\varepsilon_\alpha( b')+\alpha^{\vee}(wt(b))+
\alpha^\vee(\mu-\lambda)\geq r.$$
 Then 
 $f^r_\alpha b'\neq {\bf 0}$ by normality and we  define
$f^r\bar b=\overline{ f^rb'}$. Again this does not depend on $\mu$.
Using the lemma we check that this  defines a crystal stucture on $B(\infty)$. 
Each  $\Psi_{\lambda, \mu}$, $\lambda \leq \mu$,
commutes with the $e_\alpha^r, r \geq 0 $. 
This implies that $B(\infty)$ is upper normal.
 Since each $B(\lambda)$ is a highest weight crystal, 
 $B(\infty)$ has also this property. $\square$
\medskip

We will denote $b_\infty$ the unique element of $B(\infty)$ of weight 0. 
Note  that $B(\infty)$ is not lower normal.
 For instance,  \begin{equation}\label{binfty}\varphi_\alpha(b_\infty)=0, f(b_\infty)\neq {\bf 0}, \mbox{ for all } f\in \FF.\end{equation}
For $\lambda \in \bar C$ we define the  crystal $S(\lambda)$ 
as the set with a unique element $\{s_\lambda\}$ and the maps
$wt(s_\lambda)=\lambda, \varepsilon_\alpha(s_\lambda)=
-\alpha^\vee(\lambda),\varphi_\alpha(s_\lambda)=0$ and
$e^r_\alpha(s_\lambda)=\bf 0$ when $r\neq 0.$
\begin{lemma} The map
$$\Psi_\lambda:b\in B(\lambda) \mapsto \bar b \otimes 
s_\lambda \in B(\infty)\otimes S(\lambda)$$
is a crystal embedding.
\end{lemma}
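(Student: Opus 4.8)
The plan is to unwind both the tensor-product rules of Subsection~\ref{subtens} and the crystal structure just put on $B(\infty)$, and then check the axioms of a crystal embedding (an injective strict morphism) one at a time. First I would fix $\alpha\in\Sigma$ and $b\in B(\lambda)$, write $\bar b$ for the image of $b$ in $B(\infty)$, and compute $\sigma:=\varphi_\alpha(\bar b)-\varepsilon_\alpha(s_\lambda)$. Since $wt(\bar b)=wt(b)-\lambda$, $\varepsilon_\alpha(\bar b)=\varepsilon_\alpha(b)$ and $\varphi_\alpha(\bar b)=\varepsilon_\alpha(\bar b)+\alpha^\vee(wt(\bar b))$ by definition of the structure on $B(\infty)$, axiom (C1) for $B(\lambda)$ gives $\varphi_\alpha(\bar b)=\varphi_\alpha(b)-\alpha^\vee(\lambda)$; together with $\varepsilon_\alpha(s_\lambda)=-\alpha^\vee(\lambda)$ this yields $\sigma=\varphi_\alpha(b)$, which is $\geq 0$ because $B(\lambda)$ is normal. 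Hence $\sigma^-=0$, $\sigma^+=\sigma$, and the tensor-product formulas immediately give $wt(\bar b\otimes s_\lambda)=wt(b)$, $\varepsilon_\alpha(\bar b\otimes s_\lambda)=\varepsilon_\alpha(b)$ and $\varphi_\alpha(\bar b\otimes s_\lambda)=\varphi_\alpha(b)$; in particular all three are finite, so no $-\infty$ subtleties arise, and $\bar b\otimes s_\lambda\neq{\bf 0}$, so $\Psi_\lambda$ genuinely lands in $B(\infty)\otimes S(\lambda)$. This settles the weight, $\varepsilon$ and $\varphi$ compatibility.

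Next I would verify the intertwining $e_\alpha^r(\Psi_\lambda(b))=\Psi_\lambda(e_\alpha^r(b))$ for every $r\in\R$, which is the one substantive point. Because $\sigma\geq 0$, formula~(\ref{sigmapos}) gives $e_\alpha^r(\bar b\otimes s_\lambda)=e_\alpha^r\bar b\otimes s_\lambda$ for all $r\geq-\sigma=-\varphi_\alpha(b)$, so I then need the identity $e_\alpha^r\bar b=\overline{e_\alpha^r b}$ for such $r$. For $r\geq 0$ this is the definition of the action on $B(\infty)$ (and if $r>\varepsilon_\alpha(b)$ both sides are ${\bf 0}$ by upper normality of $B(\infty)$, which has already been established). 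For $-\varphi_\alpha(b)\leq r<0$ one writes $r=-s$, notes $f_\alpha^s(b)\neq{\bf 0}$ by lower normality of $B(\lambda)$, and combines the construction of $f_\alpha^s$ on $B(\infty)$ (pass to some $B(\mu)$ with $\mu\geq\lambda$ large enough) with the commutation relation in Lemma~\ref{clnfcry}(iii) to get $f_\alpha^s\bar b=\overline{f_\alpha^s b}$. Thus for $r\geq-\sigma$ one obtains $e_\alpha^r(\bar b\otimes s_\lambda)=\overline{e_\alpha^r b}\otimes s_\lambda=\Psi_\lambda(e_\alpha^r b)$. For $r<-\sigma$ the general tensor formula gives $e_\alpha^r(\bar b\otimes s_\lambda)=e_\alpha^{-\sigma}\bar b\otimes e_\alpha^{r+\sigma}s_\lambda$, and $e_\alpha^{r+\sigma}s_\lambda={\bf 0}$ since $r+\sigma\neq 0$ and $S(\lambda)$ has trivial operators; on the other hand $e_\alpha^r(b)=f_\alpha^{-r}(b)={\bf 0}$ by lower normality, so both sides are ${\bf 0}$. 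This shows $\Psi_\lambda$ is a strict morphism.

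Finally, injectivity of $\Psi_\lambda$ reduces to injectivity of the canonical map $B(\lambda)\to B(\infty)$, since $\bar b_1\otimes s_\lambda=\bar b_2\otimes s_\lambda$ forces $\bar b_1=\bar b_2$ in $B(\infty)$; and the canonical map is injective because $\bar b_1=\bar b_2$ means $\Psi_{\lambda,\nu}(b_1)=\Psi_{\lambda,\nu}(b_2)$ for some $\nu\geq\lambda$, and all the transition maps $\Psi_{\lambda,\mu}$ of Lemma~\ref{clnfcry} are injective. The main obstacle I anticipate is purely bookkeeping: keeping the two regimes $r\geq-\sigma$ and $r<-\sigma$ of the tensor-product rule straight while simultaneously handling the indirect definition of the lowering operators on the direct limit; once the identity $e_\alpha^r\bar b=\overline{e_\alpha^r b}$ for $r\geq-\varphi_\alpha(b)$ is pinned down, the rest is routine.
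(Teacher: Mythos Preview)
Your proof is correct and follows essentially the same route as the paper: compute $\sigma=\varphi_\alpha(b)\geq 0$ via normality, read off the compatibility of $wt,\varepsilon_\alpha,\varphi_\alpha$ from the tensor rules, and split the verification of $e_\alpha^r(\bar b\otimes s_\lambda)=\Psi_\lambda(e_\alpha^r b)$ into the regimes $r\geq -\sigma$ and $r<-\sigma$. You are in fact slightly more thorough than the paper, which leaves both the injectivity of $\Psi_\lambda$ and the identity $e_\alpha^r\bar b=\overline{e_\alpha^r b}$ for $-\varphi_\alpha(b)\leq r<0$ implicit.
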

\proof Let $b \in B(\lambda)$, then
$$wt(\Psi_\lambda(b))=wt(\bar b \otimes s_\lambda)=wt(\bar b)+wt(s_\lambda)=
wt(b)-\lambda+\lambda=wt(b).$$
Let $\sigma=\varphi_\alpha(\bar b)-\varepsilon_\alpha(s_\lambda)$. 
Then $\sigma=\varphi_\alpha(b)$ since 
$\varepsilon_\alpha(s_\lambda)=-\alpha^\vee(\lambda)$ and
 $\varphi_\alpha(\bar b)=\varphi_\alpha(b)-\alpha^\vee(\lambda)$. 
 Thus $\sigma \geq 0$ by normality of $B(\lambda)$.
  By the definition of the tensor product,  
 this implies that 
$$\varepsilon_\alpha(\Psi_\lambda(b))=
\varepsilon_\alpha(\bar b \otimes s_\lambda)=\varepsilon_\alpha(\bar b)=
\varepsilon_\alpha( b),$$
 thus $\varphi_\alpha(\Psi_\lambda(b))=\varphi_\alpha( b)$.
 Furthermore, since $\sigma \geq 0$,
$$e^r_\alpha(\Psi_\lambda(b))
=e^r_\alpha(\bar b \otimes s_\lambda)=
e^{\max(r,-\sigma)}_\alpha(\bar b)\otimes 
 e^{\min(r,-\sigma)+\sigma}s_\lambda.$$
When $r \geq -\sigma$, this is equal to
  $e^r_\alpha(\bar b)\otimes  s_\lambda=\Psi_\lambda(e^r_\alpha(b))$. If
 $r < -\sigma$ then $e^r_\alpha(\Psi_\lambda(b))= e^{-\sigma}_\alpha( \bar b) \otimes e^{r+\sigma}_\alpha( s_\lambda)={\bf 0}$, since $e^{s}_\alpha( s_\lambda)={\bf 0}$ when $s\neq 0$, 
 and on the other hand, 
 $e^r_\alpha(b)={\bf 0}$ by normality.
  Thus $\Psi_\lambda(e^r_\alpha(b))={\bf 0}$.
$\square$\medskip

If $f=f_{\alpha_n}^{r_n}\cdots f_{\alpha_1}^{r_1} \in \FF$, 
we say that $f'\in F$ is extracted from $f$ if  
$f'=f_{\alpha_n}^{r_n'}\cdots f_{\alpha_1}^{r_1'}$ with 
$0 \leq r_k'\leq r_k, k=1,\cdots,n$. Recall  the definition of 
$B_\alpha=\{b_\alpha(t), t \leq 0\}$ given in Example  \ref{exba}.

\begin{lemma}\label{lem_indm} Let $f \in \FF$ and $\alpha \in \Sigma$, 
then there exists $f'$ extracted from $f$ and $t \geq 0$ such that
$$ f(b_\infty \otimes b_\alpha(0)) = f' b_\infty \otimes b_\alpha(-t).$$
Moreover if $\lambda\in \bar C$ is such that  $\alpha^\vee(\lambda)=0$ and 
$\beta^\vee(\lambda)$
 large enough for all $\beta\in \Sigma-\{\alpha\}$, then for 
  $\mu\in \bar C$, for the same $f'\in \FF$ and $t \geq 0$,
   $$ f(b_\lambda \otimes b_\mu) = f' b_\lambda \otimes f_\alpha^t b_\mu.$$
\end{lemma}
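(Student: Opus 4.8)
The plan is to prove Lemma \ref{lem_indm} by induction on the length $n$ of the word $f = f_{\alpha_n}^{r_n}\cdots f_{\alpha_1}^{r_1}$, carrying both assertions along simultaneously since they involve the same extracted word $f'$ and the same nonnegative real $t$. The base case $n=0$ is trivial: $f$ is empty, $f'$ is empty, and $t=0$. For the inductive step I would write $f = f_{\alpha_n}^{r_n} \circ g$ where $g = f_{\alpha_{n-1}}^{r_{n-1}}\cdots f_{\alpha_1}^{r_1}$ has length $n-1$, apply the induction hypothesis to $g$ to get $g' $ extracted from $g$ and $s\geq 0$ with $g(b_\infty\otimes b_\alpha(0)) = g' b_\infty \otimes b_\alpha(-s)$, and then analyze how $f_{\alpha_n}^{r_n}$ acts on this tensor. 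Everything then reduces to a one-dimensional tensor-product computation in the factor $B_\alpha$ using the explicit formulas from Section \ref{subtens} and from Example \ref{exba}.

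The key computation is the following. Set $\beta = \alpha_n$ and look at $f_\beta^{r_n}(c \otimes b_\alpha(-s))$ where $c = g' b_\infty$. There are two cases. If $\beta \neq \alpha$, then by Example \ref{exba} we have $\varepsilon_\alpha(b_\alpha(-s)) $ finite but $\varepsilon_\beta(b_\alpha(-s)) = -\infty$, so $\sigma = \varphi_\beta(c) - \varepsilon_\beta(b_\alpha(-s)) = +\infty \geq 0$, and by \eqref{sigmapos} the operator acts only on the left factor: $f_\beta^{r_n}(c\otimes b_\alpha(-s)) = (f_\beta^{r_n}c)\otimes b_\alpha(-s)$; note $f_\beta^{r_n}c = f_\beta^{r_n}g' b_\infty \neq {\bf 0}$ by \eqref{binfty} (since $B(\infty)$ is such that no element of $\FF$ kills $b_\infty$, hence none kills any $g'b_\infty$ either — this is exactly the content of \eqref{binfty}). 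So we may take $f' = f_\beta^{r_n}g'$ (which is extracted from $f$ since $r_n' = r_n$, $g'$ extracted from $g$) and $t = s$. If $\beta = \alpha$, then $\sigma = \varphi_\alpha(c) - \varepsilon_\alpha(b_\alpha(-s)) = \varphi_\alpha(c) - s$; here $\varphi_\alpha(c) = 0$ because $c = g'b_\infty \in B(\infty)$ and $B(\infty)$ is upper normal with, by \eqref{binfty}, $\varphi_\alpha(b_\infty) = 0$ — more precisely one checks $\varphi_\alpha(g'b_\infty)$ is forced by $\varepsilon_\alpha$ and $wt$ via (C1), and one needs the fact that in $B(\infty)$ applying $\FF$ never increases $\alpha^\vee(wt(\cdot))$ enough; I would instead argue directly that $c\otimes b_\alpha(-s)$ lies in $\FF(b_\infty\otimes b_\alpha(0))$, a connected component isomorphic, after the embedding $\Psi_\lambda$-type argument, to a crystal where the second factor absorbs all the $f_\alpha$-action. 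Concretely $\sigma = -s \leq 0$, so $f_\beta^{r_n}$ acts by splitting: $f_\alpha^{r_n}(c\otimes b_\alpha(-s)) = f_\alpha^{\max(-r_n,-s)-s^-}c \otimes f_\alpha^{\min(-r_n,-s)+s^+}b_\alpha(-s)$ — working out the $\sigma^\pm$ with $\sigma = -s$, one gets $f_\alpha^{(r_n-s)^+}c \otimes b_\alpha(-(s + \min(r_n,s)\cdots))$; keeping the bookkeeping straight, the outcome is $f' b_\infty \otimes b_\alpha(-t)$ with $t = s + (\text{part of }r_n\text{ absorbed})\geq 0$ and $f' = (\text{part of }f_\alpha^{r_n}\text{ that spills to the left})\circ g'$, still extracted from $f$.

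For the second assertion I would run the identical induction, replacing $b_\infty$ by $b_\lambda$ and $b_\alpha(0)$ by $b_\mu$, and replacing the use of \eqref{binfty} by the hypothesis that $\alpha^\vee(\lambda) = 0$ and $\beta^\vee(\lambda)$ is large for $\beta \neq \alpha$ — largeness is exactly what guarantees, by normality of $B(\lambda)$ and (C1)–(C2), that the partial word $g'$ acting on $b_\lambda$ never returns ${\bf 0}$ and that $\varphi_\beta(g'b_\lambda)$ stays large enough that all the $f_\beta^{r}$ with $\beta \neq \alpha$ act purely on the left factor just as they did on $b_\infty$; and $\alpha^\vee(\lambda) = 0$ is what makes $\varphi_\alpha(g'b_\lambda) = \varepsilon_\alpha(g'b_\lambda) + \alpha^\vee(wt(g'b_\lambda)) = \alpha^\vee(wt(g'b_\lambda))$ play the same role as $\varphi_\alpha(g'b_\infty) = 0$ did (one must track the weight shift carefully, using that the $\alpha$-action is entirely carried by the second factor $b_\mu$). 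Since at every node of the induction the \emph{same} case (according to whether $\alpha_n = \alpha$ or not) is triggered — the case distinction depends only on the word $f$ and the orthogonality pattern of the roots, not on $\lambda,\mu$ — the extracted word $f'$ and the shift $t$ produced in the two situations coincide. The main obstacle I anticipate is purely bookkeeping: getting the $\min$/$\max$ and $\sigma^\pm$ in the tensor product formula to line up so that one sees the absorbed amount $t$ depends only on $f$ and not on $\lambda,\mu$, and making the "$\beta^\vee(\lambda)$ large enough" quantifier uniform over the finitely many steps — one should first fix $f$, then choose $\lambda$ large depending on $f$, in that order, which matches the phrasing of the statement.
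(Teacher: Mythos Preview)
Your inductive framework is the same one the paper uses, and the case $\alpha_n\neq\alpha$ is handled correctly. The gap is in the case $\alpha_n=\alpha$. You assert $\varphi_\alpha(g'b_\infty)=0$, but \eqref{binfty} only gives this for $b_\infty$ itself; by (C1), $\varphi_\alpha(g'b_\infty)=\varepsilon_\alpha(g'b_\infty)+\alpha^\vee(wt(g'b_\infty))$, and already for $g'=f_\beta^{r}$ with $\beta\neq\alpha$ one has $\alpha^\vee(wt(g'b_\infty))=-r\alpha^\vee(\beta)>0$. So $\sigma$ is in general not $-s$, and the splitting of $f_\alpha^{r_n}$ is not ``all to the right''; part of it can go to the left factor, and the extracted $f'$ can contain $f_\alpha$-operators. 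Your claim at the end that the splitting ``depends only on the word $f$ and the orthogonality pattern of the roots'' is likewise incorrect: it depends on the numerical value $\sigma=\varphi_\alpha(g'b_\infty)-s$.

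What the argument actually needs---and what the paper supplies---is the equality $\varphi_\alpha(g'b_\lambda)=\varphi_\alpha(g'b_\infty)$, so that $\sigma$ (hence the splitting, hence $f'$ and $t$) is the same in both settings. Two ingredients give this: $\alpha^\vee(\lambda)=0$ forces $\alpha^\vee(wt(g'b_\lambda))=\alpha^\vee(wt(g'b_\infty))$, and the direct-limit construction of $B(\infty)$ gives $\varepsilon_\alpha(g'b_\lambda)=\varepsilon_\alpha(\overline{g'b_\lambda})=\varepsilon_\alpha(g'b_\infty)$ \emph{provided} $g'b_\lambda\neq{\bf 0}$, which one checks inductively (for $\beta\neq\alpha$ using that $\beta^\vee(\lambda)$ is large, and for $\alpha$ using that the portion of $f_\alpha^{r_n}$ sent left is at most $\sigma^+\le\varphi_\alpha(g'b_\lambda)$). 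A smaller omission: in the second assertion with $\alpha_n=\beta\neq\alpha$, largeness of $\varphi_\beta(g'b_\lambda)$ alone does not put the action on the left; one also needs $\varepsilon_\beta(f_\alpha^{s}b_\mu)=0$, which the paper proves from the highest-weight property of $b_\mu$ and $\beta^\vee(\alpha)\le 0$.
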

\proof
The first part follows easily from the definition of the tensor product.
 Let  $\lambda\in \bar C$ such that  $\alpha^\vee(\lambda)=0$, $\mu \in \bar C, \beta\in \Sigma -\{\alpha\}$ and $r \geq 0$. 
If, for some $s >0$, one has $ e_\beta^s(f_\alpha^rb_\mu)\neq {\bf 0}$
 then $wt(e_\beta^s(f_\alpha^rb_\mu))=\mu+s\beta-r\alpha$
  is in $\mu -\bar C$ (since $\mu$ is a highest weight). 
 This is not possible because $\beta^\vee(s\beta-r\alpha) \geq s\beta^\vee(\beta) >0$. Therefore, by normality,
$\varepsilon_\beta(f_\alpha^rb_\mu)=0.$
On the other hand, for all 
 $f=f_{\alpha_n}^{r_n}\cdots f_{\alpha_1}^{r_1} \in \FF$,
$$\varphi_\beta(fb_\lambda)=\beta^\vee(wt(fb_\lambda))
+\varepsilon_\beta(fb_\lambda) \geq
\beta^\vee(wt(fb_\lambda))=\beta^\vee(\lambda)- \sum_{k=1}^n{r_k}
\beta^{\vee}(\alpha_k).$$
Let $\sigma=\varphi_\beta(fb_\lambda)-\varepsilon_\beta(f_\alpha^rb_\mu)=
\varphi_\beta(fb_\lambda)$ and $ s \geq 0$. Then
$$\sigma = \varphi_\beta(fb_\lambda) \geq \beta^\vee(\lambda)- 
\sum_{k=1}^n{r_k} \beta^{\vee}(\alpha_k).$$
 If $\beta^\vee(\lambda)$ is large enough, then 
 $\sigma \geq \max(s,0)$ which implies, see (\ref{sigmapos}), that 
\begin{equation}\label{flb}f_\beta^{s}(fb_\lambda \otimes f_\alpha^r b_\mu)=(f_\beta^{s}fb_\lambda) \otimes f_\alpha^r b_\mu.\end{equation}
On the other hand, 
$\varphi_\alpha(b_\lambda)=\alpha^\vee(\lambda)+\varepsilon_\alpha(b_\lambda)=0$, since 
$\varepsilon_\alpha(b_\lambda)=0$ by normality. 
We also know that $\varphi_\alpha(b_\infty)=0$, see (\ref{binfty}), hence
$$\varphi_\alpha(fb_\lambda)=
\varphi_\alpha(b_\lambda)-\sum_{k=1}^n{r_k}\alpha^{\vee}(\alpha_k)=
\varphi_\alpha(b_\infty)-\sum_{k=1}^n{r_k}\alpha^{\vee}(\alpha_k)=\varphi_\alpha(fb_\infty).$$
Thus $\sigma=\varphi_\alpha(fb_\infty)$ and  does not depend on $\lambda$. 
It follows  that the following decomposition is independent of $\lambda$:
\begin{equation}\label{fla}f_\alpha^s(fb_\lambda\otimes f_\alpha^r b_\mu)=f_\alpha^{\sigma\wedge s}fb_\lambda\otimes f_\alpha^{r+s-\sigma\wedge s}b_\mu.\end{equation}
Using (\ref{flb}) and (\ref{fla}), it is now easy to prove the lemma by induction on $n$, proving first the second assertion. $\square$
\begin{prop}\label{gammaalpha} For each
 simple root $\alpha$, there is a crystal embedding  
 $\Gamma_\alpha:B(\infty)\to B(\infty)\otimes B_\alpha$ such that
  $\Gamma_\alpha(b_\infty)=b_\infty\otimes b_\alpha(0)$.
\end{prop}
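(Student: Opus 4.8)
The plan is to define $\Gamma_\alpha$ on the generators of $B(\infty)$ and to extract everything from Lemma \ref{lem_indm} together with the closedness of the family, following the argument of Joseph for discrete crystals. Since $B(\infty)$ is a highest weight crystal of highest weight $0$, every element is $fb_\infty$ for some $f\in\FF$ (allowing $f_\alpha^0=\mathrm{id}\in\FF$), and I would put
$$\Gamma_\alpha(fb_\infty):=f(b_\infty\otimes b_\alpha(0))\in B(\infty)\otimes B_\alpha.$$
By the first part of Lemma \ref{lem_indm} this equals $f'b_\infty\otimes b_\alpha(-t)$ for an $f'$ extracted from $f$ and some $t\ge0$; in particular it is never ${\bf 0}$, and $\Gamma_\alpha(b_\infty)=b_\infty\otimes b_\alpha(0)$ on taking $f=\mathrm{id}$.

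First I would prove the crucial fact that $fb_\infty$ and $\Gamma_\alpha(fb_\infty)$ determine one another, which gives at once that $\Gamma_\alpha$ is well defined and injective. Here the second part of Lemma \ref{lem_indm} enters: choose $\lambda\in\bar C$ with $\alpha^\vee(\lambda)=0$ and $\beta^\vee(\lambda)$ as large as the lemma requires for all $\beta\in\Sigma$ with $\beta\neq\alpha$, then $\mu\in\bar C$ so large that $\lambda+\mu$ lies deep in $\bar C$; with the same $f'$ and $t$ one has $f(b_\lambda\otimes b_\mu)=f'b_\lambda\otimes f_\alpha^t b_\mu$. Closedness provides a crystal isomorphism $\iota\colon\FF(b_\lambda\otimes b_\mu)\to B(\lambda+\mu)$ with $\iota(b_\lambda\otimes b_\mu)=b_{\lambda+\mu}$, so $\iota\bigl(f(b_\lambda\otimes b_\mu)\bigr)=fb_{\lambda+\mu}$, while the direct-limit construction of $B(\infty)$ identifies $fb_{\lambda+\mu}$ with $fb_\infty$, and $f'b_{\lambda+\mu}=\Psi_{\lambda,\lambda+\mu}(f'b_\lambda)$ with $f'b_\infty$, once $\lambda+\mu$ is large enough relative to $f$ and to $f'$ (here one uses Lemma \ref{clnfcry}(iii) and that the inclusion $B(\lambda+\mu)\hookrightarrow B(\infty)$ is injective). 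Running the resulting chain
$$fb_\infty\ \leftrightarrow\ fb_{\lambda+\mu}\ \leftrightarrow\ f(b_\lambda\otimes b_\mu)=f'b_\lambda\otimes f_\alpha^t b_\mu\ \leftrightarrow\ (f'b_\lambda,t)\ \leftrightarrow\ (f'b_\infty,t)$$
in both directions — using that a tensor determines its two factors and that $t\mapsto f_\alpha^t b_\mu$ is injective because $wt(f_\alpha^t b_\mu)=\mu-t\alpha$ — shows that $fb_\infty$ is determined by, and determines, the pair $(f'b_\infty,t)$, hence the element $\Gamma_\alpha(fb_\infty)=f'b_\infty\otimes b_\alpha(-t)$; since this element is computed inside $B(\infty)\otimes B_\alpha$ and is independent of the auxiliary $\lambda,\mu$, well-definedness and injectivity follow.

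It then remains to verify that $\Gamma_\alpha$ is a strict morphism. Compatibility with $wt$ I would get by comparing the two sides of $f(b_\infty\otimes b_\alpha(0))=f'b_\infty\otimes b_\alpha(-t)$: the weight of $fb_\infty$ and that of $f(b_\infty\otimes b_\alpha(0))$ are both $0$ translated by the same vector. Commutation with the lowering operators is built into the definition, $\Gamma_\alpha(f_\gamma^s fb_\infty)=(f_\gamma^s f)(b_\infty\otimes b_\alpha(0))=f_\gamma^s\Gamma_\alpha(fb_\infty)$ for $s\ge0$. The identities $\varepsilon_\gamma\circ\Gamma_\alpha=\varepsilon_\gamma$, $\varphi_\gamma\circ\Gamma_\alpha=\varphi_\gamma$ and $e_\gamma^r\circ\Gamma_\alpha=\Gamma_\alpha\circ e_\gamma^r$ I would obtain by transporting the computation, through $\iota$ and the direct limit exactly as above, into the \emph{normal} crystal $B(\lambda+\mu)$, where $\Gamma_\alpha$ corresponds to $f(b_\lambda\otimes b_\mu)\mapsto f'b_\lambda\otimes f_\alpha^t b_\mu$ and the tensor-product formulas of \S\ref{subtens} together with (C1)--(C3) settle matters; for $\gamma\neq\alpha$ one uses that $\varepsilon_\gamma(b_\alpha(-t))=-\infty$, and that in $B(\mu)$ one has $\varepsilon_\gamma(f_\alpha^t b_\mu)=0$ — because $wt(f_\alpha^t b_\mu)=\mu-t\alpha$ with $\mu$ a highest weight and $\gamma^\vee(\alpha)\le0$ (as in the proof of Lemma \ref{lem_indm}) force $e_\gamma^r(f_\alpha^t b_\mu)={\bf 0}$ for $r>0$ — so that the quantity $\sigma$ in the tensor-product formula is $\ge0$ and $\varepsilon_\gamma,\varphi_\gamma,e_\gamma^r$ act on the first tensor factor only, matching through the inclusion $B(\lambda)\hookrightarrow B(\infty)$. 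The main difficulty I anticipate is precisely the bijectivity argument of the second paragraph, where Lemma \ref{lem_indm}, closedness, and the direct-limit description of $B(\infty)$ must all be combined, with care taken to choose $\lambda$ and $\mu$ large enough simultaneously for $f$ and for the extracted word $f'$.
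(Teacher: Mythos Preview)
Your proposal is correct and follows essentially the same approach as the paper: define $\Gamma_\alpha(fb_\infty)=f(b_\infty\otimes b_\alpha(0))$, then use Lemma~\ref{lem_indm} together with closedness and the direct-limit description of $B(\infty)$ to establish that the pair $(f'b_\infty,t)$ and the element $fb_\infty$ determine one another, yielding well-definedness and injectivity simultaneously. The only cosmetic difference is in the verification of the morphism axioms: the paper checks $\varepsilon_\alpha$, $wt$, and $e_\alpha^r$ by direct computation inside $B(\infty)\otimes B_\alpha$ (using upper normality of $B(\infty)$ for the case $e_\alpha^r b={\bf 0}$), whereas you propose to transport everything back to the normal crystal $B(\lambda+\mu)$; both routes work and neither is materially shorter.
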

\proof Let us show that the expression
\begin{equation}\label{def_Gamma}
\Gamma_\alpha(f b_\infty)=f(b_\infty\otimes b_\alpha(0)),\;\;
 f \in \FF,\end{equation}
defines the morphism $\Gamma_\alpha$.
 First we check that it is well defined.
  By definition,  $fb_\infty=\overline{f   b}_\nu$ for all $\nu\in \bar C$ 
  such that $\overline{fb}_\nu\neq {\bf 0}$.

  Let us choose $\lambda$ as in lemma \ref{lem_indm}. 
  For $\mu \in \bar C$ large enough, $\overline{f b}_{\lambda+\mu}\neq   {\bf 0}$.
   Let us write
   $$\overline{f b}_{\lambda+\mu}=f(\bar b_\lambda\otimes\bar b_\mu)=
   \overline{f' b}_\lambda \otimes\overline{ f_\alpha^t b}_\mu.$$ 
   Then  $f'$ and $t$
     depend only on $fb_{\lambda+\mu}$, which by definition depends only on
     $fb_\infty$. By lemma \ref{lem_indm},
$$f(b_\infty\otimes b_\alpha(0))= f'b_\infty \otimes b_\alpha(-t)$$
which depends only on $fb_\infty$ 
(and not on $f$ itself), showing that $\Gamma_\alpha$ 
is well defined on $\FF b_\infty$, and thus on 
$B(\infty)$, since $\FF b_\infty=B(\infty)$.
Notice that 
$f(b_\infty\otimes b_\alpha(0))\neq {\bf 0}$
 since  $f'b_\infty\neq {\bf 0}$.

Let us prove that $\Gamma_\alpha$ is injective. Suppose that  $f(b_\infty\otimes b_\alpha(0))=\tilde f(b_\infty\otimes b_\alpha(0))$  for some $f,\tilde f \in \FF$.
 Using lemma \ref{lem_indm}, 
$$f(b_\infty\otimes b_\alpha(0))= f'b_\infty \otimes b_\alpha(-t)\mbox{    and    }\tilde f(b_\infty\otimes b_\alpha(0))= \tilde f'b_\infty \otimes b_\alpha(-\tilde t).$$
If $\lambda\in \bar C$ is as in this lemma, then
$$f(b_\lambda\otimes b_\mu)= f'b_\lambda \otimes f_\alpha^t (b_\mu)=
\tilde f'b_\lambda \otimes f_\alpha^{\tilde t} b_\mu=
\tilde f(b_\lambda\otimes b_\mu),$$
therefore $fb_{\lambda+\mu}=\tilde fb_{\lambda+\mu}$,
 thus $fb_{\infty}=\tilde fb_\infty$. 
 It is clear that  $\Gamma_\alpha$ commutes with 
  $f_\alpha^r, r \geq 0$. 
  Since $\varepsilon_\alpha(b_\alpha(0))=\varphi_\alpha(b_\infty)=0$,
   $$\varepsilon_\alpha(\Gamma_\alpha(b_\infty))=
   \varepsilon_\alpha(b_\infty\otimes b_\alpha(0))=
   \varepsilon_\alpha(b_\infty),$$
hence, if $f=f_{\alpha_n}^{r_n}\cdots f_{\alpha_1}^{r_1} 
\in \FF$, $$\varepsilon_\alpha(\Gamma_\alpha(f b_\infty))
=\varepsilon_\alpha(f\Gamma_\alpha(b_\infty))=
\varepsilon_\alpha(\Gamma_\alpha(b_\infty))-\sum_{k=1}^n r_k\beta^{\vee}(\alpha_k)
= \varepsilon_\alpha(fb_\infty)).$$ Therefore $\Gamma_\alpha$ commutes with $\varepsilon_\alpha$. It also commutes with $wt$ since $wt(b_\infty)=0$. Let us now consider 
$e^r_\alpha,r \geq 0$.
Let $b\in B(\infty)$. If $e^r_\alpha(b)\neq {\bf 0}$, 
then $$\Gamma_\alpha(b)=\Gamma_\alpha(f^r_\alpha e^r_\alpha(b))=
f^r_\alpha (\Gamma_\alpha(e^r_\alpha(b))\neq {\bf 0}$$
 hence $\Gamma_\alpha(e^r_\alpha(b))=e^r_\alpha(\Gamma_\alpha(b))$.
  Suppose now that $e^r_\alpha(b)= {\bf 0}$. Since $B(\infty)$
   is upper normal,
  one has
   $\varepsilon_\alpha(b)= 0$, 
   hence $\varepsilon_\alpha(\Gamma_\alpha(b))= 0$.
    By the lemma, there is $f'\in \FF$ and $t \geq 0$ such that   $\Gamma_\alpha(b)=\Gamma_\alpha(b)=f'b_{\infty}\otimes b_\alpha(-t).$ Therefore
$$0 = \varepsilon_\alpha(\Gamma_\alpha(b))\geq \varepsilon_\alpha(f'b_\infty) \geq 0.$$
By upper normality this implies that $e^r_\alpha(f'b_\infty)={\bf 0}$, hence $$e^r_\alpha(\Gamma_\alpha(b))=  e^r_\alpha(f'b_{\infty}\otimes b_\alpha(-t))=(e^r_\alpha f'b_{\infty})\otimes b_\alpha(-t)={\bf 0}.\;\;\; \square$$

The following lemma is clear.
 \begin{lemma}\label{lem_comp}
 Let $B_1,B_2$ and $C$ be three continuous crystals and $\psi: B_1 \to B_2$ be crystal embeddings. Then  $ \tilde \psi: B_1\otimes C\to B_2\otimes C$ defined by
 $\tilde\psi(b\otimes c)= \psi(b)\otimes c$ is a crystal embedding.
  \end{lemma}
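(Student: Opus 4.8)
The plan is to verify the definition directly: since a crystal embedding is by definition an injective strict morphism, I need only check that $\tilde\psi$ is injective and is a strict morphism. Injectivity is immediate from injectivity of $\psi$ together with the conventions $\psi({\bf 0})={\bf 0}$ and $b\otimes{\bf 0}={\bf 0}\otimes c={\bf 0}$, so essentially all the content lies in the morphism property. The key observation, which I would state at the outset, is that every structure map of $B_1\otimes C$ (as written in subsection~\ref{subtens}) depends on the factors $b\in B_1$ and $c\in C$ only through $wt(b),\varepsilon_\alpha(b),\varphi_\alpha(b)$, the corresponding data for $c$, and the operators $e_\alpha^r$ applied to $b$ and to $c$. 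Because $\psi$ preserves $wt$, $\varepsilon_\alpha$, $\varphi_\alpha$ and is \emph{strict} (so $e_\alpha^r(\psi(b))=\psi(e_\alpha^r(b))$ for every $b\in B_1$, with both sides equal to ${\bf 0}$ simultaneously), replacing $b$ by $\psi(b)$ leaves all these ingredients unchanged.

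Concretely, for fixed $b\in B_1$, $c\in C$, $\alpha\in\Sigma$, I would set $\sigma=\varphi_\alpha(b)-\varepsilon_\alpha(c)$; then $\varphi_\alpha(\psi(b))-\varepsilon_\alpha(c)=\sigma$ as well, so $\sigma$, $\sigma^+$, $\sigma^-$ are the same for $\psi(b)\otimes c$ as for $b\otimes c$. I would then record the four routine identities
\begin{eqnarray*}
wt(\tilde\psi(b\otimes c))&=&wt(\psi(b))+wt(c)=wt(b\otimes c),\\
\varepsilon_\alpha(\tilde\psi(b\otimes c))&=&\varepsilon_\alpha(\psi(b))+\sigma^-=\varepsilon_\alpha(b\otimes c),\\
\varphi_\alpha(\tilde\psi(b\otimes c))&=&\varphi_\alpha(c)+\sigma^+=\varphi_\alpha(b\otimes c),
\end{eqnarray*}
and, for all $r\in\R$,
\begin{eqnarray*}
e_\alpha^r(\tilde\psi(b\otimes c))&=&e_\alpha^{\max(r,-\sigma)-\sigma^-}\psi(b)\otimes e_\alpha^{\min(r,-\sigma)+\sigma^+}c\\
&=&\psi(e_\alpha^{\max(r,-\sigma)-\sigma^-}b)\otimes e_\alpha^{\min(r,-\sigma)+\sigma^+}c=\tilde\psi(e_\alpha^r(b\otimes c)),
\end{eqnarray*}
where strictness of $\psi$ on the first factor together with $\psi({\bf 0})={\bf 0}$ are exactly what makes the last chain of equalities valid even when one of the two tensor factors is ${\bf 0}$, i.e.\ when $e_\alpha^r(b\otimes c)={\bf 0}$. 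This shows that $\tilde\psi$ is a strict morphism, hence, being injective, a crystal embedding.

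I do not expect any real obstacle here: the lemma is genuinely a bookkeeping exercise. The one point that requires a moment's attention — and the closest thing to a difficulty — is to invoke \emph{strictness} of $\psi$ (not merely that it is a morphism) in the computation of $e_\alpha^r$, so that the case in which $e_\alpha^{\cdots}b$ or $e_\alpha^{\cdots}c$ equals ${\bf 0}$ is handled automatically by $\psi({\bf 0})={\bf 0}$ and the ghost-element conventions for the tensor product; everything else is a direct unwinding of the formulas of subsection~\ref{subtens}.
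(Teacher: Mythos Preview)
Your proof is correct and is exactly the routine verification the paper has in mind; indeed the paper gives no proof at all, merely stating that the lemma ``is clear.'' Your care in singling out strictness of $\psi$ as the point that makes the $e_\alpha^r$ computation go through (including the ghost cases) is appropriate and is precisely why the hypothesis is an embedding rather than just a morphism.
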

\subsection{Uniqueness. Proof of theorem \ref{thmuniq}}\label{prfuniq}
 Recall that $\Sigma$ is the set of simple roots.
  Fix a sequence $A=(\cdots, \alpha_2,\alpha_1)$ 
  of elements of $\Sigma$ such that each simple 
  root occurs infinitely many times and $\alpha_n \neq \alpha_{n+1}$ 
  for all $n \geq 1$. Let $ \hat B(A)$ be the subset of
   $\cdots B_{\alpha_2} \otimes B_{\alpha_1}$
    in which the $k$-th entry differs from $b_{\alpha_k}(0)$
   for   only finitely many $k$. One checks that the rules
      given for the multiple tensor give $\hat B(A)$ 
      the structure of a continuous crystal
       (see, e.g., Kashiwara, \cite{kashbook}, 7.2,
        Joseph \cite{Joseph-book},\cite{Joseph-notes}).
	 Let $ b_A$ be the element of $\hat B(A)$ with entries
	  $b_{\alpha_n}(0)$ for all $n \geq 1$. We denote $B(A)=\FF b_A$.  
 \begin{prop} There exists a crystal embedding $\Gamma$ 
 from $B(\infty)$ onto $ B(A)$ 
 such that $\Gamma(b_\infty)= b_A$.
 \end{prop}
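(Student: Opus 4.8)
The plan is to construct $\Gamma$ as a direct limit of the iterated embeddings $\Gamma_{\alpha_k}$ provided by Proposition \ref{gammaalpha}, in exactly the style of Joseph \cite{Joseph-notes} and Kashiwara \cite{kashbook}. First I would set up the finite truncations. For each $n\geq 1$, by composing the embedding $\Gamma_{\alpha_1}:B(\infty)\to B(\infty)\otimes B_{\alpha_1}$ with $\Gamma_{\alpha_2}\otimes \mathrm{id}$ (using Lemma \ref{lem_comp} to promote $\Gamma_{\alpha_2}:B(\infty)\to B(\infty)\otimes B_{\alpha_2}$ to an embedding $B(\infty)\otimes B_{\alpha_1}\to B(\infty)\otimes B_{\alpha_2}\otimes B_{\alpha_1}$), and so on down the sequence $A$, one obtains a crystal embedding
$$\Gamma^{(n)}:B(\infty)\longrightarrow B(\infty)\otimes B_{\alpha_n}\otimes\cdots\otimes B_{\alpha_1}$$
with $\Gamma^{(n)}(b_\infty)=b_\infty\otimes b_{\alpha_n}(0)\otimes\cdots\otimes b_{\alpha_1}(0)$. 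The key compatibility is that forgetting the leftmost $B(\infty)$ factor is consistent from one $n$ to the next, because $\Gamma_{\alpha_{n+1}}(b_\infty)=b_\infty\otimes b_{\alpha_{n+1}}(0)$; this is what allows passing to the limit.

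Next I would define $\Gamma$ on $B(\infty)$ by the rule $\Gamma(fb_\infty)=f\,b_A$ for $f\in\FF$, mirroring the definition (\ref{def_Gamma}) of $\Gamma_\alpha$. The point to check is well-definedness: if $fb_\infty=\tilde f b_\infty$ in $B(\infty)$, then $f b_A=\tilde f b_A$ in $\hat B(A)$. Since each simple root occurs infinitely often in $A$, for a given $f$ only finitely many "letters" of $A$ are actually touched, so one can work inside a finite tensor product $B_{\alpha_N}\otimes\cdots\otimes B_{\alpha_1}$ for $N$ large; there, $\Gamma^{(N)}$ composed with projection away from the leading $B(\infty)$ is precisely $f\mapsto f b_A$ applied after $\Gamma^{(N)}$, and since $\Gamma^{(N)}$ is a morphism that sends $b_\infty$ to the primitive-type element, it respects the $\FF$-action and hence $f b_A$ depends only on $f b_\infty$. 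This simultaneously shows $\Gamma$ is a crystal morphism (it intertwines $f_\alpha^r$ by construction, and one reads off $wt$, $\varepsilon_\alpha$, $\varphi_\alpha$, $e_\alpha^r$ compatibility from the fact that each $\Gamma^{(n)}$ has these properties, exactly as in the proof of Proposition \ref{gammaalpha}).

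For surjectivity onto $B(A)=\FF b_A$: by definition $B(A)$ is the $\FF$-orbit of $b_A$, and $\Gamma(fb_\infty)=fb_A$, so $\Gamma(B(\infty))=\FF b_A=B(A)$. For injectivity: if $\Gamma(b)=\Gamma(b')$ with $b=fb_\infty$, $b'=\tilde f b_\infty$, then $fb_A=\tilde f b_A$; truncating to a finite tensor product as above and using that $\Gamma^{(N)}$ is an embedding (a composition of embeddings by Lemma \ref{lem_comp}), one deduces $fb_\infty=\tilde f b_\infty$, i.e. $b=b'$. Finally $\Gamma(b_\infty)=b_A$ by construction. I expect the main obstacle to be the bookkeeping in the well-definedness step: one must argue carefully that passing from $B(\infty)$ to the infinite tensor $\hat B(A)$ is legitimate — that is, that the "stabilization" of the non-$b_{\alpha_k}(0)$ entries after applying $f$ genuinely depends only on the class $fb_\infty$ and not on the word $f$, which is where Lemma \ref{lem_indm} (extraction of $f'$ and the weight-growth argument) does the real work, together with the requirement that each simple root recurs infinitely often in $A$ so that enough "room" is available to absorb any given $f$.
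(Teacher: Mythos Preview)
Your proposal is correct and follows essentially the same approach as the paper: both construct the finite-level embeddings $\Gamma^{(n)}=\Gamma_{\alpha_n,\ldots,\alpha_1}$ by iterating Proposition~\ref{gammaalpha} via Lemma~\ref{lem_comp}, use Lemma~\ref{lem_indm} to see that for $n$ large the leading $B(\infty)$-factor of $\Gamma^{(n)}(fb_\infty)$ stabilizes to $b_\infty$, and then pass to the limit. The only cosmetic difference is that you phrase the limiting map as $\Gamma(fb_\infty)=fb_A$ (making the morphism property and surjectivity onto $\FF b_A$ immediate), whereas the paper writes out the resulting coordinates $b_{\alpha_k}(-s_k)$ explicitly and leaves the verification that $\Gamma$ is an embedding to the reader; your explicit injectivity argument via the injectivity of $\Gamma^{(N)}$ fills exactly that gap.
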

 \proof Let $f \in \FF$. We can write 
 $f=f_{\alpha_k}^{r_k}\cdots f_{\alpha_1}^{r_1}$ where
  $(\cdots, \alpha_2,\alpha_1)= A$ and
   $ r _n \geq 0$ for all $n\geq 1$. By lemma \ref{lem_indm}
 $$\Gamma_{\alpha_1}(f_{\alpha_1}^{r_1}(b_\infty))=f_{\alpha_1}^{r_1}(\Gamma_{\alpha_1} b_\infty)=f_{\alpha_1}^{r_1}(b_\infty\otimes b_{\alpha_1}(0))=b_\infty\otimes b_{\alpha_1}(-r_1)$$
 therefore
$$ \Gamma_{\alpha_1}(f_{\alpha_k}^{r_k}\cdots f_{\alpha_1}^{r_1}b_\infty)=
 (f_{\alpha_k}^{r'_k}\cdots f_{\alpha_2}^{r'_2}b_\infty)\otimes b_{\alpha_1}(-r'_1)$$
for some $r'_1,\cdots, r'_k\geq0$. Similarly,
$$\Gamma_{\alpha_2}(f_{\alpha_k}^{r'_k}\cdots f_{\alpha_2}^{r'_2}b_\infty)=(f_{\alpha_k}^{r''_k}\cdots f_{\alpha_3}^{r''_3}b_\infty)\otimes b_{\alpha_2}(-r''_2)$$ for some $r''_2,r''_3,\cdots,r''_k$. 
If we apply lemma \ref{lem_comp} to $B_1=B(\infty),
 B_2=B(\infty)\otimes B_{\alpha_2},
  \psi=\Gamma_{\alpha_2},C=B_{\alpha_1}$,
   we obtain a crystal embedding 
$$\tilde \Gamma_{\alpha_2}:B(\infty)\otimes B_{\alpha_1}\to B(\infty)\otimes B_{\alpha_2}\otimes B_{\alpha_1}$$ 
such that, for $b\in B(\infty), b_1\in B_{\alpha_1}$
$$\tilde \Gamma_{\alpha_2}(b\otimes b_1)= \Gamma_{\alpha_2}b\otimes b_1.$$
Let $\Gamma_{\alpha_2,\alpha_1}=\tilde \Gamma_{\alpha_2}\circ  
\Gamma_{\alpha_1}:B(\infty)\to B(\infty)\otimes B_{\alpha_2}
\otimes B_{\alpha_1}$, then
\begin{eqnarray*}
 \Gamma_{\alpha_2,\alpha_1}(f_{\alpha_k}^{r_k}\cdots f_{\alpha_1}^{r_1}b_\infty)&=&\tilde \Gamma_{\alpha_2}( f_{\alpha_k}^{r'_k}\cdots f_{\alpha_2}^{r'_2}b_\infty\otimes b_{\alpha_1}(-r'_1))\\
 &=&\Gamma_{\alpha_2}( f_{\alpha_k}^{r'_k}\cdots f_{\alpha_2}^{r'_2}b_\infty)\otimes b_{\alpha_1}(-r'_1)\\
&=&(f_{\alpha_k}^{r''_k}\cdots f_{\alpha_3}^{r''_3}b_\infty)\otimes b_{\alpha_2}(-r''_2)\otimes b_{\alpha_1}(-r'_1).
\end{eqnarray*}
Again, with $\Gamma_{\alpha_3}$ we build  
$\Gamma_{\alpha_3,\alpha_2,\alpha_1}=\tilde 
\Gamma_{\alpha_3}\circ \Gamma_{\alpha_2,\alpha_1}$.
 Inductively we obtain strict morphisms
  $$\Gamma_{\alpha_k,\cdots,\alpha_1}:B(\infty)\to B(\infty)\otimes
   B_{\alpha_k}\otimes \cdots \otimes B_{\alpha_2}\otimes B_{\alpha_1}$$ 
   such that for some  $s_k,\cdots, s_1$
$$\Gamma_{\alpha_k,\cdots,\alpha_1}
(f_{\alpha_k}^{r_k}\cdots f_{\alpha_1}^{r_1}b_\infty)=b_\infty 
\otimes b_{\alpha_k}(-s_k)\otimes \cdots \otimes b_{\alpha_1}(-s_1).$$

Now we can define $\Gamma:B(\infty)\to B(A)$ by the formula
$$\Gamma(f_{\alpha_k}^{r_k}\cdots f_{\alpha_1}^{r_1}b_\infty)=
\cdots \otimes b_{\alpha_{k+n}}(0) \otimes\cdots \otimes b_{\alpha_{k+1}}(0)
 \otimes b_{\alpha_k}(-s_k)\otimes \cdots \otimes b_{\alpha_1}(-s_1). $$
  One checks that this is a
 crystal embedding. $\square$
 
\medskip

This shows that $B(\infty)$ is isomorphic to $B(A)$, which does not depend on the chosen closed family of crystals, and thus proves the uniqueness. 
It also shows that $B(A)$ doest not depend on $A$, as soon
 as a closed family exists.

\tableofcontents

\end{document}